 \numberwithin{equation}{section}
\let\nc\newcommand
\let\renc\renewcommand
\theoremstyle{plain}
\newtheorem{thm}{Theorem}
\newtheorem{prop}[thm]{Proposition}
\newtheorem{cor}[thm]{Corollary}
\newtheorem{lem}[thm]{Lemma}
\newtheorem{conjecture}[thm]{Conjecture}
\newtheorem{cond}[thm]{Condition}
\theoremstyle{definition}
\newtheorem{defn}[thm]{Definition}
\newtheorem{example}[thm]{Example}
\newtheorem{remark}[thm]{Remark}
\newtheorem{rem}[thm]{Remark}
\numberwithin{thm}{section}
\renewcommand{\subsection}{\@startsection{subsection}{2}{0pt}{-3ex
plus -1ex minus -0.2ex}{-2mm plus -0pt minus
-2pt}{\normalfont\bfseries}} \makeatother
\numberwithin{equation}{section}
\def\gbf#1{\mbox{$\mathbf{#1}$}}
\DeclareMathOperator{\res}{{\mathrm{res}}}
\DeclareMathOperator{\Proj}{\mathrm{Proj}}
\DeclareMathOperator{\End}{\mathrm{End}}
\DeclareMathOperator{\gr}{\mathrm{gr}}
\DeclareMathOperator{\Lie}{\mathrm{Lie}}
\DeclareMathOperator{\Rep}{\mathrm{Rep}}
\newcommand{\bv}{\mathbf{v}} 
\newcommand{\beq}{\begin{equation}\label}
\newcommand{\eeq}{\end{equation}}
\DeclareMathOperator{\Spec}{\mathrm{Spec}}
\DeclareMathOperator{\Hom}{\mathrm{Hom}}
\DeclareMathOperator{\GL}{\mathrm{GL}}
\DeclareMathOperator{\Wt}{\mathrm{Wt}}
\nc{\Z}{\mathbb{Z}}
\newcommand{\N}{\mathbb{N}}
\newcommand{\Q}{\mathbb{Q}}
\newcommand{\R}{\mathbb{R}}
\newcommand{\C}{\mathbb{C}}
\nc{\rank}{\textrm{rank} \,}
\nc{\ds}{\dots}
\let\mc\mathcal
\let\mf\mathfrak
\nc{\mbf}{\mathbf}
\nc{\Res}{\mathsf{Res} \, }
\nc{\Ind}{\mathsf{Ind} \, }
\nc{\cont}{\textrm{cont}}
\nc{\msf}{\mathsf}
\nc{\minusone}{-1}
\nc{\minustwo}{-2}
\nc{\Mod}{\mathrm{Mod} \,}
\nc{\ms}{\mathscr}
\nc{\Frac}{\mathrm{Frac} \,}
\nc{\ra}{\rightarrow}
\nc{\hra}{\hookrightarrow}
\nc{\lab}{\label}
\renc{\O}{\mc{O}}
\nc{\Tan}{\mc{T}}
\nc{\ul}{\underline}
\nc{\s}{\mathfrak{S}}
\nc{\g}{\mf{g}}
\nc{\pa}{\partial}
\nc{\tit}{\textit}
\nc{\Maxspec}{\mathrm{Maxspec} \, }
\nc{\gldim}{\mathrm{gl.dim}}
\nc{\rkm}{\mathrm{rk} \, (\mf{m})}
\nc{\sm}{\mathrm{sm}}
\nc{\PD}{\mathbb{PD}}
\nc{\hilb}{\textrm{Hilb}}
\nc{\T}{\mathbb{T}}
\nc{\X}{\mathbb{X}}
\nc{\F}{\mathbb{F}}
\nc{\id}{\msf{id}}
\nc{\A}{\mathbb{A}}
\nc{\Grat}{\mc{Grat}}
\nc{\Squo}[1]{\A^{(#1)}}
\nc{\twist}{\mathrm{twist}}
\nc{\Cd}{\mc{C}}
\nc{\Span}{\mathrm{Span}}
\nc{\Grass}{\mathrm{Gr}}
\nc{\Fr}{\mathrm{Fr}}
\nc{\pco}[1]{k[V]^{p\mathrm{co} #1}}
\nc{\Irr}{\mathrm{Irr}}
\renc{\o}{\otimes}
\renc{\gr}{\mathsf{gr}}
\nc{\fin}{\mathrm{fin}}
\nc{\aff}{\mathrm{aff}}
\nc{\algD}{\mf{D}}
\nc{\hr}{\mf{h}_{\textrm{reg}}}
\nc{\D}{\mathscr{D}}
\nc{\PIdeg}{\mathrm{P.I.-degree}}
\nc{\ch}{\mathrm{ch}}
\nc{\ev}{\mathsf{ev}}
\nc{\Stab}{\mathrm{Stab}}
\nc{\Der}{\mathrm{Der}}
\nc{\rightsim}{\stackrel{\sim}{\longrightarrow}}
\nc{\HZ}{H_{\mbf{h},\Z}(\Z_m)}
\nc{\sing}{\mathrm{sing}}
\nc{\dd}{\mathscr{D}}
\nc{\bc}{\mathbf{c}}
\nc{\vc}{\underline{\mathbf{c}}}
\nc{\ba}{\mathbf{a}}
\nc{\reg}{\mathrm{reg}}
\nc{\Amp}{\mathrm{Amp}}
\nc{\Nef}{\mathrm{Nef}}
\nc{\SL}{\mathrm{SL}}
\nc{\SO}{\mathrm{SO}}
\nc{\Sp}{\mathrm{Sp}}
\nc{\Sym}{\mathrm{Sym}}
\nc{\Mov}{\mathrm{Mov}}
\nc{\Pic}{\mathrm{Pic}}
\nc{\Cs}{\C^{\times}}
\nc{\Nak}[3]{\mf{M}_{{#1}} ({#2},{#3}) }
\nc{\Naka}[2]{\mf{M}({#1},{#2}) }
\nc{\Mtheta}[1]{\mc{M}_{#1}}
\DeclareMathOperator{\Seshadri}{\mathrm{S}} 
\DeclareMathOperator{\head}{hd}
\DeclareMathOperator{\tail}{tl}
\nc{\bw}{\mathbf{w}}
\nc{\bn}{\mathbf{n}}
\nc{\CB}{\mathrm{CB}}
\nc{\GVect}{\Lambda}
\nc{\pZ}{\overline{Z}}
\nc{\Qu}{Q}
\nc{\Supp}{\mathrm{Supp}}
\nc{\mr}{\mathrm}
\newcommand{\one}{\ensuremath{(\mathrm{i})}}
\newcommand{\two}{\ensuremath{(\mathrm{ii})}}
\newcommand{\three}{\ensuremath{(\mathrm{iii})}}
\newcommand{\four}{\ensuremath{(\mathrm{iv})}}
\newcommand{\CC}{\ensuremath{\mathbb{C}}} 
\newcommand{\kk}{\ensuremath{\Bbbk}} 
\newcommand{\QQ}{\ensuremath{\mathbb{Q}}}
\newcommand{\ZZ}{\ensuremath{\mathbb{Z}}}
\newcommand{\git}{\ensuremath{/\!\!/\!}}
\DeclareMathOperator{\Cox}{Cox}
\DeclareMathOperator{\Exc}{Exc}
\DeclareMathOperator{\Uns}{Uns}
\newcommand{\Xtheta}[1]{X_{\theta{#1}}}
\nc{\NS}{\operatorname{NS}}
\nc{\I}{\operatorname{I}}
\nc{\II}{\operatorname{II}}
\nc{\III}{\operatorname{III}}
\begin{document}

\title{Birational geometry of quiver varieties \\ and other GIT quotients}

\dedicatory{In memory of Tom Nevins}

\author{Gwyn Bellamy}
\address{School of Mathematics and Statistics, University of Glasgow, 15 University Gardens, Glasgow, G12 8QW.}
\email{gwyn.bellamy@glasgow.ac.uk} 
\urladdr{http://www.maths.gla.ac.uk/~gbellamy/}

\author{Alastair Craw} 
\address{Department of Mathematical Sciences, 
University of Bath, 
Claverton Down, 
Bath BA2 7AY, 
United Kingdom.}
\email{a.craw@bath.ac.uk}
\urladdr{http://people.bath.ac.uk/ac886/}

\author{Travis Schedler} 
\address{Imperial College London, Huxley Building,
South Kensington Campus, London SW7 2AZ}
\email{t.schedler@imperial.ac.uk}
\urladdr{https://www.imperial.ac.uk/people/t.schedler}

\subjclass[2010]{Primary: 14L24, 14D20,14B05. }

 \keywords{Geometric invariant theory, crepant resolutions, quiver varieties, hypertoric varieties, three-fold quotient singularities.}

\maketitle

\begin{abstract}
    We prove that all projective crepant resolutions of Nakajima quiver varieties satisfying natural conditions are also Nakajima quiver varieties.  More generally, we classify the small birational models of many Geometric Invariant Theory (GIT) quotients by introducing a sufficient condition for the GIT quotient of an affine variety $V$ by the action of a reductive group $G$ to be a relative Mori Dream Space. Two surprising examples illustrate that our new condition is optimal. When the condition holds, we show that the linearisation map identifies a region of the GIT fan with the Mori chamber decomposition of the relative movable cone of $V\git_\theta G$. 
    If $V\git_\theta G$ is a crepant resolution of $Y\!\!:= V\git_0 G$, then every projective crepant resolution of $Y$ is obtained by 
    varying $\theta$. Under suitable conditions, we show that this is the case for quiver varieties and hypertoric varieties. Similarly, for any finite subgroup $\Gamma\subset \SL(3,\CC)$ whose nontrivial conjugacy classes are all junior, we obtain a simple geometric proof of the fact that every projective crepant resolution of $\mathbb{C}^3/\Gamma$ is a fine moduli space of $\theta$-stable $\Gamma$-constellations. 
\end{abstract}

\section{Introduction}
Nakajima quiver varieties~\cite{Nak1994, NakDuke98} provide a rich source of examples illustrating many beautiful phenomena in algebraic geometry and geometric representation theory.  To recall the construction, consider a finite graph with vertex set $I$, and vectors $\mathbf{v}, \mathbf{w}\in \mathbb{N}^{I}$. This combinatorial data determines a Hamiltonian action of the group $G:= \prod_{i\in I} \GL(v_i)$ on a symplectic vector space $\mathbf{M}(\mathbf{v}, \mathbf{w})$, giving rise to a moment map $\mu\colon \mathbf{M}(\mathbf{v}, \mathbf{w})\to \mathfrak{g}^*$. For any character $\theta \in G^\vee$, the Nakajima \emph{quiver variety} is defined to be the Geometric Invariant Theory (GIT) quotient 
\begin{equation}
\label{eqn:quivervariety}
\mathfrak{M}_\theta(\mathbf{v},\mathbf{w}):= \mu^{-1}(0)\git_\theta \, G.
\end{equation}

 Under suitable conditions on $\bv, \bw$, and for any sufficiently general $\theta$ (see Section~\ref{sec:AppstoQuiverVars}), the 
 structure morphism $\mathfrak{M}_\theta(\mathbf{v},\mathbf{w})\to \mf{M}_0(\bv,\bw)$ is a crepant resolution of singularities.
 It follows from the work of Birkar, Cascini, Hacon and McKernan~\cite[Corollary~1.3.2]{BCHM} that $\mathfrak{M}_\theta(\mathbf{v},\mathbf{w})$ is a relative Mori Dream Space (see Namikawa~\cite{NamikawaMDS} or \cite[Lemma~5.3]{BCRSW21}).
Put simply, the birational geometry of $\mathfrak{M}_\theta(\mathbf{v},\mathbf{w})$ is especially well-behaved. It is therefore natural to ask for a concrete description of the relative movable cone and the set of all projective crepant resolutions of $\mathfrak{M}_0(\mathbf{v},\mathbf{w})$.

Here, we answer these questions in full 
 by proving directly that quiver varieties are relative Mori Dream Spaces, and we establish that every projective crepant resolution of $\mathfrak{M}_0(\mathbf{v},\mathbf{w})$ is itself a quiver variety. In doing so, we provide a vast generalisation of the results of \cite{BellamyCraw}, bypassing the \'{e}tale-local description of quiver varieties required there. Our proof does not use results from \cite{BCHM}, nor does it apply the relative version of the sufficient condition to be a Mori Dream Space given by Hu and Keel~\cite[Theorem~2.3]{HuKeel00}, because that condition does not apply even to the simplest quiver variety, namely, the minimal resolution of the $A_1$ surface singularity.
 
 \subsection{The main result for GIT quotients}
 
 In fact, our approach is much more general, and applies to a broad class of quotients that arises across algebraic geometry. Consider the action of a reductive group $G$ on an affine variety $V$. We do not assume that $V$ is normal.  The vector space $G^\vee_\QQ=G^\vee \otimes_\ZZ \QQ$ of rational characters decomposes into a polyhedral wall-and-chamber structure called the GIT fan. The set of generic stability parameters $\theta$ in $G^\vee_\QQ$ decomposes as the union of finitely many GIT chambers, each of which is the interior of a top-dimensional cone in the GIT fan. Our main result introduces a new sufficient condition guaranteeing that, for any chamber $C$ and any $\theta\in C$, the GIT quotient $\Xtheta{}:=V\git_\theta \, G$ is a relative Mori Dream Space over the affine quotient $Y:= V\git_0 \, G$; in fact, we describe a region of the GIT fan that captures completely the birational geometry of $\Xtheta{}$ over $Y$.
 
 Before stating our main result for GIT quotients, we describe our sufficient condition in general terms (see Condition~\ref{cond:GIT} for details). Recall that each character $\zeta\in G^\vee$ determines a $G$-linearisation of the trivial bundle on $V$ that descends to a line bundle $L_\zeta$ on $\Xtheta{}$ for generic $\theta\in G^\vee_{\QQ}$. Let $C$ be the chamber containing $\theta$. The \emph{linearisation map} for $C$ is the map of rational vector spaces
 \[
 L_C\colon G^\vee_\QQ\longrightarrow \Pic(\Xtheta{}/Y)\otimes_\ZZ \QQ
 \]
 defined by $L_C(\zeta) = L_\zeta$. One of the sufficient conditions from \cite[Lemma~2.2(4)]{HuKeel00} requires that $L_C$ is an isomorphism, and we impose this as part of the first criterion in our sufficient condition.

The novel aspect in our Condition~\ref{cond:GIT} is that the second and third criteria are phrased in terms of wall-crossing. For each GIT chamber $C$, we define a closed cone $R_C$ in $G^\vee_\QQ$ to be the union of the closures of a collection of chambers (see Definition~\ref{def:GITregion}), and our second and third criteria guarantee that \one\ variation of GIT quotient across each wall in the interior of $R_C$ induces a flip $V\git_{\theta_-} G\dashrightarrow V\git_{\theta_+}G$; and that \two\ the morphisms induced by variation of GIT quotient into each boundary wall from the interior of $R_C$ contract a divisor. We provide examples to show that even when the linearisation map $L_C$ is an isomorphism, it can happen that an interior GIT wall of $R_C$ does not induce a flip (see Example~\ref{exa:toricQuiverGIT}), and moreover, that even when $L_C$ is an isomorphism and all interior walls induce flips, it can happen that a boundary wall induces a morphism that does not contract a divisor (see Example~\ref{exa:threefoldflop}). Thus, all three criteria from Condition~\ref{cond:GIT} must be imposed to obtain the geometric behaviour that we seek.

The importance of our sufficient condition is illustrated by our main result for GIT quotients that can be stated as follows (see Theorem~\ref{thm:movable} and Corollaries~\ref{cor:sbm}-\ref{cor:mds}):
     \begin{thm}
     \label{thm:mainGITintro}
     For the action of a reductive group $G$  on an affine variety $V$, suppose that a GIT chamber $C$ in $G^\vee_\QQ$ satisfies Condition~\ref{cond:GIT}. For $\theta\in C$, write $\Xtheta{}:=V\git_\theta \, G$. 
 \begin{enumerate}
     \item[\one] The linearisation map is an isomorphism that identifies the GIT wall-and-chamber structure in $R_C$ with the decomposition of the movable cone $\Mov(\Xtheta{}/Y)$ into Mori chambers; 
     \item[\two] For generic $\zeta \in R_C$, the 
      GIT quotient $X_\zeta := V\git_\zeta G$ is  the birational model of $\Xtheta{}$ determined by the line bundle $L_C(\zeta)$; 
    \item[\three] 
     The GIT quotient $\Xtheta{}$ is a Mori Dream Space over $Y$. 
 \end{enumerate}
In particular, the $\QQ$-factorial small birational models of $\Xtheta{}$ are the birational models $V\git_\zeta \, G$ determined by the line bundles $L_C(\zeta)$ for generic $\zeta \in R_C$.
     \end{thm}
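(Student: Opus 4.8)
The plan is to deduce this final statement formally from parts \one--\three, using the theory of (relative) Mori Dream Spaces; the substantive content lies in establishing Condition~\ref{cond:GIT} and parts \one--\three, after which the final sentence is a corollary. Assume $\Xtheta{}$ is $\QQ$-factorial and write $X:=\Xtheta{}$, so that part \three\ guarantees $X$ is a Mori Dream Space over $Y$. Fix an arbitrary small birational model $\phi\colon X\dashrightarrow X'$ over $Y$, meaning that $X'$ is normal, projective over $Y$, and $\phi$ is an isomorphism in codimension one. First I would produce from $X'$ a movable divisor class on $X$ lying in the interior of $\Mov(X/Y)$, and then invoke parts \one\ and \two\ to realise $X'$ as a GIT quotient.

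To produce the class, I would choose a relatively ample $\QQ$-Cartier class $A'$ on $X'$ and let $A:=\phi^{-1}_*A'$ be its proper transform on $X$. Since $\phi$ restricts to an isomorphism between open subsets $U\subseteq X$ and $U'\subseteq X'$ whose complements have codimension at least two, normality (Hartogs' principle) gives $H^0(X,\O_X(mA))\cong H^0(X',\O_{X'}(mA'))$ for every $m$, so the relative section rings $R(X,A)=\bigoplus_{m\ge 0}H^0(X,\O_X(mA))$ and $R(X',A')$ coincide. As $A'$ is relatively ample, $mA'$ is relatively globally generated for $m\gg 0$, so the stable base locus of $A$ meets $X$ only inside $X\setminus U$; being of codimension at least two, this shows that $A$ is movable. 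Since $A'$ is big over $Y$, the class $A$ is big, and therefore $A$ lies in the interior of $\Mov(X/Y)$. The key identity is then $X'\cong\Proj_Y R(X',A')=\Proj_Y R(X,A)$, which exhibits $X'$ as the birational model of $X$ determined by $A$.

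Finally I would feed this into parts \one\ and \two. By \one, the linearisation map $L_C$ is an isomorphism carrying the interior of $R_C$ onto the interior of $\Mov(X/Y)$, so there is a rational point $\zeta$ in the interior of $R_C$ with $L_C(\zeta)$ a positive multiple of $A$; clearing denominators yields an integral character $\zeta\in G^\vee$, and rescaling does not change $\Proj$. Part \two\ then identifies $V\git_\zeta G$ with the birational model of $X$ determined by $L_C(\zeta)$, namely $\Proj_Y R(X,A)\cong X'$. Thus $X'\cong V\git_\zeta G$ with $\zeta$ in the interior of $R_C$, as required.

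The main obstacle I anticipate is not in the formal deduction but in the two geometric matchings underlying it: first, checking carefully that a relatively ample class on a small model pulls back into the \emph{interior} of the movable cone (rather than merely into $\Mov(X/Y)$), which requires the agreement of section spaces across the codimension-one isomorphism together with the relative bigness of $A'$; and second, confirming that the phrase ``birational model determined by $L_C(\zeta)$'' in part \two\ is literally $\Proj_Y$ of the corresponding relative section ring, so that it matches the $\Proj_Y R(X,A)$ description of $X'$. A secondary point worth flagging is that this argument covers all small birational models, including non-$\QQ$-factorial ones arising from classes on walls interior to $\Mov(X/Y)$, since the $\Proj$-of-section-ring construction accounts uniformly for every interior class.
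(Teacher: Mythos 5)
Your proposal addresses only the final ``in particular'' sentence of the theorem; parts \one--\three, which you explicitly set aside, are where essentially all of the paper's work lies (they are Theorem~\ref{thm:movable} and Corollaries~\ref{cor:sbm}--\ref{cor:mds}, resting on Lemma~\ref{lem:Assumptiongivesnefissemiample} --- the identification of $\overline{C}$ with $\Nef(X/Y)$ and the implication nef $\Rightarrow$ semiample --- on Lemma~\ref{lem:Ass1all C'}, which propagates Condition~\ref{cond:GIT}(1) across the interior walls of $R_C$ via Proposition~\ref{prop:floppingwall}, and on the decomposition of $L_C(R_C)$ into pulled-back nef cones). As a proof of the stated theorem the proposal is therefore incomplete. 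For the part you do prove, your route --- realise an arbitrary small birational model $X'$ as $\Proj_Y R(X,A)$ for $A$ the proper transform of a relatively ample class, place $A$ in the interior of $\Mov(X/Y)$, then apply \one\ and \two\ --- is exactly the paper's, which simply asserts the existence of a class $L'$ in the interior of $\Mov(X/Y)$ with $X(L')\cong X'$; your Hartogs/section-ring argument usefully fills in that assertion.

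There is, however, a genuine flaw in the one step you flag as delicate: the inference ``$A$ is movable and big, therefore $A$ lies in the interior of $\Mov(X/Y)$'' is false. A semiample class inducing a divisorial contraction over $Y$ is big (the contraction is birational) and movable (its stable base locus is empty), yet it lies on a boundary wall of $\Mov(X/Y)$; such walls occur in the paper's own examples. Bigness is not the right tool. The correct argument uses what you have already established: $X\dashrightarrow X'\cong\Proj_Y R(X,A)$ is an isomorphism in codimension one, whereas by part \one\ together with Condition~\ref{cond:GIT}(3) every class on the boundary of $\Mov(X/Y)=L_C(R_C)$ determines a model reached by a divisorial or fibre-type contraction, which is not an isomorphism in codimension one. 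Hence $A$ cannot lie on the boundary and must be interior. With that repair, and granting \one--\three, your deduction of the final sentence is correct.
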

 
\begin{rem} 
As a consequence of the proof of the theorem, we see that the line bundles yielding birational models which are small, in the sense that they coincide with $\Xtheta{}$ outside codimension two, are precisely the $L_C(\zeta)$ for $\zeta$ in the interior of $R_C$. The corresponding model is the normalisation $\widetilde{X}_\zeta$ of the GIT quotient $X_\zeta$.
\end{rem}

 Thus, when Condition~\ref{cond:GIT} applies, Theorem~\ref{thm:mainGITintro} shows that the birational geometry of $\Xtheta{}$ over $Y$ is determined completely by variation of GIT quotient within the cone $R_C$. In fact, Theorem~\ref{thm:mainGITintro}\three\ implies that Condition~\ref{cond:GIT} is a new sufficient condition for a GIT problem to define a relative Mori Dream Space. Our approach does not use in any way the deep geometric results in the minimal model programme from \cite{BCHM}, relying instead on GIT arguments. 
    
    The work of Hu and Keel~\cite{HuKeel00} shows that for any Mori Dream Space $X$, Theorem~\ref{thm:mainGITintro} applies for the action of an algebraic torus on $\Spec \Cox(X)$; more generally,  \cite[Theorem~4.3.3.1]{ADHL} reconstructs a larger class of varieties from a quasi-torus action on $\Spec \Cox(X)$. However, we are particularly interested in examples where the reductive group $G$ need not be a (quasi-)torus,
    and where the affine variety $V$ is not the spectrum of $\Cox(X)$. In short, the Cox ring does not have a monopoly on finitely generated $\kk$-algebras that encode perfectly the birational geometry of a Mori Dream Space. 
 
 \subsection{Application to quiver varieties}
 \label{sec:AppstoQuiverVars}
  While our Condition~\ref{cond:GIT}  is strong enough to establish Theorem~\ref{thm:mainGITintro}, it is also weak enough to apply in a number of interesting situations. The case of primary interest to us is the group action that defines a quiver variety. 
  
  As above, for any graph with vertex set $I$, choose dimension vectors $\mathbf{v}, \mathbf{w}\in \mathbb{N}^{I}$ with $\mathbf{w}\neq 0$ and $v_i \neq 0$ for all $i\in I$.  For any $\theta\in G^\vee$,  the quiver variety $\mf{M}_\theta(\bv,\bw)$ is the GIT quotient from \eqref{eqn:quivervariety}. We assume throughout that there exists a simple representation of the doubled quiver in $\mu^{-1}(0)$, or equivalently, the vector $\alpha:=(1,\mathbf{v})\in \mathbb{N}\times \mathbb{N}^{I}$ satisfies Crawley-Boevey's condition $\alpha\in \Sigma_0$ (see Definition~\ref{def:Sigma0}). It follows that the zero fibre of the moment map $V:= \mu^{-1}(0)$ is an affine variety \cite[Theorem~1.2]{CBmomap}, and moreover, if $\theta$ is generic, then the projective morphism $\mathfrak{M}_\theta(\mathbf{v},\mathbf{w})\rightarrow  \mathfrak{M}_0(\mathbf{v},\mathbf{w})$ to the affine GIT quotient is a crepant resolution of singularities.
  
  For any GIT chamber $C$ and for $\theta\in C$, the quiver variety $\mathfrak{M}_\theta(\mathbf{v},\mathbf{w})$ is nonsingular and hence the linearisation map $L_C$ is surjective by the work of McGerty and Nevins~\cite[Theorem~1.2]{QuiverKirwan}.
 The assumption $\alpha\in \Sigma_0$ implies that $\mathfrak{M}_{\theta_0}(\mathbf{v},\mathbf{w})$ is singular for any non-generic $\theta_0\in G^\vee_\QQ$, so the morphism 
 \begin{equation}
 \label{eqn:introVGIT}
 \tau\colon \mf{M}_\theta(\bv,\bw)\longrightarrow \mf{M}_{\theta_0}(\bv,\bw)
 \end{equation}
 obtained by varying $\theta$ into the boundary of the chamber $C$ necessarily contracts at least one curve. This allows us to prove that $L_C$ is actually an isomorphism; we provide examples to show that $L_C$ need not be injective when $\alpha\not\in \Sigma_0$ (see Remark~\ref{rem:nontrivialkerLC}). The second and third criteria in our Condition~\ref{cond:GIT} are phrased in terms of wall crossing for quiver varieties, and for these, we control the dimension of the unstable locus of the morphism $\tau$ from \eqref{eqn:introVGIT} by analysing the singular locus of $\mathfrak{M}_{\theta_0}(\mathbf{v},\mathbf{w})$ for non-generic $\theta_0\in G^\vee_\QQ$. We distinguish flipping and divisorial contractions using the fact that $\tau$ is semi-small, a result due to Kaledin~\cite{Kaledinsympsingularities}. 
 
 This leads to our main result for quiver varieties (see Theorem \ref{thm:mainquiverVars} and Proposition \ref{prop:Namikawa-Weyl}). Note that all Nakajima quiver varieties are normal thanks to \cite{CBnormal} (see also \cite{BellSchedQuiver} for the nonaffine case).

 \begin{thm}
 \label{thm:mainquivertwointro}
 Under the above assumptions, the following hold:
 \begin{enumerate}
 \item[\one] every GIT chamber $C$ satisfies Condition~\ref{cond:GIT}, so Theorem~\ref{thm:mainGITintro} holds for the quiver variety $\Xtheta{}:=\mathfrak{M}_\theta(\mathbf{v},\mathbf{w})$ with $\theta\in C$; and
 \item[\two] for any chamber $C$, the GIT region $R_C$ is a simplicial cone that provides a fundamental domain for the action of the Namikawa--Weyl group on $G^\vee_{\QQ}$.
\end{enumerate}
 Thus, projective crepant resolutions $\mathfrak{M}_\theta(\mathbf{v}, \mathbf{w})
\to \mf{M}_0(\bv,\bw)$, taken up to isomorphism over $\mathfrak{M}_0(\mathbf{v},\mathbf{w})$ are in bijection with GIT chambers in $G^\vee_{\QQ}$ modulo the action of the Namikawa--Weyl group.
 \end{thm}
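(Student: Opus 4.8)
The plan is to establish part~\one\ by verifying the three criteria of Condition~\ref{cond:GIT} for an arbitrary GIT chamber $C$, to establish part~\two\ by identifying $R_C$ with a Weyl chamber for the Namikawa--Weyl group, and then to read off the final bijection from these together with Theorem~\ref{thm:mainGITintro}. Throughout I would use that $\alpha=(1,\bv)\in\Sigma_0$ makes $V=\mu^{-1}(0)$ an affine variety on which $\mathfrak{M}_\theta(\bv,\bw)\to\mf{M}_0(\bv,\bw)$ is a projective symplectic resolution for generic $\theta$, and that $\mathfrak{M}_{\theta_0}(\bv,\bw)$ is singular for every non-generic $\theta_0\in G^\vee_\QQ$.

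The first criterion asks that the linearisation map $L_C\colon G^\vee_\QQ\to\Pic(\Xtheta{}/Y)_\QQ$ be an isomorphism. Surjectivity is the Kirwan surjectivity theorem of McGerty--Nevins~\cite{QuiverKirwan}, applicable since $\mathfrak{M}_\theta(\bv,\bw)$ is nonsingular. For injectivity I would use that $\alpha\in\Sigma_0$ makes every GIT wall genuine: the contraction obtained by varying $\theta$ into any wall contracts a curve, so distinct GIT chambers correspond to distinct birational models with distinct nef cones in $\Pic(\Xtheta{}/Y)_\QQ$. Hence $L_C$ separates chambers and must be injective, and together with surjectivity it is an isomorphism (Remark~\ref{rem:nontrivialkerLC} shows injectivity can fail once $\alpha\notin\Sigma_0$).

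The heart of the proof, and the step I expect to be the main obstacle, is the second and third criteria, which govern wall-crossing. Fixing a wall of $R_C$ and a point $\theta_0$ in its relative interior, I would study the contraction $\tau\colon\mathfrak{M}_\theta(\bv,\bw)\to\mathfrak{M}_{\theta_0}(\bv,\bw)$ by bounding the dimension of its exceptional locus in terms of the singular locus of the target. This requires computing that singular locus from the stratification of $\mu^{-1}(0)$ by representation type, using Crawley-Boevey's combinatorics for $\Sigma_0$ to read off the codimensions of the strata. Kaledin's semismallness of $\tau$~\cite{Kaledinsympsingularities} then pins these dimensions down: an exceptional locus of codimension at least two forces $\tau$ to be a small contraction, giving a flip across an interior wall of $R_C$, whereas a divisorial exceptional component can only occur on a boundary wall. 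The delicate point is to verify that $R_C$, as defined in Definition~\ref{def:GITregion}, is cut out precisely so that its interior walls are the flipping ones and its boundary walls the divisorial ones; this is where the combinatorics of the GIT fan must be matched against the codimension estimates.

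For part~\two, I would identify the closure of $R_C$ with the movable cone $\Mov(\Xtheta{}/Y)$. By part~\one\ together with Theorem~\ref{thm:mainGITintro}\one, the GIT chambers inside $R_C$ tile this cone by the nef cones of the crepant resolutions, while the boundary walls, being divisorial, are exactly the reflecting hyperplanes of the Namikawa--Weyl group $W$ acting on $G^\vee_\QQ\cong\Pic(\Xtheta{}/Y)_\QQ$. Since $W$ is a finite reflection group, its Weyl chamber is a simplicial fundamental domain, giving both the simpliciality of $R_C$ and the decomposition $G^\vee_\QQ=\bigcup_{w\in W}w\cdot R_C$. Finally, each Mori chamber in $\Mov(\Xtheta{}/Y)$ is the nef cone of a unique projective crepant resolution of $\mf{M}_0(\bv,\bw)$, so the chambers of $R_C$ biject with crepant resolutions up to isomorphism over $\mf{M}_0(\bv,\bw)$; as $R_C$ is a fundamental domain, these in turn biject with GIT chambers in $G^\vee_\QQ$ modulo $W$, which is the asserted correspondence.
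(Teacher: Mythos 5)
Your overall route coincides with the paper's: Kirwan surjectivity for $L_C$, curve contraction at every wall for injectivity, Kaledin semismallness plus the singular-locus analysis for the wall dichotomy, and the identification of $R_C$ with $\Mov(\Xtheta{}/Y)$ as a Namikawa--Weyl fundamental domain. Two points, however, are genuine gaps rather than routine details. First, in your injectivity argument, ``$L_C$ separates chambers, hence is injective'' does not follow without knowing that $\overline{C}$ is strongly convex: a nonzero $\eta\in\ker(L_C)$ lying in the common lineality space of all GIT cones would never carry $\theta$ across a wall, so no contradiction with curve contraction arises. The paper supplies this via Lemma~\ref{lem:fixedpoint} (the origin of $\mu^{-1}(0)$ is a $G(\alpha)$-fixed point, so $\{0\}$ is a GIT cone) together with Proposition~\ref{prop:stronglyconvex}. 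Relatedly, for Condition~\ref{cond:GIT}(3) you only argue that a divisorial component ``can only occur on a boundary wall''; what is actually needed is the converse, that every boundary wall \emph{is} divisorial. This comes from the dichotomy in the paper's proof of Theorem~\ref{thm:mainquiverVars}: when the singular locus of $\mf{M}_{\theta_0}(\bv,\bw)$ has codimension two, the transverse Kleinian slice forces equality in the semismallness inequality, so \emph{both} $\tau_\pm$ are divisorial; when it has codimension at least four, both are small. Since the analysis depends only on $\theta_0$ and not on the side of approach, every wall is either flipping or divisorial, and the definition of $R_C$ then sorts these into interior and boundary walls.

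The more serious gap is in part~\two. Pulling back the statement that $\Mov(\Xtheta{}/Y)$ is a fundamental domain for the Namikawa--Weyl group (via \cite[Proposition~2.17]{BLPWAst}) only gives an abstract reflection group acting on $\Pic(\Xtheta{}/Y)_\QQ$ with $L_C(R_C)$ as fundamental domain. To conclude that the translates of $R_C$ in $G^\vee_\QQ$ are precisely the \emph{other GIT regions} --- which is what the final bijection between crepant resolutions and GIT chambers modulo $W$ requires --- you must show that the reflection about a boundary wall of $R_C$ carries $R_C$ onto the adjacent GIT region and intertwines the linearisation maps. The paper does this by proving that across a divisorial wall $W$ the quotients $\Xtheta{}$ and $\Xtheta{'}$ for $\theta\in C$, $\theta'\in C'$ are isomorphic over $Y$ (both being the unique minimal model dominating $X_{\theta_0}$), that $L_C|_W=L_{C'}|_W$, and hence that $\gamma:=L_{C'}^{-1}\circ L_C$ is an integral involution fixing $W$ pointwise with $\gamma(R_C)=R_{C'}$ (Lemma~\ref{lem:integratransCCprime}), followed by an induction on word length. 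Without this compatibility your decomposition $G^\vee_\QQ=\bigcup_{w\in W}w\cdot R_C$ is unproved, and the asserted bijection does not yet follow.
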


 This theorem provides a broad generalisation of the geometric interpretation by Kronheimer~\cite{Kronheimer} of the McKay correspondence, in which, for any finite subgroup $\Gamma\subset \SL(2,\kk)$, the minimal resolution of the Kleinian singularity $\mf{M}_0(\bv,\bw)\cong \mathbb{A}^2/\Gamma$ is constructed by  variation of GIT (or hyperk\"ahler) quotient as a quiver variety $\mathfrak{M}_\theta(\mathbf{v},\mathbf{w})$ for generic $\theta$, and moreover, any Weyl chamber of finite type ADE can be identified with the ample cone of the minimal resolution.

 Theorem~\ref{thm:mainquivertwointro} provides a direct, geometric proof of the fact that every quiver variety $\mathfrak{M}_\theta(\mathbf{v},\mathbf{w})$ is a relative Mori Dream Space over $\mathfrak{M}_0(\mathbf{v},\mathbf{w})$. In fact, we go further for quiver varieties by describing explicitly the hyperplane arrangement that determines the GIT chamber decomposition appearing in Theorem~\ref{thm:mainquivertwointro} (see Theorem~\ref{thm:GITquiverarrangement}).
 
 \begin{cor} 
 \label{cor:introNamikawaWeyl}
  Under the above assumptions, every projective crepant resolution of the affine quotient $\mathfrak{M}_0(\mathbf{v},\mathbf{w})$ is itself a quiver variety $\mathfrak{M}_\theta(\mathbf{v},\mathbf{w})$ for some generic $\theta\in G^\vee$.
  \end{cor}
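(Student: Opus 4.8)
The plan is to deduce this corollary directly from Theorem~\ref{thm:mainquivertwointro} together with the final assertion of Theorem~\ref{thm:mainGITintro}, so that essentially all of the work has already been done. Fix a generic character $\theta$ lying in a GIT chamber $C$, and write $X_\theta = \mathfrak{M}_\theta(\bv,\bw)$. Since $\alpha\in\Sigma_0$, the variety $X_\theta$ is a crepant resolution of $Y := \mathfrak{M}_0(\bv,\bw)$; in particular it is nonsingular, hence $\QQ$-factorial. By Theorem~\ref{thm:mainquivertwointro}\one, the chamber $C$ satisfies Condition~\ref{cond:GIT}, so the full strength of Theorem~\ref{thm:mainGITintro} is available for $X_\theta$.

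Now let $f\colon Z \to Y$ be an arbitrary projective crepant resolution. The first and only substantive step is to verify that $Z$ is a small birational model of $X_\theta$ over $Y$. Both $Z$ and $X_\theta$ are projective over $Y$, nonsingular (hence $\QQ$-factorial), and crepant over $Y$, so the induced birational map $Z \dashrightarrow X_\theta$ over $Y$ is crepant. I would invoke the negativity lemma, equivalently the standard fact that relative minimal models are unique in codimension one: a crepant birational map between varieties that are $\QQ$-factorial and projective over $Y$ can contract no divisor in either direction, and is therefore an isomorphism in codimension one. Thus $Z$ is a small, $\QQ$-factorial birational model of $X_\theta$ over $Y$.

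With this in hand the conclusion is immediate. The final assertion of Theorem~\ref{thm:mainGITintro} states that, because $X_\theta$ is $\QQ$-factorial, every small birational model of $X_\theta$ over $Y$ is of the form $V\git_\zeta G = \mathfrak{M}_\zeta(\bv,\bw)$ for some $\zeta$ in the interior of $R_C$; applying this to $Z$ produces such a $\zeta$ with $Z \cong \mathfrak{M}_\zeta(\bv,\bw)$. Since $Z$ is itself nonsingular, hence $\QQ$-factorial, it corresponds under the isomorphism of Theorem~\ref{thm:mainGITintro}\one\ to a top-dimensional Mori chamber, that is, to an open GIT chamber inside $R_C$; equivalently $\zeta$ is generic. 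As the GIT quotient depends only on the chamber containing $\zeta$, we may moreover take $\zeta$ to be an integral generic character in $G^\vee$. This exhibits $Z$ as the quiver variety $\mathfrak{M}_\zeta(\bv,\bw)$ for generic $\zeta\in G^\vee$, as required.

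The main obstacle is the verification in the second paragraph: one must be certain that an arbitrary projective crepant resolution of $Y$ differs from the fixed quiver variety $X_\theta$ only by a small modification, rather than by a genuine divisorial contraction. This rests on $Y$ having canonical—indeed symplectic—singularities, so that crepant resolutions are relative minimal models; the uniqueness of minimal models in codimension one then guarantees smallness. Once this point is secured, the corollary follows formally from the two main theorems, and no further input from birational geometry is needed.
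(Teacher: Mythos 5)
Your proposal is correct and follows essentially the same route as the paper: the paper also reduces to the fact that two $\QQ$-factorial terminalisations of $Y$ are isomorphic in codimension one (citing \cite[Theorem~3.52]{KollarMori}, i.e.\ the negativity-lemma argument you invoke) and then applies Theorem~\ref{thm:movable} via Corollaries~\ref{c:partial-cr} and \ref{cor:resgivenbyquiver}. Your final step, that $\QQ$-factoriality of $Z$ forces $\zeta$ to lie in an open chamber, is exactly Corollary~\ref{cor:sbm}\three, so no gap remains.
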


 This result implies that every relative minimal model of a quiver variety is itself a quiver variety. This generalises and unifies the results from Yamagishi~\cite[Section~5]{Yamagishimasters}, and a pair of results of the authors \cite[Theorem~1.2]{BellamyCraw}, \cite[Theorem~1.2]{BCRSW21} (see Remark \ref{r:BC-case} for more details about the former).

 \subsection{Hypertoric varieties}
Our methods apply equally well to nonsingular hypertoric (originally called toric hyperk\"ahler) varieties. Here, a hypertoric variety is a variety obtained as a Hamiltonian reduction of a vector space by an algebraic torus, by which we mean a GIT quotient of the zero fibre of the moment map by the torus. In this case, the verification of our Condition~\ref{cond:GIT} for the standard GIT construction of a nonsingular hypertoric variety $X$ was largely carried out by Konno~\cite[Theorem~6.3]{Konno03}, though we also use the tilting bundle on $X$ constructed by \v{S}penko and Van den Bergh~\cite{SVdBhypertoric} (see also \cite{McBreenWebesterhypertoric}) to deduce that the linearisation map is surjective. Our main result for nonsingular hypertoric varieties, given in Theorem~\ref{thm:hypertoric}, establishes the following result.
 
 \begin{thm}
 \label{thm:conjecturefortori}
 Condition \ref{cond:GIT} holds for nonsingular hypertoric varieties.
 \end{thm}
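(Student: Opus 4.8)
The plan is to verify the three criteria of Condition~\ref{cond:GIT} directly for the standard GIT presentation of a nonsingular hypertoric variety, reusing the logical skeleton of the quiver argument but replacing its representation-theoretic inputs by toric ones. Write $X:=\Xtheta{}=\mu^{-1}(0)\git_\theta T$, where $T\subseteq(\kk^\times)^n$ is the gauge torus defining $X$ and $\mu$ is the moment map for the induced $T$-action on the cotangent space of $\kk^n$, and set $Y:=\mu^{-1}(0)\git_0 T$ to be the associated affine hypertoric variety. Since $T$ is an algebraic torus, the entire problem is controlled by the combinatorics of the cooriented hyperplane arrangement attached to $X$, and the morphism $X\to Y$ is a projective, conical symplectic resolution.

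For the first criterion I would show that the linearisation map $L_C\colon T^\vee_\QQ\to \Pic(X/Y)\otimes_\ZZ\QQ$ is an isomorphism. Surjectivity follows from the tilting bundle on $X$ constructed by \v{S}penko and Van den Bergh~\cite{SVdBhypertoric} (see also \cite{McBreenWebesterhypertoric}): its indecomposable summands are line bundles of the form $L_\zeta$ for $\zeta$ ranging over a generating set of characters, so these classes span $\Pic(X/Y)\otimes_\ZZ\QQ$, exactly as Kirwan surjectivity of McGerty and Nevins~\cite{QuiverKirwan} is used in the quiver case. Injectivity then follows by the same mechanism as for quivers: because $Y$ is singular and every non-generic parameter yields a singular quotient, varying $\theta$ into any wall of $C$ produces a morphism that contracts at least one curve, so no nonzero direction in $T^\vee_\QQ$ lies in the kernel of $L_C$. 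Hence $L_C$ is an isomorphism.

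The heart of the proof is the flipping and divisorial-contraction criteria, which concern variation of GIT across the walls of the region $R_C$. The essential input here is Konno's analysis~\cite[Theorem~6.3]{Konno03} of the chamber structure and of wall-crossing for hypertoric varieties, phrased through the hyperplane arrangement: crossing an interior wall of $R_C$ exchanges two arrangements whose associated unstable locus has codimension at least two, while approaching a boundary wall collapses a bounded region and contracts a divisor. To turn this into the precise dichotomy required by Condition~\ref{cond:GIT}, I would invoke the fact that $X\to Y$ is semismall (Kaledin~\cite{Kaledinsympsingularities}, applicable since $X$ is a symplectic resolution), exactly as in the quiver case: semismallness bounds the fibre dimensions and thereby separates the codimension-one divisorial contractions on the boundary walls from the codimension $\geq 2$ small contractions, which are therefore flipping, on the interior walls.

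The step I expect to be the main obstacle is the combinatorial bookkeeping needed to align Konno's conventions with ours. Concretely, one must identify precisely which walls of the GIT fan lie in the interior of $R_C$ and which on its boundary, and then translate Konno's description of the (un)stable loci --- stated in terms of the ample and movable cones of $X$ --- into the codimension statements demanded by the two wall-crossing criteria of Condition~\ref{cond:GIT}. Once this arrangement-theoretic dictionary is set up carefully and semismallness is used to control the exceptional loci, the verification of all three criteria follows, establishing that Condition~\ref{cond:GIT} holds for nonsingular hypertoric varieties.
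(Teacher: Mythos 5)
Your proposal follows essentially the same route as the paper: surjectivity of $L_C$ from the \v{S}penko--Van den Bergh tilting bundle (its summands span $K_0(\Xtheta{})$ and $\det\colon K_0\to\Pic$ is surjective), injectivity from the fact that every wall-crossing morphism contracts a curve, and Konno's wall-crossing analysis for Conditions (2) and (3). The one genuinely different choice is your appeal to Kaledin semismallness to separate small from divisorial walls. The paper does not need this for hypertoric varieties: Konno already describes the unstable locus explicitly as a $\mathbb{P}^r$-bundle over the locus $S$ of points with positive-dimensional stabiliser, with $r\geq 1$, so the dichotomy is read off directly ($r>1$ gives a small contraction with unstable locus of codimension at least two, $r=1$ gives a divisorial boundary wall), together with the observation that the Poisson morphism $\tau_\pm$ must send the unstable locus onto the singular locus of $\Xtheta{_0}$. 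Your semismallness route would also work and mirrors the quiver proof, but it is redundant given the explicit fibre description, and you would still need to identify the image of the unstable locus with the singular locus before semismallness says anything.

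Two points in your argument are asserted rather than established. First, your injectivity step ``every wall-crossing contracts a curve, hence $\ker L_C=0$'' is exactly Corollary~\ref{c:git-stronglyconvex}, which requires $\overline{C}$ to be strongly convex; without that, curve contraction at every wall only gives $\ker(L_C)\subseteq K$, the common lineality space of the GIT cones. You never verify strong convexity; the paper gets it from Lemma~\ref{lem:fixedpoint} because the origin of $\mu^{-1}(0)$ is a $G$-fixed point. Second, the claim that varying $\theta$ into any wall contracts a curve (equivalently, that there are no type $\0$ walls) is not automatic from ``non-generic parameters give singular quotients''; in the paper it is precisely Konno's statement that $r\geq 1$ for the $\mathbb{P}^r$-bundle over $S$ (\cite[Lemma~6.8(3)]{Konno03}) that rules out fake walls. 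Both gaps are easily repaired with the right citations, but as written the injectivity argument is incomplete.
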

 
 In this context, Theorem~\ref{thm:mainGITintro} implies in particular that every projective crepant resolution of a hypertoric variety is itself a hypertoric variety.

  \subsection{Application to threefold quotient singularities}
  The holomorphic symplectic nature of Nakajima quiver varieties and hypertoric varieties plays a role in our proofs of Theorem~\ref{thm:mainquivertwointro} and~\ref{thm:conjecturefortori}.  However, to emphasise that this is not an essential feature when applying Theorem~\ref{thm:mainGITintro}, we also study a class of examples in odd dimension.
  
  Consider any threefold quotient singularity of the form  $\mathbb{A}^3/\Gamma$, where $\Gamma\subset \SL(3,\kk)$ is a finite subgroup for which every nontrivial conjugacy class is \emph{junior} in the sense of Ito and Reid~\cite{ItoReid96} and $\kk$ is an algebraically closed field of characteristic zero. This condition is equivalent to requiring that any projective crepant resolution $f\colon X\to Y:=\mathbb{A}^3/\Gamma$ has all fibres of dimension at most one. One such resolution is provided by $X:= \Gamma\text{-Hilb}(\mathbb{A}^3)$, the fine moduli space of $\Gamma$-clusters in $\mathbb{A}^3$, as in \cite{BKR}, for which there is a natural GIT quotient construction $X\cong \Xtheta{} = V\git_\theta \, G$ for some generic $\theta$. The fact that $X$ contains no proper surfaces allows us to show that Condition~\ref{cond:GIT} holds in this setting, so the conclusions of Theorem~\ref{thm:mainGITintro} hold for the given GIT quotient description of $\Xtheta{}$ (see Theorem~\ref{thm:noseniormain}). Thus, we obtain:
 
 \begin{thm}
 \label{thm:mainintrothreefolds}
 Let $\Gamma\subset \SL(3,\kk)$ be a finite subgroup such that every non-trivial conjugacy class of $\Gamma$ is junior. Then every projective crepant resolution of $\mathbb{A}^3/\Gamma$ is a fine moduli space $\mathcal{M}_\theta$ of $\theta$-stable $\Gamma$-constellations for some generic $\theta\in \Theta$.
 \end{thm}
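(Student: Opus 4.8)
The plan is to realise $\theta$-stable $\Gamma$-constellations as a GIT problem and feed it into Theorem~\ref{thm:mainGITintro}. Recall that the moduli space $\mathcal{M}_\theta$ of $\theta$-stable $\Gamma$-constellations is the GIT quotient $V\git_\theta G$, where $V$ is the representation variety of the McKay quiver cut out by the commutativity relations coming from the potential, and $G=\prod_{i}\GL(v_i)$ acts with $v$ the dimension vector of the regular representation of $\Gamma$; here $Y:=\mathbb{A}^3/\Gamma=V\git_0 G$. I would fix a generic $\theta$ in the GIT chamber $C$ for which $\Xtheta{}:=\mathcal{M}_\theta\cong\Gamma\text{-Hilb}(\mathbb{A}^3)$ is a projective crepant resolution of $Y$. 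First I would record two facts: that $\Xtheta{}$ is nonsingular, hence $\QQ$-factorial; and that the junior hypothesis forces every projective crepant resolution of $Y$ to have all fibres of dimension at most one, so that---since $Y$ is affine and its complete subvarieties therefore lie in fibres of the resolution---no such resolution contains a proper (complete) surface. This is exactly the geometric input that Theorem~\ref{thm:noseniormain} converts into the assertion that $C$ satisfies Condition~\ref{cond:GIT}, which I would invoke directly.

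With Condition~\ref{cond:GIT} verified, Theorem~\ref{thm:mainGITintro} applies to $\Xtheta{}=V\git_\theta G$. Since $\Xtheta{}$ is $\QQ$-factorial, its conclusion gives that every small birational model of $\Xtheta{}$ over $Y$ is a GIT quotient $V\git_\zeta G=\mathcal{M}_\zeta$ with $\zeta$ in the interior of $R_C$, while part~\one\ identifies the chambers of $R_C$ with the Mori chambers of $\Mov(\Xtheta{}/Y)$. In particular, any small birational model that happens to be a smooth crepant resolution corresponds to a top-dimensional chamber, and is therefore of the form $\mathcal{M}_\zeta$ for generic $\zeta\in\Theta$. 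It thus remains only to show that an arbitrary projective crepant resolution $f'\colon X'\to Y$ occurs as a small birational model of $\Xtheta{}$.

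This reduction is the heart of the present argument. Both $f'\colon X'\to Y$ and $f\colon \Xtheta{}\to Y$ are projective and crepant, so $K_{X'}$ and $K_{\Xtheta{}}$ are the respective pullbacks of $K_Y$ and are $f$-trivial; hence $X'$ and $\Xtheta{}$ are relative minimal models of $Y$ sharing the same pulled-back canonical class. By the negativity lemma---the standard comparison of relative minimal models, which does \emph{not} rely on \cite{BCHM}---the induced birational map $\Xtheta{}\dashrightarrow X'$ over $Y$ contracts no divisor, i.e.\ it is an isomorphism in codimension one, so $X'$ is a small birational model of $\Xtheta{}$. As $X'$ is smooth it sits in a top-dimensional Mori chamber, and the previous paragraph yields $X'\cong V\git_\zeta G=\mathcal{M}_\zeta$ for some generic $\zeta\in\Theta$, as required. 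I expect the genuine difficulty to be concentrated in the input Theorem~\ref{thm:noseniormain}, namely the wall-crossing analysis showing that the absence of proper surfaces forces every interior wall of $R_C$ to induce a flop and every boundary wall to induce a divisorial contraction, exactly as Condition~\ref{cond:GIT} demands; granting that input, the codimension-one comparison of crepant resolutions above is the only remaining subtlety, and it follows from standard relative minimal model theory together with the junior hypothesis that keeps all such resolutions inside $\Mov(\Xtheta{}/Y)$.
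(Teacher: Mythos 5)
Your proposal is correct and follows essentially the same route as the paper: verify Condition~\ref{cond:GIT} via Theorem~\ref{thm:noseniormain} (whose wall-crossing analysis, as you note, is where the real work lies), and then reduce an arbitrary projective crepant resolution $X'\to Y$ to a small birational model of $\mathcal{M}_\theta$ by the standard comparison of relative minimal models, which the paper carries out via \cite[Theorem~3.52]{KollarMori} in the subsection on minimal models of Gorenstein singularities and Corollary~\ref{c:partial-cr}. Your appeal to the negativity lemma is the same standard fact, and your observation that smoothness of $X'$ places it in a top-dimensional chamber, hence at a generic $\zeta$, matches the paper's conclusion.
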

 
  Our direct and simple geometric proof of this result bypasses the algebraic approach via mutation from Nolla de Celis and Sekiya~\cite[Corollaries~1.3 and 1.5]{NollaSekiya17} that was pioneered by Wemyss and later generalised in his beautiful paper~\cite[Theorem~6.2]{Wemyss18}. Our Theorem~\ref{thm:mainGITintro}\one\ also provides a direct GIT description of the relative movable cone $\Mov(X/Y)$ in this setting.

 Very recently, Yamagishi~\cite[Theorem~1.1]{YamagishiII} announced that the conclusion of Theorem~\ref{thm:mainintrothreefolds} holds for any finite subgroup $\Gamma$ of $\SL(3,\kk)$. While the scope of our Theorem~\ref{thm:mainintrothreefolds} is much more limited, our approach is elementary: we show that there are no GIT walls of `type $\0$', and also, we do not require the deep results from \cite{BCHM}. Put simply, those $\Gamma$ for which every nontrivial conjugacy class is junior provide an especially simple family of examples that is amenable to our geometric approach.

\subsection{Optimality of Condition~\ref{cond:GIT}}
In the course of proving that our three main classes of examples satisfy Condition~\ref{cond:GIT}, we find that Condition~\ref{cond:GIT}(1) implies the other two conditions in those cases.  However, Examples \ref{exa:toricQuiverGIT} and \ref{exa:threefoldflop} show that this is not true in general, and indeed, for those two examples the conclusions of Theorem~\ref{thm:mainGITintro} do not hold.

The cases of Nakajima quiver varieties and hypertoric varieties are both Hamiltonian reductions associated to a representation of a reductive group.  In this  situation, the conclusion of Theorem~\ref{thm:mainGITintro} was anticipated (without proof) in \cite[Remark 2.21]{BPWAst}, assuming only Condition~\ref{cond:GIT}.(1). This omission would be explained if, in this setting, the first part of Condition~\ref{cond:GIT} implies the others: 

\begin{conjecture}
\label{conj:hamiltonianreduction}
Let $W$ be a linear representation of a complex reductive algebraic group $G$ and let $\mu\colon T^*W \to \mathfrak{g} := \Lie G$ be the associated moment map.  
Let $V := \mu^{-1}(0)$.  If, for some generic $\theta \in G^\vee_{\QQ}$, the VGIT morphism $X_\theta \to X_0$ is a crepant resolution, and the associated linearisation map $L_C$ is an isomorphism, then Condition~\ref{cond:GIT} holds.
\end{conjecture}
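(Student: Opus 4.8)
The plan is to derive the two wall-crossing criteria of Condition~\ref{cond:GIT} from the holomorphic symplectic structure, using the isomorphism of $L_C$ only to match the GIT fan with the geometry. First I would record the symplectic input. Writing $W$ for the representation (so that $V=\mu^{-1}(0)$ and $Y=X_0=V\git_0 G$), the symplectic form on $T^\ast W$ restricts to $V$ and descends to each GIT quotient $V\git_\zeta G$, so $X_0$ is a symplectic singularity in the sense of Beauville and, by hypothesis, $X_\theta\to X_0$ is a symplectic resolution. Every VGIT morphism $\tau\colon X_\theta\to X_{\theta_0}$ is then a projective, crepant, birational contraction of a symplectic resolution, hence semismall by Kaledin~\cite{Kaledinsympsingularities}. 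Since $X_0$ carries the contracting $\mathbb{C}^\times$-action that scales the cotangent fibres, it is a conical symplectic singularity and Namikawa's theory~\cite{NamikawaMDS} applies.

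Second, I would set up the dichotomy that distinguishes the two wall-crossing behaviours purely in terms of symplectic leaves. For a crepant contraction $\tau$ no divisor can be contracted onto a divisor, so a contracted prime divisor $D\subset X_\theta$ must dominate a stratum $S\subset X_{\theta_0}$ of codimension at least two; semismallness gives $2(\dim D-\dim S)\le \codim S$, and since $\dim D=\dim X_0-1$ this forces $\codim S\le 2$. As symplectic leaves have even codimension, $\tau$ contracts a divisor if and only if it contracts onto a codimension-two leaf, and is otherwise small (a flop). Thus Condition~\ref{cond:GIT}(2) and (3) reduce to the assertion that crossing an interior wall of $R_C$ contracts only loci lying over leaves of codimension at least four, while crossing into a boundary wall contracts onto a codimension-two leaf.

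Third, I would transport the GIT data to $\Pic(X_\theta/Y)_\QQ$ along $L_C$ and invoke Namikawa's description of the movable cone of a conical symplectic singularity: $\Mov(X_\theta/Y)$ is a fundamental domain for the Namikawa--Weyl group, its interior walls are the flopping walls separating the finitely many symplectic resolutions, and its boundary comprises the divisorial walls attached to the codimension-two leaves. Since $L_C$ is a linear isomorphism carrying $\overline{C}$ into the relative nef cone of $X_\theta$, it identifies the GIT chambers meeting $R_C$ with a fan refining this Mori chamber structure; combined with the leaf dichotomy of the previous step, this would yield Condition~\ref{cond:GIT}(2) and (3) as soon as one knows the refinement is trivial, i.e.\ that the two fans coincide.

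The main obstacle is precisely this last point: ruling out interior GIT walls of \emph{type} $\0$, namely walls $\theta_0$ across which $\tau\colon X_\theta\to X_{\theta_0}$ is an isomorphism and which therefore subdivide a single Namikawa chamber. Example~\ref{exa:toricQuiverGIT} shows that injectivity of $L_C$ does \emph{not} forbid such walls in general, so any proof must use the symplectic hypothesis essentially: one must show that at every GIT wall in $R_C$ the set of closed semistable $G$-orbits genuinely changes, and that the resulting properly semistable locus in $V=\mu^{-1}(0)$, being cut out by the moment-map equations, is contracted to a positive-dimensional symplectic stratum, so that $\tau$ is never an isomorphism. This is the analogue, for general Hamiltonian reductions, of the role played by the condition $\alpha\in\Sigma_0$ in the quiver case, where it guarantees that $X_{\theta_0}$ is singular for every non-generic $\theta_0$; I expect controlling the dimension and symplectic codimension of these contracted strata to be the crux of the argument.
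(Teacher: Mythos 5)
This statement is Conjecture~\ref{conj:hamiltonianreduction}: the paper offers no proof of it, and explicitly presents it as open, so there is no argument of the authors to compare yours against. What can be said is that your outline reproduces the strategy the authors use to verify Condition~\ref{cond:GIT} in their two Hamiltonian special cases (Theorem~\ref{thm:mainquiverVars} for quiver varieties and Theorem~\ref{thm:hypertoric} for hypertoric varieties): semismallness of the contraction via Kaledin, the even codimension of symplectic leaves, the dichotomy ``codimension-two leaf $\Rightarrow$ divisorial, codimension $\ge 4$ $\Rightarrow$ small'', and the identification of $\Mov(X_\theta/Y)$ with a Namikawa--Weyl fundamental domain. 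That part of your plan is sound and is exactly how one would expect a proof to go.

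The genuine gap is the one you name yourself in your last paragraph, and naming it is not the same as closing it. To get Condition~\ref{cond:GIT}(2) you must show, for every interior wall of $R_C$, that both VGIT morphisms $\tau_\pm$ contract at least one curve \emph{and} that the unstable loci $\Uns(\tau_\pm)$ (not merely the exceptional loci, which can be strictly smaller by Remark~\ref{rem:exceplocus}) have codimension at least two. In the quiver case the authors achieve this only under the hypothesis $\alpha\in\Sigma_0$, which via Lemma~\ref{lem:contractscurve} forces $\mathfrak{M}_{\theta_0}(\bv,\bw)$ to be singular for every non-generic $\theta_0$ and identifies $\Uns(\tau)$ with $\tau^{-1}(\mathrm{Sing})$, after which semismallness gives the codimension bound; in the hypertoric case they import Konno's explicit description of $\tau_\pm$ as a $\mathbb{P}^r$-bundle with $r\ge 1$ over the strictly semistable stratum. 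For a general Hamiltonian reduction you have no substitute for either input: nothing in your sketch shows that crossing a GIT wall changes the set of closed semistable orbits, that the image of the unstable locus lies in the singular locus, or even that $X_{\theta_0}$ is singular (equivalently, that there are no type~$\0$ walls). Since Example~\ref{exa:toricQuiverGIT} shows that injectivity of $L_C$ alone does not exclude such walls, this is precisely the point at which the symplectic hypothesis must do work that no one currently knows how to make it do. There are also smaller unaddressed points earlier in your sketch: that $\mu^{-1}(0)$ is reduced and irreducible and that the reduced quotients are genuinely symplectic singularities is automatic for quivers by Crawley-Boevey's results but requires justification for arbitrary $(G,V)$ (flatness of $\mu$, normality of the quotients). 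As it stands your text is a correct road map to the conjecture, not a proof of it.
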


A proof of this conjecture would establish that Hamiltonian reductions bypass the more subtle and surprising VGIT phenomena exhibited by Examples \ref{exa:toricQuiverGIT} and \ref{exa:threefoldflop}.
More precisely, we anticipate that parts (2) and (3) of Corollary~\ref{c:useful-criterion} hold in the case of the Hamiltonian reductions appearing in Conjecture~\ref{conj:hamiltonianreduction}.

\medskip

\noindent \textbf{Notation.\ } 
Let $\kk$ be an algebraically closed field of characteristic zero. Throughout the paper, a variety is an integral separated scheme of finite type over $\kk$. 

\medskip

\noindent \textbf{Acknowledgements.\ } 
We would like to thank the organisers of the `Facets of Noncommutative Geometry' conference, held at the University of Illinois Urbana-Champaign in June 2022, for the opportunity to present a talk on this work in honour of Tom Nevins' memory. Tom had a great impact on all of us, mathematically and non-mathematically, and in particular on themes related to this article. We would also like to thank the referee for valuable comments and suggestions. The first two authors were partially supported by Research Project Grant RPG-2021-149 from the Leverhulme Trust. The first author was also partially supported by EPSRC grant EP-W013053-1.

\section{Background}

\subsection{Birational geometry}
 Consider a projective morphism $f\colon X\to Y$ of normal varieties over $\kk$, where $Y$ is affine. The relative Picard group is  $\Pic(X/Y):=\Pic(X)/f^*\Pic(Y)$, and we set $\Pic(X/Y)_{\QQ}:=\Pic(X/Y)\otimes_{\ZZ} \QQ$.
 A line bundle $L\in \Pic(X/Y)_{\QQ}$ is \emph{nef} (over $Y$) if $\deg L\vert_\ell\geq 0$ for every proper curve $\ell$ in $X$, and it is \emph{semiample} (over $Y$) if $L^{m}$ is basepoint-free for some $m\geq 1$. The stable base locus of $L$ is defined to be the intersection of the base loci of the linear series $\vert L^{m}\vert$ for all $m\geq 1$, and we say that $L$ is \emph{movable} if its stable base locus is of codimension at least two in $X$. Every semiample line bundle
 is nef, but the converse is not true in general. 
 
 The \emph{nef cone} of $X$ over $Y$ is the closed convex cone $\Nef(X/Y)$ in $\Pic(X/Y)_{\QQ}$ generated by line bundles on $X$ that are nef over $Y$. The relative version of Kleiman's ampleness criterion \cite[IV,~\S4]{Kleiman66} implies that the relative \emph{ample cone} $\Amp(X/Y)$ is the interior of $\Nef(X/Y)$. The \emph{movable cone} $\Mov(X/Y)$ is the closed convex cone in $\Pic(X/Y)_{\QQ}$
 obtained as the closure of the cone generated by all movable divisor classes. Note that the nef cone is contained in the movable cone. 
 
 Let $\tau\colon X\to X_0$ be a projective, surjective morphism over $Y$ satisfying $\tau_*(\mathcal{O}_X)=\mathcal{O}_{X_0}$, so $\tau$ has connected fibres.  We say that $\tau$ is of \emph{fibre type} if $\dim X_0<\dim X$. Otherwise, $\tau$ is birational, and there are two cases: either the exceptional locus of $\tau$, denoted $\Exc(\tau)$, contains a divisor, in which case $\tau$ is a \emph{divisorial contraction}; or it does not, in which case $\tau$ is a \emph{small contraction}. 
   In the latter case, let $L$ be a line bundle on $X$ such that $L^{-1}$ is $\tau$-ample. The \emph{flip of $\tau$ with respect to $L$} is a commutative diagram
   \begin{equation}
\label{eqn:psiXprime}
\xymatrix{
 X \ar[rd]_{\tau}\ar@{-->}[rr]^{\psi} & & X^\prime \ar[ld]^{\tau^\prime} \\
  & X_0 & 
 }
\end{equation}
 where $\tau^\prime$ is a small contraction, $\psi$ is an isomorphism in codimension one, and the strict transform of $L$ along $\psi$ is $\tau^\prime$-ample. If, in addition, the canonical class $K_X$ satisfies $K_X\cdot \ell=0$ for each curve $\ell$ contracted by $\tau$, then \eqref{eqn:psiXprime} is the \emph{flop} of the curve class $\ell$ \cite[Definition~6.10]{KollarMori}.
 
 Let $L\in \Pic(X/Y)_{\QQ}$ be such that the section ring
 \[
 R(X,L):=\bigoplus_{m\geq 0} f_* L^{m}
 \]
 is a finitely generated $\mathcal{O}_Y$-algebra. Then $X(L):= \Proj_Y R(X,L)$ fits into a commutative diagram
   \begin{equation}
\label{eqn:psiDXD}
\xymatrix{
 X \ar[rd]_{f}\ar@{-->}[rr]^{\psi_L} & & X(L) \ar[ld]^{f_L} \\
  & Y & 
 }
\end{equation}
 where $\psi_L$ is regular on the complement of the stable base locus of $L$ in $X$. We do not assume in general that $f$ is birational, nor do we assume that $L$ is \emph{big}, i.e.\ $\psi_L$ need not be birational either. However, if $L$ is movable, then the rational map $\psi_L$ is an isomorphism in codimension one. If $L$ is movable and $X(L)$ is $\QQ$-factorial, then we call $X(L)$ a \emph{$\QQ$-factorial small birational model}
 of $X$ over $Y$. When $X$ and $X(L)$ are $\QQ$-factorial, we identify 
 $\Pic(X(L)/Y)_{\QQ}$ with $\Pic(X/Y)_{\QQ}$ by taking strict transform along the birational map $\psi_L$; this in turn identifies $\Mov(X(L)/Y)$ with $\Mov(X/Y)$. Let $\psi_L^*\Amp(X(L)/Y)$ and $\psi_L^*\Nef(X(L)/Y)$ denote the cones in $\Pic(X/Y)_\QQ$ obtained by taking the strict transform along $\psi_L$ of all classes on $X(L)$ that are relatively ample and nef respectively.  
 
   Given $L, L^\prime\in \Pic(X/Y)_{\QQ}$ with finitely generated section rings, we say that $L$ is \emph{Mori equivalent} to $L^\prime$ if there is an isomorphism $\varphi\colon X(L)\to X(L^\prime)$ such that the rational maps $\psi_L, \psi_{L^\prime}$ satisfy $\varphi\circ \psi_L = \psi_{L^\prime}$. 
  A \emph{Mori chamber} is a Mori equivalence class whose interior is open in $\Pic(X/Y)_{\QQ}$. These chambers are typically studied under the additional assumption that $\Pic(X/Y)_\QQ$ is isomorphic to the N\'{e}ron--Severi space $N^1(X/Y):=\Pic(X/Y)_\Q/\!\equiv$ of \emph{numerical equivalence} classes, where $L\equiv L^\prime$ if and only if $\deg(L\vert_\ell) = \deg(L^{\prime}\vert_\ell)$ for every proper curve $\ell$ in $X$. 
  
  To see how the isomorphism $\Pic(X/Y)_\QQ\cong N^1(X/Y)$ arises in the case of interest to us,  recall first the following fundamental and well-known result.
 
 \begin{prop}
  If $L\in \Pic(X/Y)$ is semi-ample over $Y$, then it is nef over $Y$. Moreover, the section ring $R(X,L)$ is a finitely generated $\mathcal{O}_Y$-algebra, and the morphism from $X$ to $\Proj_Y R(X,L)$ determined by any power of $L$ contracts a proper curve $\ell$ in $X$ if and only if $L\cdot \ell = 0$.
 \end{prop}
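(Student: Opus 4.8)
The plan is to establish the three assertions in turn, the common thread being that a relatively semi-ample line bundle is the pullback of a relatively ample one along a morphism, after which everything follows from the projection formula and ampleness.

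\smallskip

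\emph{Semi-ample implies nef.} By hypothesis $L^m$ is basepoint-free over $Y$ for some $m\geq 1$, i.e.\ the adjunction map $f^*f_*L^m\to L^m$ is surjective. This surjection determines a morphism $X\to \mathbb{P}(f_*L^m)$ over $Y$ pulling $\mathcal{O}(1)$ back to $L^m$. Taking its Stein factorisation, I obtain a morphism $g\colon X\to Z$ over $Y$ with $g_*\mathcal{O}_X=\mathcal{O}_Z$, with $Z$ normal and projective over $Y$, and with $L^m=g^*A$ for an ample line bundle $A$ on $Z$ over $Y$ (ampleness is preserved under restriction to the image and pullback along the finite part of the factorisation). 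For any proper curve $\ell\subset X$ the projection formula yields $m\,(L\cdot \ell)=\deg(g^*A\vert_\ell)=(A\cdot g_*\ell)\geq 0$ since $A$ is nef, so $L$ is nef over $Y$.

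\smallskip

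\emph{Finite generation.} Using $f=f_Z\circ g$, the projection formula, and $g_*\mathcal{O}_X=\mathcal{O}_Z$, I compute $f_*L^{mk}=(f_Z)_*\bigl(g_*g^*A^k\bigr)=(f_Z)_*A^k$ for all $k\geq 0$, where $f_Z\colon Z\to Y$. Hence the Veronese subalgebra $\bigoplus_{k\geq 0}f_*L^{mk}$ equals $R(Z,A)$, which is a finitely generated $\mathcal{O}_Y$-algebra because $A$ is ample and $Y$ is affine (indeed $Z=\Proj_Y R(Z,A)$). To recover the full ring I write $R(X,L)=\bigoplus_{i=0}^{m-1}M_i$ with $M_i=\bigoplus_{k\geq 0}f_*L^{mk+i}=\bigoplus_{k\geq 0}(f_Z)_*\bigl(A^k\otimes g_*L^i\bigr)$; applying the graded-module form of Serre's theorem to the ample bundle $A$ and the coherent sheaf $g_*L^i$ on $Z$ shows that each $M_i$ is a finitely generated graded module over $R(Z,A)$. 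Thus $R(X,L)$ is finite over the finitely generated $\mathcal{O}_Y$-algebra $R(Z,A)$, hence is itself a finitely generated $\mathcal{O}_Y$-algebra.

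\smallskip

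\emph{Contraction criterion.} Since $\Proj_Y$ of a graded algebra agrees with that of any Veronese subalgebra, $\Proj_Y R(X,L)=\Proj_Y R(Z,A)=Z$, and the morphism determined by a power of $L$ is precisely $g$. For a proper curve $\ell\subset X$ the projection formula again gives $m\,(L\cdot \ell)=(A\cdot g_*\ell)$. If $g$ contracts $\ell$ to a point then $g_*\ell=0$, so $L\cdot\ell=0$; conversely, if $g$ does not contract $\ell$ then $g(\ell)$ is a curve and ampleness of $A$ forces $(A\cdot g_*\ell)>0$, whence $L\cdot\ell\neq 0$. This gives the asserted equivalence, and in particular shows the morphism is independent of the chosen power.

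\smallskip

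The one step requiring genuine care is the bootstrap in the finite-generation argument: passing from finite generation of the Veronese subalgebra $R(X,L)^{(m)}=R(Z,A)$ to that of the full section ring $R(X,L)$, which rests on the module-finiteness of each $M_i$ over $R(Z,A)$ rather than on any naive claim that a ring is finitely generated as soon as some Veronese subalgebra is. The remaining inputs—Stein factorisation, preservation of ampleness, and the projection formula—are routine.
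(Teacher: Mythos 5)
Your proof is correct and follows essentially the same route as the paper's: both use the morphism induced by a basepoint-free power of $L$ together with the pushforward of curve classes to get nefness and the contraction criterion. The only difference is that where the paper simply cites the relative version of Zariski's theorem for finite generation of $R(X,L)$, you prove it directly via the Veronese subalgebra $R(Z,A)$ and module-finiteness of the pieces $M_i$ — a correct and self-contained substitute for that citation.
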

 \begin{proof}
Suppose that $L^m\in \Pic(X/Y)$ is a basepoint-free line bundle over $Y$. The induced morphism $h\colon X\to \vert L^m\vert\cong \mathbb{P}_Y^N$ satisfies $h^*(\mathcal{O}(1))\cong L^m$. For a proper curve $\ell $ in $X$, we have
  \[
  L\cdot \ell:=  \frac{1}{m} \deg\big(h^*(\mathcal{O}(1))\vert_\ell\big) = \frac{1}{m}\deg\big(\mathcal{O}(1)\vert_{h_*[\ell]}\big)
  \]
  where $h_*[\ell]$ is the pushforward of the curve class of $\ell$. Thus, $L$ is nef over $Y$, and   $\ell$ is contracted by $h$ if and only if $L\cdot \ell=0$. Finite generation of $R(X,L)$ is the relative version of a theorem of Zariski (see \cite[Lemma~6.11]{Ohta20}),
  and the image of $h$ is $\Proj_Y R(X,L)$.
  \end{proof}

  \begin{cor}
  \label{cor:PicN1}
If each $L\in \Pic(X/Y)$ that is nef over $Y$ is actually semiample over $Y$, then there is an isomorphism $\Pic(X/Y)_\QQ \cong N^1(X/Y)$.
  \end{cor}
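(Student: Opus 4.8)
The plan is to show that the natural surjection $\Pic(X/Y)_\QQ \twoheadrightarrow N^1(X/Y)$, which exists by the very definition of $N^1(X/Y)$ as the quotient by numerical equivalence, has trivial kernel; equivalently, that any $L\in \Pic(X/Y)$ with $L\cdot \ell = 0$ for every proper curve $\ell$ in $X$ is torsion in $\Pic(X/Y)$. Since such an $L$ satisfies $\deg L\vert_\ell = 0\geq 0$ for all $\ell$, it is nef over $Y$, so by hypothesis it is semiample over $Y$. Replacing $L$ by a positive power (which only rescales its class in $\Pic(X/Y)_\QQ$), we may assume $L$ is basepoint-free over $Y$, and we let $h\colon X\to X(L)=\Proj_Y R(X,L)$ be the induced morphism over $Y$, so that $L = h^*A$ for the relatively ample bundle $A:=\mathcal{O}_{X(L)}(1)$.

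First I would record that $h$ contracts every proper curve in $X$: by the preceding Proposition the morphism determined by a power of $L$ contracts a proper curve $\ell$ precisely when $L\cdot \ell = 0$, and by assumption this holds for all $\ell$. Next I would deduce that the structure morphism $\pi\colon X(L)\to Y$ is finite. If it were not, then some fibre would be positive-dimensional and would therefore contain a proper curve $C$; since $A$ is relatively ample, $A\cdot C>0$. As $h$ is proper (a morphism over $Y$ from the $Y$-projective $X$ to the separated $X(L)$), the preimage $h^{-1}(C)$ is proper and surjects onto $C$, hence contains a proper curve $\tilde{C}$ with $h(\tilde{C})=C$; then $L\cdot \tilde{C} = A\cdot h_*\tilde{C} = (\deg h\vert_{\tilde{C}})\,(A\cdot C) > 0$, contradicting $L\cdot \tilde{C}=0$. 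Thus $\pi$ is quasi-finite and proper, hence finite, and in particular $X(L)$ is affine.

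Finally, to conclude I would use that $f_*\mathcal{O}_X = \mathcal{O}_Y$ — which we may assume after passing to the Stein factorisation of $f$, and which holds automatically in the birational case of interest since $Y$ is normal. The chain of inclusions $\mathcal{O}_Y\subseteq \pi_*\mathcal{O}_{X(L)}\subseteq \pi_*h_*\mathcal{O}_X = f_*\mathcal{O}_X = \mathcal{O}_Y$, in which the middle inclusion holds because $h$ is dominant with reduced target, forces $\pi$ to be an isomorphism. Hence $L = h^*A = f^*A$ lies in $f^*\Pic(Y)$, so the original class vanishes in $\Pic(X/Y)_\QQ$, as required. I expect the main obstacle to be the finiteness step: one must argue carefully that $h$ is proper and that curves in $X(L)$ lift to proper curves in $X$, in order to play the relative ampleness of $A$ against the numerical triviality of $L$. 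Once $\pi$ is finite, the identification $X(L)\cong Y$ is a formal consequence of $f_*\mathcal{O}_X = \mathcal{O}_Y$, and the vanishing of the class follows immediately.
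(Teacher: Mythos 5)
Your proof is correct, and it is genuinely more than what the paper does: the paper's entire proof of Corollary~\ref{cor:PicN1} is the single sentence that the quotient map $\Pic(X/Y)_\QQ \to N^1(X/Y)$ is injective, with a citation to \cite[Proposition~3.2]{Ohta20}. Injectivity is exactly what you establish, and your chain of reductions --- numerically trivial $\Rightarrow$ nef $\Rightarrow$ semiample (by hypothesis) $\Rightarrow$ the model $X(L)=\Proj_Y R(X,L)$ is finite over $Y$ $\Rightarrow$ $X(L)\cong Y$, so a power of $L$ lies in $f^*\Pic(Y)$ --- is sound. The finiteness step, lifting a hypothetical fibral curve of $\pi\colon X(L)\to Y$ to a proper curve of $X$ via properness of $h$ and then playing the relative ampleness of $A$ against $L\cdot\tilde{C}=0$ through the projection formula, is exactly the right mechanism and uses nothing beyond the Proposition stated just above the Corollary. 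The one point where you are slightly glib is the hypothesis $f_*\mathcal{O}_X=\mathcal{O}_Y$. It is genuinely needed: for instance, if $X=E\setminus\{p\}$ is a punctured elliptic curve mapping finitely onto $Y=\mathbb{A}^1$, then $X$ has no proper curves, every line bundle is nef and semiample over $Y$, yet $\Pic(X/Y)_\QQ\cong\Pic^0(E)\otimes\QQ\neq 0$ while $N^1(X/Y)=0$. Moreover ``passing to the Stein factorisation'' does not literally reduce the general statement to this case, since replacing $Y$ by $\Spec_Y f_*\mathcal{O}_X$ enlarges $f^*\Pic(Y)$ and hence shrinks $\Pic(X/Y)$; the correct reading is that $f_*\mathcal{O}_X=\mathcal{O}_Y$ is a standing hypothesis on the pair $(X,Y)$, which indeed holds in every application in the paper because $f$ is always replaced by its Stein factorisation before the Corollary is invoked (and automatically when $f$ is birational with $Y$ normal). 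With that understanding, your argument is a complete and self-contained replacement for the external citation.
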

 \begin{proof}
  The quotient map $\Pic(X/Y)_\QQ \to N^1(X/Y)$ is injective (see \cite[Proposition~3.2]{Ohta20}). 
  \end{proof}
 
\subsection{GIT quotients and the linearisation map} \label{ss:GIT-quotients}

Let $G$ be a reductive algebraic group acting on an affine variety $V$ with coordinate ring $\kk[V]$. Let $G^\vee$ denote the character group of $G$. For $\theta\in G^\vee$, we say that $f\in \kk[V]$ is \emph{$\theta$-semi-invariant} if $f(g.v) = \theta(g)f(v)$ for all $v\in V$ and $g\in G$, and we write $\kk[V]_\theta$ for the space of $\theta$-semi-invariant functions. A point $v\in V$ is \emph{$\theta$-semistable} if there exists $j>0$ and $f\in \kk[V]_{j\theta}$ such that $f(v)\neq 0$. The \emph{$\theta$-semistable locus} $V^\theta\subseteq V$ is the $G$-invariant, open subset of $\theta$-semistable points. A point $v\in V^\theta$ is \emph{$\theta$-stable} if the stabiliser $G_v$ is finite and the orbit $G\cdot v$ is closed in $V^\theta$. 
A character $\theta\in G^\vee$ is \emph{effective} if $V^\theta$ is non-empty, and an effective character $\theta$ is \emph{generic} if every $\theta$-semistable point of $V$ is $\theta$-stable. The $\theta$-semistable locus is unchanged if we replace $\theta$ by a positive multiple, so the definitions extend to any fractional character $\theta\in G^\vee_{\QQ}:= G^\vee\otimes_{\ZZ} \QQ$.

 For any effective $\theta\in G^\vee_{\QQ}$, the GIT quotient
\[
\Xtheta{}:= V\git_\theta \, G:= \Proj \Big(\bigoplus_{j\geq 0} \kk[V]_{j\theta}\Big)
\]
 is the categorical quotient of the $\theta$-semistable locus $V^\theta$ by the action of $G$. Note that  $\Xtheta{}$ is projective over the affine quotient
 \[
Y:= V\git_0 \, G = \Spec \kk[V]^G.
\]
If $\theta$ is generic, then $\Xtheta{}$ is the geometric quotient of $V^\theta$ by $G$.

 The set of effective fractional characters is a closed, convex cone in $G^\vee_{\QQ}$ that admits a wall-and-chamber structure as follows. Fractional characters $\theta, \theta'\in G^\vee_{\QQ}$ are \emph{GIT-equivalent} if $V^{\theta}=V^{\theta'}$.  The GIT-equivalence classes form the relative interiors of a finite collection of rational polyhedral cones in $G^\vee_{\QQ}$, and the collection of all such cones, called \emph{GIT cones}, forms a fan, called the \emph{GIT fan}, whose support is the convex cone of effective fractional characters in $G^\vee_{\QQ}$. The set of generic stability parameters $\theta$ in $G^\vee_\QQ$ decomposes as the union of (GIT) \emph{chambers}, each of which is the interior of a top-dimensional cone in the GIT fan. As shown by Ressayre~\cite{RessayreThickWall}, it can happen that the interior of a top-dimensional GIT cone is not a chamber. However, in this paper we work only with stability parameters $\theta$ lying in the closure of the union of all GIT chambers, and we reserve the phrase \emph{GIT wall} for any codimension-one face of the closure $\overline{C}$ of some GIT chamber $C$.
 The characterisation of the GIT fan via GIT-equivalence was established by Ressayre~\cite{Ressayre} (see Halic~\cite{Halic} for affine $V$), building on the earlier work of Dolgachev and Hu~\cite{DolgachevHu98}, and Thaddeus~\cite{Thaddeus96}. Those papers assume that $V$ is normal, but in fact, GIT-equivalence is unaffected by passing to the normalisation of $V$; explicitly, if $\nu\colon \widetilde{V}\to V$ is the normalisation, then ${\widetilde{V}}^\theta = \nu^{-1}(V^\theta)$ for any $\theta\in G^\vee_{\QQ}$.

 Let $C$ be a GIT chamber and fix $\theta\in C$, so $\theta$ is generic. For $\chi\in G^\vee$, consider the $G$-equivariant line bundle $\chi\otimes \mathcal{O}_{V^\theta}$ on the $\theta$-stable locus in $V$ given by equipping the trivial line bundle with the action of $G$ on each fibre given by $\chi$; explicitly, the action of $G$ on $V^\theta$ lifts to the action on $V^\theta\times \mathbb{A}^1$ such that the dual action on functions is $g\cdot (f,t) = (g\cdot f, \chi^{-1}(g) t)$. It follows that the space of sections is isomorphic to the space $\kk[V^\theta]_\chi$ of $\chi$-semi-invariant functions on $V^\theta$. By descent \cite{Nevins08}, $\chi\otimes \mathcal{O}_{V^\theta}$ descends to a line bundle on $\Xtheta{}$ if the stabiliser of each $x \in V^{\theta}$ is in the kernel of $\chi$. Since all stabilisers are finite, and there are only finitely many conjugacy classes of such stabilisers by \cite[Corollaire~3]{Luna}, there is some multiple $j \chi\in G^\vee$ of $\chi$ that descends to a line bundle on $X$ that we denote $L_{j\chi}$. We define $L_{\chi}:= \frac{1}{j}L_{j \chi}\in \Pic(\Xtheta{}/Y)_{\QQ}$.  
   
 \begin{defn}
  Let $C$ be a GIT chamber. For $\theta\in C$ and $\Xtheta{}=V\git_\theta \, G$, the \emph{linearisation map} for $C$ is the $\mathbb{Q}$-linear map 
 \begin{equation}
     \label{eqn:ellC}
 L_C\colon G^\vee_{\mathbb{Q}} \longrightarrow \Pic(\Xtheta{}/Y)_{\QQ} 
 \end{equation}
 determined by setting $L_C(\chi):= L_\chi$ for all $\chi\in G^\vee$.
  \end{defn}
  
\subsection{Variation of GIT quotient}
Let $C$ be a GIT chamber and let $\theta\in C$. In addition, let $\theta_0$ be a general point in any face of the closure $\overline{C}$. The $G$-equivariant inclusion of the $\theta$-stable locus into the $\theta_0$-semistable locus of $V$ fits into a commutative diagram of varieties
\begin{equation}
     \label{eqn:VGITdiagram}
 \begin{tikzcd}
 V^{\theta} \ar[r,hook]\ar[d,"\pi"] & V^{\theta_0}\ar[d,"\pi_0"] \\
 \Xtheta{} \ar[r,"\tau"] & \Xtheta{_0}
 \end{tikzcd}
  \end{equation}
 where $\pi_0$ is a good categorical quotient, $\pi$ is a geometric quotient and $\tau$ is a projective morphism; the morphism $\tau$ is said to be induced by \emph{variation of GIT quotient} (VGIT). The $G$-equivariant line bundle $\theta_0\otimes \mathcal{O}$ on $V^{\theta_0}$ descends to the polarising ample bundle $\mathcal{O}(1)$ on $\Xtheta{_0}$, and its restriction to $V^\theta$ descends to the line bundle $L_C(\theta_0)$ on $\Xtheta{}$. Commutativity of diagram \eqref{eqn:VGITdiagram} gives
 \begin{equation}
     \label{eqn:O1pullback}
     L_C(\theta_0) = \tau^*\big(\mathcal{O}(1)\big).
     \end{equation}

 Let $C_-$ and $C_+$ be adjacent GIT chambers separated by a wall. Let $\theta_-\in C_-$, $\theta_+\in C_+$ and let $\theta_0$ be a general point in the wall $\overline{C_-}\cap \overline{C_+}$. The morphisms $\tau_-\colon \Xtheta{_-}\rightarrow \Xtheta{_0}$ and $\tau_+\colon \Xtheta{_+}\rightarrow \Xtheta{_0}$ 
 obtained by VGIT as in \eqref{eqn:VGITdiagram} fit into a commutative diagram
 \begin{equation}
\begin{tikzcd}
\label{eqn:flop}
 \Xtheta{_-} \ar[rr,"\psi",dashed] \ar[dr,"\tau_-"'] & & \Xtheta{_+} \ar[dl,"\tau_+"] \\
& \Xtheta{_0} & 
\end{tikzcd}
\end{equation}
 of varieties over $Y=V\git_0 \, G$. Let us assume that $\Xtheta{_\pm}$ are normal. 
 
 \begin{lem}
 \label{lem:theta0stable}
 \begin{enumerate}
     \item[\one] The $\theta_0$-stable locus in $V$ is the intersection $V^{\theta_0\mathrm{-st}}:=V^{\theta_+}\cap V^{\theta_-}$.
     \item[\two] Each map in diagram \eqref{eqn:flop} is an isomorphism over the subset $V^{\theta_0\mathrm{-st}}/G\subseteq \Xtheta{_0}$.
     \item[\three] The subset $\tau_-^{-1}\big(\pi_0(V^{\theta_0}\setminus V^{\theta_0\mathrm{-st}})\big)$ is Zariski-closed in $\Xtheta{_-}$ (the same with $+$ replacing $-$).
 \end{enumerate}
 \end{lem}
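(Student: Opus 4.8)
The plan is to isolate part \one\ as the essential point, proved through the Hilbert--Mumford numerical criterion, and to obtain parts \two\ and \three\ as formal consequences of the quotient maps in \eqref{eqn:VGITdiagram}. For $v\in V$, let $M(v)$ denote the set of one-parameter subgroups $\lambda$ of $G$ for which $\lim_{t\to 0}\lambda(t)\cdot v$ exists in $V$, and set $\Sigma(v):=\{\chi\in G^\vee_\QQ : \langle\chi,\lambda\rangle\ge 0\ \text{for all}\ \lambda\in M(v)\}$; this is a closed, convex, rational polyhedral cone, and by the numerical criterion (in the form valid for affine quotients) the point $v$ is $\chi$-semistable precisely when $\chi\in\Sigma(v)$, with $\chi$-stability corresponding to $\chi$ lying in the interior of $\Sigma(v)$ when $G_v$ is finite. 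Since $V^{\theta_\pm}$, $V^{\theta_0}$ and $V^{\theta_0\mathrm{-st}}$ depend only on the GIT cones containing $\theta_\pm$ and $\theta_0$, I may choose the representatives so that $\theta_0=s\theta_-+(1-s)\theta_+$ with $s\in(0,1)$; I will also use the standard inclusions $V^{\theta_\pm}\subseteq V^{\theta_0}$, which follow from the numerical criterion by letting $\theta_\pm\to\theta_0$.

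For part \one, first take $v\in V^{\theta_+}\cap V^{\theta_-}$ and choose $f_\pm\in\kk[V]_{j_\pm\theta_\pm}$ with $f_\pm(v)\ne 0$; a suitable power $f_-^{a}f_+^{b}$ with $a,b>0$ chosen so that $aj_-\theta_-+bj_+\theta_+$ is a positive multiple of $\theta_0$ is then a $\theta_0$-semi-invariant nonzero at $v$, so $v\in V^{\theta_0}$, and $G_v$ is finite because $v$ is $\theta_+$-stable. It remains to show that $G\cdot v$ is closed in $V^{\theta_0}$. If not, its closure in $V^{\theta_0}$ contains a closed orbit $G\cdot w$ of strictly smaller dimension, and since $G\cdot v$ is closed in each of the open subsets $V^{\theta_\pm}\subseteq V^{\theta_0}$ we must have $w\notin V^{\theta_+}\cup V^{\theta_-}$. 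A destabilising one-parameter subgroup $\lambda$ with $\lim_{t\to 0}\lambda(t)\cdot v=w$ then satisfies $\langle\theta_\pm,\lambda\rangle>0$, by $\theta_\pm$-stability of $v$ and $w\notin G\cdot v$, hence $\langle\theta_0,\lambda\rangle>0$; but $w$ is fixed by $\lambda$ and lies in $V^{\theta_0}$, forcing $\langle\theta_0,\lambda\rangle=0$, a contradiction. Thus $v\in V^{\theta_0\mathrm{-st}}$. Conversely, a $\theta_0$-stable point $v$ has $\theta_0$ in the interior of $\Sigma(v)$, so a neighbourhood of $\theta_0$ meeting both chambers lies in $\Sigma(v)$; by GIT-equivalence this gives $\theta_\pm\in\Sigma(v)$, that is, $v\in V^{\theta_+}\cap V^{\theta_-}$.

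For part \two, set $U_0:=V^{\theta_0\mathrm{-st}}/G\subseteq\Xtheta{_0}$. The stable locus $V^{\theta_0\mathrm{-st}}=V^{\theta_+}\cap V^{\theta_-}$ is saturated for $\pi_0$, since stable orbits are closed of maximal dimension, so $\pi_0^{-1}(U_0)=V^{\theta_0\mathrm{-st}}$; because $V^{\theta_0\mathrm{-st}}\subseteq V^{\theta_+}$, the commutative square \eqref{eqn:VGITdiagram} then gives $\tau_+^{-1}(U_0)=\pi_+(V^{\theta_0\mathrm{-st}})$. As $\pi_+$ restricts to a geometric quotient on the $G$-invariant open set $V^{\theta_0\mathrm{-st}}$, exactly as $\pi_0$ does, the morphism $\tau_+$ carries this common geometric quotient isomorphically onto $U_0$; the same holds for $\tau_-$, and therefore the birational map $\psi$ in \eqref{eqn:flop} is an isomorphism over $U_0$ as well. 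Part \three\ is then immediate: the stable locus $V^{\theta_0\mathrm{-st}}$ is open in $V^{\theta_0}$, so its complement $V^{\theta_0}\setminus V^{\theta_0\mathrm{-st}}$ is closed and $G$-invariant; the good quotient $\pi_0$ maps closed invariant sets to closed sets, whence $\pi_0(V^{\theta_0}\setminus V^{\theta_0\mathrm{-st}})$ is closed in $\Xtheta{_0}$, and its preimage under the morphism $\tau_-$ (and symmetrically $\tau_+$) is closed in $\Xtheta{_-}$.

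I expect the orbit-closedness step in part \one\ to be the main obstacle. The inclusions of semistable loci and the finiteness of stabilisers are routine, but ruling out a degeneration of $G\cdot v$ to a smaller closed orbit inside $V^{\theta_0}$ is precisely where the interaction between the numerical criterion and the convex combination $\theta_0=s\theta_-+(1-s)\theta_+$ is genuinely used. Once part \one\ is established, everything in parts \two\ and \three\ is formal, depending only on the standard behaviour of geometric and good quotients and on the identification $V^{\theta_0\mathrm{-st}}=V^{\theta_+}\cap V^{\theta_-}$.
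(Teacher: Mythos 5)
Your proof is correct, and parts \two\ and \three\ coincide with the paper's argument: \two\ is deduced from the saturation of the stable locus and uniqueness of geometric quotients, and \three\ from the fact that a good quotient maps closed invariant sets to closed sets. The genuine difference is in part \one, where the paper simply cites Thaddeus (Proposition~1.3, Lemma~3.2) or Dolgachev--Hu (Proposition~3.4.7, Lemma~4.1.5), whereas you supply a self-contained proof via the Hilbert--Mumford criterion. Your argument is the standard one underlying those references, and it has the mild advantage of making no normality assumption on $V$ (a point the paper has to address separately, since the cited works assume $V$ normal). Two steps are invoked without proof and deserve a word: first, the existence of a one-parameter subgroup $\lambda$ with $\lim_{t\to 0}\lambda(t)\cdot v$ in the closed orbit $G\cdot w\subseteq \overline{G\cdot v}$ is Kempf's theorem, which applies on an affine chart $V_f$ with $f$ a $\theta_0$-semi-invariant not vanishing at $v$ (needed because $V^{\theta_0}$ is only quasi-affine; note $w$ does lie in $V_f$ since the fibre of $\pi_0$ through $v$ is contained in the saturated open set $V_f$). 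Second, the implication ``$v$ is $\theta_\pm$-stable and $\lim_{t\to 0}\lambda(t)\cdot v\notin G\cdot v$ imply $\langle\theta_\pm,\lambda\rangle>0$'' is the stability half of the numerical criterion; it follows because $\langle\theta_\pm,\lambda\rangle=0$ would force the limit to remain in the $\theta_\pm$-semistable locus and hence, by closedness of $G\cdot v$ there, in $G\cdot v$ itself. With these standard inputs acknowledged, the argument is complete.
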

 \begin{proof}
   Part \one\ follows by combining two results from Thaddeus~\cite[Proposition~1.3, Lemma~3.2]{Thaddeus96}; alternatively, the proof by Dolgachev--Hu~\cite[Proposition~3.4.7, Lemma~4.1.5]{DolgachevHu98} can be applied under our assumptions on $V$. Part \two\ follows from the description of the open set $V^{\theta_0\mathrm{-st}}$ given in part~\one. For part \three, the semistable locus for any character of $G$ is open and $G$-invariant in $V$, so $V^{\theta_0}\setminus V^{\theta_-}$ is a closed and $G$-invariant subset of $V^{\theta_0}$. The image $\pi_0(V^{\theta_0}\setminus V^{\theta_-})$ is closed because $\pi_0$ is a good quotient, so the statement follows from continuity of $\tau_-$ in the Zariski topology.
 \end{proof}

\begin{defn}
   The \emph{unstable locus} for $\tau_-$ is the subset in $X_{\theta_-}$ parametrising strictly $\theta_0$-semistable points, $\Uns(\tau_-):=\tau_-^{-1}\big(\pi_0(V^{\theta_0}\setminus V^{\theta_0\mathrm{-st}})\big)$. The locus $\Uns(\tau_+)$ is the same with $+$ replacing $-$.
\end{defn}

 \begin{rem}
 \label{rem:exceplocus}
  \begin{enumerate}
     \item \label{exceptional-unstable} Lemma~\ref{lem:theta0stable} implies that the exceptional locus of $\tau_-$ (i.e.\ the locus where it is not an isomorphism), denoted $\Exc(\tau_-)$, is a subset of the unstable locus $\Uns(\tau_-)$. This inclusion may be strict. The locus $\Exc(\tau_+)$ is the same with $+$ replacing $-$.
     \item The unstable locus is closed by Lemma~\ref{lem:theta0stable}\three. We give it the reduced scheme structure.
   \end{enumerate}
 \end{rem}

The next result generalises a result of Thaddeus~\cite[Theorem~3.3]{Thaddeus96}, though the necessary assumption on the dimension of the unstable locus is missing from that statement. For this, we take the Stein factorisations of the morphisms $\tau_\pm$ from diagram \eqref{eqn:flop} to obtain a commutative diagram
 \begin{equation}
\begin{tikzcd}
\label{eqn:flopStein}
 \Xtheta{_-} \ar[rr,"\psi",dashed] \ar[dr,"\widetilde{\tau}_-"'] & & \Xtheta{_+} \ar[dl,"\widetilde{\tau}_+"] \\
& \widetilde{X}_{\theta_0}, & 
\end{tikzcd}
\end{equation}
where the morphisms $\widetilde{\tau}_\pm$ have connected fibres. Note that the target $\widetilde{X}_{\theta_0}$ is just the normalisation of $X_{\theta_0}$ since $\tau_{\pm}$ are birational by Lemma~\ref{lem:theta0stable}\one\ and $\Xtheta{_\pm}$ are normal.  In other words, $\widetilde{\tau}_{\pm}$ are the functorial maps induced by normalisation.

\begin{prop}
\label{prop:floppingwall}
 Assume that $\Xtheta{_+}$ and $\Xtheta{_-}$  are normal, that $\Uns(\tau_+)\subseteq \Xtheta{_+}$ and $\Uns(\tau_-)\subseteq \Xtheta{_-}$ have codimension at least two, and that $\tau_+$ and $\tau_-$ contract at least one curve. Then \eqref{eqn:flopStein} is a flip with respect to the line bundle $L_{C_-}(\theta_+)$
 on $\Xtheta{_-}$, and
  \begin{equation}
     \label{eqn:LC+isLC-}
     L_{C_-}(\eta) \cong \psi^*\big(L_{C_+}(\eta)\big) \text{ for all }\eta\in G^\vee_{\mathbb{Q}}.
 \end{equation}     
  If, in addition, $\Xtheta{_-}$ is $\QQ$-factorial and $L_{C_-}$ is surjective, then $\Xtheta{_+}$ is $\QQ$-factorial.
 \end{prop}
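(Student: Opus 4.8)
The plan is to verify, one by one, the four requirements in the definition of a flip of $\tau_-$ with respect to the line bundle $L:=L_{C_-}(\theta_+)$, to deduce \eqref{eqn:LC+isLC-} along the way, and finally to read off $\QQ$-factoriality of $\Xtheta{_+}$ from the linearisation-map identity together with its surjectivity. Throughout, write $f_\pm\colon \Xtheta{_\pm}\to Y$ for the structure morphisms and $U:=V^{\theta_0\mathrm{-st}}/G\subseteq \Xtheta{_0}$.

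First I would show that $\tau_-,\tau_+$ are small birational contractions and that $\psi$ is an isomorphism in codimension one. By Lemma~\ref{lem:theta0stable}\two\ each $\tau_\pm$ restricts to an isomorphism over $U$, and under this identification $U\cong \Xtheta{_\pm}\setminus\Uns(\tau_\pm)$. The hypothesis $\codim\Uns(\tau_-)\geq 2$ therefore forces $U$ to be nonempty, and as an open subset of the irreducible variety $\Xtheta{_0}$ it is then dense; hence $\tau_-$ is an isomorphism over a dense open set and is birational. Since $\Exc(\tau_-)\subseteq\Uns(\tau_-)$ by Remark~\ref{rem:exceplocus}(\ref{exceptional-unstable}), its exceptional locus has codimension at least two, and because $\tau_-$ contracts a curve by hypothesis it is a genuine small contraction. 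The same applies to $\tau_+$, and then $\psi$, being an isomorphism $\Xtheta{_-}\setminus\Uns(\tau_-)\iso\Xtheta{_+}\setminus\Uns(\tau_+)$ with both complements of codimension at least two, is an isomorphism in codimension one.

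Next I would pin down positivity. Replacing $\theta_\pm$ by other points of $C_\pm$ changes neither the quotients nor the maps $L_{C_\pm}$, so I may assume $\theta_0=\tfrac12(\theta_-+\theta_+)$. The bundle $L_{C_-}(\theta_-)$ is the tautological polarisation of $\Xtheta{_-}=\Proj_Y\big(\bigoplus_{j\geq 0}\kk[V]_{j\theta_-}\big)$, hence ample over $Y$, while $L_{C_-}(\theta_0)=\tau_-^*\mathcal{O}(1)$ by \eqref{eqn:O1pullback}. Linearity of $L_{C_-}$ then gives $L_{C_-}(\theta_+)^{-1}\cong L_{C_-}(\theta_-)\otimes\tau_-^*\big(\mathcal{O}(-2)\big)$; since relative ampleness is unaffected by twisting with the pullback of a line bundle on $\Xtheta{_0}$, and a bundle ample over $Y$ is ample over the intermediate target $\Xtheta{_0}$, we conclude that $L_{C_-}(\theta_+)^{-1}$ is $\tau_-$-ample. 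Symmetrically $L_{C_+}(\theta_+)$ is ample over $Y$, hence $\tau_+$-ample. To obtain \eqref{eqn:LC+isLC-}, I would note that $L_{C_-}(\eta)$ and $L_{C_+}(\eta)$ both descend from the $G$-equivariant bundle $\eta\otimes\mathcal{O}$ and therefore restrict to the same line bundle on $U$; as $U$ is the complement of a codimension-two subset of the normal variety $\Xtheta{_-}$, the reflexive sheaves $L_{C_-}(\eta)$ and $\psi^*L_{C_+}(\eta)$ agree everywhere. Taking $\eta=\theta_+$ shows the strict transform of $L=L_{C_-}(\theta_+)$ is $L_{C_+}(\theta_+)$, which is $\tau_+$-ample; combined with the first two paragraphs, all four requirements of a flip hold.

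For the final assertion, suppose $\Xtheta{_-}$ is $\QQ$-factorial and $L_{C_-}$ is surjective. Since $\psi$ is an isomorphism in codimension one between normal varieties, strict transform yields an isomorphism $\psi^*\colon \Cl(\Xtheta{_+})_\QQ\iso\Cl(\Xtheta{_-})_\QQ$ satisfying $f_-^*=\psi^*\circ f_+^*$. Let $D$ be a Weil divisor class on $\Xtheta{_+}$. Then $\psi^*D\in\Cl(\Xtheta{_-})_\QQ=\Pic(\Xtheta{_-})_\QQ$ by $\QQ$-factoriality, and surjectivity of $L_{C_-}$ gives some $\eta$ with $\psi^*D=L_{C_-}(\eta)+f_-^*N$ for an $N\in\Pic(Y)_\QQ$. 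Using \eqref{eqn:LC+isLC-} and $f_-^*=\psi^*f_+^*$, this becomes $\psi^*D=\psi^*\big(L_{C_+}(\eta)+f_+^*N\big)$, so injectivity of $\psi^*$ forces $D=L_{C_+}(\eta)+f_+^*N$, which is $\QQ$-Cartier; hence $\Xtheta{_+}$ is $\QQ$-factorial. The main obstacle throughout is the bookkeeping around these codimension-two comparisons — both the sheaf identity \eqref{eqn:LC+isLC-} and the class-group isomorphism rely on normality and on the unstable loci having codimension at least two — together with keeping the relative-over-$Y$ and absolute pictures consistent; it is precisely the surjectivity of $L_{C_-}$ that upgrades the $\QQ$-factoriality of $\Xtheta{_-}$ to that of $\Xtheta{_+}$.
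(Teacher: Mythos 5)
Your proof is correct and follows essentially the same route as the paper's: small contractions from the codimension hypothesis on the unstable loci, the identity \eqref{eqn:LC+isLC-} by descent from $\eta\otimes\mathcal{O}$ over the common $\theta_0$-stable locus together with normality, the normalisation $\theta_0=\tfrac12(\theta_-+\theta_+)$ to analyse $L_{C_-}(\theta_+)^{-1}$, and surjectivity of $L_{C_-}$ transported through $\psi^*$ for $\QQ$-factoriality. The only cosmetic difference is that you deduce $\tau_-$-ampleness of $L_{C_-}(\theta_+)^{-1}=L_{C_-}(\theta_-)\otimes\tau_-^*\mathcal{O}(-2)$ from functoriality of relative ampleness (ample over $Y$ implies ample over $\Xtheta{_0}$, and twisting by a pullback is harmless), where the paper instead computes degrees on the contracted curves; both rest on the same identity, and your explicit handling of the $f_-^*\Pic(Y)$ ambiguity in the final step is, if anything, slightly more careful than the paper's.
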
 

\begin{proof}
  The exceptional loci $\Exc(\tau_+)$ and $\Exc(\tau_-)$ are contained in the unstable loci $\Uns(\tau_+)$ and $\Uns(\tau_-)$ respectively by Remark~\ref{rem:exceplocus}(1), so our codimension assumption shows that $\psi$ is an isomorphism in codimension one. Pushforward along $\psi$ therefore identifies the class groups of $X_{\theta_\pm}$. 
 
 Combining Lemma~\ref{lem:theta0stable}\two\ with our assumption on the unstable locus implies that the complement of $V^{\theta_0\mathrm{-st}}/G$ is of codimension at least two in both $\Xtheta{_+}$ and $\Xtheta{_-}$. Since $\Xtheta{_+}$ and $\Xtheta{_-}$  are normal, line bundles on both $\Xtheta{_+}$ and $\Xtheta{_-}$ are uniquely determined, up to isomorphism, by their restriction to $V^{\theta_0\mathrm{-st}}/G$. By restricting all three maps from \eqref{eqn:flop} to the isomorphisms over this locus, we see that both $L_{C_+}(\eta)$ on $\Xtheta{_+}$ and $L_{C_-}(\eta)$ on $\Xtheta{_-}$ are obtained by descent from $\eta\otimes \mathcal{O}$ on $V^{\theta_0\mathrm{-st}}$. Therefore isomorphism \eqref{eqn:LC+isLC-} holds. In particular, the strict transform of $L_{C_-}(\theta_+)$ along $\psi$ is the line bundle $L_{C_+}(\theta_+)$ on $\Xtheta{_+}$. 
  
 The polarising ample bundle  $L_{C_+}(\theta_+)$ on $\Xtheta{_+}$ is $\widetilde{\tau}_+$-ample, so to prove that \eqref{eqn:flopStein} is a flip, we need only show that $L_{C_-}(\theta_+)^{-1}$ is $\widetilde{\tau}_-$-ample. For this, the ample bundle $L_0:=\mathcal{O}(1)$ on $\widetilde{X}_{\theta_0}$ satisfies $L_{C_-}(\theta_0) = (\widetilde{\tau}_-)^*(L_0)$ by \eqref{eqn:O1pullback}.
 By choosing alternative characters $\theta_+\in C_+$ and $\theta_-\in C_-$ if necessary, we may assume that $\theta_0 = \frac{1}{2}(\theta_++\theta_-)$, in which case  
\[
L_{C_-}(\theta_+)\otimes L_{C_-}(\theta_-) = L_{C_-}(\theta_+ + \theta_-) = L_{C_-}(2\theta_0) = (\widetilde{\tau}_-)^*(L_0)^{2}.
\]
The set of curve classes contracted by $\widetilde{\tau}_-$ is non-empty by assumption. The line bundles $L_{C_-}(\theta_-)$ and $\widetilde{\tau}_-^*(L_0)$ have positive and zero degree respectively on all such curves. It follows that
\[
L_{C_-}(\theta_+)^{-1} = L_{C_-}(\theta_-)\otimes  \tau_-^*(L_0)^{-2}
\]
has positive degree on all such curves, so it is $\widetilde{\tau}_-$-ample as required.

 For the final statement, let $D_+$ be a Weil divisor on $\Xtheta{_+}$. Then $(\psi^{-1})_*D_+$ is a Weil divisor on $\Xtheta{_-}$, and since $\Xtheta{_-}$ is $\QQ$-factorial, the divisor $m(\psi^{-1})_*D_+$ is Cartier for some $m>0$. Since $L_{C_-}$ is surjective, there exists $\eta\in G^\vee_\QQ$ such that $L_{C_-}(\eta) \cong \mathcal{O}_{\Xtheta{_-}}(m(\psi^{-1})_*D_+)$. Now \eqref{eqn:LC+isLC-} gives
 \[
  \mathcal{O}_{\Xtheta{_+}}(mD_+) \cong (\psi^{-1})^*\mathcal{O}_{\Xtheta{_-}}(m(\psi^{-1})_*D_+) \cong (\psi^{-1})^* L_{C_-}(\eta) \cong L_{C_+}(\eta)
 \]
 which lies in $\Pic(\Xtheta{_+}/Y)$, so $mD_+$ is Cartier.
 \end{proof}

\section{Reconstructing relative Mori Dream Spaces by GIT}

\subsection{GIT regions}
As before, let $G$ denote a reductive algebraic group acting on an affine variety $V$. For $\theta\in G^\vee_{\mathbb{Q}}$, write $\Xtheta{}:=V\git_\theta \, G$, and let $f\colon \Xtheta{}\to Y:= X_0$ denote the projective morphism obtained by VGIT.

 To formulate our key condition, let $C_-$ and $C_+$ be GIT chambers separated by a wall $\overline{C_-}\cap \overline{C_+}$. We delete this separating wall if and only if the morphisms $\tau_-$ and $\tau_+$ from diagram \eqref{eqn:flop} are both small. The result is an a priori coarser wall-and-chamber decomposition of the GIT fan. 

 \begin{defn}
 \label{def:GITregion}
  A \emph{GIT region} in $G^\vee_\QQ$ is any top-dimensional cone of the coarse fan defined above. By construction, every GIT region that contains a chamber is the union of the closures of a collection of GIT chambers. For any chamber $C$, let $R_C$ denote the unique GIT region containing $C$.
 \end{defn}
 
\begin{example}
\label{exa:F1}
 The Cox construction of the first Hirzebruch surface $\mathbb{F}_1:=\mathbb{P}_{\mathbb{P}^1}(\mathcal{O}\oplus \mathcal{O}(1))$ passes via the action of the torus $G=(\kk^\times)^2$ on $\mathbb{A}^4$ with weights $\begin{bsmallmatrix}1&-1&1& 0\\0&1&0&1\end{bsmallmatrix}$. There are two GIT chambers
 \[
 C_-:= \Amp(\mathbb{F}_1) = \big\{\alpha \begin{bsmallmatrix}1\\0\end{bsmallmatrix} +\beta \begin{bsmallmatrix}0\\1\end{bsmallmatrix} \mid \alpha, \beta>0\big\}
 \quad\text{and}\quad 
 C_+:= \big\{\alpha \begin{bsmallmatrix}-1\\1\end{bsmallmatrix} +\beta \begin{bsmallmatrix}0\\1\end{bsmallmatrix} \mid \alpha, \beta>0\big\}
 \]
 For $\theta_-\in C_-$ and $\theta_+\in C_+$, we have $\Xtheta{_-}\cong \mathbb{F}_1$ and $\Xtheta{_+}\cong \mathbb{P}^2$. The VGIT morphism $\tau_-\colon \mathbb{F}_1\to \mathbb{P}^2$ contracts the $(-1)$-curve, whereas $\tau_+$ is an isomorphism. Thus, $R_{C_-} = \overline{C_-}$ and $R_{C_+} = \overline{C_+}$. 
 \end{example} 
 
 The linearisation map $L_{C_-}$ in Example~\ref{exa:F1} is an isomorphism that identifies $R_{C_-}$ with the movable (in fact, the nef) cone of $\Xtheta{_-}$ for $\theta_-\in C_-$. Example~\ref{exa:threefoldflop} below 
 illustrates that even when $L_{C_-}$ is an isomorphism, it need not identify $R_{C_-}$ with $\Mov(\Xtheta{_-})$ for $\theta\in C_-$.

 \subsection{The new GIT condition and two key examples}
 Define a GIT wall to be a \emph{flipping wall} if it satisfies the assumptions of Proposition~\ref{prop:floppingwall}, i.e.\ the wall separates two GIT chambers $C_\pm$ such that, in the notation of diagram \eqref{eqn:flop}, both $\Xtheta{_-}$ and $\Xtheta{_+}$ are normal, the unstable loci  $\Uns(\tau_+)\subseteq \Xtheta{_+}$ and $\Uns(\tau_-)\subseteq \Xtheta{_-}$ have codimension at least two, and the morphisms $\tau_+$ and $\tau_-$ both contract at least one curve. For any wall in the boundary of $R_C$, let $C_-$ denote the unique chamber in $R_C$ that contains the wall in its closure. Let $\theta_-\in C_-$ and let $\theta_0$ be general in the wall.  We say that the wall, when approached from the chamber $C_-$ in $R_C$, is \emph{small, divisorial} or \emph{of fibre type} if the
 induced morphism $\tau_-\colon \Xtheta{_-} \to \Xtheta{_0}$ is of the same type.

\begin{cond}
\label{cond:GIT} 
There exists a GIT chamber $C$ such that:
\begin{enumerate}
    \item[\emph{(1)}] for $\theta\in C$, the GIT quotient $X:=\Xtheta{}$ is a $\QQ$-factorial normal variety and the linearisation map 
    \[
    L_{C} \colon G^{\vee}_\QQ\longrightarrow \Pic(X/Y)_\Q
    \]
    is an isomorphism of rational vector spaces;
    \item[\emph{(2)}] each wall in the interior of the GIT region $R_C$ containing $C$ is a flipping wall; and
    \item[\emph{(3)}] each boundary wall of $R_C$ is either divisorial or of fibre type.
\end{enumerate}
\end{cond}

 \begin{remark}
 \label{rem:keyExamples}
 Condition~\ref{cond:GIT} is required for the statement and proof of Theorem~\ref{thm:movable}. We present several examples to shed light on the three different parts of this assumption as follows:
 \begin{itemize}
     \item Condition~\ref{cond:GIT} holds for the chamber $C_-$ in Example~\ref{exa:F1}, while Condition~\ref{cond:GIT}(1) fails for $C_+$. Many similar examples are described in Example~\ref{exa:MDS} below.
     \item If Condition~\ref{cond:GIT}(1) holds, it can happen that (2) fails to hold; see Example~\ref{exa:toricQuiverGIT} below.
     \item If Condition~\ref{cond:GIT}(1) and (2) both hold, it can happen that (3) fails to hold; see Example~\ref{exa:threefoldflop}.
 \end{itemize}
 Note in addition that Condition~\ref{cond:GIT}(2) implies that $\Xtheta{}$ is normal for every generic $\theta\in R_C$. 
 \end{remark}

\begin{example}[\textbf{Mori Dream Spaces via the Cox ring}] 
\label{exa:MDS}
 Generalising Example~\ref{exa:F1}, let $X$ be any Mori Dream Space in the sense of Hu and Keel~\cite{HuKeel00}. That is, $X$ is a $\QQ$-factorial normal projective variety with $\Pic(X)_\QQ\cong N^1(X)$, such that the Cox ring of $X$, denoted $\Cox(X)$, is a finitely generated $\kk$-algebra. For simplicity, assume that $\Pic(X)$ is free. The $\Pic(X)$-grading of $\Cox(X)$ defines an action of the algebraic torus $G:= \Hom(\Pic(X),\kk^\times)$ on the affine variety $V:= \Spec \Cox(X)$.
 \begin{enumerate}
 \item[\one] For the chamber $C=\Amp(X)$, the linearisation map $L_C \colon G^\vee_\QQ\to \Pic(X)_\QQ$ is an isomorphism by \cite[Proof of Proposition~2.11]{HuKeel00}, so Condition~\ref{cond:GIT}(1) holds, whilst \cite[Proposition~1.11]{HuKeel00} shows that conditions (2) and (3) also hold.  
 \item[\two] For any chamber $C^\prime$ that does not lie in $\Mov(X)$,  the kernel of $L_{C^\prime}$ has dimension at least one because the rank of $\Pic(\Xtheta{^\prime})_\QQ$ for $\theta^\prime\in C^\prime$ drops by one as we cross each boundary wall of the movable cone. In particular, Condition~\ref{cond:GIT} fails for $C^\prime$. 
 \end{enumerate}
 Analogous statements hold for the action induced by the $\Pic(X/Y)$-grading on the Cox ring for any relative Mori Dream Space $X\to Y$; this requires a choice of line bundles on $X$ that provide a basis for $\Pic(X/Y)_{\QQ}$ (see Grab~\cite{Grab19} or Ohta~\cite{Ohta20} for details).
\end{example}
 
\begin{example}[\textbf{A local del Pezzo by quiver GIT}]
\label{exa:toricQuiverGIT}
Let $Z$ be the two-point blow-up of $\mathbb{P}^2$. The total space $X:= \text{tot}(\omega_Z)$ of the canonical bundle on $Z$ is smooth with trivial canonical class, the anticanonical ring $R:= \bigoplus_{k\geq 0} H^0(Z,\omega_Z^{-\otimes k})$ is Gorenstein \cite[Example~5.1.13]{GotoWatanabe},  and the morphism 
\[
f\colon X=\text{tot}(\omega_Z) \longrightarrow Y:= \Spec R
\]
 that contracts the zero section is a projective crepant resolution. In fact, $f$ is a morphism of toric varieties: for the lattice $M=\ZZ^3$, we have that $R\cong \CC[\sigma^\vee\cap M]$, where $\sigma\subseteq N\otimes_{\ZZ} \QQ$ is the strongly convex rational polyhedral cone obtained as the cone over the  pentagon in Figure~\ref{fig:totwZ}(a); the basic triangulation of the pentagon that determines the fan $\Sigma$ of $X$ is also shown in Figure~\ref{fig:totwZ}(a).
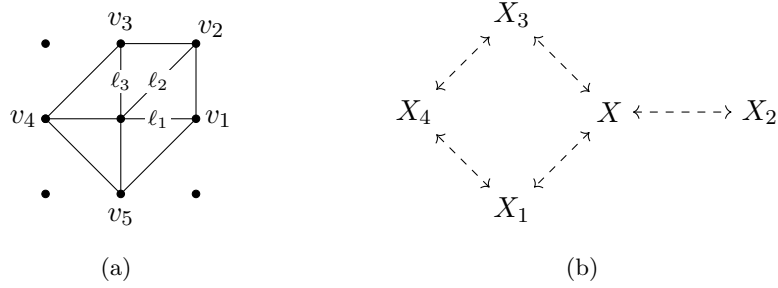
\begin{figure}[!ht]
   \centering
      \subfigure[]{
\begin{tikzpicture}[baseline={(0,-1.5)}] 
\draw (1,0) -- (1,1) -- (0,1) -- (-1,0) -- (0,-1) -- (1,0); 
\draw (0,0) node[circle,draw,fill=black,minimum size=2pt,inner sep=1pt] {{}};
\draw (1,0) node[circle,draw,fill=black,minimum size=2pt,inner sep=1pt] {{}};
\draw (-1,0) node[circle,draw,fill=black,minimum size=2pt,inner sep=1pt] {{}};
\draw (0,1) node[circle,draw,fill=black,minimum size=2pt,inner sep=1pt] {{}};
\draw (1,1) node[circle,draw,fill=black,minimum size=2pt,inner sep=1pt] {{}};
\draw (-1,1) node[circle,draw,fill=black,minimum size=2pt,inner sep=1pt] {{}};
\draw (-1,0) node[circle,draw,fill=black,minimum size=2pt,inner sep=1pt] {{}};
\draw (0,-1) node[circle,draw,fill=black,minimum size=2pt,inner sep=1pt] {{}};
\draw (1,-1) node[circle,draw,fill=black,minimum size=2pt,inner sep=1pt] {{}};
\draw (-1,-1) node[circle,draw,fill=black,minimum size=2pt,inner sep=1pt] {{}};
\draw (-1,0) -- (1,0);
\draw  (0,-1) -- (0,1);
\draw  (0,0) -- (1,1);
\draw (1.3,0) node {$v_1$};
\draw (1.2,1.2) node {$v_2$};
\draw (0,1.3) node {$v_3$};
\draw (-1.3,0) node {$v_4$};
\draw (0,-1.3) node {$v_5$};
\draw (0.5,0)  node [fill=white,inner sep=1pt] {${\scriptstyle \ell_1}$};
\draw (0.5,0.5)  node [fill=white,inner sep=1pt] {${\scriptstyle \ell_2}$};
\draw (0,0.5)  node [fill=white,inner sep=1pt] {${\scriptstyle \ell_3}$};
\end{tikzpicture}
 \label{fig:totwZfan}
 }
      \qquad 
      \qquad
      \subfigure[]{
\begin{tikzpicture}
\node (A) at (1.3,1.3) {$X$};
\node (B) at (0,0) {$X_1$};
\node (C) at (3.3,1.3) {$X_2$};
\node (D) at (0,2.6) {$X_3$};
\node (E) at (-1.3,1.3) {$X_4$};
\path (A) edge[dashed, <->] (C);
\path (A) edge[dashed, <->] (B);
\path (A) edge[dashed, <->] (D);
\path (D) edge[dashed, <->] (E);
\path (B) edge[dashed, <->] (E);
\end{tikzpicture} 
\label{fig:totwZflops}
}
          \caption{(a) slice of the fan defining $X$; (b) flops linking crepant resolutions}
          \label{fig:totwZ}
  \end{figure} 
  The cones $\ell_1, \ell_2, \ell_3\in \Sigma$ each determine a $(-1,-1)$-curve in $X$, and for $1\leq i\leq 3$, flopping the curve defined by $\ell_i$ produces a projective crepant resolution $X_i\to Y$. A curve in each of $X_1$ and $X_3$ can be flopped to produce the projective crepant resolution $X_4\to Y$ as shown in Figure~\ref{fig:totwZflops}.

The morphism $f\colon X\to Y$ is a relative Mori Dream Space, but our interest here lies with a GIT quotient construction that differs from the construction via the Cox ring of $X$ as in Example~\ref{exa:MDS}. For this, list the torus-invariant prime Weil divisors  $D_1, \dots, D_5$ on $Y$, one for each lattice point $v_1, \dots, v_5$ on the boundary of the pentagon as above. Define four reflexive sheaves of rank one on $Y$, namely $E_0:= \mathcal{O}_Y$, $E_1:=\mathcal{O}_Y(D_1)$, $E_2:=\mathcal{O}_Y(D_3)$ and $E_3:= \mathcal{O}_Y(D_1+D_5)$, and set $\mathscr{E}:=\{E_0, E_1, E_2, E_3\}$.  Following \cite[Definition~2.2]{CV12}, the \emph{quiver of sections} of  $\mathscr{E}$, denoted $Q$, is shown in Figure~\ref{fig:totwZquiver}: the vertex set corresponds to the collection $\mathscr{E}$; and each arrow is labelled by a Weil divisor where, for example, the label $12$ is shorthand for the divisor $D_1+D_2$.  The algebra 
 $\End_{R}(\bigoplus_{0\leq i\leq 3}E_i)$ can be presented as the quotient of the path algebra of $Q$ by a two-sided ideal of relations determined by the labelling of arrows by divisors \cite[Lemma~2.5]{CV12}. 
  
   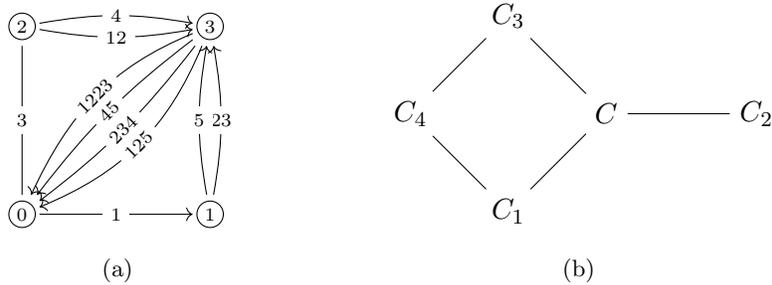
\begin{figure}[!ht]
   \centering
      \subfigure[]{
\begin{tikzpicture}
\node (A) at (0,0) {$0$};
\node (B) at (2.5,0) {$1$};
\node (C) at (0,2.5) {$2$};
\node (D) at (2.5,2.5) {$3$};
\draw (A) node[circle,draw,fill=white,minimum size=10pt,inner sep=1pt] {{\tiny 0}};
\draw (B) node[circle,draw,fill=white,minimum size=10pt,inner sep=1pt] {{\tiny 1}};
\draw (C) node[circle,draw,fill=white,minimum size=10pt,inner sep=1pt] {{\tiny 2}};
\draw (D) node[circle,draw,fill=white,minimum size=10pt,inner sep=1pt] {{\tiny 3}};
\draw[->] (A) to node[fill=white,inner sep=3pt] {{\tiny 1}} (B);
\draw[->] (A) to node[fill=white,inner sep=3pt] {{\tiny 3}}(C);
\draw[->,out=80, in=280] (B) to node[fill=white,inner sep=3pt] {{\tiny 23}} (D);
\draw[->,out=100, in=260] (B) to node[fill=white,inner sep=3pt] {{\tiny 5}} (D);
\draw[->,out=10, in=170] (C) to node[fill=white,inner sep=3pt] {{\tiny 4}} (D);
\draw[->,out=350, in=190] (C) to node[fill=white,inner sep=3pt] {{\tiny 12}} (D);
\draw[->,out=202.5, in=65] (D) to node[fill=white,rotate=45,inner sep=3pt] {{\tiny 1223}} (A);
\draw[->,out=217.5, in=52.5] (D) to node[fill=white,rotate=45,inner sep=3pt] {{\tiny 45}} (A);
\draw[->,out=232.5, in=37.5] (D) to node[fill=white,rotate=45,inner sep=3pt] {{\tiny 234}} (A);
\draw[->,out=247.5, in=22.5] (D) to node[fill=white,rotate=45,inner sep=3pt] {{\tiny 125}} (A);
\end{tikzpicture} 
\label{fig:totwZquiver}
 }
      \qquad 
      \qquad
      \subfigure[]{
\begin{tikzpicture}
\node (A) at (1.3,1.3) {$C$};
\node (B) at (0,0) {$C_1$};
\node (C) at (3.3,1.3) {$C_2$};
\node (D) at (0,2.6) {$C_3$};
\node (E) at (-1.3,1.3) {$C_4$};
\path (A) edge (C);
\path (A) edge (B);
\path (A) edge (D);
\path (D) edge (E);
\path (B) edge (E);
\end{tikzpicture} 
\label{fig:totwZchambers}
}
          \caption{(a) quiver of sections $Q$ on $Y$; (b) graph indicating chambers that lie adjacent}
          \label{fig:totwZquivermoduli}
  \end{figure}
  
   To reconstruct the morphism $f$ by quiver GIT, let $\mathbb{N}(Q)$ denote the semigroup generated by the columns of the matrix
   \[
   \left[\begin{smallmatrix}
   -1 & -1 &  0 &  0 &  0 &  0 &  1 &  1 &  1 &  1 \\
    1 &  0 & -1 & -1 &  0 &  0 &  0 &  0 &  0 &  0 \\
    0 & 1 &  0 &  0 & -1 & -1 &  0 &  0 &  0 &  0 \\
    0 &  0 &  1 &  1 &  1 &  1 & -1 & -1 & -1 & -1 \\
    1 &  0 &  0 &  0 &  0 &  1 &  1 &  0 &  0 &  1 \\
    0 &  0 &  0 &  1 &  0 &  1 &  2 &  0 &  1 &  1 \\
    0 &  1 &  0 &  1 &  0 &  0 &  1 &  0 &  1 &  0 \\
    0 &  0 &  0 &  0 &  1 &  0 &  0 &  1 &  1 &  0 \\
    0 &  0 &  1 &  0 &  0 &  0 &  0 &  1 &  0 &  1 
   \end{smallmatrix}\right];
   \]
 each column corresponds to an arrow, where the top four entries record the head and tail, while the bottom five entries record the labelling divisor. Consider the toric variety $V:= \Spec \kk[ \mathbb{N}(Q)]$ defined by the semigroup algebra of $\mathbb{N}(Q)$. The algebraic torus $G:=\Spec \kk[\Wt(Q)]$ with character lattice $\Wt(Q):=\{\theta\in \ZZ^4 \mid \sum_i \theta_i=0\}$ acts on $V$, where the weights of the action are recorded in the top four rows of the matrix. 
 Following \cite[Proposition~2.14]{CV12}, the affine quotient satisfies $V\git_0 T \cong Y$, while for each $\theta\in \Wt(Q)$, variation of GIT gives a projective, birational, toric morphism $f_\theta\colon \Xtheta{}:= V\git_\theta G\to Y$. The GIT chamber
 \[
 C:=\big\{ \theta\in G^\vee_\QQ \mid \theta_1 <0;\; \theta_2<0,\; \theta_1+\theta_2+\theta_3> 0\big\}
 \]
 gives $X\cong \Xtheta{}$ and $f=f_\theta$ for $\theta\in C$. The linearisation map $L_{C}$ is an isomorphism that identifies $C$ with the ample cone of $X$,  so Criterion~\ref{cond:GIT}(1) is satisfied.
 
 The defining inequalities of the chambers $C_1, C_2, C_3$ that lie adjacent to $C$ are shown below, together with the unique chamber $C_4$ that lies adjacent to both $C_1$ and $C_3$ as in Figure~\ref{fig:totwZchambers}:
\begin{center}
\begin{tabular}{ c c c c r }
Chamber & &  \multicolumn{3}{c}{Defining inequalities} \\
 \hline
$C_1$ &  $\quad$ &  $\theta_1 <0$ \; & \: $\theta_2>0$ & $\theta_1+\theta_3> 0$ \\ 
$C_2$ &  $\quad$ &  $\theta_1 <0$ \: & \: $\theta_2<0$ & \; $\theta_1+\theta_2+\theta_3 < 0$ \\
$C_3$ &  $\quad$ &  $\theta_1 >0$ \; & \; $\theta_2<0$ & $\theta_2+\theta_3> 0$ \\
$C_4$ &  $\quad$ &  $\theta_1 >0$ \; & \; $\theta_2>0$ & $\theta_3> 0$ \\
 & & \\
\end{tabular}
\end{center}
 Crossing the wall from $C$ to $C_1$ induces the flop $X\dashrightarrow X_1$, and symmetrically, crossing from $C$ to $C_3$ induces the flop $X\dashrightarrow X_3$. In addition, for $i\in \{1,3\}$, one can cross a wall from $C_i$ to $C_4$ to induce the flop $X_i\dashrightarrow X_4$.  However,  crossing the wall separating $C_-:=C$ from $C_+:= C_2$ does not induce the flop $X\dashrightarrow X_2$ as one might expect after comparing Figures~\ref{fig:totwZflops} and \ref{fig:totwZchambers}. Rather, the morphism $\tau_-$ from diagram \eqref{eqn:flop} contracts the curve in $X$ determined by $\ell_2$, whereas $\tau_+$ is an isomorphism. In particular, $V\git_{\theta_2}G$ for $\theta_2\in C_2$ is not $\QQ$-factorial, and $X_2\not\cong V\git_{\theta_2}G$. Therefore the wall separating $C$ from $C_2$ is not a flipping wall despite being an internal wall of $R_C$, so Condition~\ref{cond:GIT}(2) fails even though Condition~\ref{cond:GIT}(1) holds.
 \end{example}

   \begin{remark}
   Example~\ref{exa:toricQuiverGIT} is obtained by modifying that from Ishii--Ueda~\cite[Example~12.6]{IU16}, where for the additional reflexive sheaf $E_4:=\mathcal{O}_Y(2D_1+2D_5)$, a collection of GIT quotients associated to a quiver with relations defining the algebra $\End_{R}(\bigoplus_{0\leq i\leq 4}E_i)$ are studied.  The linearisation map is not injective for any chamber in that case. In Example~\ref{exa:toricQuiverGIT}, we omitted the summand $E_4$ which reduces the dimension of the space of stability conditions by one, making $L_{C}$ an isomorphism.
   \end{remark}

 \begin{example}
 \label{exa:threefoldflop}
 Consider the action of the algebraic torus $(\kk^\times)^2$ on $\mathbb{A}^5$ with weights 
 $
 \begin{bsmallmatrix}
 0 & 0 & 1 & 1 & 1 \\
 1&1&0&-1&-1
 \end{bsmallmatrix}
 $. 
 For the character $L_-:= \begin{bsmallmatrix}1\\1\end{bsmallmatrix}$, the Cox construction gives $X_-=\mathbb{A}^5\git_{L_-} (\kk^\times)^2$, where we identify the character lattice of $(\kk^\times)^2$ with $\Pic(X_-)$. For $L_+:= \begin{bsmallmatrix}2\\-1\end{bsmallmatrix}$, variation of GIT quotient determines a flop $\psi\colon X_-\dashrightarrow X_+:=\mathbb{A}^5\git_{L_+} (\kk^\times)^2$ of smooth projective toric threefolds. The ample cones of $X_-$ and $X_+$ satisfy 
 \[
 \Amp(X_-) = \big\{\alpha \begin{bsmallmatrix}1\\0\end{bsmallmatrix} +\beta \begin{bsmallmatrix}0\\1\end{bsmallmatrix} \mid \alpha, \beta>0\big\} \quad\text{and}\quad \psi^*\Amp(X_+) = \big\{\alpha \begin{bsmallmatrix}1\\0\end{bsmallmatrix} +\beta \begin{bsmallmatrix}1\\-1\end{bsmallmatrix} \mid \alpha, \beta>0\big\}.
 \]
 
Our interest lies with an alternative GIT construction introduced in~\cite{CrawSmith08}. For this, consider the globally generated line bundles $L_0:= \mathcal{O}_{X_-} = \begin{bsmallmatrix}0\\0\end{bsmallmatrix}, \; L_1:= \begin{bsmallmatrix}2\\0\end{bsmallmatrix}, \; L_2:= \begin{bsmallmatrix}0\\1\end{bsmallmatrix}\in \Pic(X_-)$, and set $\mathcal{L}:=\{L_0, L_1, L_2\}$. Let $\mathbb{N}(\mathcal{L})$ denote the semigroup generated by the columns of the matrix
   \[
   \left[\begin{smallmatrix}
   -1 & -1 & -1 &  0 &  0 &  0 &  0 &  0 &  0 &  0 &  0 \\
    1 &  0 &  0 &  1 &  1 &  1 &  1 &  1 &  1 &  1 &  1 \\
    0 &  1 &  1 & -1 & -1 & -1 & -1 & -1 & -1 & -1 & -1 \\
    0 &  0 &  1 &  1 &  1 &  1 &  0 &  0 &  0 &  0 &  0 \\
    0 &  1 &  0 &  0 &  0 &  0 &  1 &  1 &  1 &  0 &  0 \\
    2 &  0 &  0 &  0 &  0 &  0 &  0 &  0 &  0 &  1 &  1 \\
    0 &  0 &  0 &  2 &  1 &  0 &  2 &  1 &  0 &  1 &  0 \\
    0 &  0 &  0 &  0 &  1 &  2 &  0 &  1 &  2 &  0 &  1 
   \end{smallmatrix}\right],
   \]
 where for $0\leq i,j\leq 2$, each column with $-1$ in row $i$ and $+1$ in row $j$ corresponds to a given torus-invariant divisor defining a section of $L_j\otimes L_i^{-1}$; see \cite[Section~3]{CrawSmith08}. The semigroup algebra of $\mathbb{N}(\mathcal{L})$ defines the toric variety $V:= \Spec \kk[ \mathbb{N}(\mathcal{L})]$, and the top three rows of the above matrix encode the weights of an action on $V$ by the algebraic torus $G$ of rank two 
 whose character lattice is $G^\vee=\{(\theta_i)\in \ZZ^3 \mid \theta_0+\theta_1+\theta_2=0\}$.  There are two GIT chambers 
  \[
 C_-:=\{\theta\in G^\vee_{\mathbb{Q}} \mid \theta_1>0, \theta_2>0\}
 \quad\text{and}\quad 
 C_+:= \{\theta\in G^\vee_{\mathbb{Q}} \mid \theta_1+\theta_2>0, \theta_2<0\}.
 \]
 Observe that the GIT quotient $X_\eta$ is non-empty if and only if $\eta\in \overline{C_-}\cup \overline{C_+}$.
 
 We claim that 
 the GIT regions satisfy 
 $R_{C_-}= \overline{C_-}\cup \overline{C_+}=R_{C_+}$. To see this, fix $\theta_-:=(-2,1,1)\in C_-$ and $\theta_+:=(-1,2,-1)\in C_+$. Applying \cite[Corollary~4.10, Theorem~4.15]{CrawSmith08} shows that $X_-\cong \Xtheta{_-}$, and moreover, that $L_{C_-}$ identifies $G^\vee$ with the index 2 sublattice of $\Pic(X_-)$ spanned by $L_1$ and $L_2$. It follows that $L_{C_-}$ is an isomorphism of rational vector spaces that identifies $C_-$ with $\Amp(X_-)$. More generally, for $\eta\in \overline{C_-}\cup \overline{C_+}$, the columns of the matrix are chosen to ensure that the $\eta$-graded piece $\kk[V]_\eta$ of the coordinate ring of $V$ is isomorphic to the vector space $H^0(X_-,L_{C_-}(\eta))$, so $X_\eta$  is isomorphic to $\Proj$ of the section ring of $L_{C_-}(\eta)$. In particular, $\Xtheta{_+}\cong X_+$ and the rational map $\Xtheta{_-}\dashrightarrow \Xtheta{_+}$ induced by crossing the wall separating $C_-$ and $C_+$ is the flop $\psi\colon X_-\dashrightarrow X_+$. Thus, both VGIT morphisms  $\tau_\pm$ are small, so the claim follows.
 
 Note, however, that while $L_{C_-}$ is an isomorphism, it fails to identify $C_+$ with $\Amp(X_+)$, so $L_{C_-}$ does not identify $R_{C_-}$ with $\Mov(\Xtheta{_-})$. In fact, the cone $C_+$ is generated by the vectors $(-1,1,0)$ and $(0,1,-1)$, so the cone $L_{C_-}(C_+)$ is generated by $L_{C_-}((-1,1,0)) = L_0^{-1}\otimes L_1\otimes L_2^0\cong L_1=\begin{bsmallmatrix}2\\0\end{bsmallmatrix}$ and $L_{C_-}((0,1,-1)) = L_0^0\otimes L_1\otimes L_2^{-1} = \begin{bsmallmatrix}2\\-1\end{bsmallmatrix}$, giving
 \[
 L_{C_-}(C_+)=\big\{L_1^{\alpha}\otimes (L_1\otimes L_2^{-1})^{\beta} \mid \alpha, \beta>0\big\} = \big\{\alpha \begin{bsmallmatrix}2\\0\end{bsmallmatrix} + \beta \begin{bsmallmatrix}2\\-1\end{bsmallmatrix} \mid \alpha, \beta >0\big\}.
 \]
 It follows that $L_{C_-}$ identifies the character $\eta=(0,1,-1)$ in the boundary of $R_{C_-}$ with the ample bundle $\begin{bsmallmatrix}2\\-1\end{bsmallmatrix}$ on $X_+$, so the induced VGIT morphism $\Xtheta{_+}\to X_{\eta}$ is an isomorphism rather than a divisorial or fibre type contraction.
 \end{example}
 
\subsection{The main result}
We now establish the main geometric consequences of Condition~\ref{cond:GIT}. 
For $C$ a GIT chamber, $\theta \in C$, and $\eta \in \overline{C}$, let $X_\theta \to \widetilde{X}_\eta$ be the Stein factorisation of the VGIT morphism $X_\theta \to X_\eta$. 

\begin{lem}
\label{lem:Assumptiongivesnefissemiample}
Let $C$ be a chamber satisfying Condition~\ref{cond:GIT}, and set $X=\Xtheta{}$ for $\theta\in C$. Then:
\begin{enumerate}
\item[\one] the isomorphism $L_{C}$ identifies $C$ and $\overline{C}$ with $\Amp(X/Y)$ and $\Nef(X/Y)$ respectively;
\item[\two] each $L\in \Pic(X/Y)$ that is nef over $Y$ is also semiample over $Y$; 
\item[\three] for $\eta\in \overline{C}$,
the section ring $R(X,L_C(\eta))$ is a finitely generated $\mathcal{O}_Y$-algebra, and moreover, $\widetilde{X}_\eta$ is isomorphic to  
$\Proj R(X,L_C(\eta))$; and
\item[\four] $\Pic(X/Y)_\QQ\cong N^1(X/Y)$.
\end{enumerate}

\end{lem}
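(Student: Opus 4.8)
The plan is to prove the four parts in the stated order, since each depends on its predecessors, with \four\ dropping out at the end. For \one, observe first that for $\theta\in C$ the class $L_C(\theta)$ is the tautological bundle $\mathcal O(1)$ on $X=\Proj_Y(\bigoplus_j\kk[V]_{j\theta})$, hence relatively ample over $Y$, so $L_C(C)\subseteq\Amp(X/Y)$ by linearity. The decisive step is to show that $L_C$ carries every facet of $\overline C$ into $\partial\Nef(X/Y)$. Given $\theta_0$ general in such a facet, the VGIT morphism $\tau\colon X\to X_{\theta_0}$ satisfies $L_C(\theta_0)=\tau^*\mathcal O(1)$ by \eqref{eqn:O1pullback}, so $L_C(\theta_0)$ is nef. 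Each facet of $\overline C$ is either an interior wall of $R_C$, hence flipping by Condition~\ref{cond:GIT}(2), or a boundary wall of $R_C$, hence divisorial or of fibre type by Condition~\ref{cond:GIT}(3); in every case $\tau$ contracts a proper curve $\ell$, whence $L_C(\theta_0)\cdot\ell=\mathcal O(1)\cdot\tau_*\ell=0$ and $L_C(\theta_0)$ is not ample. Thus $L_C(C)\subseteq\Amp(X/Y)$ while $L_C(\partial\overline C)\subseteq\partial\Nef(X/Y)$. Since $L_C$ is a linear isomorphism (Condition~\ref{cond:GIT}(1)) and $\overline C$ is full-dimensional, a routine convexity argument upgrades this to $L_C(\overline C)=\Nef(X/Y)$ and $L_C(C)=\Amp(X/Y)$: a full-dimensional closed convex subcone of $\Nef(X/Y)$ whose boundary misses $\Amp(X/Y)$ must equal $\Nef(X/Y)$.

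For \two, let $L\in\Pic(X/Y)$ be nef over $Y$. By \one\ we have $L\in\Nef(X/Y)=L_C(\overline C)$, so $L=L_C(\eta)$ for some $\eta\in\overline C$. The inclusion $V^\theta\subseteq V^\eta$ induces a VGIT morphism $\tau\colon X\to X_\eta$ with $L_C(\eta)=\tau^*\mathcal O(1)$ by \eqref{eqn:O1pullback}, where $\mathcal O(1)$ is relatively ample, hence relatively basepoint-free after passing to a power. Its pullback $L_C(\eta)^m=\tau^*\mathcal O(m)$ is then basepoint-free over $Y$, so $L=L_C(\eta)$ is semiample over $Y$.

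For \three, finite generation of $R(X,L_C(\eta))$ with $\eta\in\overline C$ follows from \two\ together with the proposition identifying semiample bundles with finitely generated section rings. To identify $X_\eta$ with $\Proj_Y R(X,L_C(\eta))$, I would use the same morphism $\tau\colon X\to X_\eta$, passing (as in the conventions for diagram~\eqref{eqn:flop}) to the normalisation of its Stein factorisation so that $X_\eta$ is normal and $\tau_*\mathcal O_X=\mathcal O_{X_\eta}$. Writing $g\colon X_\eta\to Y$, the projection formula then gives
\[
R(X,L_C(\eta))=\bigoplus_{m\ge 0}f_*\tau^*\mathcal O(m)=\bigoplus_{m\ge 0}g_*\big(\mathcal O(m)\otimes\tau_*\mathcal O_X\big)=\bigoplus_{m\ge 0}g_*\mathcal O(m),
\]
and since $\mathcal O(1)$ is relatively ample this is the section ring of an ample bundle on $X_\eta$, whose relative $\Proj$ is $X_\eta$. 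Equivalently, $\tau$ contracts exactly the curves $\ell$ with $L_C(\eta)\cdot\ell=0$, so it is the canonical contraction attached to the semiample bundle $L_C(\eta)$ and hence coincides with the morphism $X\to\Proj_Y R(X,L_C(\eta))$.

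Part \four\ is then immediate: \two\ verifies the hypothesis of Corollary~\ref{cor:PicN1}, giving $\Pic(X/Y)_\QQ\cong N^1(X/Y)$. I expect the main obstacle to be the identification in \three\ rather than any inequality, precisely because $V$ is not assumed normal: the ring $\bigoplus_m\kk[V]_{m\eta}$ computing $X_\eta$ need not match $\bigoplus_m H^0(X,L_C(m\eta))\cong\bigoplus_m\kk[V^\theta]_{m\eta}$ from Lemma~\ref{lem:sections} term by term, so the argument should proceed through the canonical contraction of the semiample bundle (via the normalised Stein factorisation) rather than a degree-by-degree comparison of the two rings. The convexity step in \one\ is the secondary point requiring care, where one genuinely needs $L_C$ to be an isomorphism to pass from facet-by-facet information to the global equality $L_C(\overline C)=\Nef(X/Y)$.
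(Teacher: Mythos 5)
Your proposal is correct and follows essentially the same route as the paper: ampleness of $L_C(\theta)$ for $\theta\in C$, Condition~\ref{cond:GIT}(2)--(3) forcing every wall of $\overline{C}$ to map into $\partial\Nef(X/Y)$, the identity $L_C(\eta)=\tau^*\mathcal{O}(1)$ from \eqref{eqn:O1pullback} for semiampleness and for identifying $X_\eta$ with $\Proj_Y R(X,L_C(\eta))$, and Corollary~\ref{cor:PicN1} for \four. Your extra care in \three\ (routing through the Stein factorisation rather than a degree-by-degree comparison of $\bigoplus_m\kk[V]_{m\eta}$ with $\bigoplus_m\kk[V^\theta]_{m\eta}$) and the explicit convexity argument in \one\ only flesh out steps the paper leaves implicit.
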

\begin{proof} 
 For part \one, the line bundle $L_{C}(\theta)$ is ample, so the isomorphism $L_{C}$ from Condition~\ref{cond:GIT}(1) identifies $C$ with the interior of a top-dimensional polyhedral cone in $\Nef(X/Y)$. Let $\theta_0$ be general in a wall of $\overline{C}$. If this wall passes through the interior of $R_C$, then the morphism $\tau\colon X\to \Xtheta{_0}$ induced by VGIT contracts at least one curve by Condition~\ref{cond:GIT}(2). Otherwise, it's a boundary wall of $R_C$, in which case  $\tau\colon X\to \Xtheta{_0}$ is of fibre type or it is a divisorial contraction. Thus, $L_{C}$ sends the boundary of $\overline{C}$ into the boundary of $\Nef(X/Y)$, so in fact $L_{C}$ identifies $C$ with $\Amp(X/Y)$ and $\overline{C}$ with $\Nef(X/Y)$.
 
 For part \two, any class in the interior of $\Nef(X/Y)$ is ample and hence semiample. For $L$ in the boundary of $\Nef(X/Y)$, it follows that $\eta:=L_{C}^{-1}(L)$ lies in the boundary of $C$. After multiplying by some $m>0$ if necessary, \eqref{eqn:O1pullback} shows that $L_C(m\eta) = \tau^*(\mc{O}(1))$ for the morphism $\tau\colon X\to X_{m\eta}$ over $Y$ induced by VGIT. Thus, $L^m=\tau^*\big(\mc{O}(1)\big)$ is basepoint-free, so $L$ is semiample over $Y$.
 
 For part \three, let $\eta\in \overline{C}$. Parts \one\ and \two\ imply that $L_C(\eta)$ is semiample, so the section ring $R(X,L_C(\eta))$ is finitely generated by a theorem of Zariski~\cite[Example~2.1.30]{LazarsfeldI}, and hence the model $X(L_C(\eta))$ is well-defined. Since $\eta$ lies in a face of $\overline{C}$,  the VGIT morphism $\tau\colon X\to X_\eta$ satisfies $L_C(\eta)\cong\tau^*(\mathcal{O}_{X_\eta}(1))$ by \eqref{eqn:VGITdiagram}. The Stein factorisation $\widetilde{\tau}\colon X_\theta \to \widetilde{X}_\eta$ then gives $L_C(\eta)\cong\widetilde{\tau}^*(\mathcal{O}(1))$, where $\mathcal{O}(1)$ is ample on $\widetilde{X}_\eta$. Since $\widetilde{\tau}_*(\mathcal{O}_X)\cong \mathcal{O}_{\widetilde{X}}$, we have for each $m\geq 0$ that
 \begin{equation}
     \label{eqn:normalisation}
 f_*L_C(\eta)^m
 \cong  
 (f_\eta)_*\big(\widetilde{\tau}_*(\widetilde{\tau}^*(\mathcal{O}(m))\big)
 \cong 
 (f_\eta)_*\big(\mathcal{O}(m)\otimes \widetilde{\tau}_*(\mathcal{O}_X)\big)
 \cong 
 (f_\eta)_* \mathcal{O}(m)
 \end{equation}
 on $Y$, where $f_\eta\colon \widetilde{X}_\eta\to Y$ satisfies $f=f_\eta\circ \tau$. Therefore, the section rings of $L_C(\eta)$ on $X$ and $\mathcal{O}(1)$ on $\widetilde{X}_\eta$ are isomorphic, so $\widetilde{X}_\eta\cong \Proj R(X,L_C(\eta))$ as required. 
 
 Part \four\ follows by combining part \two\ with Corollary~\ref{cor:PicN1}.
\end{proof}

\begin{lem}
\label{lem:Ass1all C'}
 If one GIT chamber $C$ satisfies Condition~\ref{cond:GIT}, then every GIT chamber $C'\subset R_C$ satisfies Condition~\ref{cond:GIT}. In particular, the statement of Lemma~\ref{lem:Assumptiongivesnefissemiample} holds for each chamber in $R_C$. 
\end{lem}
\begin{proof}
Conditions~\ref{cond:GIT}(2) and (3) are independent of the choice of chamber $C'\subset R_C$, so it suffices to prove Condition~\ref{cond:GIT}(1) holds for $C'$. There are only finitely many GIT chambers \cite[Theorem~2.4]{Thaddeus96}, so we proceed by induction. We know Condition~\ref{cond:GIT}(1) holds for $C$.  For the induction step, let $C_+$ and $C_-$ be adjacent chambers in $R_C$ separated by a GIT wall, where Condition~\ref{cond:GIT}(1) holds for $C_-$. Let $\theta_+\in C_+$ and $\theta_-\in C_-$. Since the wall separating $C_+$ and $C_-$ lies in the interior of $R_C$, Condition~\ref{cond:GIT}(2) gives that $X_{\theta_+}$ is normal, and Proposition~\ref{prop:floppingwall} implies that the birational map $\psi \colon \Xtheta{_-}\dashrightarrow \Xtheta{_+}$ from diagram \eqref{eqn:VGITdiagram} is an isomorphism in codimension one such that 
   \begin{equation}
  \label{eqn:LC-=LC+}
  L_{C_-}(\eta) \cong\psi^*L_{C_+}(\eta)\quad\text{ for all }\eta\in G^\vee_{\mathbb{Q}}.
  \end{equation}
  Since $X_{\theta_-}$ is $\QQ$-factorial and $L_{C_-}$ is surjective, then $X_{\theta_+}$ is $\QQ$-factorial by Proposition~\ref{prop:floppingwall}. Pushforward along $\psi$ identifies the class groups of $X_{\theta_\pm}$, and hence $\QQ$-factoriality implies that $\psi^*$ identifies the rational Picard groups $\Pic(X/Y)_\QQ\cong \Pic(\Xtheta{_-}/Y)_\QQ\cong \Pic(\Xtheta{_+}/Y)_\QQ$. Since $L_{C_-}$ is an isomorphism, then so is $L_{C_+}$
by \eqref{eqn:LC-=LC+}. 
 This shows that Condition~\ref{cond:GIT}(1) holds for $C_+$, so it holds for each chamber in $R_C$ by induction. This completes the proof of the first statement, while the second statement follows by applying the proof of Lemma~\ref{lem:Assumptiongivesnefissemiample} verbatim to each chamber $C'$ in $R_C$. 
\end{proof}

\begin{remark}
\label{rem:LCcompatible}
The proof of Lemma~\ref{lem:Ass1all C'} shows that for each chamber $C'\subset R_C$ and any $\theta'\in C'$, the linearisation map $L_{C'}$ is equal to $L_C$, up to the identification $\Pic(X/Y)_{\QQ}\cong \Pic(\Xtheta{^\prime}/Y)_{\QQ}$.
\end{remark}

By taking the composition of the linearisation map $L_{C}$ with the isomorphism from Lemma~\ref{lem:Assumptiongivesnefissemiample}\four, we may identify the target of $L_{C}$ with $N^1(X/Y)$ whenever Condition~\ref{cond:GIT} holds.

\begin{thm}
\label{thm:movable}
  Suppose that Condition~\ref{cond:GIT} holds. Then:
  \begin{enumerate}
      \item[\one] the linearisation map $L_{C}\colon G^{\vee}_{\mathbb{Q}} \rightarrow N^1(X/Y)$
 is an isomorphism that identifies the GIT decomposition of the region $R_C$ with the Mori chamber decomposition
 of $\Mov(X/Y)$; and
      \item[\two] for any $\eta\in R_C$, the section ring $R(X,L_C(\eta))$ is a finitely generated $\mathcal{O}_Y$-algebra, and the model $\Proj R(X,L_C(\eta))$ is the variety $\widetilde{X}_\zeta$ appearing in the Stein factorisation of $X_{\theta'} \to X_\zeta$ for $\theta'$ in a GIT chamber $C'$ whose boundary contains $\zeta$.
 \end{enumerate} 
  In particular, every  small birational model of $X$ over $Y$ can be obtained as a GIT quotient of the form $\widetilde{X}_\eta$ for some $\eta$ in the interior of $R_C$. 
\end{thm}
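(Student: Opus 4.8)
The plan is to globalise the per-chamber statements of Lemmas~\ref{lem:Assumptiongivesnefissemiample} and~\ref{lem:Ass1all C'} across the whole region $R_C$, and then to match the two cones $L_C(R_C)$ and $\Mov(X/Y)$ by comparing their boundaries. First, Condition~\ref{cond:GIT}(1) makes $L_C\colon G^\vee_\QQ\to\Pic(X/Y)_\QQ$ an isomorphism, and composing with the isomorphism $\Pic(X/Y)_\QQ\cong N^1(X/Y)$ of Lemma~\ref{lem:Assumptiongivesnefissemiample}\four\ shows that $L_C\colon G^\vee_\QQ\to N^1(X/Y)$ is an isomorphism. By Lemma~\ref{lem:Ass1all C'} every chamber $C'\subset R_C$ again satisfies Condition~\ref{cond:GIT}, so Lemma~\ref{lem:Assumptiongivesnefissemiample}\one\ identifies $\overline{C'}$ with $\Nef(X'/Y)$, where $X':=\Xtheta{'}$ for $\theta'\in C'$. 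Each interior wall of $R_C$ is a flipping wall by Condition~\ref{cond:GIT}(2), so Proposition~\ref{prop:floppingwall} supplies a birational map $\psi_{C'}\colon X\dashrightarrow X'$ that is an isomorphism in codimension one, together with the compatibility \eqref{eqn:LC-=LC+}; with Remark~\ref{rem:LCcompatible} this transports each nef cone into $N^1(X/Y)$ and gives
\[
L_C(R_C)=\bigcup_{C'\subset R_C}L_C(\overline{C'})=\bigcup_{C'\subset R_C}\psi_{C'}^*\Nef(X'/Y),
\]
a union of nef cones of small birational models of $X$ over $Y$.

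The heart of part \one\ is the identity $L_C(R_C)=\Mov(X/Y)$. The inclusion $\subseteq$ is immediate, since a nef class on the small model $X'$ is movable and movability is preserved under the codimension-one isomorphism $\psi_{C'}$. For the reverse inclusion I would observe that the codimension-one faces of $L_C(R_C)$ are exactly the images of the boundary walls of $R_C$, the interior (flipping) walls lying in the interior of $L_C(R_C)$. By Condition~\ref{cond:GIT}(3), a boundary wall, approached from its adjacent chamber $C_-\subset R_C$, yields a VGIT morphism $\tau\colon\Xtheta{_-}\to\Xtheta{_0}$ that is divisorial or of fibre type, with $L_C(\theta_0)=\tau^*\mathcal{O}(1)$ by \eqref{eqn:O1pullback}. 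If $\tau$ is of fibre type then $L_C(\theta_0)$ is not big; if $\tau$ is divisorial then crossing the wall out of $R_C$ forces the contracted divisor into the stable base locus as a fixed component. In either case $L_C(\theta_0)\in\partial\Mov(X/Y)$, so $\partial\big(L_C(R_C)\big)\subseteq\partial\Mov(X/Y)$. Since $L_C(R_C)\subseteq\Mov(X/Y)$ are full-dimensional closed cones and $\Mov(X/Y)$ is convex, the segment from a point of $\Amp(X/Y)$ to any point of $\Mov(X/Y)$ meets $\partial(L_C(R_C))$ only inside $\partial\Mov(X/Y)$, which forces $L_C(R_C)=\Mov(X/Y)$. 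To match the decompositions, Lemma~\ref{lem:Assumptiongivesnefissemiample}\three\ shows the model $X_\eta\cong X'$ is constant along $C'$, so $L_C(C')$ is an open Mori equivalence class; a flipping wall separates non-isomorphic models, so each $L_C(\overline{C'})$ is precisely a Mori chamber, and these exhaust $\Mov(X/Y)$. This proves \one.

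Part \two\ is then local. For $\eta\in R_C$ choose a chamber $C'\subset R_C$ with $\eta\in\overline{C'}$ and apply Lemma~\ref{lem:Assumptiongivesnefissemiample}\three\ to $C'$: the section ring $R(X',L_{C'}(\eta))$ is finitely generated and $X_\eta\cong\Proj R(X',L_{C'}(\eta))$. Because $\psi_{C'}$ is an isomorphism in codimension one between normal varieties and $L_C(\eta)=\psi_{C'}^*L_{C'}(\eta)$ by \eqref{eqn:LC-=LC+}, reflexivity of line bundles gives $R(X,L_C(\eta))\cong R(X',L_{C'}(\eta))$ as graded $\mathcal{O}_Y$-algebras, transporting both conclusions to $X$. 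For the final assertion, suppose $X$ is $\QQ$-factorial and let $W$ be a small birational model of $X$ over $Y$, with $\psi\colon X\dashrightarrow W$ an isomorphism in codimension one. Pick a class $A$ on $W$ ample over $Y$; since $X$ is $\QQ$-factorial its strict transform $L:=\psi^*A$ is $\QQ$-Cartier, so $L\in\Pic(X/Y)_\QQ$ and $W\cong X(L)$. As $L$ is big and movable, $L\in\Mov(X/Y)=L_C(R_C)$ by part \one, say $L=L_C(\eta)$ with $\eta\in R_C$. Were $\eta$ on a boundary wall, the associated VGIT morphism would be divisorial or of fibre type, contradicting that $\psi$ is a birational isomorphism in codimension one; hence $\eta\in\operatorname{int}R_C$, and part \two\ yields $X_\eta\cong\Proj R(X,L_C(\eta))=X(L)\cong W$.

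The step I expect to be the main obstacle is the equality $L_C(R_C)=\Mov(X/Y)$, and within it the claim that every boundary wall lands on $\partial\Mov(X/Y)$. The divisorial case is the delicate one: one must show that the contracted divisor genuinely becomes a fixed component of the stable base locus immediately across the wall, so that $\tau^*\mathcal{O}(1)$ cannot be interior to the movable cone despite being itself movable; the fibre-type case reduces to the failure of bigness. The accompanying convexity argument pinning $L_C(R_C)$ exactly against $\Mov(X/Y)$, and the bookkeeping ensuring that the cones $\psi_{C'}^*\Nef(X'/Y)$ glue without gaps or overlaps across all of $R_C$, is where the global argument must be assembled with care.
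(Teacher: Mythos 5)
Your proof is correct and follows essentially the same route as the paper's own: the decomposition $L_C(R_C)=\bigcup_{C'\subset R_C}\psi_{C'}^*\Nef(\Xtheta{'}/Y)$ obtained from Lemma~\ref{lem:Ass1all C'} and Remark~\ref{rem:LCcompatible}, boundary walls sent into $\partial\Mov(X/Y)$ via Condition~\ref{cond:GIT}(3), and part \two\ plus the final $\QQ$-factorial statement handled exactly as in the paper (where $\QQ$-factoriality likewise enters to make the strict transform of an ample class $\QQ$-Cartier). Your explicit convexity (segment) argument pinning $L_C(R_C)=\Mov(X/Y)$ and the fixed-component analysis at divisorial boundary walls simply spell out steps the paper leaves implicit.
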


\begin{remark}
Theorem~\ref{thm:movable} establishes that the region $R_C$ from Condition~\ref{cond:GIT} is the convex polyhedral cone $L_{C}^{-1}\big(\Mov(X/Y)\big)$. 

Conversely, for a given GIT set-up, suppose that there is a chamber $C$ with $X=\Xtheta{}$ for $\theta\in C$, such that $L_C$ is an isomorphism of fans between $R_C$ and $\Mov(X/Y)$. 
If, in addition, each VGIT morphism for an interior wall of $R_C$ has unstable locus of codimension at least two, then all parts of Condition~\ref{cond:GIT} are satisfied.
\end{remark}

\begin{proof}
Suppose first that $R_C$ contains a unique GIT chamber, i.e.\ $\overline{C}=R_C$. For every wall of $C$ and any $\theta_0$ that is general in the wall, Condition~\ref{cond:GIT}(3) implies that the Stein factorisation of the induced VGIT morphism $\tau\colon X\to \Xtheta{_0}$ is of fibre type or it is a divisorial contraction, so $\Nef(X/Y)=\Mov(X/Y)$. The identification of $\overline{C}$ with $\Nef(X/Y)$ and the isomorphism from $\widetilde{X}_\eta$ to $\Proj R(X,L_C(\eta))$ for all relevant $\eta\in \overline{C}$ were established in Lemma~\ref{lem:Assumptiongivesnefissemiample}. This proves \one\ and \two\ when $\overline{C}=R_C$.
  
For the general case, we noted in Remark~\ref{rem:LCcompatible} 
 that the linearisation maps $L_{C'}$ for all $C'\subset R_C$ are compatible with taking the strict transform along the appropriate birational map $\psi_{C'}$. Thus, for the chamber $C$ from Condition~\ref{cond:GIT} and for any other chamber $C'\subset R_C$ with $\theta'\in C'$, we obtain $L_C\big(\overline{C^\prime}\big) = \psi_{C'}^*L_{C^\prime}\big(\overline{C^\prime}\big) = \psi_{C'}^*\big(\Nef(\Xtheta{^\prime}/Y)\big)$ by \eqref{eqn:LC-=LC+} and Lemma~\ref{lem:Ass1all C'}. Applying $L_C$ to the obvious decomposition $R_C =  \bigcup_{C^\prime\subset R_C}  \overline{C^\prime}$ gives
 \begin{equation}
     \label{eqn:LCFdecomp}
 L_C(R_C) =  \bigcup_{C^\prime\subset R_C} \psi_{C'}^* \big(\Nef(\Xtheta{^\prime}/Y)\big).
  \end{equation}
  Any wall in the boundary of $L_C(R_C)$ therefore lies in the boundary of $\Nef(\Xtheta{^\prime}/Y)
$ for some $C'\subset R_C$ and $\theta'\in C'$. Condition~\ref{cond:GIT}(3) ensures that if $\theta_0$ is general in the corresponding boundary wall of $R_C$, then the Stein factorisation of the induced VGIT morphism $\tau\colon \Xtheta{^\prime}\to \Xtheta{_0}$  is either of fibre type or it is a divisorial contraction, so $L_C$ sends the boundary of $R_C$ into the boundary of $\Mov(X/Y)$. It follows from \eqref{eqn:LCFdecomp} that $L_C$ maps GIT chambers in $R_C$ to the open ample cones in $\Mov(X/Y)$ of small birational models of
$X$ over $Y$. More generally it maps GIT cones in $R_C$ to cones of the Mori decomposition of $\Mov(X/Y)$. This proves \one.

For \two, let $\eta\in R_C$ and let $C^\prime\subset R_C$ denote any chamber such that $\eta\in \overline{C'}$. Lemma~\ref{lem:Ass1all C'} implies that the codomain of the Stein factorisation of $X_{\theta'}\to X_\eta$ for $\theta'\in C'$ satisfies $\widetilde{X}_\eta\cong X(L_{C'}(\eta))$. Since both $C$ and $C^\prime$ are contained in $R_C$, the linearisation maps $L_{C'}$ and $L_C$ are compatible with taking the strict transform along the appropriate birational map, giving $\widetilde{X}_\eta \cong X(L_{C'}(\eta))\cong X(L_{C}(\eta))$.

For the final statement, $X$ is $\QQ$-factorial and hence so is $X_{\eta}$ for every generic $\eta\in R_C$ by Lemma~\ref{lem:Ass1all C'}. Let $X'$ be a small birational model of $X$ over $Y$. Then there is a line bundle $L'$ in the interior of $\Mov(X/Y)$ satisfying $X(L')\cong X'$. The character $\theta':=L_C^{-1}(L')$ lies in the interior of $R_C$ and satisfies $\widetilde{X}_{\theta'}\cong X'$ as required.
\end{proof}

\begin{example}
In Theorem~\ref{thm:movable}, if $X$ were not $\QQ$-factorial, then it does not necessarily follow that every $\QQ$-factorial small 
birational model of $X$ can be obtained by variation of GIT quotient. For example, if $G$ is trivial and $V$ is the locus of square-zero $n \times n$ matrices, then $X=Y=V$. Condition~\ref{cond:GIT} applies because $\Pic(X)$ is trivial and hence $G^\vee_{\QQ}\cong N^1(X/Y)=\{0\}$, so Theorem~\ref{thm:movable} applies. However, $X$ admits a projective crepant resolution $T^* \mathbb{P}^{n-1}$ which is a $\QQ$-factorial small birational model that is not obtained by varying the trivial quotient. In this case, this resolution is a quiver variety and  can be obtained by varying a different GIT quotient; see Corollary~\ref{cor:resgivenbyquiver}.
\end{example}

 This result allows us to draw conclusions about the GIT quotients directly from known results in birational geometry as follows.

\begin{cor}
\label{cor:sbm}
Suppose that Condition~\ref{cond:GIT} holds, and let $\theta, \theta'\in R_C$. Then:
\begin{enumerate}
    \item[\one] $\Xtheta{}$ is isomorphic to $\Xtheta{'}$ over $Y$ if and only if $\theta, \theta'$ lie in the same GIT cone; 
   \item[\two] if $\theta$ is generic and $\theta'$ in the interior of $R_C$, then the normalisation $\widetilde{X}_{\theta'}$ 
   of $\Xtheta{'}$ is the (not necessarily $\mathbb{Q}$-factorial) small birational model of $\Xtheta{}$ over $Y$ given by the line bundle $L_C(\theta')$;
   \item[\three] for any $\theta$ in the interior of $R_C$, the variety $\Xtheta{}$ is $\QQ$-factorial if and only if $\theta$ is generic;
   \item[\four] the dimension of $N^1(\widetilde{X}_{\eta}/Y)$ equals the dimension of the minimal GIT cone containing $\eta$, where for any GIT chamber $C'$ with $\eta\in \overline{C'}$ and $\theta'\in C'$, we write $X_{\theta'}\to \widetilde{X}_{\eta}$ for the Stein factorisation of the VGIT morphism $X_{\theta'}\to X_{\eta}$.
     \end{enumerate}
\end{cor}
\begin{proof}
 For \one, the models $X(L)$ and $X(L')$ associated to $L, L'\in \Pic(X/Y)$ are isomorphic over $Y$ if and only if $L, L'$ lie in the same face of the decomposition of $\Mov(X/Y)$ from \eqref{eqn:LCFdecomp}. Thus, \one\ follows from Theorem~\ref{thm:movable}.
 
 For \two, the interior of $R_C$ is identified with the interior of $\Mov(X/Y)$, so the GIT quotients $\Xtheta{'}$ and $\Xtheta{}$ are isomorphic in codimension one over $Y$.  Moreover, if $C' \subseteq R_C$ is a GIT chamber with $\theta' \in \overline{C'}$, then the codomain of the Stein factorisation $\Xtheta{''} \to \widetilde{X}_{\theta'}$ of the 
 VGIT morphism for $\theta'' \in C'$ is just the normalisation of $\Xtheta{'}$, as noted after \eqref{eqn:flopStein}. The result then follows from Theorem~\ref{thm:movable}\two.

 For \three, one direction was noted in Lemma~\ref{lem:Ass1all C'} while for the other, it is well-known that the base of a flip is not $\QQ$-factorial. Indeed, if $\Xtheta{_0}$ were $\QQ$-factorial, then in the notation of Proposition~\ref{prop:floppingwall}, any Cartier divisor $D$ satisfying $\mathcal{O}_{\Xtheta{_-}}(D)=L_{C_-}(\theta_+)$ would define a Weil divisor $(\tau_-)_*(D)$ on $\Xtheta{_0}$, making $mD$ and $(\tau_-)^*(\tau_-)_*(mD)$ linearly equivalent for some $m>0$. However, the intersection numbers of these divisors with respect to a curve contracted by $\tau_-$ are negative and zero respectively.
 
 For \four,  let $F$ be the minimal GIT cone containing $\eta$, let $C'\subset R_C$ be a chamber containing $F$ in its closure, and let $\theta'\in C'$. Lemma~\ref{lem:Ass1all C'} implies that $L_{C}$ identifies  $F$ with the minimal face of $\Nef(\Xtheta{^\prime}/Y)$ containing $L_C(\theta)$. By \eqref{eqn:ellC}, every line bundle in the interior of this face is the pullback of an ample bundle via the Stein factorisation $\widetilde{\tau}\colon X_{\theta'}\to \widetilde{X}_\eta$ of the VGIT morphism $\tau\colon \Xtheta{^\prime}\to X_\eta$, so $\dim L_C(F) \leq \dim (\widetilde{\tau})^*N^1(\widetilde{X}_{\eta}/Y)$. On the other hand, $L_C(F)$ is dual via the intersection pairing to the face $\sigma$ of the Mori cone of curves generated by the numerical classes of curves contracted by $\widetilde{\tau}$, so $\dim L_C(F) = \dim (\sigma^\perp)$ for $\sigma^\perp =\{L\in N^1(\widetilde{X}_{\theta'}/Y) \mid \deg L\vert_\ell =0 \;\forall \; \ell\in \sigma\}$. The pullback via $\widetilde{\tau}$ of any class in $N^1(\widetilde{X}_\eta/Y)$ has degree zero on each generator of $\sigma$, so $(\widetilde{\tau})^*N^1(\widetilde{X}_\eta/Y)\subseteq \sigma^\perp$ and hence $\dim (\widetilde{\tau})^*N^1(\widetilde{X}_\eta/Y)\leq \dim (\sigma^\perp) = \dim L_C(F)$. The map $\tau^*$ is injective, 
 so $\dim F = \dim L_C(F) = \dim N^1(\widetilde{X}_\eta/Y)$ as required.
 \end{proof}

 \subsection{Relative Mori Dream Spaces}
Example~\ref{exa:MDS} illustrates that the GIT construction of any Mori Dream Space via its Cox ring gives rise to a GIT chamber that satisfies our Condition~\ref{cond:GIT}. The next result provides a partial converse, but we emphasise that even for a Mori Dream Space, we are typically interested in applying our Condition~\ref{cond:GIT} for new GIT descriptions that do not involve the Cox ring directly.

\begin{cor}
\label{cor:mds}
 Suppose that Condition~\ref{cond:GIT} holds. Then the GIT quotient $X=\Xtheta{}$ is a Mori Dream Space over $Y$, i.e.:
 \begin{enumerate}
 \item[\one] $X$ is normal and $\QQ$-factorial;
 \item[\two] $\Pic(X/Y)_\QQ\cong N^1(X/Y);$
 \item[\three] the relative nef cone $\Nef(X/Y)$ is generated by finitely many semiample
line bundles; and
 \item[\four] there exists $k\geq 0$ and  $\QQ$-factorial varieties $X=X_0, X_1, \dots, X_k$, each projective over $Y$, as well as birational maps $\psi_i\colon X\dashrightarrow X_i$ over $Y$ for $0\leq i\leq k$ that are isomorphisms in codimension one, such that  
 \begin{equation}
 \label{eqn:movconedecomp}
\Mov(X/Y) = \bigcup_{0\leq i\leq k} \psi_i^*\;\Nef(X_i/Y),
\end{equation}
 where each cone in this description is generated by finitely many semiample line bundles.
 \end{enumerate}
\end{cor}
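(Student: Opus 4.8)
The plan is to read off the four defining properties of a relative Mori Dream Space from the structural results already established in this section, the only genuine work being a preliminary propagation of $\QQ$-factoriality across the whole of $R_C$. The hypothesis supplies a single generic $\theta\in R_C$, lying in some chamber $C'\subset R_C$, with $\Xtheta{}$ being $\QQ$-factorial. Any two chambers of $R_C$ are joined by a chain of adjacent chambers whose separating walls lie in the interior of $R_C$, and by Condition~\ref{cond:GIT}(2) every such wall is a flipping wall; moreover, Lemma~\ref{lem:Ass1all C'} guarantees that the linearisation map of each chamber in $R_C$ is an isomorphism, hence surjective. I would therefore apply Proposition~\ref{prop:floppingwall} repeatedly, once for each wall in the chain: since its hypotheses are symmetric in the two adjacent chambers, it transfers $\QQ$-factoriality across each wall in either direction, so that $\QQ$-factoriality propagates from $C'$ to the base chamber $C$, and thence to every chamber contained in $R_C$.

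Granting this, parts \one--\three\ are immediate. For \one, normality of $X$ is part of Condition~\ref{cond:GIT}(1), while $\QQ$-factoriality is the propagation just carried out. Part \two\ is precisely Lemma~\ref{lem:Assumptiongivesnefissemiample}\four. For \three, Lemma~\ref{lem:Assumptiongivesnefissemiample}\one\ identifies $\Nef(X/Y)$ with $L_C(\overline{C})$; the rational polyhedral cone $\overline{C}$ is finitely generated, and its generators are carried to semiample classes because every nef class is semiample by Lemma~\ref{lem:Assumptiongivesnefissemiample}\two.

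For \four\ I would invoke Theorem~\ref{thm:movable}\one, which gives $L_C(R_C)=\Mov(X/Y)$ together with the decomposition~\eqref{eqn:LCFdecomp}. Since there are only finitely many GIT chambers, $R_C$ is a union of finitely many chamber closures $\overline{C_0},\dots,\overline{C_k}$ with $C_0=C$; setting $X_i:=\Xtheta{_i}$ for $\theta_i\in C_i$, the induced birational maps $\psi_i\colon X\dashrightarrow X_i$ are isomorphisms in codimension one by Corollary~\ref{cor:sbm}\two, each $X_i$ is $\QQ$-factorial by the propagation step, and rewriting~\eqref{eqn:LCFdecomp} yields~\eqref{eqn:movconedecomp}. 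Finite generation of each $\Nef(X_i/Y)$ by semiample line bundles then follows by applying Lemma~\ref{lem:Assumptiongivesnefissemiample}\one--\three\ to the chamber $C_i$, exactly as in the proof of \three.

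The main obstacle is the propagation step, since the hypothesis asserts $\QQ$-factoriality at only one generic point of $R_C$, whereas the conclusion requires it at the base chamber $C$ and at every model $X_i$. The care needed is to verify that Proposition~\ref{prop:floppingwall} really applies symmetrically across each interior wall, that is, that normality, the codimension-two bound on the unstable locus, and surjectivity of the linearisation map all hold for both adjacent chambers; this is exactly what Condition~\ref{cond:GIT}(2) together with Lemma~\ref{lem:Ass1all C'} provides. Everything after this step is bookkeeping built on Theorem~\ref{thm:movable} and Lemma~\ref{lem:Assumptiongivesnefissemiample}.
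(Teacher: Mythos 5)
Your proposal is correct and follows essentially the same route as the paper: parts \one--\three\ are read off from Condition~\ref{cond:GIT}(1), Lemma~\ref{lem:Assumptiongivesnefissemiample} and Lemma~\ref{lem:Ass1all C'}, and part \four\ from the decomposition \eqref{eqn:LCFdecomp} together with $L_C(R_C)=\Mov(X/Y)$ from Theorem~\ref{thm:movable}. The only addition is your explicit propagation of $\QQ$-factoriality from the hypothesised chamber $C'$ to all chambers of $R_C$ via the symmetry of Proposition~\ref{prop:floppingwall}; this is sound, but is already packaged in the final statements of Lemma~\ref{lem:Ass1all C'} and Corollary~\ref{cor:sbm}\three\ (applied with $C'$ as base chamber, noting $R_{C'}=R_C$), which is why the paper can simply take $X=\Xtheta{}$ for the hypothesised $\theta$ and declare part \one\ to hold by assumption.
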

 \begin{proof}
 Let $C^\prime\subset R_C$ be the chamber with $\theta\in C^\prime$. Part \one\ holds by assumption, while part \two\ is Lemma~\ref{lem:Assumptiongivesnefissemiample}\four. For part \three, the closure $\overline{C'}$ is a polyhedral cone \cite[Theorems~2.3-2.4]{Thaddeus96}, and hence so is $\Nef(X/Y)$ by Lemma~\ref{lem:Ass1all C'}. 
 Any choice of cone generators for $\Nef(X/Y)$ are semiample over $Y$ by Lemma~\ref{lem:Ass1all C'}. 
 Part \four\ follows from the decomposition \eqref{eqn:LCFdecomp} and the equality of cones $L_C(R_C)=\Mov(X/Y)$ from Theorem~\ref{thm:movable}. 
 \end{proof}

 \begin{remark}
 \begin{enumerate}
     \item The birational maps that feature in Corollary~\ref{cor:mds} are all constructed by variation of GIT quotient, so we need not appeal to the existence of flips from~\cite{BCHM}.
 \item It is instructive to compare Condition~\ref{cond:GIT} with the criteria for a GIT quotient to be a Mori Dream Space given by Hu and Keel~\cite[Lemma~2.2]{HuKeel00} (see also Ohta~\cite[Theorem~6.7]{Ohta20}). While our Condition~\ref{cond:GIT}(1) is equivalent to their third and fourth criteria, our criteria (2) and (3) differ considerably from their first and second criteria. We show in Theorem~\ref{thm:mainquiverVars} that quiver varieties satisfy our Condition~\ref{cond:GIT}, so they are Mori Dream Spaces by Corollary~\ref{cor:mds}. However, the next example shows that even the simplest quiver varieties can fail to satisfy the Hu and Keel criteria.
 \end{enumerate}
 \end{remark}
 
\begin{example}
\label{exa:A1}
 The minimal resolution of the $A_1$ singularity $Y=\mathbb{V}(uv-w^2)\subset \mathbb{A}^3$ is obtained by variation of GIT quotient for quiver varieties associated to the graph with one node, and vectors $\bv=1$, $\bw = 2$. The quiver $Q$ is the McKay quiver for the cyclic group of order two in $\SL(2,\kk)$, and  dimension vector $\alpha=(1,1)$. In this case, $V = \mathbb{V}(ad-bc)\subset \mathbb{A}^4$ admits an action by $G=(\kk^\times)^2/\kk^\times$, and for $\theta = (-1,1)\in G^\vee$, the $\theta$-unstable locus is the intersection of $V$ with $\mathbb{V}(a,b)$. This locus is of codimension one in $V$, so $X=V\git_\theta \, G$ fails to satisfy the Hu and Keel criterion \cite[Lemma~2.2(1)]{HuKeel00}. However, this example satisfies Condition~\ref{cond:GIT} (see Theorem~\ref{thm:mainquiverVars}), so it is a Mori Dream Space over $Y$ by Corollary~\ref{cor:mds}.  
\end{example}

\begin{remark}
 If Condition~\ref{cond:GIT} holds, then combining Corollary~\ref{cor:mds} with the statement of Hu and Keel~\cite[Proposition~2.9]{HuKeel00} (see also Ohta~\cite[Proposition~6.9]{Ohta20}) implies that the Cox ring of the variety $\Xtheta{}$ is a finitely generated $\kk$-algebra for $\theta\in C$. Our Theorem~\ref{thm:movable} allows for an alternative description of the Cox ring of $X_{\theta}$, see \cite{BCSCox}.
\end{remark}

\subsection{Strong convexity and injectivity}
In practice, injectivity of $L_C$ can often follow from the following simple criterion.

A cone is called \emph{strongly convex} if the origin is a face. Given a fan, the origin is a cone if and only if all cones are strongly convex. 

Given a chamber $C$, let $K_C$ be the intersection of all supporting hyperplanes of the walls of $C$.  The following lemma is standard.
\begin{lem}
The chamber $C$ is a product of $K_C$ and a strongly convex cone.  In particular, $K_C$ is the largest linear subspace of $C$, the intersection of all faces of $C$, and the only face which is a linear subspace.
 \end{lem}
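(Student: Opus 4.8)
The plan is to reduce the statement to the standard decomposition of a polyhedral cone into its lineality space and a pointed (strongly convex) cone, after first identifying $K_C$ with the lineality space. Since $C$ is a chamber, its closure $\overline{C}$ is a full-dimensional rational polyhedral cone in $G^\vee_\QQ$. Being full-dimensional, $\overline{C}$ is cut out exactly by the half-spaces bounded by the supporting hyperplanes of its walls: writing each such hyperplane as $H_i=\ker(u_i)$ for a linear functional $u_i$ with $\overline{C}\subseteq\{u_i\geq 0\}$, we have $\overline{C}=\bigcap_i\{x:u_i(x)\geq 0\}$. First I would observe that $K_C=\bigcap_i H_i=\{x:u_i(x)=0\ \forall i\}$ coincides with the lineality space $L:=\overline{C}\cap(-\overline{C})$: indeed $x$ and $-x$ both lie in $\overline{C}$ if and only if $u_i(x)\geq 0$ and $u_i(x)\leq 0$ for all $i$, that is, if and only if $x\in K_C$. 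In particular $K_C$ is a linear subspace contained in $\overline{C}$, and it is the largest such, since any linear subspace $U\subseteq\overline{C}$ satisfies $U=-U$ and hence $U\subseteq\overline{C}\cap(-\overline{C})=K_C$.

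Next I would choose a vector-space complement $W$ with $G^\vee_\QQ=K_C\oplus W$ and set $\sigma:=\overline{C}\cap W$. Since $K_C\subseteq\overline{C}\cap(-\overline{C})$ and $\overline{C}$ is closed under addition, the cone $\overline{C}$ is stable under translation by $K_C$; thus every $x\in\overline{C}$ decomposes uniquely as $x=k+w$ with $k\in K_C$ and $w=x-k\in\overline{C}\cap W=\sigma$, while conversely $K_C+\sigma\subseteq\overline{C}$. This yields the product $\overline{C}=K_C\oplus\sigma$, and $\sigma$ is strongly convex because its lineality space $\sigma\cap(-\sigma)$ is contained in $\overline{C}\cap(-\overline{C})\cap W=K_C\cap W=\{0\}$. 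Passing to relative interiors gives the corresponding product decomposition of the open chamber $C$ itself.

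Finally I would read off the face statements from this product. The nonempty faces of $\overline{C}=K_C\oplus\sigma$ are precisely the sets $K_C\oplus\tau$ for faces $\tau$ of $\sigma$ (a linear subspace, viewed as a cone, has only itself as a face). Each such face contains $K_C$, and $K_C$ itself is the face obtained by taking $\tau=\{0\}$, equivalently by intersecting $\overline{C}$ with all the supporting hyperplanes $H_i$; hence $K_C$ is the smallest face and the intersection of all faces. Moreover a face $K_C\oplus\tau$ is a linear subspace if and only if $\tau=-\tau$, and since $\sigma$ is strongly convex this forces $\tau=\{0\}$, so $K_C$ is the unique face that is a linear subspace.

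The argument is entirely standard polyhedral geometry, so I do not expect a serious obstacle; the one point requiring genuine care is the identification $K_C=\overline{C}\cap(-\overline{C})$, which uses the full-dimensionality of $\overline{C}$ to guarantee that $\overline{C}$ is cut out exactly by the facet inequalities $u_i\geq 0$, with no additional supporting hyperplane needed. Everything else follows formally from the product decomposition.
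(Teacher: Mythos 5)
Your proof is correct and follows essentially the same route as the paper's: both are the standard decomposition of a polyhedral cone into its lineality space and a pointed cone, with the paper phrasing the splitting via the quotient $C/K_C$ and you via a chosen complement $W$. Your explicit identification $K_C=\overline{C}\cap(-\overline{C})$ (using full-dimensionality of $\overline{C}$ so that it is cut out by its facet inequalities) is a useful point that the paper's terser proof leaves implicit, but it is not a different argument.
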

\begin{proof}
Since all faces of $C$ are defined by intersections of half-spaces supported on the hyperplanes defining $K_C$, we see that $K_C$ is contained in all faces. This also shows that it is the largest linear subspace of $C$ and the intersection of all faces. For the first statement, $C/K_C$ is a strongly convex cone, and by taking a splitting of the ambient vector space, $C \cong C/K_C \times K_C$. 
\end{proof}
In particular, $K_C$ does not depend on the choice of chamber, and it is zero if and only if all cones are strongly convex. Let us simply call it $K$ from now on.

\begin{lem}
\label{lem:fixedpoint}
Suppose that $V$ has a $G$-fixed point. Then each GIT cone is strongly convex.
\end{lem}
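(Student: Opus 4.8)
The plan is to show that the GIT-equivalence class of the origin $0\in G^\vee_\QQ$ consists of the single point $\{0\}$. Since the GIT-equivalence classes are exactly the relative interiors of the GIT cones, this exhibits the origin itself as a cone of the GIT fan, and then every GIT cone is strongly convex by the elementary fact recorded just before the statement (``the origin is a cone if and only if all cones are strongly convex''). So the whole task reduces to producing, for each nonzero $\theta$, a point of $V$ that is $\theta$-unstable, and the $G$-fixed point is the natural candidate.

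First I would record how semi-invariants behave at the fixed point. Let $v_0\in V$ be $G$-fixed, and let $\chi\in G^\vee$ be a character that is non-torsion, i.e.\ nonzero in $G^\vee_\QQ$. For any $f\in \kk[V]_\chi$ the identity $f(v_0)=f(g\cdot v_0)=\chi(g)\,f(v_0)$ holds for every $g\in G$, because $g\cdot v_0=v_0$. As $\chi$ is non-torsion, its image in $\kk^\times$ is infinite, so there exists $g$ with $\chi(g)\neq 1$; hence $f(v_0)=0$. Thus every semi-invariant whose weight is nonzero in $G^\vee_\QQ$ vanishes at $v_0$.

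Next I would use this to locate an unstable point for every nonzero stability parameter. Fix $\theta\in G^\vee_\QQ$ with $\theta\neq 0$. Any function witnessing $\theta$-semistability lies in $\kk[V]_{j\theta}$ for some $j>0$, and $j\theta$ is again nonzero in $G^\vee_\QQ$; by the previous paragraph every such function vanishes at $v_0$. Therefore $v_0$ is not $\theta$-semistable, so $v_0\in V\setminus V^\theta$ and in particular $V^\theta\neq V$. On the other hand $V^0=V$, since the constant function $1\in \kk[V]^G=\kk[V]_0$ is nowhere vanishing. Consequently $V^\theta\neq V^0$ for every $\theta\neq 0$, which says precisely that the GIT-equivalence class of $0$ equals $\{0\}$, completing the argument.

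The only genuinely delicate point is the passage to rational coefficients: one must work in $G^\vee_\QQ$ rather than $G^\vee$ so that torsion characters (which can occur when $G$ is disconnected) are discarded, ensuring that a nonzero $\theta$ really does produce a character taking a value $\neq 1$ somewhere on $G$. Everything else is a direct evaluation at $v_0$ together with the cited description of the GIT-equivalence classes as the relative interiors of the GIT cones.
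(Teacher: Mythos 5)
Your proof is correct and follows essentially the same route as the paper: evaluate semi-invariants at the $G$-fixed point to conclude it is unstable for every nontrivial $\theta$, hence $V^\theta\neq V=V^0$ and $\{0\}$ is a GIT cone, forcing all cones to be strongly convex. The paper's version is a one-line assertion of the same facts; your added care about torsion characters in $G^\vee$ versus $G^\vee_\QQ$ is a legitimate (if minor) detail the paper leaves implicit.
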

\begin{proof} 
By definition, if $\theta$ is a nontrivial effective character, then any $G$-fixed point is $\theta$-unstable. So $V^{\theta}\neq V=V^0$, and hence $\{0\}$ is a GIT cone. 
\end{proof}
In particular, if $V$ is conical and the $G$ action is equivariant, then the origin is a $G$-fixed point.

\begin{prop}
\label{prop:stronglyconvex}
Suppose that $C$ is a GIT chamber such that the VGIT morphism $\tau$ from \eqref{eqn:VGITdiagram} obtained by choosing general $\theta_0$ in any wall of $\overline{C}$ contracts at least one curve. Then $\ker(L_{C}) \subseteq K$.
\end{prop}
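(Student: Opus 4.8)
The plan is to produce, for each wall of $C$, a nonzero linear functional on $G^\vee_\QQ$ whose kernel is exactly the supporting hyperplane of that wall and which annihilates $\ker(L_C)$; intersecting over all walls then forces $\ker(L_C)\subseteq K$, since $K$ is by definition the intersection of all supporting hyperplanes of the walls of $C$.

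First I would fix a wall $W$ of $\overline C$, with supporting hyperplane $H_W=\Span(W)$, and choose $\theta_0$ general in $W$. By hypothesis the VGIT morphism $\tau\colon X\to X_{\theta_0}$ of \eqref{eqn:VGITdiagram} contracts a proper curve $\ell\subseteq X$. Since $Y$ is affine, any line bundle pulled back from $Y$ restricts to degree zero on the complete curve $\ell$, so $\deg(-\vert_\ell)$ descends to $\Pic(X/Y)_\QQ$ and the composite $\phi_\ell(\zeta):=L_C(\zeta)\cdot \ell=\deg\!\big(L_C(\zeta)\vert_\ell\big)$ is a well-defined $\QQ$-linear functional $\phi_\ell\colon G^\vee_\QQ\to\QQ$.

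Next I would record two properties of $\phi_\ell$. For $\theta\in C$ the bundle $L_C(\theta)$ is ample over $Y$ (it is the polarising bundle $\mathcal O(1)$ of the GIT quotient), so the relative Kleiman criterion gives $\phi_\ell(\theta)>0$; in particular $\phi_\ell\not\equiv 0$, and by continuity $\phi_\ell\geq 0$ on $\overline C$. On the other hand, \eqref{eqn:O1pullback} gives $L_C(\theta_0)=\tau^*\mathcal O(1)$, and as $\ell$ is contracted by $\tau$ the projection formula yields $\phi_\ell(\theta_0)=0$.

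The crux is to upgrade $\phi_\ell(\theta_0)=0$ to $\phi_\ell\vert_{H_W}=0$. Because $\theta_0$ lies in the relative interior of the cone $W$ and $W$ spans $H_W$, every $w\in H_W$ satisfies $\theta_0\pm\varepsilon w\in W\subseteq\overline C$ for some $\varepsilon>0$; applying $\phi_\ell\geq 0$ to both points forces $\phi_\ell(w)=0$, so $\ker(\phi_\ell)=H_W$. Finally, if $\eta\in\ker(L_C)$ then $L_C(\eta)=0$, whence $\phi_\ell(\eta)=0$ and $\eta\in H_W$; as $W$ was arbitrary, $\eta\in\bigcap_W H_W=K$. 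I expect the only genuinely delicate point to be this relative-interior step — promoting vanishing at the single parameter $\theta_0$ to vanishing on the entire hyperplane $H_W$ — while the remaining steps are formal consequences of the linearity of $L_C$ and the degree pairing together with \eqref{eqn:O1pullback}.
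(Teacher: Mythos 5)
Your proof is correct and rests on exactly the same ingredients as the paper's: the ampleness of $L_C(\theta)$ for $\theta\in C$, the identity $L_C(\theta_0)=\tau^*\mathcal{O}(1)$ from \eqref{eqn:O1pullback}, and the fact that a pulled-back polarisation has degree zero on a contracted curve. The paper merely packages this as a contradiction (translating $\theta\in C$ along a putative $\eta\in\ker(L_C)\setminus K$ until it hits a wall, where the ample bundle $L_C(\theta)$ would coincide with the non-ample $\tau^*\mathcal{O}(1)$), whereas you run the same computation directly via the degree functional $\phi_\ell$; your relative-interior step making $\ker(\phi_\ell)=H_W$ precise is a sound, if slightly more explicit, rendering of the same idea.
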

\begin{proof}
We adapt the proof of \cite[Proposition 6.1]{BellamyCraw}.  Suppose on the contrary that there is a vector $\eta \in \ker(L_{C})$ not contained in $K$. Then $\eta$ is not contained in every supporting hyperplane of $\overline{C}$. We may translate any $\theta \in C$ by a rational multiple of $\eta$ to hit a wall of $\overline{C}$, say at $\theta_0$.  Since $\eta \in \ker(L_{C})$, the line bundle $L_C(\theta) = L_C(\theta_0)$ is ample, so some multiple is very ample. However, this is a contradiction because the VGIT morphism $\tau\colon \Xtheta{}\to\Xtheta{_0}$ contracts a curve.
\end{proof}

\begin{cor}\label{c:git-stronglyconvex}
Suppose that $\overline{C}$ is strongly convex. If, for every wall of $\overline{C}$, the VGIT morphism $\tau$ from \eqref{eqn:VGITdiagram} obtained by choosing $\theta_0$ to be general in the wall contracts at least one curve, then $L_C$ is injective.
\end{cor}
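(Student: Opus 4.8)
The plan is to deduce this immediately from Proposition~\ref{prop:stronglyconvex} together with the standard characterisation of $K$ as the largest linear subspace contained in any chamber. The hypothesis here---that for every wall of $\overline{C}$ the VGIT morphism $\tau$ obtained from a general $\theta_0$ in that wall contracts at least one curve---is \emph{verbatim} the hypothesis of Proposition~\ref{prop:stronglyconvex}. So the first step is simply to invoke that proposition, which yields the containment $\ker(L_C) \subseteq K$.

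The second step is to show that the strong convexity of $\overline{C}$ forces $K = \{0\}$. By definition, a strongly convex cone has the origin as a face, and hence contains no nonzero linear subspace. By the standard lemma preceding Proposition~\ref{prop:stronglyconvex}, $K = K_C$ is precisely the largest linear subspace contained in $C$ (equivalently, the unique face of $C$ that is a linear subspace), and this subspace is independent of the choice of chamber. Thus strong convexity of the single chamber $\overline{C}$ already gives $K = \{0\}$; equivalently, one may cite the remark that $K$ vanishes if and only if all GIT cones are strongly convex, using chamber-independence to pass from $\overline{C}$ to all cones.

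Combining the two steps gives $\ker(L_C) \subseteq K = \{0\}$, so $L_C$ is injective, as required. There is no real obstacle here: the content is entirely carried by Proposition~\ref{prop:stronglyconvex}, and the corollary is the special case in which the geometric containment $\ker(L_C)\subseteq K$ becomes the sharp statement $\ker(L_C)=\{0\}$. The only point worth stating carefully is the passage from strong convexity of one chamber to the vanishing of the chamber-independent subspace $K$, which is exactly what the preceding lemma and remark provide.
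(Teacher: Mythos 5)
Your proposal is correct and is exactly the argument the paper intends (the corollary is stated without an explicit proof precisely because it follows immediately from Proposition~\ref{prop:stronglyconvex} once one notes that strong convexity of $\overline{C}$ forces the chamber-independent linear subspace $K$ to vanish). Nothing further is needed.
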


\begin{rem}\label{r:image-unstable}
In the situation of diagram \eqref{eqn:flop}, suppose $\tau_\pm$ are both small contractions. If the image under both $\tau_\pm$ of the unstable locus $\Uns(\tau_\pm)$ is singular, and the singular locus of $X_{\theta_0}$ has codimension at least two, then each of $\Uns(\tau_\pm)$ has codimension at least two in $\Xtheta{_\pm}$. This can be a convenient condition to check in practice. 
\end{rem} 

Putting these together, we conclude the following useful criteria for Condition~\ref{cond:GIT} to hold:

\begin{cor}\label{c:useful-criterion}
 Suppose that there exists a GIT chamber $C$ such that:
\begin{enumerate}
    \item[\emph{(1)}] the closed cone $\overline{C}$ is strongly convex, the map $L_C$ is surjective, and for $\theta\in C$, the GIT quotient $X:=\Xtheta{}$ is a 
   $\QQ$-factorial,
    normal variety;
    \item[\emph{(2)}] for any GIT chambers $C_\pm$ in $R_C$ sharing a wall, the VGIT morphisms $\tau_{\pm}$ from \eqref{eqn:flop} both contract a curve and they each map their unstable locus to the singular locus of $\Xtheta{_0}$ which is of codimension at least two; and
    \item[\emph{(3)}]  each boundary wall of $R_C$ is either divisorial or of fibre type.
\end{enumerate}
 Then Condition~\ref{cond:GIT} is satisfied.
\end{cor}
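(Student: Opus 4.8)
The plan is to verify the three clauses of Condition~\ref{cond:GIT} for the chamber $C$ one at a time. Clause~(3) is immediate: part~(3) of the corollary is verbatim Condition~\ref{cond:GIT}(3). For Condition~\ref{cond:GIT}(1) we are already given that $X=\Xtheta{}$ is normal and that $L_C$ is surjective, so only injectivity of $L_C$ remains, and the natural tool is Corollary~\ref{c:git-stronglyconvex}, whose hypotheses are strong convexity of $\overline{C}$ (given) together with the requirement that, for a general $\theta_0$ in each wall of $\overline{C}$, the VGIT morphism $\tau$ of \eqref{eqn:VGITdiagram} contracts at least one curve. First I would check this curve-contraction wall by wall: if the wall is interior to $R_C$ it holds by hypothesis~(2); if it is a boundary wall, hypothesis~(3) makes $\tau$ divisorial or of fibre type, and in either case $\tau$ contracts a curve (a contracted divisor, respectively a positive-dimensional generic fibre, always contains a contracted curve). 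Corollary~\ref{c:git-stronglyconvex} then gives $\ker L_C=0$, so with surjectivity $L_C$ is an isomorphism and Condition~\ref{cond:GIT}(1) holds.

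The substantive work is Condition~\ref{cond:GIT}(2): every wall interior to $R_C$ is a flipping wall, i.e.\ satisfies the hypotheses of Proposition~\ref{prop:floppingwall}. I would argue by induction on combinatorial distance in $R_C$ from the base chamber $C$, where $X$ is normal by~(1). Fix an interior wall between chambers $C_\pm\subseteq R_C$ in the notation of diagram~\eqref{eqn:flop}, and assume inductively that $\Xtheta{_-}$ is normal. By hypothesis~(2) both $\tau_+$ and $\tau_-$ contract a curve; the key point is that, at an interior wall, this forces \emph{both} contractions to be small. This is precisely the dichotomy exhibited by Example~\ref{exa:toricQuiverGIT}, where a curve-contracting $\tau_-$ is paired with an \emph{isomorphism} $\tau_+$: concretely, the local structure of a one-parameter VGIT wall-crossing presents $\Uns(\tau_-)=\tau_-^{-1}(B)$ and $\Uns(\tau_+)=\tau_+^{-1}(B)$ over the common centre $B:=\pi_0(V^{\theta_0}\setminus V^{\theta_0\mathrm{-st}})$ as (weighted) projective bundles whose fibre dimensions $a$ and $b$ over the deepest stratum are complementary, with $\Exc(\tau_-)$ of codimension $b+1$ in $\Xtheta{_-}$. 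Hence $\tau_-$ is small exactly when $b\geq 1$, which is exactly the condition that $\tau_+$ contracts a curve; symmetrically for $\tau_+$. Since both $\tau_\pm$ contract a curve, both are small.

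Granting smallness, Remark~\ref{r:image-unstable} upgrades this to the codimension bound needed in Proposition~\ref{prop:floppingwall}: the inclusion $\Exc(\tau_\pm)\subseteq\Uns(\tau_\pm)$ may be strict by Remark~\ref{rem:exceplocus}(\ref{exceptional-unstable}), and hypothesis~(2) provides exactly the missing input, namely that $\tau_\pm\big(\Uns(\tau_\pm)\big)$ lies in $\operatorname{Sing}(\Xtheta{_0})$ of codimension at least two, whence $\codim\Uns(\tau_\pm)\geq 2$. Normality then propagates across the wall: the common open set $V^{\theta_0\mathrm{-st}}/G$ is the complement of $\Uns(\tau_-)$ in the normal variety $\Xtheta{_-}$, so it is normal, and it is also the complement of the codimension-$\geq 2$ set $\Uns(\tau_+)$ in $\Xtheta{_+}$, which forces $\Xtheta{_+}$ to be regular in codimension one; combined with the $S_2$ property of the GIT quotient this yields normality of $\Xtheta{_+}$. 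With $\Xtheta{_\pm}$ normal, $\Uns(\tau_\pm)$ of codimension at least two, and both $\tau_\pm$ contracting a curve, Proposition~\ref{prop:floppingwall} shows the wall is flipping, completing the induction and hence Condition~\ref{cond:GIT}(2).

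I expect the genuine obstacle to be the smallness dichotomy of the second paragraph, namely ruling out a divisorial (or fibre-type) interior contraction. Its honest content is the local model of a one-parameter VGIT wall-crossing due to Thaddeus and to Dolgachev--Hu, which shows that a curve-contracting $\tau_+$ is incompatible with a divisorial $\tau_-$; the hypothesis in~(2) that \emph{both} $\tau_\pm$ contract a curve is calibrated exactly to invoke this, while the separate hypothesis that they map their unstable loci into a codimension-$\geq 2$ singular locus is what feeds Remark~\ref{r:image-unstable}. A secondary, more technical point is the normality propagation when $V$ is not assumed normal, which I would handle through the normalisation $\nu\colon\widetilde V\to V$ and the compatibility $\widetilde V^{\theta}=\nu^{-1}(V^{\theta})$ of semistable loci; in the main applications the $\Xtheta{}$ are even smooth, so normality is automatic there.
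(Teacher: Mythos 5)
Your assembly of the ingredients is the right one --- Corollary~\ref{c:git-stronglyconvex} for injectivity of $L_C$, Remark~\ref{r:image-unstable} for the codimension bound on the unstable loci, and Proposition~\ref{prop:floppingwall} to conclude that interior walls are flipping --- and your treatment of Conditions~\ref{cond:GIT}(1) and (3) is correct. However, the step you yourself identify as the crux, the ``smallness dichotomy'' in your second paragraph, is both unnecessary and false as stated. The implication ``both $\tau_\pm$ contract a curve $\Rightarrow$ both $\tau_\pm$ are small'' fails in general: at a divisorial wall both VGIT morphisms can be divisorial contractions (and hence both contract curves without either being small). This is exactly what happens for quiver varieties at walls where $\mathfrak{M}_{\theta_0}(\mathbf{v},\mathbf{w})$ has a codimension-two leaf --- see case (2) in the proof of Theorem~\ref{thm:mainquiverVars} and the opening of the proof of Proposition~\ref{prop:Namikawa-Weyl}, where $\tau_-$ divisorial forces $\tau_+$ divisorial. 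So the local weighted-projective-bundle model cannot deliver the dichotomy you want from the curve-contraction hypothesis alone. The point you are missing is that smallness at interior walls requires no proof at all: by Definition~\ref{def:GITregion}, the GIT region $R_C$ is formed by deleting precisely those walls at which both $\tau_-$ and $\tau_+$ are small, so a wall shared by two chambers $C_\pm\subseteq R_C$ is small on both sides \emph{by construction}. That is the paper's (implicit) route: interior wall $\Rightarrow$ both $\tau_\pm$ small by definition of $R_C$; hypothesis~(2) then feeds Remark~\ref{r:image-unstable} to give $\codim \Uns(\tau_\pm)\geq 2$; and Proposition~\ref{prop:floppingwall} applies. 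Your hypothesis~(2) is needed not to establish smallness but to upgrade smallness of $\Exc(\tau_\pm)$ to the codimension bound on the a priori larger locus $\Uns(\tau_\pm)$.

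A secondary remark: you are right that normality of $X_{\theta_\pm}$ for chambers beyond $C$ is needed to invoke Proposition~\ref{prop:floppingwall} (the definition of a flipping wall includes it), and the paper glosses over this. Your proposed $R_1+S_2$ argument is not obviously available when $V$ is not normal (a GIT quotient of a non-normal variety need not be $S_2$), so if you want to make this airtight you should either pass to the normalisation $\nu\colon\widetilde V\to V$ using $\widetilde V^{\theta}=\nu^{-1}(V^\theta)$ as you suggest, or note that in all the paper's applications the relevant quotients are smooth. This is a presentational gap shared with the paper; the substantive error is the smallness claim above.
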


\subsection{Minimal models of Gorenstein singularities}
In this section we assume, in addition, that $Y$ has Gorenstein singularities and that $f\colon X \to Y$ is a projective crepant resolution, or more generally, a projective $\QQ$-factorial terminalisation, that is, a projective crepant birational morphism from a variety with $\QQ$-factorial terminal singularities.

Suppose that there exists another projective $\QQ$-factorial terminalisation
$f'\colon X'\to Y$. Then $X$ and $X'$ are birational minimal models over $Y$, so by \cite[Theorem~3.52]{KollarMori} there is a movable line bundle $L$ on $X$ such that $X'\cong X(L)$, and the morphism $f'=f_{L}$ fits into a commutative diagram \eqref{eqn:psiDXD}. In particular, if there is a GIT construction such that $X\cong\Xtheta{}$ for $\theta\in C$ as in Condition~\ref{cond:GIT}, then Theorem~\ref{thm:movable} implies that there is a chamber $C^\prime$ in the GIT region $R_C$ such that $X^\prime \cong \Xtheta{^\prime}$ for $\theta^\prime\in C^\prime$. More generally, we have the following.

\begin{cor}\label{c:partial-cr}
 Suppose that Condition~\ref{cond:GIT} holds and that $f_\theta\colon \Xtheta{}\to Y$ is a projective $\QQ$-factorial terminalisation
 for $\theta\in C$. 
 \begin{enumerate}
     \item[\one] If a projective, crepant morphism $g \colon Z \to Y$ is dominated by a projective $\QQ$-factorial terminalisation
     $f'\colon X'\to Y$, then there is a chamber $C^\prime$ in $R_C$ and $\eta\in \overline{C'}$ such that $Z\cong X_{\eta}$ and $g=f_\eta$. Moreover, $Z$ has terminal singularities if and only if $\eta$ lies in the interior of $R_C$. 
     \item[\two] Conversely, for all $\eta\in R_C$, the morphism $f_\eta\colon X_\eta\to Y$ is projective  and crepant. 
 \end{enumerate}
 \end{cor}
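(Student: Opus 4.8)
The plan is to prove part \two\ first, as its crepancy statement is exactly what part \one\ needs. Fix $\eta\in R_C$, choose a chamber $C'\subset R_C$ with $\eta\in\overline{C'}$, and pick $\theta'\in C'$. By Corollary~\ref{cor:sbm}\two\ the quotient $X_{\theta'}$ is a small birational model of $X=\Xtheta{}$, so it is isomorphic to $X$ in codimension one, and it is $\QQ$-factorial by Lemma~\ref{lem:Ass1all C'}. Since $K_Y$ is Cartier and $X_{\theta'}$ agrees with $X$ outside a closed set of codimension at least two, the reflexive sheaves $\omega_{X_{\theta'}}$ and $f_{\theta'}^{*}\omega_Y$ agree there and hence everywhere, so $f_{\theta'}$ is crepant; terminality of $X$ then transfers to $X_{\theta'}$ along the codimension-one isomorphism, since discrepancies of divisors over $Y$ agree for crepant models. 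Thus every such $X_{\theta'}$ is again a projective $\QQ$-factorial terminalisation of $Y$, and in particular $f_{\theta'}$ is surjective.

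Next I would analyse the VGIT morphism $\tau_\eta\colon X_{\theta'}\to X_\eta$, which is projective and satisfies $f_{\theta'}=f_\eta\circ\tau_\eta$. Surjectivity of $f_{\theta'}$ forces $f_\eta$ to be surjective and hence $\dim X_\eta=\dim Y$, so $\tau_\eta$ and $f_\eta$ are birational; in particular no boundary wall of $R_C$ is of fibre type in this setting. Any curve $\ell$ contracted by $\tau_\eta$ is contracted by $f_{\theta'}$, so $K_{X_{\theta'}}\cdot\ell=f_{\theta'}^{*}K_Y\cdot\ell=0$ and $\tau_\eta$ is crepant. As $\tau_\eta$ is small or divisorial, the locus where it is an isomorphism has complement of codimension at least two in the normal variety $X_\eta$; comparing $\omega_{X_\eta}$ with $f_\eta^{*}\omega_Y$ there shows they agree, so $f_\eta$ is crepant. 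This proves \two.

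For part \one, let $h\colon X'\to Z$ be the projective birational morphism with $g\circ h=f'$. From $K_Z=g^{*}K_Y$ and $K_{X'}=(f')^{*}K_Y=h^{*}g^{*}K_Y=h^{*}K_Z$ we see $h$ is crepant. Both $X$ and $X'$ are projective $\QQ$-factorial terminalisations, hence birational minimal models over $Y$, so by \cite[Theorem~3.52]{KollarMori} $X'$ is a small birational model of $X$; Theorem~\ref{thm:movable} then yields $X'\cong X_{\theta'}$ for some $\theta'$ in the interior of $R_C$, which is generic by Corollary~\ref{cor:sbm}\three, say $\theta'\in C'$. Pulling back an ample bundle $A$ on $Z$, the class $h^{*}A$ is semiample and lies in $\Nef(X'/Y)$, identified with $L_{C'}(\overline{C'})$ by Lemma~\ref{lem:Assumptiongivesnefissemiample}\one; thus $h^{*}A=L_{C'}(\eta)$ for a unique $\eta\in\overline{C'}\subseteq R_C$. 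Since $h$ is the ample model of $h^{*}A$, we obtain $Z\cong\Proj R(X',h^{*}A)=\Proj R(X',L_{C'}(\eta))\cong X_\eta$ compatibly over $Y$, so $g=f_\eta$ by Theorem~\ref{thm:movable}\two.

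Finally I would settle the terminality dichotomy. If $\eta$ lies in the interior of $R_C$, then $L_C(\eta)$ lies in the interior of $\Mov(X/Y)$ by Theorem~\ref{thm:movable}\one, so $X_\eta$ is isomorphic to $X$ in codimension one, either directly or through a flip base if $\eta$ sits on an interior flipping wall; combined with crepancy from part \two, terminality transfers to $X_\eta=Z$. If instead $\eta\in\partial R_C$, then $\eta$ lies in a proper face $F$ of some $\overline{C'}$ contained in a boundary wall $W$ of $R_C$, which is divisorial since fibre type is excluded. The curves sweeping out the divisor contracted across $W$ lie in the face of the Mori cone dual to $F$, so they are all contracted by $\tau_\eta$; hence $\tau_\eta$ contracts a divisor $D$, and being a crepant divisorial contraction from the terminalisation $X_{\theta'}$ it gives a divisorial valuation of discrepancy zero on $Z=X_\eta$, so $Z$ is not terminal. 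The hardest step is this last dichotomy: one must promote the divisorial contraction detected at a generic point of the boundary wall to a genuine discrepancy-zero valuation on the possibly non-$\QQ$-factorial model $X_\eta$, which is where the interplay between the dual-face description of contracted curves and crepancy of $\tau_\eta$ is essential.
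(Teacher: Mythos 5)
Your proposal is correct and follows essentially the same route as the paper's proof: part \two\ via the factorisation $f_{\theta'}=f_\eta\circ\tau_\eta$ and transfer of crepancy, and part \one\ by realising $h\colon X'\to Z$ as the model of a semiample class in $\Nef(X'/Y)=L_{C'}(\overline{C'})$ and reading off terminality from whether $\eta$ lies in the boundary of $R_C$ (equivalently, whether a divisor is contracted). The extra detail you supply — that $f_{\theta'}$ is crepant for every chamber $C'\subset R_C$ and that a boundary wall forces a discrepancy-zero divisor over $X_\eta$ — is exactly what the paper leaves implicit.
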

\begin{proof}
 For \one, the choice of $C^\prime\subseteq R_C$ is described in the paragraph preceding Corollary~\ref{c:partial-cr}. Since $f^\prime$ factors via $g$, the morphism $h\colon X'\to Z$ satisfying $f^\prime = g\circ h$ is obtained from a basepoint-free line bundle $L\in \Nef(X'/Y)$. The first statement follows from Theorem~\ref{thm:movable} by setting $\eta:=L_{C}^{-1}(L)$. For the second statement,  $Z$ fails to be terminal if and only if the crepant morphism $h\colon X\to Z\cong X_\eta$ contracts a divisor. This holds if and only if $\eta$ lies in the boundary of $\Mov(X'/Y)$, which is identified with $\Mov(X/Y)$ by pullback along the birational map $\psi$ from \eqref{eqn:psiXprime}.
 
 For \two, each $\eta\in R_C$ lies in the closure of some chamber $C^\prime\subseteq R_C$, so for $\theta^\prime\in C^\prime$, the morphism $f_{\theta^\prime}\colon \Xtheta{^\prime}\to Y$ factors via $f_\eta\colon X_\eta\to Y$. Since $f_{\theta^\prime}$ is crepant, then so too is $f_\eta$.
\end{proof}

\begin{remark}
 The hypothesis from Corollary~\ref{c:partial-cr}\one, namely that a projective, crepant morphism $g\colon Z\to Y$ is dominated by a projective, $\mathbb{Q}$-factorial terminalisation $f^\prime\colon X^\prime\to Y$, is superfluous in light of results from \cite{BCHM}. However, we choose to leave this as an explicit assumption here to underline the fact that the results in the current paper do not rely on \cite{BCHM} in any way.
\end{remark}

\subsection{Weakening the hypotheses}

 We conclude this section by noting that our results hold in a slightly more general context. 
 
 Rather than assume that $L_C$ is an isomorphism in Condition~\ref{cond:GIT}(1), suppose instead that there exists an affine subspace $\Lambda \subseteq G^\vee_\QQ$ that is not contained in the linear span of any GIT wall, such that the restriction of  $L_C$ to $\Lambda$ is an affine isomorphism $\Lambda\cong \Pic(X/Y)_{\QQ}$. If one replaces the study of chambers $C$ in $G^\vee_\QQ$ by the open cones $C\cap \Lambda$ in $\Lambda$, and the closed GIT region $R_C$ by $R_C\cap \Lambda$, then the proof of Theorem~\ref{thm:movable} applies verbatim if one assumes the analogue of Condition~\ref{cond:GIT} in this context.
 
 \begin{example}
 \label{exa:framedMcKayslice}
 If $\Xtheta{}$ denotes the quiver variety for a framed extended Dynkin quiver of dimension vector $(1,\delta)$ where $\delta$ denotes the minimal imaginary root for an affine root system of type ADE as in  \cite[Proposition~7.11\one]{BellamyCraw}, then any affine hyperplane $\Lambda$ orthogonal to the kernel of $L_C$ will do.
 \end{example}

 One might hope to weaken Condition~\ref{cond:GIT}(1) further to require only surjectivity of $L_C$. However, as Corollary~\ref{c:git-stronglyconvex} shows, if $L_C$ is not injective, then there are two cases to consider. Firstly, if $\overline{C}$ is not strongly convex, then it is natural to study a transverse slice $\Lambda$ to the maximal linear subspace in $\overline{C}$, as described above. The second case is where $C$ has a wall for which the VGIT morphism does not contract a curve. In this latter case, one might hope to restrict attention to an affine subspace that intersects none of these walls (such walls are called `fake' or `type $\0$' in the literature); this is precisely the situation described in Example~\ref{exa:framedMcKayslice} above.  

\section{Nakajima quiver varieties}
In this section we establish our main result for Nakajima quiver varieties, see Theorem~\ref{thm:mainquiverVars}.

\subsection{Quiver varieties}

Choose an arbitrary finite graph with vertices $0,\ds,r$ and let $H$ be the set of pairs consisting of an edge, together with an orientation on it. Let $\tail(a)$ and $\head(a)$ denote the tail and head respectively of the oriented edge $a \in H$. Let $a^*$ denote the same edge, but with opposite orientation. We fix an orientation of the graph, that is, a subset $\Omega \subset H$ such that $\Omega \cup \Omega^* = H$ and $\Omega \cap \Omega^* = \emptyset$. Then $\epsilon : H \rightarrow \{ \pm 1 \}$ is defined to take value $1$ on $\Omega$ and $-1$ on $\Omega^*$. 

Fix collections $V_0, \ds, V_r$ and $W_0, \ds, W_r$ of finite-dimensional vector spaces over $\kk$ and set 
$$
\bv = (\dim V_0, \ds, \dim V_r), \quad \bw = (\dim W_0, \ds, \dim W_r).
$$
The group $G(\bv) := \prod_{k = 0}^r \GL(V_k)$ acts naturally on the space
$$
\mathbf{M}(\bv,\bw) := \left(\bigoplus_{a \in H} \Hom_{\kk}(V_{\tail(a)}, V_{\head(a)})\right) \oplus \left(\bigoplus_{k = 0}^r \big(\Hom_{\kk}(V_k,W_k) \oplus \Hom_{\kk}(W_k,V_k)\big)\right).
$$
This action of $G(\bv)$ is Hamiltonian for the natural symplectic structure on $\mathbf{M}(\bv,\bw)$ and, after identifying the dual of $\mf{g}(\bv) := \mathrm{Lie} \ G(\bv)$ with $\mf{g}(\bv)$ via the trace pairing, the corresponding moment map $\gbf{\mu} \colon \mathbf{M}(\bv,\bw) \rightarrow \mf{g}(\bv)$ satisfies
$$
\gbf{\mu}(B,i,j) = \left( \sum_{\head(a) = k} \epsilon(a) B_a B_{a^*} + i_k j_k \right)_{k = 0}^r
$$
where $i_k \in \Hom_{\kk}(W_k,V_k), j_k \in \Hom_{\kk}(V_k,W_k)$ and $B_a \in \Hom_{\kk}(V_{\tail(a)},V_{\head(a)})$. Though one can talk about arbitrary stability conditions in this context, as was done in \cite{NakajimaBranching}, it is easier in our case to apply the trick of Crawley-Boevey~\cite{CBmomap} and reduce to the case where each $W_k = 0$ by introducing a framing vertex. 

The set $H$ associated to the graph can be thought of as the arrow set of a quiver. We frame this quiver by adding an additional vertex $\infty$, as well as $\bw_i$ arrows from vertex $\infty$ to vertex $i$ and another $\bw_i$ arrows from vertex $i$ to vertex $\infty$. This framed (doubled) quiver is denoted $Q = (Q_0,Q_1)$, where $Q_0 = \{ \infty, 0, \ds, r \}$. Each dimension vector $\bv = (\dim V_0, \ds, \dim V_r)$ for the original graph determines a dimension vector $\alpha:= (1,\bv) = (1,\dim V_0, \ds, \dim V_r)$ for $Q$. We may identify $\mathbf{M}(\bv,\bw)$ with the space
\[
\Rep(Q,\alpha):= \bigoplus_{a\in Q_1} \Hom_{\kk}(\kk^{\alpha_{\tail(a)}},\kk^{\alpha_{\head(a)}})
\]
of representations of $Q$ of dimension vector $\alpha$ in such a way that the $G(\bv)$-action on $\mathbf{M}(\bv,\bw)$ corresponds to the action of the group $G(\alpha):=\big(\prod_{i\in Q_0} \GL(\alpha_i)\big)/\kk^\times$ on $\Rep(Q,\alpha)$ by conjugation and, moreover, the above map $\gbf{\mu}$ corresponds to the moment map $\mu$ induced by this $G(\alpha)$-action on $\Rep(Q,\alpha)$. 

 From now on, we assume that $\mathbf{v}$ satisfies $v_i\neq 0$ for all $0\leq i\leq r$, and $\mathbf{w} \neq 0$. This is equivalent to choosing a dimension vector $\alpha$ for $Q$ with component $\alpha_{\infty} = 1$ and $\alpha_i \neq 0$ for all $i$ (and $Q$ must have at least one arrow from some vertex $i$ to vertex $\infty$). Then the rational vector space
\[
\Theta := \big\{\theta \in \Hom(\Z^{Q_0},\Q) \mid  \theta(\alpha)=0\big\}
\]
 satisfies $G(\alpha)^{\vee} \o_{\Z} \Q = \Theta$, where $\chi_\theta(g) = \prod_{i \in Q_0} \det(g_i)^{\theta_i}$ for $g\in G(\alpha)$. For $\theta\in \Theta$, after replacing $\theta$ by a positive multiple if necessary, the (Nakajima) \emph{quiver variety} associated to $\theta$ is the categorical quotient 
 \[
 \mathfrak{M}_\theta(\bv,\bw) :=  \mu^{-1}(0)\git_{\chi_\theta} G(\alpha)= \mu^{-1}(0)^{\theta}/\!\!/G(\alpha) = \Proj\bigoplus_{k\geq 0} \kk[\mu^{-1}(0)]^{\chi_{k\theta}},
 \]
where $\mu^{-1}(0)^{\theta}$ denotes the locus of $\chi_\theta$-semistable points in $\mu^{-1}(0)$ and $\kk[\mu^{-1}(0)]^{\chi_{k\theta}}$ is the $\chi_{k\theta}$-semi-invariant slice of the coordinate ring of the locus $\mu^{-1}(0)$. Note that $\kk^\times$ acts on $\mathbf{M}(\bv,\bw)$ by scaling, and this action descends to an action on $\mathfrak{M}_\theta(\bv,\bw)$.
  
 For a more algebraic description of $\mathfrak{M}_\theta(\bv,\bw)$,  extend $\epsilon$ to $Q$ by setting $\epsilon(a) = 1$ if $a \colon \infty \rightarrow i$ and $\epsilon(a) = -1$ if $a \colon i \rightarrow \infty$. The \textit{preprojective algebra} $\Pi$ is the quotient of the path algebra $\kk Q$ by the relation
 \begin{equation}
 \label{eqn:preprojective}
\sum_{a \in Q_1} \epsilon(a) aa^* = 0. 
\end{equation}
Given $\theta\in \Theta$, we say that a $\Pi$-module $M$ of dimension vector $\alpha$ is $\theta$-semistable if $\theta(N)\geq 0$ for all submodules $N\subseteq M$, and it is $\theta$-stable if $\theta(N)>0$ for all proper nonzero submodules. Two $\theta$-semistable $\Pi$-modules are $\Seshadri$-equivalent if their composition series agree in the abelian category of $\theta$-semistable $\Pi$-modules. A finite dimensional $\Pi$-module is said to be $\theta$-polystable if it is a direct sum of $\theta$-stable $\Pi$-modules. King~\cite{KingStable} proved that a $\Pi$-module $M$ of dimension vector $\alpha$ is $\theta$-semistable (resp.\ $\theta$-stable) if and only if the corresponding point of $\mu^{-1}(0)$ is $\chi_\theta$-semistable (resp.\ $\chi_\theta$-stable) in the sense of GIT. In fact \cite[Propositions~3.2, 5.2]{KingStable} establishes that the quiver variety $\mathfrak{M}_\theta(\bv,\bw)$ is the coarse moduli space of $\Seshadri$-equivalence classes of $\theta$-semistable $\Pi$-modules of dimension vector $\alpha$, where the closed points of $\mathfrak{M}_\theta(\bv,\bw)$ are in bijection with the $\theta$-polystable representations of $\Pi$ of dimension $\alpha$. We write $\mathfrak{M}_\theta(\bv,\bw)^s$ for the (possibly empty) open subset of $\mathfrak{M}_\theta(\bv,\bw)$ parametrising $\theta$-stable representations.

\subsection{Kirwan surjectivity}
 Recall that $\bw\neq 0$ and $v_i \neq 0$ for all $i$, with  $\alpha:= (1,\bv)\in \mathbb{N}^{Q_0}$. In this case, there exist $\theta$ that are non-degenerate in the sense of \cite[Definition~3.1]{QuiverKirwan}, but $\mu^{-1}(0)^{\theta}$ can be empty.  Let us first recall the condition on $\alpha$ which excludes this.

Associated to the quiver $Q$ is a root system $R$ with positive roots $R^+ = R \cap \ZZ_{\ge 0}^{Q_0}$.  We set $R^+_{\theta} = \{ \gamma \in R^+ \, | \, \theta(\gamma) = 0 \}$. 

\begin{lem}\label{lem:genericfreeaction}
The following are equivalent:
\begin{enumerate}
    \item[\one] $\alpha \in R^+$;
    \item[\two] each $\theta\in \Theta$ is effective, i.e.\ $\mu^{-1}(0)^{\theta} \neq \emptyset$;
    \item[\three] $G(\alpha)$ acts freely on $\mu^{-1}(0)^{\theta}$ for $\theta$ in some dense open subset of $\Theta$. 
\end{enumerate}
Moreover, when these conditions hold, the GIT chambers are precisely the interiors of the top dimensional cones of the GIT fan.
\end{lem}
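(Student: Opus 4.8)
The plan is to prove the one nontrivial inclusion: that the interior of \emph{every} top-dimensional cone of the GIT fan consists of generic parameters. Since the preceding discussion already records that each chamber is the interior of a top-dimensional cone, establishing that every such interior is in fact generic will yield the asserted equality. The essential structural input is the equivalence of \one\ and \two\ just established: because every $\theta\in\Theta$ is effective, the support of the GIT fan is the whole of $\Theta$, so each top-dimensional GIT cone $\sigma$ has interior $\operatorname{int}(\sigma)$ that is a genuinely open subset of $\Theta$. Moreover $\operatorname{int}(\sigma)$ is the relative interior of a GIT cone, hence a single GIT-equivalence class, so the semistable locus $\mu^{-1}(0)^{\theta}$ is constant as $\theta$ ranges over $\operatorname{int}(\sigma)$.

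First I would fix a top-dimensional cone $\sigma$ and argue by contradiction, supposing that some $\theta\in\operatorname{int}(\sigma)$ fails to be generic. By King's theorem this provides a $\theta$-semistable $\Pi$-module $M$ of dimension vector $\alpha$ that is not $\theta$-stable, and hence a proper nonzero submodule $N\subsetneq M$ with $\theta(N)=0$; let $\beta\in\ZZ_{\ge 0}^{Q_0}$ be the dimension vector of $N$, so that $0\le\beta\le\alpha$ componentwise with $\beta\notin\{0,\alpha\}$. Since the semistable locus is constant on $\operatorname{int}(\sigma)$, the module $M$ remains $\theta'$-semistable for every $\theta'\in\operatorname{int}(\sigma)$, and testing semistability against the fixed submodule $N$ gives $\theta'(\beta)\ge 0$ throughout $\operatorname{int}(\sigma)$. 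As $\theta(\beta)=0$ and $\operatorname{int}(\sigma)$ is open in $\Theta$, linearity of the functional $\eta\mapsto\eta(\beta)$ forces it to vanish identically on $\Theta$; equivalently $\beta\in\Q\alpha$, since $\Theta$ is the annihilator of $\alpha$. But $\alpha_{\infty}=1$ and $N\subseteq M$ force $\beta_{\infty}\in\{0,1\}$, so $\beta\in\Q\alpha$ leaves only the possibilities $\beta=0$ or $\beta=\alpha$, contradicting $\beta\notin\{0,\alpha\}$. Thus every $\theta\in\operatorname{int}(\sigma)$ is generic, so $\operatorname{int}(\sigma)$ is a chamber, and the two descriptions coincide.

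The step I expect to be the crux is the passage from the one-sided bound $\theta'(\beta)\ge 0$ on the open cone $\operatorname{int}(\sigma)$ to the conclusion $\beta\in\Q\alpha$: it is exactly here that full-dimensionality of the effective locus (condition \two) is indispensable, since without the openness of $\operatorname{int}(\sigma)$ in $\Theta$ one could not upgrade a sign condition to identical vanishing, and it is the normalization $\alpha_{\infty}=1$ that then eliminates every admissible proper submodule dimension. The remaining ingredients — the constancy of the semistable locus on a relatively open GIT cone, and King's dictionary between GIT (semi)stability and module (semi)stability — are standard and have already been set up above.
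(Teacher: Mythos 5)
Your proposal addresses only the final sentence of the lemma and leaves its main assertion --- the equivalence of \one, \two\ and \three\ --- unproved. You describe the equivalence of \one\ and \two\ as ``just established'', but nothing preceding the lemma establishes it; it is part of what the lemma asserts. The paper's proof derives it from the fact that $\mu^{-1}(0)^{\theta}$ is non-empty if and only if $\mathfrak{M}_\theta(\mathbf{v},\mathbf{w})$ is non-empty, if and only if $\alpha\in\NN R^+_{\theta}$, together with the observation that, because $\alpha_\infty=1$, one has $\alpha\in\NN R^+_{\theta}$ for every $\theta\in\Theta$ precisely when $\alpha\in R^+$. Condition \three\ is never mentioned in your argument: for \two$\Rightarrow$\three\ one must note that indivisibility of $\alpha$ makes every $\theta$-semistable point $\theta$-stable for general $\theta$, so that $G(\alpha)$ acts freely on the (non-empty) semistable locus; for \three$\Rightarrow$\two\ one uses the inclusion $\mu^{-1}(0)^{\theta}\subset\mu^{-1}(0)^{\theta_0}$ for $\theta\in C$ and $\theta_0\in\overline{C}$ to propagate non-emptiness from a dense set of parameters to all of $\Theta$. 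As written, the proposal therefore has a genuine gap covering most of the statement.

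The part you do prove --- that the interior of every top-dimensional GIT cone consists of generic parameters --- is correct, and your route differs from the paper's. The paper simply observes that a general $\theta$ (one avoiding the hyperplanes $\beta^\perp$ for $0<\beta<\alpha$ a root) is generic because $\alpha$ is indivisible, and takes this to identify the chambers with the interiors of the top-dimensional cones. Your argument is more hands-on: given a destabilising submodule $N$ of dimension $\beta$ for some $\theta$ in the open interior, constancy of the semistable locus there gives $\theta'(\beta)\ge 0$ on an open set on which the linear functional vanishes at $\theta$, forcing $\beta\in\QQ\alpha$ and hence $\beta\in\{0,\alpha\}$ since $\alpha_\infty=1$. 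This is sound and has the merit of making explicit the step the paper leaves implicit (why genericity is constant on the interior of a top-dimensional GIT cone), but it cannot substitute for the missing equivalences.
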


\begin{proof}
The space $\mu^{-1}(0)^{\theta}$ is non-empty if and only if the quiver variety $\mf{M}_{\theta}(\bv,\bw)$ is non-empty if and only if $\alpha \in \N R^+_{\theta}$. Since $\alpha_{\infty} = 1$, we have $\alpha \in \N R^+_{\theta}$ for all $\theta \in \Theta$ if and only if $\alpha \in R^+$. This shows $\one \iff \two$. For a general $\theta \in \Theta$, the condition $\alpha$ indivisible implies that every $\theta$-semistable representation is $\theta$-stable; in other words, a general $\theta$ is generic. Therefore, $G(\alpha)$ will act freely if $\mu^{-1}(0)^{\theta} \neq \emptyset$, so \two\ implies \three. Conversely, if $G(\alpha)$ acts freely on $\mu^{-1}(0)^{\theta}$ then the latter must be non-empty by definition. But $\mu^{-1}(0)^{\theta} \subset \mu^{-1}(0)^{\theta_0}$ for $\theta \in C$ and $\theta_0 \in \overline{C}$, so $\three$ implies $\two$. 

In the preceding paragraph, we noted that each general $\theta \in \Theta$ is generic. This is precisely the final statement.
\end{proof}
By \cite[Theorem 1.15]{BellSchedQuiver}, the smooth locus of $\mf{M}_{\theta}(\bv,\bw)$ equals the \emph{canonically polystable} locus, which is the locus where the decomposition of polystable representations into a sum of stable ones is of generic type. In particular, if the stable locus is nonempty, then the smooth locus equals the stable locus $\mathfrak{M}_\theta(\bv,\bw)^s$.  

For $\theta \ge \theta_0$, the morphism $f_{\theta} \colon \mf{M}_{\theta}(\bv,\bw) \to \mf{M}_{\theta_0}(\bv,\bw)$ need not be surjective. However, it is birational onto its image \cite[Theorem~A.1]{BellamyCraw}. Hence it is not of fibre type. If the stable locus with respect to $\theta_0$ is empty then it can happen that $f_{\theta}$ is an isomorphism onto (the normalisation of) its image, showing that `fake' walls (also known as walls of `type $\0$', see section~\ref{sec:threefoldsGITwalls}) exist. 

As a consequence of Kirwan surjectivity, established in \cite{QuiverKirwan}, we note that:

\begin{thm}[McGerty--Nevins]\label{thm:Kirwansurjectivequiver}
Assume that $\bw \neq 0$ and the equivalent conditions of Lemma~\ref{lem:genericfreeaction} hold. For any chamber $C$ and for $\theta\in C$, the restriction of the linearisation map defines a surjective map of lattices $\ell_C \colon G^{\vee} \to \Pic\big(\mf{M}_{\theta}(\mathbf{v}, \mathbf{w})/\mf{M}_{0}(\mathbf{v}, \mathbf{w})\big) \cong \Pic\big(\mf{M}_\theta(\mathbf{v},\mathbf{w})\big)$.
\end{thm}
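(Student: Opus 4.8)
The statement is in essence a reformulation of the Kirwan surjectivity theorem of McGerty--Nevins in the language of Picard groups, so the plan is to identify $L_C$ with the degree-two part of the Kirwan map and then quote \cite[Theorem~1.2]{QuiverKirwan}. Since $\bw\neq 0$ and the equivalent conditions of Lemma~\ref{lem:genericfreeaction} hold, every $\theta\in C$ is generic, so $X:=\mf{M}_\theta(\bv,\bw)$ is the geometric quotient $V^\theta/G(\alpha)$, where $V:=\mu^{-1}(0)$ and $G(\alpha)$ acts freely on $V^\theta$. In particular $X$ is smooth and $\pi\colon V^\theta\to X$ is a principal $G(\alpha)$-bundle, so descent identifies $\Pic(X)\cong \Pic^{G(\alpha)}(V^\theta)$; under this identification the linearisation map is the composite $G^\vee\to \Pic^{G(\alpha)}(V^\theta)\cong \Pic(X)$ carrying $\chi$ to the descent $L_\chi$ of $\chi\otimes\mathcal{O}_{V^\theta}$. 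As $\Pic(X)\to \Pic(X/\mf{M}_0(\bv,\bw))$ is surjective by definition, it suffices to prove that $G^\vee\to \Pic(X)$ is surjective.

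Next I would pass to cohomology. Smooth Nakajima quiver varieties have integral cohomology that is torsion-free and of Tate type, concentrated in even degrees; the exponential sequence then gives an isomorphism $c_1\colon \Pic(X)\xrightarrow{\sim} H^2(X,\Z)$. The ambient space $\mathbf{M}(\bv,\bw)$ is an equivariantly contractible vector space, so $H^*_{G(\alpha)}(\mathbf{M}(\bv,\bw))=H^*(BG(\alpha))$, and $H^2(BG(\alpha),\Z)\otimes\Q\cong G^\vee\otimes\Q$ via the equivariant first Chern class $\chi\mapsto c_1^{G(\alpha)}(\chi)$. Under these identifications the composite $c_1\circ L_C$ agrees with the degree-two component of the Kirwan map
\[
\kappa\colon H^*\big(BG(\alpha)\big)\longrightarrow H^*(X),
\]
since both carry $\chi$ to the class of the tautological bundle $L_\chi$.

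The key input is then \cite[Theorem~1.2]{QuiverKirwan}, which asserts that $\kappa$ is surjective. In degree two this shows that $c_1\circ L_C$ surjects onto $H^2(X,\Z)$, and since $c_1$ is an isomorphism, $L_C$ is surjective onto $\Pic(X)$, hence onto the relative Picard group.

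The formal identification of $L_C$ with $\kappa$ is routine; I expect the main obstacle to be the integral, lattice-level statement. One must verify both that $c_1\colon \Pic(X)\to H^2(X,\Z)$ is an isomorphism of lattices---which rests on the torsion-freeness and Tate-type purity of $H^*(X,\Z)$---and that the degree-two Kirwan map is surjective integrally rather than merely rationally. Pinning down these integrality inputs, rather than the cohomological surjectivity itself, is where the genuine care is needed.
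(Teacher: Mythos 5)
Your proposal is correct and follows essentially the same route as the paper: factor $L_C$ through $H^2(\mf{M}_\theta(\bv,\bw),\Z)$, invoke the Kirwan surjectivity theorem of \cite{QuiverKirwan} for the map $G^\vee\to H^2$, and use that $\Pic(\mf{M}_\theta(\bv,\bw))\to H^2(\mf{M}_\theta(\bv,\bw),\Z)$ is an isomorphism. The only cosmetic difference is that where you sketch the latter isomorphism via torsion-freeness and the exponential sequence, the paper simply cites \cite[Theorem~7.3.5]{NakJAMS} (noting that its no-loops hypothesis is not needed), which packages exactly the integrality point you correctly identify as the delicate step.
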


\begin{proof}
Note that $\mf{M}_{0}(\mathbf{v}, \mathbf{w})$ is an affine cone, so $\Pic(\mf{M}_{0}(\mathbf{v}, \mathbf{w})) = 0$ 
and hence the relative Picard group $\Pic(\mf{M}_{\theta}(\mathbf{v}, \mathbf{w})/\mf{M}_{0}(\mathbf{v}, \mathbf{w}))$ equals $\Pic(\mf{M}_{\theta}(\mathbf{v}, \mathbf{w}))$. There is a commutative diagram
\[
\begin{tikzcd}
G^{\vee} \ar[rr,"\ell_C"] \ar[dr] & & \Pic(\mf{M}_{\theta}\big(\mathbf{v}, \mathbf{w})\big) \ar[dl,"c_1"] \\
& H^2\big(\mf{M}_{\theta}\big(\mathbf{v}, \mathbf{w}),\Z\big). &
\end{tikzcd}
\]
The main result of \cite{QuiverKirwan} says that the map $G^{\vee} \to H^2(\mf{M}_{\theta}(\mathbf{v}, \mathbf{w}),\Z)$ is surjective. Since $\mf{M}_{\theta}(\mathbf{v}, \mathbf{w})$ is smooth, \cite[Theorem~7.3.5]{NakJAMS} says that the cycle map $\Pic(\mf{M}_{\theta}(\bv,\bw)) 
 \to H^2(\mf{M}_{\theta}(\mathbf{v}, \mathbf{w}),\Z)$ is an isomorphism. We note that the paper \cite{NakJAMS} assumed that the quiver $Q$ has no loops. However, the proof of \cite[Theorem~7.3.5]{NakJAMS} does not require this restriction. 
\end{proof}
The map $L_C$ is obtained from $\ell_C$ by tensoring by $\mathbb{Q}$, so is also surjective under the hypotheses of the theorem.
\begin{rem}
\label{rem:nontrivialkerLC}
The map $\ell_C$ will have a non-trivial kernel in general. A simple example is given by taking a framed affine Dynkin quiver and $\alpha = e_{\infty} + \delta$; see \cite[Proposition~7.11]{BellamyCraw}. In fact, one can choose suitable $(\Qu,\mathbf{v}, \mathbf{w})$ and generic $\theta$ such that the quiver variety $\mf{M}_{\theta}(\mathbf{v}, \mathbf{w})$ is a minimal resolution of the corresponding Kleinian singularity and $\ell_C$ has kernel of arbitrarily large dimension. 
\end{rem}

\subsection{Applying the main result}
\label{sec:quiverfan}
Let $\langle - , -\rangle$ denote the Ringel form on $\Z^{Q_0}$ and $( - , - )$ its symmetrisation. For any $\gamma\in R$, define $p(\gamma) := 1 - \frac{1}{2}(\gamma,\gamma)$.
\begin{defn}\label{def:Sigma0}
Define $\Sigma_{\theta}$ to be the set of $\gamma \in R^+_{\theta}$ such that 
\[
p(\gamma) > p\big(\beta^{(1)}\big) + \cdots + p\big(\beta^{(k)}\big) 
\]
for every proper decomposition $\gamma = \beta^{(1)} + \cdots + \beta^{(k)}$ with $\beta^{(i)} \in R^+_{\theta}$.
\end{defn}

Crawley-Boevey~\cite[Theorem~1.2]{CBmomap} showed that $\alpha \in \Sigma_{0}$ if and only if there exists a simple (= $0$-stable) $\Pi$-module of dimension vector $\alpha$. More generally, it is shown in \cite[Theorem~1.3]{BellSchedQuiver} that there exists a $\theta$-stable $\Pi$-module of dimension $\alpha$ if and only if $\alpha \in \Sigma_{\theta}$.  
Since $\alpha=(1,\bv)$ is indivisible, every $\theta$-semistable representation will be $\theta$-stable if $\theta(\beta) \neq 0$ for all roots $\beta < \alpha$. That is, the GIT walls are contained in the union of hyperplanes $\beta^\perp$. For a precise description of the GIT walls, see Section \ref{s:combinatorics} below.

\begin{lem}
\label{lem:contractscurve}
 Assume $\alpha\in \Sigma_0$. For any chamber $C$, let $\theta\in C$ and choose $\theta_0\in \overline{C}\setminus C$. Then the surjective, birational VGIT morphism $\tau \colon \mf{M}_{\theta}(\bv,\bw) \to \mf{M}_{\theta_0}(\bv,\bw)$ contracts at least one curve.
\end{lem}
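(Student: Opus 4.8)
The plan is to reduce the statement to the contrast between the smoothness of $\mf{M}_\theta(\bv,\bw)$ and the singularity of $\mf{M}_{\theta_0}(\bv,\bw)$, and then to apply Zariski's Main Theorem. Since $\alpha\in\Sigma_0\subseteq R^+$, the equivalent conditions of Lemma~\ref{lem:genericfreeaction} hold, so every $\theta\in\Theta$ is effective, the generic parameters are exactly those lying in a chamber, and $\mf{M}_\theta(\bv,\bw)$ is smooth for $\theta\in C$. The morphism $\tau$ is projective, hence proper, and it is surjective and birational by hypothesis, so it suffices to show that $\tau$ is not an isomorphism. Indeed, a proper, birational, surjective morphism all of whose fibres are finite is finite, and therefore identifies the smooth (hence normal) source with the normalisation of the target; if the target is itself normal, this forces an isomorphism. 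As $\mf{M}_{\theta_0}(\bv,\bw)$ is normal, being a GIT quotient of the normal variety $\mu^{-1}(0)$, and will be shown to be singular, no such isomorphism can exist. Consequently some fibre of $\tau$ is positive-dimensional, and since $\tau$ is projective this fibre contains a complete curve that $\tau$ contracts.

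It remains to verify that $\mf{M}_{\theta_0}(\bv,\bw)$ is singular, and this is where the hypothesis $\alpha\in\Sigma_0$ enters. First I would check that $\alpha\in\Sigma_{\theta_0}$. Since $\theta_0\in\Theta$ we have $\theta_0(\alpha)=0$, so $\alpha\in R^+_{\theta_0}$; and because $R^+_{\theta_0}\subseteq R^+=R^+_0$, every proper decomposition of $\alpha$ with parts in $R^+_{\theta_0}$ is in particular a decomposition with parts in $R^+_0$. Hence the inequalities defining membership in $\Sigma_0$ immediately give those defining membership in $\Sigma_{\theta_0}$, that is, $\Sigma_0\cap R^+_{\theta_0}\subseteq\Sigma_{\theta_0}$. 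By \cite[Theorem~1.3]{BellSchedQuiver} this shows that a $\theta_0$-stable $\Pi$-module of dimension $\alpha$ exists, so the stable locus $\mf{M}_{\theta_0}(\bv,\bw)^s$ is non-empty. On the other hand, the parameter $\theta_0\in\overline{C}\setminus C$ lies in a proper face of the GIT fan and is therefore non-generic, so by definition some $\theta_0$-semistable point of $\mu^{-1}(0)$ of dimension $\alpha$ fails to be $\theta_0$-stable; its associated $\theta_0$-polystable representation gives a point of $\mf{M}_{\theta_0}(\bv,\bw)$ lying outside the stable locus. Thus $\mf{M}_{\theta_0}(\bv,\bw)^s$ is a non-empty proper subset.

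Since the stable locus is non-empty, \cite[Theorem~1.15]{BellSchedQuiver} identifies the smooth locus of $\mf{M}_{\theta_0}(\bv,\bw)$ with its stable locus; as the latter is a proper subset, $\mf{M}_{\theta_0}(\bv,\bw)$ is genuinely singular, which completes the argument. The main obstacle is precisely this verification that $\mf{M}_{\theta_0}(\bv,\bw)$ is singular rather than smooth: its content is that $\alpha\in\Sigma_0$ forces the $\theta_0$-stable locus to be non-empty, thereby ruling out the `fake wall' phenomenon in which $\tau$ is an isomorphism onto the normalisation of its image and contracts no curve. A secondary point I would confirm carefully is the normality of $\mf{M}_{\theta_0}(\bv,\bw)$, without which $\tau$ could a priori be a nontrivial normalisation that is finite yet not an isomorphism.
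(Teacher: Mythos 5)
Your proof is correct and follows essentially the same route as the paper: both establish that the $\theta_0$-stable locus is non-empty (you via $\Sigma_0\cap R^+_{\theta_0}\subseteq\Sigma_{\theta_0}$ together with \cite[Theorem~1.3]{BellSchedQuiver}, the paper via the observation that simple modules exist since $\alpha\in\Sigma_0$ and are $\zeta$-stable for every $\zeta$), then invoke \cite[Theorem~1.15]{BellSchedQuiver} to identify the smooth locus of $\mf{M}_{\theta_0}(\bv,\bw)$ with its stable locus, exhibit a strictly $\theta_0$-polystable point in the image of $\tau$, and conclude by Zariski's Main Theorem. The one point to tighten is the normality of $\mf{M}_{\theta_0}(\bv,\bw)$, which you rightly flag as necessary but which does not follow merely from its being a GIT quotient of $\mu^{-1}(0)$: normality of $\mu^{-1}(0)$ and of its reductions is itself a theorem of Crawley-Boevey \cite{CBnormal}, the same input the paper implicitly relies on when citing \cite[III, Corollary~11.4]{Hartshorne}.
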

\begin{proof}
Since $\alpha \in \Sigma_0$, the simple locus ($=$ $0$-stable locus) of $\mf{M}_0(\bv,\bw)$ is nonempty, hence open and dense. Therefore, the simple locus of $\mf{M}_{\zeta}(\bv,\bw)$ is non-empty for all $\zeta\in \Theta$. Since this locus is contained in the $\zeta$-stable locus, the latter is also always non-empty. Now \cite[Theorem 1.15]{BellSchedQuiver} implies that the singular locus is precisely the strictly $\theta_0$-polystable locus. 

The morphism $\tau \colon \mf{M}_{\theta}(\bv,\bw) \to \mf{M}_{\theta_0}(\bv,\bw)$ is a surjective birational morphism because $\alpha \in \Sigma_0$. Moreover, since $\theta_0 \in \overline{C} \smallsetminus C$, there is a $\theta$-stable $\Pi$-module $M$ of dimension $\alpha$ that is not $\theta_0$-stable. Therefore, the image under $\tau$ of the corresponding point $[M]\in \mf{M}_\theta(\bv,\bw)$ is strictly $\theta_0$-polystable. The previous paragraph implies that $\tau[M]$ lies in the singular locus of $\mf{M}_{\theta_0}(\bv, \bw)$, so $\tau$ contracts at least one curve by Zariski's Main Theorem \cite[III, Corollary~11.4]{Hartshorne}. 
\end{proof}

\begin{thm} 
\label{thm:mainquiverVars}
 Let $\alpha\in \Sigma_0$. Then every GIT chamber $C$ satisfies Condition~\ref{cond:GIT}.
\end{thm}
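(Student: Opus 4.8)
The plan is to verify, for an arbitrary GIT chamber $C$, the three hypotheses of Corollary~\ref{c:useful-criterion}; these then yield Condition~\ref{cond:GIT}. Throughout I use that $\alpha\in\Sigma_0\subseteq R^+$, so the equivalent conditions of Lemma~\ref{lem:genericfreeaction} hold. In particular, for generic $\theta$ (hence for every $\theta\in C$) the group $G(\alpha)$ acts freely on the $\theta$-stable locus, which is nonempty because $\alpha\in\Sigma_0$; thus $X:=\mathfrak{M}_\theta(\bv,\bw)$ is smooth. For $\theta_0$ general in a wall of $\overline{C}$, the quiver variety $\mathfrak{M}_{\theta_0}(\bv,\bw)$ is a normal (indeed symplectic) variety, so its singular locus has codimension at least two, and by \cite[Theorem~1.15]{BellSchedQuiver} this singular locus coincides with the strictly $\theta_0$-polystable locus, as already recorded in the proof of Lemma~\ref{lem:contractscurve}.

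First I would dispatch hypothesis~(1) of Corollary~\ref{c:useful-criterion}. The origin of $\Rep(Q,\alpha)$ lies in $\mu^{-1}(0)$ and is fixed by $G(\alpha)$, so Lemma~\ref{lem:fixedpoint} shows every GIT cone, and in particular $\overline{C}$, is strongly convex. Surjectivity of $L_C$ is exactly Theorem~\ref{thm:Kirwansurjectivequiver}, and $X$ is normal because it is smooth. For hypothesis~(2), let $C_\pm\subseteq R_C$ be chambers sharing a wall and let $\theta_0$ be general in it. Lemma~\ref{lem:contractscurve} shows that both $\tau_\pm$ contract a curve, and each $\tau_\pm$ maps $\Uns(\tau_\pm)$ onto $\pi_0\big(V^{\theta_0}\setminus V^{\theta_0\mathrm{-st}}\big)$, which is the strictly $\theta_0$-polystable locus, hence the singular locus of $\mathfrak{M}_{\theta_0}(\bv,\bw)$, of codimension at least two. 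This is precisely hypothesis~(2).

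The delicate point, which I expect to be the main obstacle, is hypothesis~(3): each boundary wall of $R_C$, approached from the chamber $C_-\subseteq R_C$, must be divisorial or of fibre type. Since $\alpha\in\Sigma_0$, the morphism $\tau_-$ is birational \cite[Theorem~A.1]{BellamyCraw}, so it is never of fibre type, while Lemma~\ref{lem:contractscurve} shows it contracts a curve; hence $\tau_-$ is either divisorial or small, and the task is to exclude the small case. Here I would invoke Kaledin's theorem \cite{Kaledinsympsingularities} that the symplectic resolution $\tau_-\colon \mathfrak{M}_{\theta_-}(\bv,\bw)\to\mathfrak{M}_{\theta_0}(\bv,\bw)$ is semismall. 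Semismallness forces any exceptional divisor to map onto a component of the singular locus of codimension exactly two; conversely, a codimension-two component of the singular locus carries a transverse Du Val singularity which the crepant morphism $\tau_-$ resolves minimally, producing an exceptional divisor. Thus $\tau_-$ is divisorial if and only if the singular locus of $\mathfrak{M}_{\theta_0}(\bv,\bw)$ has a component of codimension two. This is an intrinsic property of the target $\mathfrak{M}_{\theta_0}(\bv,\bw)$, so it is insensitive to the side from which the wall is approached, and therefore $\tau_-$ is divisorial if and only if $\tau_+$ is. Consequently, were $\tau_-$ small, then $\tau_+$ would be small as well, so the wall would have been deleted when forming $R_C$ and could not be a boundary wall, a contradiction. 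Hence $\tau_-$ is divisorial, which is hypothesis~(3).

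Having verified hypotheses~(1)--(3) of Corollary~\ref{c:useful-criterion} for the arbitrary chamber $C$, I conclude that Condition~\ref{cond:GIT} holds for every GIT chamber. The crux is hypothesis~(3): the flipping-versus-divisorial dichotomy for boundary walls rests on Kaledin's semismallness together with the transverse Du Val structure of $\mathfrak{M}_{\theta_0}(\bv,\bw)$, which is exactly what makes the type of each wall a symmetric invariant determined by the target alone.
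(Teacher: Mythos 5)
Your proof is correct and follows essentially the same route as the paper's: strong convexity via the fixed point, Kirwan surjectivity, curve contraction via Lemma~\ref{lem:contractscurve} to get injectivity of $L_C$, and then Kaledin's semismallness together with the even codimension of symplectic leaves and the transverse Kleinian structure to show that each wall's type (flipping versus divisorial) is determined intrinsically by the codimension of $\mathrm{Sing}(\mf{M}_{\theta_0}(\bv,\bw))$. The only cosmetic difference is that you package the argument through Corollary~\ref{c:useful-criterion} while the paper verifies Condition~\ref{cond:GIT} directly, and your symmetry-plus-contradiction treatment of boundary walls is just a rephrasing of the paper's case analysis on whether that codimension is $2$ or at least $4$.
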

\begin{proof}
Since $\alpha \in R^+$, Lemma~\ref{lem:genericfreeaction} implies that $G(\alpha)$ acts freely on $\mu^{-1}(0)^{\theta}$ for $\theta \in C$, so the quotient $\mf{M}_{\theta}(\bv,\bw)$ is nonsingular (see e.g \cite[Lemma~10.3]{CBConze}) and Theorem~\ref{thm:Kirwansurjectivequiver} says that $L_C$ is surjective. The origin in $\mu^{-1}(0)$ is a $G(\alpha)$-fixed point, so Lemma~\ref{lem:fixedpoint} says that $\overline{C}$ is strongly convex. For $\theta\in C$ and any $\theta_0\in \overline{C}\setminus C$ general in a wall, the morphism $\tau \colon \mf{M}_{\theta}(\bv,\bw) \to \mf{M}_{\theta_0}(\bv,\bw)$ contracts at least one curve by Lemma~\ref{lem:contractscurve}. Therefore $L_C$ is injective by Corollary~\ref{c:git-stronglyconvex}, so Condition~\ref{cond:GIT}(1) holds. In addition, the unstable locus in $\mf{M}_{\theta}(\bv,\bw)$ is precisely the preimage under $\tau$ of the singular locus of $\mf{M}_{\theta_0}(\bv,\bw)$; that is, the unstable locus is the exceptional locus of $\tau$, thanks to \cite[Theorem 1.15]{BellSchedQuiver} and the fact that the $\theta_0$-stable locus is nonempty (hence $\theta_0$-stable is equivalent to canonically $\theta_0$-polystable as in \emph{op.~cit.}). Since the smooth variety $\mf{M}_{\theta}(\bv,\bw)$ is a symplectic quotient, it is symplectic (see \cite[Section 8]{CBnormal}; note that the symplectic condition actually does not require smoothness, see  \cite[Theorem 1.2]{BellSchedQuiver}). As we have already established that $\tau$ is projective and birational, it is a symplectic resolution of singularities (with normal base, thanks to \cite{CBnormal}, although we don't require this for the argument). Therefore, it is semi-small \cite{Kaledinsympsingularities}, so 
\begin{equation}\label{eq:semismallquiverproof}
    \mr{codim}_{\mf{M}_{\theta}(\bv,\bw)} \, \mr{Uns}(\tau)%
    \ge \frac{1}{2} \mr{codim}_{\mf{M}_{\theta_0}(\bv,\bw)} \, \mr{Sing}\big(\mf{M}_{\theta_0}(\bv,\bw)\big).
\end{equation}
Note that this inequality also follows  explicitly for quiver varieties using, eg,   \cite[Corollary 6.4]{BellSchedQuiver}.

Since the singular locus of $\mf{M}_{\theta_0}(\bv,\bw)$ is a union of symplectic leaves, its codimension is even. There are two cases: 
\begin{enumerate}
\item If this codimension is at least $4$ then the codimension of the unstable locus is at least two. This depends only on $\theta_0$ in the GIT wall rather than on the chamber whose closure contains the wall, so this analysis applies equally to both morphisms $\tau_+$ and $\tau_-$ in diagram \eqref{eqn:flop}. Therefore, the GIT wall is a flipping wall. 
 \item Otherwise, the codimension of the singular locus of $\mf{M}_{\theta_0}(\bv,\bw)$ is two. Locally, the singularities of $\mf{M}_{\theta_0}(\bv,\bw)$ transverse to a codimension two leaf are Kleinian, which implies that the inequality in \eqref{eq:semismallquiverproof} is an equality. In other words, $\tau$ is divisorial. 
\end{enumerate}
 It remains to note that the GIT region $R_C$ is defined in such a way that the walls in the interior of $R_C$ cannot induce a divisorial contraction, so they are flipping by the above; boundary walls are not flipping, so they are divisorial by the above. Thus, Conditions~\ref{cond:GIT}(2) and (3) hold. 
\end{proof}

Our main result (Theorem~\ref{thm:movable}) therefore holds under the assumptions of Theorem~\ref{thm:mainquiverVars}, so we obtain Theorem~\ref{thm:mainquivertwointro}. In fact, Corollary \ref{c:partial-cr} implies the following stronger result:

\begin{cor}\label{cor:resgivenbyquiver} 
Let $\alpha\in \Sigma_0$, and let $C\subset \Theta$ be a chamber with GIT region $R_C$.
\begin{enumerate}
\item[\one] Projective partial crepant resolutions of $\mf{M}_0(\bv,\bw)$, taken
up to isomorphism over $\mf{M}_0(\bv,\bw)$, are in bijection with the GIT cones in $R_C$;
\item[\two] Under this bijection, the crepant resolutions of $\mf{M}_0(\bv,\bw)$ correspond to the GIT chambers.  
\end{enumerate}
\end{cor}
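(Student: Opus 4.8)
The plan is to deduce both statements from Corollary~\ref{c:partial-cr}, whose hypotheses I first verify. Since $\alpha\in\Sigma_0$, for generic $\theta\in C$ the quiver variety $\mf{M}_\theta(\bv,\bw)$ is smooth and $f_\theta\colon\mf{M}_\theta(\bv,\bw)\to\mf{M}_0(\bv,\bw)=Y$ is a projective crepant resolution; being smooth, $\mf{M}_\theta(\bv,\bw)$ is automatically $\QQ$-factorial and terminal, so $f_\theta$ is a projective $\QQ$-factorial terminalisation. By Theorem~\ref{thm:mainquiverVars} the chamber $C$ satisfies Condition~\ref{cond:GIT}, so all of Theorem~\ref{thm:movable}, Corollary~\ref{cor:sbm} and Corollary~\ref{c:partial-cr} are available with $X=\Xtheta{}$.

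For the forward direction of \one, I would send each GIT cone $\sigma\subseteq R_C$ to the model $X_\eta$ for any $\eta$ in the relative interior of $\sigma$. By Corollary~\ref{c:partial-cr}\two\ the morphism $f_\eta\colon X_\eta\to Y$ is projective and crepant, and since $X_\eta$ is a GIT model of the birational variety $X$ over $Y$ it is birational onto $Y$; thus $f_\eta$ is a projective crepant partial resolution. This assignment is independent of the choice of $\eta\in\mathrm{relint}(\sigma)$ and injective on cones, because Corollary~\ref{cor:sbm}\one\ states that $X_\eta\cong X_{\eta'}$ over $Y$ precisely when $\eta,\eta'$ lie in a common GIT cone.

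For the reverse assignment, start with an arbitrary projective crepant partial resolution $g\colon Z\to Y$ with $Z$ normal. Because both $X=\mf{M}_\theta(\bv,\bw)$ and $Z$ are crepant over the Gorenstein base $Y$, the resolution $X$ already carries every crepant exceptional divisor over $Y$, so each prime divisor of $Z$ is the strict transform of one on $X$ and the induced birational map $\phi\colon X\dashrightarrow Z$ is a birational contraction. Choosing a $g$-ample line bundle $A$ on $Z$ and letting $L$ be its strict transform on $X$, one gets $R(X,L)\cong R(Z,A)$ and $\Proj_Y R(X,L)\cong Z$. The point is then to show $L\in\Mov(X/Y)$: granting this, Theorem~\ref{thm:movable} gives $L_C(R_C)=\Mov(X/Y)$, so $\eta:=L_C^{-1}(L)\in R_C$, and Theorem~\ref{thm:movable}\two\ yields $X_\eta\cong\Proj_Y R(X,L_C(\eta))\cong Z$. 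Via Corollary~\ref{cor:sbm}\one\ this assigns to $Z$ the unique GIT cone whose relative interior contains $\eta$, inverse to the forward map; equivalently, this is exactly the content of Corollary~\ref{c:partial-cr}\one\ once $Z$ is known to be dominated by a $\QQ$-factorial terminalisation.

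The main obstacle is precisely the membership $L\in\Mov(X/Y)$, equivalently the domination hypothesis of Corollary~\ref{c:partial-cr}\one. The remark following that corollary observes this holds unconditionally by \cite{BCHM}, but I want to bypass \cite{BCHM}. Here the assumption $\alpha\in\Sigma_0$ is essential, since it forces crepant resolutions of $Y$ to exist (the various $\mf{M}_{\theta'}$); I would combine the birational-contraction structure of $\phi$ with the fact that every crepant exceptional divisor of $X$ over $Y$ is contracted along a boundary wall of $R_C$, which is divisorial by Condition~\ref{cond:GIT}(3), to conclude that contracting such divisors keeps $L$ inside $\Mov(X/Y)=L_C(R_C)$. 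Finally, for \two, under the bijection a partial crepant resolution $Z=X_\eta$ is a genuine resolution exactly when $\mf{M}_\eta(\bv,\bw)$ is smooth, which by Lemma~\ref{lem:genericfreeaction} together with the identification of the smooth locus with the stable locus occurs precisely when $\eta$ is generic; and $\eta\in\mathrm{relint}(\sigma)$ is generic if and only if $\sigma$ is a top-dimensional GIT cone, that is, a GIT chamber. Hence crepant resolutions correspond to GIT chambers.
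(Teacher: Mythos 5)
Your overall route is the same as the paper's: the paper gives no separate argument for this corollary, stating it as an immediate consequence of Corollary~\ref{c:partial-cr} once Theorem~\ref{thm:mainquiverVars} has verified Condition~\ref{cond:GIT} for every chamber. Your verification of the hypotheses, the forward assignment (GIT cone $\mapsto X_\eta$ via Corollary~\ref{c:partial-cr}\two, with well-definedness and injectivity from Corollary~\ref{cor:sbm}\one), and part \two\ (smoothness of $\mf{M}_\eta$ precisely for generic $\eta$, using nonemptiness of the stable locus from $\alpha\in\Sigma_0\subseteq\Sigma_\eta$ and the smooth $=$ stable identification, with singularity at non-generic $\eta$ coming from the existence of strictly polystable points) all match what the paper intends.

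The one substantive issue is the surjectivity step for arbitrary projective partial crepant resolutions $Z\to Y$. You correctly isolate the crux --- showing $L\in\Mov(X/Y)$, equivalently that $Z$ is dominated by a projective $\QQ$-factorial terminalisation --- but your resolution of it is only a declaration of intent (``I would combine the birational-contraction structure of $\phi$ with \dots to conclude''), not an argument: a birational contraction $\phi\colon X\dashrightarrow Z$ that contracts divisors does not by itself place the strict transform of a $g$-ample class in $\Mov(X/Y)$, and nothing in your sketch actually produces the factorisation of $\phi$ through divisorial boundary walls of $R_C$ that you would need. Note that the paper does not close this gap either: Corollary~\ref{c:partial-cr}\one\ carries the domination hypothesis explicitly, and the remark following it records that removing it requires \cite{BCHM}. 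For genuine crepant resolutions (part \two\ and the headline Corollary~\ref{cor:introNamikawaWeyl}) the issue evaporates, since a smooth $Z$ is its own $\QQ$-factorial terminalisation and Corollary~\ref{c:partial-cr}\one\ applies with $X'=Z$; so the gap affects only the ``partial'' half of part \one, and you should either invoke the domination hypothesis as the paper does, cite \cite{BCHM}, or supply a genuine argument in place of the sketch.
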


In particular, every projective partial crepant resolution of the affine quiver variety $\mf{M}_0(\bv,\bw)$ is of the form $f_\theta\colon \mf{M}_{\theta}(\bv,\bw) \to \mf{M}_0(\bv,\bw)$, for some $\theta \in R_C$. We also deduce from Corollary~\ref{cor:mds} the following result, independent of \cite{BCHM, NamikawaMDS}. 

\begin{cor}
 Let $\alpha\in \Sigma_0$. For any generic $\theta\in \Theta$, the quiver variety $\mf{M}_{\theta}(\bv,\bw)$ is a Mori Dream Space over $\mf{M}_0(\bv,\bw)$.
\end{cor}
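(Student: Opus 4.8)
The plan is to deduce the statement directly from Corollary~\ref{cor:mds}, which already packages the Mori Dream Space property as a formal consequence of Condition~\ref{cond:GIT} together with $\QQ$-factoriality of the GIT quotient at a generic parameter. Since the deep geometric content for quiver varieties has already been established in Theorem~\ref{thm:mainquiverVars}, the remaining work is only to locate $\theta$ in the correct combinatorial region and to confirm $\QQ$-factoriality.

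First I would observe that $\alpha\in\Sigma_0$ forces $\alpha\in R^+$, so the equivalent conditions of Lemma~\ref{lem:genericfreeaction} hold. A generic $\theta\in\Theta$ lies in the interior of a top-dimensional GIT cone, that is, in a GIT chamber $C$, and by Theorem~\ref{thm:mainquiverVars} this chamber satisfies Condition~\ref{cond:GIT}. In particular $\theta\in C\subseteq R_C$ is a generic parameter in the associated GIT region, so the hypotheses of Corollary~\ref{cor:mds} will be in place as soon as we know that $\mf{M}_\theta(\bv,\bw)$ is $\QQ$-factorial.

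To establish $\QQ$-factoriality I would use genericity of $\theta$: by Lemma~\ref{lem:genericfreeaction}, $G(\alpha)$ acts freely on the semistable locus $\mu^{-1}(0)^{\theta}$, and $\mf{M}_\theta(\bv,\bw)$ is the geometric quotient of this free action. Hence $\mf{M}_\theta(\bv,\bw)$ is nonsingular, exactly as already recorded in the proof of Theorem~\ref{thm:mainquiverVars}, and any smooth variety is $\QQ$-factorial. With this in hand, Corollary~\ref{cor:mds} applies verbatim with $X:=\mf{M}_\theta(\bv,\bw)$ and $Y:=\mf{M}_0(\bv,\bw)$, yielding that $\mf{M}_\theta(\bv,\bw)$ is a Mori Dream Space over $\mf{M}_0(\bv,\bw)$.

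There is essentially no serious obstacle at this stage: all of the difficulty has been absorbed into Theorem~\ref{thm:mainquiverVars}, whose proof verifies the three parts of Condition~\ref{cond:GIT} via Kirwan surjectivity, strong convexity coming from the $G(\alpha)$-fixed point at the origin, and the semismallness of the symplectic resolution $\tau$. The only point worth flagging is to ensure that $\QQ$-factoriality is invoked for the specific generic $\theta$ appearing in the statement rather than for some auxiliary chamber; since smoothness holds for \emph{every} generic parameter, this causes no trouble, and the conclusion is immediate.
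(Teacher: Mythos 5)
Your argument is exactly the paper's: the corollary is stated as an immediate deduction from Corollary~\ref{cor:mds}, using Theorem~\ref{thm:mainquiverVars} to supply Condition~\ref{cond:GIT} for the chamber containing $\theta$ and smoothness of $\mf{M}_\theta(\bv,\bw)$ (from the free $G(\alpha)$-action on $\mu^{-1}(0)^\theta$ via Lemma~\ref{lem:genericfreeaction}) to supply $\QQ$-factoriality. The proposal is correct and matches the paper's route.
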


\begin{rem}\label{r:BC-case}
    Actually, the assumption $\alpha \in \Sigma_0$ is stronger than what we need. It is enough for the proof above that: (a) for generic $\theta$, $\mf{M}_{\theta}(\bv,\bw) \to \mf{M}_0(\bv,\bw)$ is a resolution of singularities,  (b) $\mf{M}_{\theta_0}(\bv,\bw)$ is singular for nongeneric $\theta_0$, and (c) for $\theta_0$ in the interior of $R_C$, the $\theta_0$-stable locus in $V$ is nonempty.  These conditions are all satisfied in the setting of \cite{BellamyCraw} for $n > 1$, so Condition \ref{cond:GIT} applies in that situation, recovering their main result.  More generally, by \cite[Theorem~6.23, Remark~6.24]{SchedlerTirelli}, (a) is satisfied if and only if $\alpha$ is a ``flat root'', meaning that, for $\theta=0$, Definition \ref{def:Sigma0} holds except with a nonstrict inequality instead of a strict one.
     Under this assumption, condition (b) holds if and only if, whenever the proper decomposition in Definition 4.4 (for $\theta_0$) is into only two roots $\alpha= \beta^{(1)} + \beta^{(2)}$, either the inequality is strict, or for some other decomposition of $\alpha$ into positive roots lying in the $\mathbb{Q}$-linear span of $\beta^{(1)}$ and $\beta^{(2)}$, the inequality is strict.  
     Finally, assuming this, 
     condition (c) holds if and only if, for each decomposition into two roots $\alpha=\beta^{(1)}+\beta^{(2)}$, either (I) one has a strict inequality $p(\alpha)>p(\gamma^{(1)})+\cdots+p(\gamma^{(m)})$ for every decomposition into roots $\gamma^{(i)}$ in the  $\mathbb{Q}$-linear span of $\beta^{(1)}$ and $\beta^{(2)}$, or (II) for at least one such decomposition, $p(\alpha)=1+p(\gamma^{(1)})+\cdots+p(\gamma^{(m)})$.
     Geometrically, these conditions says that, for generic $\theta_0$ in the corresponding wall, either (I) $\alpha \in \Sigma_{\theta_0}$, or else (II) the quiver variety $\mf{M}_\theta(\bv,\bw)$ has singular locus of codimension two.
\end{rem}

\begin{rem}
One can consider quiver varieties associated to deformed preprojective algebras (at deformation parameter $\lambda$). The assumption $\lambda = 0$ is only required to deduce the surjectivity of $L_C$ from \cite{QuiverKirwan}. This is also the only place where we require $\alpha_{\infty} = 1$ (or $\bw \neq 0$). In forthcoming work,
 we will show that we can drop this condition, so that, for $\lambda=0$, $L_C$ is an isomorphism over $\Q$ for any $\alpha \in \Sigma_0$ except when $\alpha=2\beta$ for $(\beta,\beta)=-2$ (the O'Grady type singularity, for which $\mf{M}_\theta(\bv,\bw)$ is not terminal for generic $\theta$ and is resolved by blowing up the reduced singular locus).
\end{rem}

\subsection{The Namikawa--Weyl group}
 We now prove a general result about polyhedral cones and automorphisms of real vector spaces for which we could not find a suitable reference.
 
\begin{lem}\label{lem:integratransCCprime}
Let $C, C' \subset \R^n$ be the interiors of rational polyhedral cones such that $W:= \overline{C} \cap \overline{C'}$ is a common codimension-one face.
Let $\gamma$ be an integral automorphism of $\R^n$ with $\gamma(C) = C'$ fixing $W$ pointwise. Then $\gamma^2 = 1$ and $\mr{Fix}(\gamma)$ is the hyperplane spanned by $W$.   If $\overline{C}$ is strongly convex, then such an automorphism is unique.
\end{lem}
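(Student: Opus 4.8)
The plan is to put $\gamma$ into a normal form relative to the wall $W$, read off the first two assertions directly, and then handle uniqueness via a transvection argument that is the only place strong convexity is used. Write $H := \mathrm{span}(W)$. Since $W$ is a codimension-one face of the full-dimensional cone $\overline{C}$, and faces of cones contain the origin, $H$ is a linear hyperplane and is precisely the supporting hyperplane of $\overline{C}$ along $W$. As $\gamma$ is linear and fixes the spanning set $W$ pointwise, it fixes all of $H$ pointwise. Choosing $v \notin H$ and writing $\R^n = H \oplus \R v$, the automorphism takes the form $\gamma(h_0 + tv) = h_0 + t(\lambda v + h)$ for a fixed scalar $\lambda$ and a fixed vector $h \in H$; in a basis adapted to this decomposition $\gamma$ is block upper-triangular with identity block on $H$ and bottom-right entry $\lambda$, so that $\det(\gamma) = \lambda$.

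First I would record that $C$ and $C'$ lie on opposite open sides of $H$. Indeed $C \neq C'$ (otherwise $\overline{C} \cap \overline{C'}$ would be full-dimensional), so their interiors are disjoint; since $H$ supports each of $\overline{C}, \overline{C'}$ along their common facet $W$, each lies in a closed half-space bounded by $H$, and inspecting a neighbourhood of a relative-interior point of $W$ forces these half-spaces to be opposite, as otherwise the two full-dimensional cones would overlap near $W$. Now fix the linear functional $\ell$ vanishing on $H$ with $\ell(v) = 1$, normalised so that $\ell > 0$ on the side containing $C$. For $x \in C$ we have $\ell(x) > 0$, while $\gamma(x) \in C'$ gives $\ell(\gamma(x)) < 0$; since $\ell(\gamma(x)) = \lambda\,\ell(x)$, this forces $\lambda < 0$. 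Because $\gamma$ is an \emph{integral} automorphism, $\det(\gamma) = \lambda \in \{\pm 1\}$, whence $\lambda = -1$. Then $\gamma^2(v) = \gamma(-v + h) = -\gamma(v) + h = -(-v + h) + h = v$ and $\gamma^2|_H = \mathrm{id}$, so $\gamma^2 = 1$. Moreover $\gamma(h_0 + tv) = (h_0 + th) - tv$ equals $h_0 + tv$ only if $t = 0$, so $\mathrm{Fix}(\gamma) = H = \mathrm{span}(W)$. This proves the first two assertions, and note that strong convexity has not yet been used.

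For uniqueness, suppose $\gamma_1$ and $\gamma_2$ both satisfy the hypotheses. By the previous paragraph each is an involution with $\gamma_i(v) = -v + h_i$, so $\delta := \gamma_1\gamma_2$ fixes $H$ pointwise, satisfies $\delta(v) = v + u$ with $u := h_2 - h_1 \in H$, and satisfies $\delta(C) = \gamma_1(\gamma_2(C)) = \gamma_1(C') = C$ (using $\gamma_1^2 = 1$). Thus $\delta$ is a lattice transvection preserving the cone $\overline{C}$, and it suffices to prove $u = 0$. Here I would iterate: for $x \in C$ with $t := \ell(x) > 0$ one computes $\delta^k(x) = x + kt\,u$, and since $\delta$ is invertible with $\delta^{\pm 1}(\overline C) = \overline C$, all these points lie in $\overline{C}$ for every $k \in \Z$. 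Scaling by $1/|k|$ and letting $k \to +\infty$ and $k \to -\infty$, closedness of the cone yields both $tu \in \overline{C}$ and $-tu \in \overline{C}$; as $t > 0$ this gives $u \in \overline{C} \cap (-\overline{C})$. Strong convexity of $\overline{C}$ means this intersection is $\{0\}$, so $u = 0$ and $\gamma_1 = \gamma_2$.

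The main obstacle I anticipate is precisely this uniqueness step, since it is where strong convexity must be leveraged: one has to convert the existence of a nontrivial integral transvection fixing $W$ and preserving $C$ into a genuine line through the origin inside $\overline{C}$. The recession-cone limit argument above is the cleanest route, but some care is needed to justify that the orbit $\{\delta^k(x)\}$ remains in $\overline{C}$ for negative $k$ as well, which is where $\delta^{-1}(\overline{C}) = \overline{C}$ is used. The opposite-sides claim, though geometrically evident, also warrants a careful local argument near the relative interior of the facet $W$, and I would make sure to isolate it as a clean preliminary observation.
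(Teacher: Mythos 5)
Your proof is correct, and it splits naturally against the paper's: the first two assertions follow essentially the paper's own route, while your uniqueness argument is genuinely different. For $\gamma^2=1$ and $\mathrm{Fix}(\gamma)=\mathrm{span}(W)$, both you and the paper reduce to the normal form of $\gamma$ relative to $H=\mathrm{span}(W)$ — the paper writes $H=\beta^\perp$ for a primitive integral normal and observes $\gamma(\beta)=-\beta+v$ with $v\in H$, which is the same computation as your $\gamma(v)=\lambda v+h$ — with $\lambda=\det(\gamma)=-1$ forced by integrality together with $C,C'$ lying on opposite sides of $H$. For uniqueness, the paper also forms the transvection $\varphi=\gamma\circ\gamma'$ fixing $H$ pointwise with $\varphi(\beta)=\beta+u$, but then argues by moving a face: if $u\neq 0$, strong convexity supplies another codimension-one face $W'$ of $\overline{C}$ whose spanning hyperplane misses $u$, and $\varphi(W')$ lands in the interior or exterior of $\overline{C}$, contradicting $\varphi(C)=C$. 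You instead iterate: $\delta^k(x)=x+ktu\in\overline{C}$ for all $k\in\Z$ (correctly flagging that $\delta^{-1}(\overline{C})=\overline{C}$ is what covers negative $k$), rescale, and pass to the limit to get $u\in\overline{C}\cap(-\overline{C})=\{0\}$. Your limit argument is arguably cleaner and more self-contained: it uses only that $\overline{C}$ is a closed cone preserved by $\delta$, plus the standard equivalence of the paper's definition of strong convexity (the origin is a face) with $\overline{C}\cap(-\overline{C})=\{0\}$, and it avoids producing the auxiliary facet $W'$ and checking that $\varphi$ actually moves it off $\overline{C}$ — the step the paper leaves mostly to the reader. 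The paper's version is shorter on the page but relies on that unproved geometric assertion.

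One cosmetic slip: the inference ``$C\neq C'$, so $C\cap C'=\emptyset$'' is not valid as stated (distinct open cones can overlap), but your own parenthetical repairs it verbatim: if $C\cap C'\neq\emptyset$ then $\overline{C}\cap\overline{C'}$ contains a nonempty open set and is full-dimensional, contradicting that $W$ has codimension one. With that rephrasing, the opposite-sides claim and everything downstream is sound.
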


\begin{proof}
If $H \subset \R^n$ is the hyperplane spanned by the vectors in $W$ then $\gamma$ is the identity on $H$.  Write $H = \beta^{\perp}$ for some primitive vector $\beta \in \Z^n$. Then $\gamma(\beta) = -\beta + v$ for some $v \in H$, as $\gamma$ is an integral automorphism sending $C$ to $C'$.  As a result, $\det(\gamma)=-1$, and $\gamma$ must have an eigenvector $\beta'$ of eigenvalue $-1$. This proves the first assertion.

For the second assertion, suppose that $\gamma'$ is another integral automorphism fixing $W$ pointwise and sending $C$ to $C'$. Then $\varphi:=\gamma \circ \gamma'$ is an integral automorphism fixing $C$ and fixing $W$ pointwise. Thus, for $\beta$ as before, we have $\varphi(\beta) = \beta + u$ for some $u \in H$. Now if $\overline{C}$ is strongly convex, then for some other codimension-one face $W'$ of $\overline{C}$, we have that $u$ is not in the hyperplane spanned by $W'$. Therefore $\varphi(W')$ is either in the interior or the exterior of $\overline{C}$, which contradicts $\varphi(C)=C$. 
\end{proof}

For symplectic resolutions of conical symplectic singularities, such as $\mf{M}_{\theta}(\bv,\bw) \to \mf{M}_0(\bv,\bw)$, Namikawa \cite{Namikawa2} has shown that there is a finite Weyl group that acts (as a reflection group) on $H^2(\mf{M}_{\theta}(\bv,\bw),\Q)$. We refer to this action as the \textit{Namikawa--Weyl group}. 

\begin{prop}
\label{prop:Namikawa-Weyl}
 Let $\alpha\in \Sigma_0$. Each GIT region of the form $R_C$ is a 
simplicial cone. Reflections about the boundary walls of $R_C$ generate a group $\Gamma$ isomorphic to the Namikawa--Weyl group, which acts simply transitively on the set of all GIT regions. The union of these regions is all of $\Theta$.  Given GIT chambers $C, C'$, if $g \in \Gamma$ is the element satisfying $g(R_C)=R_{C'}$, then $L_{C'} = L_C \circ g$.
\end{prop}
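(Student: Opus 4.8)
The plan is to transport Namikawa's structure theory for the movable cone through the isomorphism of Theorem~\ref{thm:movable}, and then to realise the abstract Namikawa reflections concretely as the integral involutions supplied by Lemma~\ref{lem:integratransCCprime}. Fix $\theta\in C$ and write $X=\mf{M}_\theta(\bv,\bw)$ and $Y=\mf{M}_0(\bv,\bw)$, so that $f\colon X\to Y$ is a projective symplectic resolution of a conical symplectic singularity. By Theorems~\ref{thm:mainquiverVars} and~\ref{thm:movable}, $L_C\colon\Theta\iso N^1(X/Y)$ identifies $R_C$ with $\Mov(X/Y)$ and the GIT cones in $R_C$ with the Mori chambers of $\Mov(X/Y)$. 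Since $\alpha\in\Sigma_0\subseteq R^+$, Lemma~\ref{lem:genericfreeaction} shows that every $\theta\in\Theta$ is effective, so the GIT fan, and hence the coarse fan of regions, has support all of $\Theta$; this is the assertion that the union of the regions is $\Theta$. Finally, the origin of $\mu^{-1}(0)$ is a $G(\alpha)$-fixed point, so Lemma~\ref{lem:fixedpoint} shows that every GIT cone, and therefore each $\overline{R_{C}}$, is strongly convex.

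\emph{Simpliciality and the reflection group.} Namikawa's theorem~\cite{Namikawa2} produces a finite reflection group $W$ acting on $N^1(X/Y)\cong H^2(X,\Q)$ and preserving $\Pic(X/Y)$, for which $\overline{\Mov(X/Y)}$ is a simplicial fundamental domain whose facets are exactly the reflecting hyperplanes, with the $W$-translates of $\overline{\Mov(X/Y)}$ tiling $N^1(X/Y)$ and $W$ generated by the facet reflections. Because $f$ is birational, no wall of $R_C$ is of fibre type, so by Condition~\ref{cond:GIT} the boundary walls of $R_C$ are precisely the divisorial walls, and these match the reflecting hyperplanes bounding $\overline{\Mov}$ (the interior, flopping walls separate Mori chambers and are not reflecting). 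Pulling back along $L_C$ shows that $\overline{R_C}$ is a simplicial cone with the boundary walls as facets.

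\emph{The reflections and the compatibility of linearisation maps.} Any two of the resolutions $\mf{M}_\eta(\bv,\bw)$ with $\eta$ generic are relative minimal models, hence isomorphic in codimension one; this canonically identifies all their N\'eron--Severi spaces with $N^1(X/Y)$, and under these identifications the linearisation map of a chamber in a region $R'$ depends only on $R'$ (as in Remark~\ref{rem:LCcompatible}), so we denote it $L_{R'}$. For a boundary wall $W_i$ of $R_C$ let $R_i$ be the adjacent region, which exists since the fan is complete, and set $s_i:=L_C^{-1}\circ L_{R_i}\in\GL(\Theta)$, an integral automorphism. Arguing as in Proposition~\ref{prop:floppingwall}, and using that $X$ and the resolution over $R_i$ agree in codimension one, for $\theta_0$ general in $W_i$ both $L_C(\theta_0)$ and $L_{R_i}(\theta_0)$ equal the pullback of the polarising bundle on $\mf{M}_{\theta_0}(\bv,\bw)$; hence $L_C$ and $L_{R_i}$ agree on the linear span of $W_i$ and $s_i$ fixes $W_i$ pointwise. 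Comparing movable cones gives $s_i(R_i)=R_C$, so by Lemma~\ref{lem:integratransCCprime} (applicable because $\overline{R_i}$ is strongly convex) $s_i$ is the unique integral reflection interchanging $R_C$ and $R_i$ and fixing $W_i$; by construction $L_{R_i}=L_C\circ s_i$.

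\emph{Assembly and the final identity.} Transported through $L_C$, the $s_i$ are exactly the facet reflections of $\overline{\Mov}$, so the group $\Gamma$ they generate is carried isomorphically onto the Namikawa--Weyl group $W$. Iterating the identity $L_{R_i}=L_C\circ s_i$ along galleries of adjacent regions shows that $L_C$ carries every region to a $W$-translate of $\Mov(X/Y)$ and conversely; since these translates tile $N^1(X/Y)$ and $W$ acts simply transitively on them, $\Gamma$ acts simply transitively on the GIT regions. The same iteration, combined with the constancy of $L_{R'}$ on each region, yields the final identity $L_{C'}=L_C\circ g$ for the unique $g\in\Gamma$ with $g(R_C)=R_{C'}$ (here one must keep track of the inverse convention for the $\Gamma$-action, the single-wall case being unambiguous as $s_i$ is an involution). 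I expect the main obstacle to be exactly this matching step: one must verify that the geometrically defined wall-reflections $s_i$ coincide with Namikawa's abstract reflections and preserve the coarse fan, so that the tiling of $\Theta$ by GIT regions is literally the tiling by $\Gamma$-translates of $R_C$; the involution property and the uniqueness statement in Lemma~\ref{lem:integratransCCprime} are what allow the single-wall compatibility to be propagated coherently across a chain of walls.
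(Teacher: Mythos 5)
Your proposal is correct and follows essentially the same route as the paper's proof: identify $R_C$ with $\Mov(X/Y)$ via $L_C$, invoke Namikawa's theory (the paper cites \cite[Proposition 2.17]{BLPWAst} together with \cite{Namikawa2}) for the simplicial fundamental domain, realise the wall reflections as compositions of linearisation maps across divisorial walls using the uniqueness and involution statements of Lemma~\ref{lem:integratransCCprime}, and propagate by induction along galleries of regions. The only cosmetic differences are that your $s_i$ is the inverse of the paper's $\gamma$ (harmless, as both are involutions) and that you deduce completeness of the region fan directly from Lemma~\ref{lem:genericfreeaction} rather than from the tiling by $\Gamma$-translates.
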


\begin{proof}
Let $Y = \mf{M}_0(\bv,\bw)$ and $X_{\theta} = \mf{M}_{\theta}(\bv,\bw)$. 

Let $C$ be a GIT chamber with wall $W$ for which $\tau_-\colon \Xtheta{} \to X_{\theta_0}$, with $\theta \in C$ and a general $\theta_0 \in W$, is a divisorial contraction. If $C'$ is the other chamber with wall $W$ then $\tau_{+}$ from \eqref{eqn:flop} is also a divisorial contraction since $X_{\theta_0}$ has a codimension two leaf. Under the isomorphism $L_C$, the nef cone $L_C(\overline{C})$ contains $L_C(W)$. Since $\tau_-$ is a divisorial contraction, $W$ is a boundary wall of $R_C$. Then Theorem~\ref{thm:movable}\one \ says that $L_C(W)$ must be a boundary wall of the movable cone. Fix $\theta_0 \in W, \theta \in C, \theta'\in C'$ and let $\mc{O}(\theta_0)$ denote the corresponding polarising ample line bundle on $X_{\theta_0}$. Then \eqref{eqn:O1pullback} says that $L_C(\theta_0) = \tau_{-}^* (\mc{O}(\theta_0))$, but since $L_C(W)$ is a boundary wall of the movable cone of $X_{\theta}$, the latter is the unique minimal model of $Y$ dominating $X_{\theta_0}$. This implies that $X_{\theta} \cong X_{\theta'}$ over $Y$. We deduce that $L_C(C) = L_{C'}(C')$ is the ample cone of $X_{\theta}$ over $Y$. Moreover, 
\[
L_C(\theta_0) = \tau_{-}^*(\mc{O}(\theta_0)) = \tau^*_{+}(\mc{O}(\theta_0)) = L_{C'}(\theta_0)
\]
shows that $L_C |_{W} = L_{C'} |_{W}$. Hence, Theorem~\ref{thm:Kirwansurjectivequiver} says that $\gamma := L_{C'}^{-1} \circ L_C$ is obtained from an integral automorphism, $\ell_{C'}^{-1} \circ \ell_C$, of $\Theta_{\mathbb{Z}} := \{\theta \in \Hom(\ZZ^{Q_0}, \ZZ) \mid \theta(\alpha)=0\}$,
and it  maps $C$ to $C'$ and fixes $W$ pointwise. Lemma~\ref{lem:integratransCCprime} says that $\gamma^2 = 1$.  Moreover,
 $L_{C'}(R_{C'})=L_C(R_C) = \Mov(\Xtheta{}/Y)$ implies that $R_{C'}=\gamma(R_C)$.

Next, inside $N^1(X_{\theta}/Y)$, \cite[Proposition 2.17]{BLPWAst} shows that $\Mov(\Xtheta{}/Y)$ is a fundamental domain for the Namikawa--Weyl group, generated by reflections about the boundary walls of $\Mov(\Xtheta{}/Y)$.  Since the Namikawa--Weyl group is a Weyl group \cite{Namikawa2} acting on the reflection representation, its fundamental regions are simplicial cones. Pulling this back via $L_C$ shows that $R_C$ is a simplicial cone. If $s$ is the reflection in the Namikawa--Weyl group about the wall $L_C(W)$ then the uniqueness statement of Lemma~\ref{lem:integratransCCprime} implies that $L_C \circ \gamma = s \circ L_C$. Hence the reflections about the boundary walls of $R_C$ generate a group $\Gamma \subset \GL(\Theta)$ isomorphic to the Namikawa--Weyl group. This group acts with $R_C$ as a fundamental region. In particular, $\Theta = \bigcup_{g \in \Gamma} g(R_C)$. 

We claim $g(R_C) = R_{C'}$ for some GIT chamber $C'$. This is done by induction on the length of $g$, the case $\ell(g) = 1$ having been done already. If $g = \gamma h$ with $\ell(h) < \ell(g)$ then $h(R_C) = R_{h(C)}$ and applying the previous argument with $C$ replaced by $h(C)$ shows that $g(R_C) = \gamma(R_{h(C)}) = R_{C'}$ for some $C'$. A similar induction shows that if $C'' \subset R_{C'}$ and $g(R_C) = R_{C'}$ then $L_{C''} \circ g = L_C$. 
\end{proof}

\subsection{Combinatorics and hyperplane arrangements}\label{s:combinatorics}
The results of this final section 
require that the dimension vector $\alpha$ for the quiver $Q$ is indivisible, but they 
do not require $\alpha$ to come from a nonzero framing. Thus, we write $\mf{M}_\theta(\alpha)=\mu^{-1}(0)^\theta\git \; G(\alpha)$ for the quiver variety. 
Note that the indivisibility assumption on $\alpha$ ensures that $\mf{M}_\theta(\alpha)$ is nonsingular for general $\theta$. 

For any root $\gamma\in R$, consider the hyperplane $\gamma^\perp:= \{\theta\in \Hom(\Z^{Q_0},\Q) \mid \theta(\gamma)=0\}$. Note that $\alpha^\perp = \Theta$. 

\begin{defn}
\label{defn:quiverGITarrangement}
	Consider the hyperplane arrangement in $\Theta$ given by
	$$
	\mathcal{A}_\alpha = \big\{\beta^\perp \cap \alpha^\perp \mid  \textrm{$\alpha = \beta + (\alpha - \beta)$ is a decomposition into two roots in $R^+$} \big\}.
	$$
\end{defn}

The hyperplane arrangement $\mathcal{A}_\alpha$ determines a polyhedral wall-and-chamber decomposition of $\Theta$, and the resulting (closed) cones form a complete fan in $\Theta$. The interior of each top-dimensional cone in the fan of $\mathcal{A}_\alpha$ is the intersection of $\Theta$ with a connected component of the locus
 \[
 \Theta_{\mathbb{R}} \setminus \bigcup_{\gamma^\perp\in \mathcal{A}_\alpha} \gamma^\perp.
 \]
 
The goal of this section is to show that the above fan is precisely the GIT fan. The following result generalises \cite[Corollary~4.7]{WuQuiver}. 

\begin{lem}
\label{lem:rootsum}
	If $ \beta,\gamma \in R^+$ and $\langle \beta,\gamma \rangle < 0$, then $\beta+\gamma \in R^+$ as well.
\end{lem}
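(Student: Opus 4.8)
The plan is to read the Ringel form homologically and then invoke Kac's theorem. Fix an orientation $\Omega$ of the graph and regard $\beta,\gamma$ as dimension vectors for the path algebra $\kk Q$, which is hereditary; thus for any representations $X,Y$ one has $\langle \underline{\dim}\,X,\underline{\dim}\,Y\rangle = \dim\Hom(X,Y)-\dim\Ext^1(X,Y)$, the Euler form. Since $\beta,\gamma\in R^+$, Kac's theorem supplies indecomposable representations $X,Y$ with $\underline{\dim}\,X=\beta$ and $\underline{\dim}\,Y=\gamma$. The hypothesis $\langle\beta,\gamma\rangle<0$ then forces $\dim\Ext^1(X,Y)>\dim\Hom(X,Y)\geq 0$, so $\Ext^1(X,Y)\neq 0$, and any nonzero class yields a non-split extension $0\to Y\to E\to X\to 0$ with $\underline{\dim}\,E=\beta+\gamma$.

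It remains to produce an \emph{indecomposable} representation of dimension $\beta+\gamma$, for then $\beta+\gamma\in R^+$ by the other direction of Kac's theorem. Equivalently, writing $\operatorname{ext}(\beta,\gamma)$ for the generic dimension of $\Ext^1$ over representations of dimensions $\beta,\gamma$, the above shows $\operatorname{ext}(\beta,\gamma)\geq -\langle\beta,\gamma\rangle>0$, and I would invoke the result of Kac and Schofield on general representations: for positive roots $\beta,\gamma$, positivity of $\operatorname{ext}(\beta,\gamma)$ forces $\beta+\gamma\in R^+$. The idea is to pass to a \emph{generic} extension, whose endomorphism ring is as small as the stratification of the representation variety allows; combined with Kac's count of the number $p(\beta+\gamma)$ of parameters of indecomposables of a given dimension, one shows that the generic representation of dimension $\beta+\gamma$ built this way is indecomposable. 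This upgrade from ``non-split extension'' to ``indecomposable'' is the step I expect to be the main obstacle, since a single non-split $E$ may well decompose.

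As a guide and consistency check, the symmetric form obeys $p(\beta+\gamma)=p(\beta)+p(\gamma)-1-(\beta,\gamma)$, which keeps track of $p$ under the reduction; a purely root-combinatorial alternative would instead use reflection functors to move $\beta$ or $\gamma$ into the fundamental region or onto a simple root, where the claim is transparent. I would emphasise one structural point that shapes the whole argument: the hypothesis is on the \emph{non-symmetric} Ringel form $\langle\beta,\gamma\rangle$ and cannot be weakened to $(\beta,\gamma)<0$, because there exist positive roots with $\langle\beta,\gamma\rangle<0$ yet $(\beta,\gamma)\geq 0$. This is precisely why the homological reading, which is sensitive to $\Ext^1(X,Y)$ rather than to the symmetric pairing, is the natural route.
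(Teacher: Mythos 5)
Your route is genuinely different from the paper's. The paper first disposes of the case where one of $\beta,\gamma$ is real via root strings (integrability of the adjoint representation restricted to the $\mathfrak{sl}_2$ attached to the real root), and then gives a self-contained combinatorial argument for the general case: it applies a maximal sequence of simple reflections increasing $\eta=\beta+\gamma$, tracks the images $\beta^{(j)},\gamma^{(j)}$, shows these remain positive roots with nonzero pairing, and concludes that the terminal $\eta^{(k)}$ lies in the fundamental domain, so $\beta+\gamma$ is an imaginary root. Your homological reading of the Ringel form is correct (for the hereditary path algebra $\langle\beta,\gamma\rangle=\dim\Hom(X,Y)-\dim\Ext^1(X,Y)$ for \emph{all} $X,Y$ of those dimensions, so $\Ext^1(X,Y)\neq 0$ and a non-split extension $E$ of dimension $\beta+\gamma$ exists), and your closing observation that the hypothesis is on the non-symmetric form, so that the symmetric-form root-string argument does not apply directly, is also correct and is precisely why the paper needs the reflection argument rather than a one-line appeal to $(\beta,\gamma)<0$.

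However, there is a genuine gap exactly where you flag one. The step you defer to ``the result of Kac and Schofield'' --- that $\operatorname{ext}(\beta,\gamma)>0$ for positive roots $\beta,\gamma$ forces $\beta+\gamma\in R^+$ --- is essentially the lemma itself restated in homological language, and it is not, so far as I can tell, a literal theorem in either source. What those papers provide is the canonical decomposition ($\beta+\gamma=\sum_i\delta_i$ with the $\delta_i$ Schur roots and $\operatorname{ext}(\delta_i,\delta_j)=0$ for $i\neq j$) and Schofield's criterion that the general representation of dimension $\beta+\gamma$ admits a subrepresentation of dimension $\gamma$ if and only if $\operatorname{ext}(\beta,\gamma)=0$; neither immediately excludes the possibility that the general representation of dimension $\beta+\gamma$ decomposes into summands unrelated to $\beta$ and $\gamma$, which is what you must rule out. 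Your fallback --- a parameter count comparing $p(\beta+\gamma)$ with the dimension of the locus of extensions and of decomposables --- is the right shape of argument, but it is not carried out, and carrying it out is at least as much work as the paper's reflection argument. As written, the proposal correctly isolates the obstacle (upgrading ``non-split extension'' to ``indecomposable'') but does not overcome it, so the proof is incomplete.
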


\begin{proof}
  If either $\beta$ or $\gamma$ is real then this is true thanks to \cite[Proposition 3.6(a)]{KacBook} (see also Proposition 5.1(c) of \emph{op.~cit.}): if $\beta$ is a real root then the restriction of an integral representation to $\mathfrak{g}(\beta) \cong \mathfrak{sl}_2$ is a sum of finite-dimensional modules, and the adjoint representation is integrable. We give a purely combinatorial proof of the general case.

  Let $\langle \beta,\gamma \rangle = -m < 0$.  If $\gamma$ is real and $m=-1$, then $\beta+\gamma$ is a reflection of $\beta$, so also a root; the same is true swapping $\beta$ and $\gamma$. We may assume therefore that either $m\neq -1$ or $\beta,\gamma$ are both imaginary.

  Let $\eta^{(0)} := \beta+\gamma, \; \beta^{(0)} := \beta, \;\gamma^{(0)} := \gamma$. Inductively, let us apply a maximal sequence of simple reflections so that $\eta^{(j)} = s_{i_j}(\eta^{(j-1)})$, with $\eta^{(j-1)} < \eta^{(j)}$; this means that $\langle \eta^{(j-1)}, e_{i_j} \rangle > 0$.  Let $\beta^{(j)} := s_{i_j}(\beta^{(j-1)})$ and $\gamma^{(j)} := s_{i_j}( \gamma^{(j-1)})$.  We claim that under this sequence $\beta^{(j)}, \gamma^{(j)}$ always remain positive. If, at some stage, $\beta^{(j)}$ is negative, then $\beta^{(j-1)}=e_{i_j}$. Then $-m= \langle \beta^{(j-1)}, \gamma^{(j-1)} \rangle = \langle e_{i_j}, \gamma^{(j-1)} \rangle$. But $\langle \eta^{(j-1)}, e_{i_j} \rangle \geq 1$ by assumption, so $\langle e_{i_j}, \gamma^{(j-1)} \rangle \geq -1$. Thus $m=1$. In this case $\beta$ and $\gamma$ are both imaginary, which contradicts $\beta^{(j-1)}=e_{i_j}$.

Since $\beta^{(j)}$ and $\gamma^{(j)}$ are always positive roots with nonzero pairing, their sum is always connected and positive.  So $\eta^{(j)}$ remains connected and positive. Eventually, this sequence must terminate (say at $\eta^{(k)}$). Then $\langle \eta^{(k)},e_i\rangle \le 0$ for all loop free vertices $i$, implying that $\eta^{(k)}$ is in the fundamental domain. This implies that $\beta+\gamma$ is an imaginary root.
\end{proof} 

Given a tuple $D := (\alpha^{(1)}, \ds, \alpha^{(m)})$ of roots in $R^+$ we associate the quiver $Q_D$ whose vertices are $1,\ds,m$, with $-\langle \alpha^{(i)}, \alpha^{(j)} \rangle$ arrows from $i$ to $j$ for $i \neq j$. Given a decomposition $\alpha = \alpha^{(1)} + \cdots + \alpha^{(m)}$, we associate this tuple and hence the quiver.

\begin{lem}
\label{lem:QDconnected}
	Suppose that $\alpha \in \Sigma_{\theta}$ has a decomposition $D : \alpha = \alpha^{(1)} + \cdots + \alpha^{(m)}$ into roots in $R^+_{\theta}$. Then the associated quiver $Q_D$ is connected.
\end{lem}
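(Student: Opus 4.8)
The engine of the proof is the \emph{superadditivity} of the function $p$ built into the definition of $\Sigma_\theta$. First I would record the elementary identity, valid for any decomposition $\gamma = \beta^{(1)}+\cdots+\beta^{(k)}$,
\[
p(\gamma) - \sum_{s=1}^{k} p\big(\beta^{(s)}\big) = (1-k) - \sum_{1\le s<t\le k}\big(\beta^{(s)},\beta^{(t)}\big),
\]
which follows by expanding $p(\gamma)=1-\tfrac12(\gamma,\gamma)$ bilinearly. The whole argument is then by contradiction: assuming $Q_D$ is disconnected, the plan is to produce a \emph{proper} decomposition of $\alpha$ into roots of $R^+_\theta$ whose pairwise symmetric pairings all vanish, so that the right-hand side above collapses to $1-k<0$, while $\alpha\in\Sigma_\theta$ forces it to be strictly positive.

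Concretely, suppose $Q_D$ has connected components with vertex sets $A_1,\dots,A_c$ and $c\ge 2$, and set $\beta_k:=\sum_{i\in A_k}\alpha^{(i)}$. Since vertices in distinct components are joined by no arrow, we have $\langle\alpha^{(i)},\alpha^{(j)}\rangle = \langle\alpha^{(j)},\alpha^{(i)}\rangle = 0$, hence $(\alpha^{(i)},\alpha^{(j)})=0$, whenever $i,j$ lie in different components; summing over the components gives $(\beta_k,\beta_l)=0$ for $k\neq l$. Moreover $\theta(\beta_k)=\sum_{i\in A_k}\theta(\alpha^{(i)})=0$, as every $\alpha^{(i)}\in R^+_\theta$. \emph{Granting} the key claim that each $\beta_k$ is a root (so that $\beta_k\in R^+_\theta$), the decomposition $\alpha=\beta_1+\cdots+\beta_c$ is proper, lies in $R^+_\theta$, and has vanishing cross terms, so the displayed identity yields $p(\alpha)-\sum_k p(\beta_k)=1-c\le -1$, contradicting $\alpha\in\Sigma_\theta$. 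This shows the content of the lemma is concentrated entirely in the following claim.

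\emph{Claim: if $S$ induces a connected subgraph of $Q_D$, then $\sigma_S:=\sum_{i\in S}\alpha^{(i)}\in R^+$.} I would prove this by induction on $|S|$ using the immediately preceding Lemma~\ref{lem:rootsum}, which is placed precisely for this purpose: choosing a non-cut vertex $v$ of the induced subgraph on $S$, the set $S':=S\setminus\{v\}$ remains connected, so $\sigma_{S'}\in R^+$ by induction, and one wants a strictly negative Ringel pairing $\langle\sigma_{S'},\alpha^{(v)}\rangle<0$ or $\langle\alpha^{(v)},\sigma_{S'}\rangle<0$ in order to conclude $\sigma_S=\sigma_{S'}+\alpha^{(v)}\in R^+$ from Lemma~\ref{lem:rootsum}. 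The main obstacle is exactly this last point: adjacency of $v$ to $S'$ guarantees a negative pairing with \emph{some} individual summand of $\sigma_{S'}$, but positive pairings with the remaining summands could a priori cancel it, so the naive induction is not automatic. I expect to resolve this by passing to a minimal counterexample and invoking the contrapositive of Lemma~\ref{lem:rootsum} (if $\beta+\gamma\notin R^+$ for $\beta,\gamma\in R^+$, then $(\beta,\gamma)\ge 0$): in a minimal connected $S$ with $\sigma_S\notin R^+$, every non-cut vertex $v$ would satisfy $(\sigma_{S\setminus v},\alpha^{(v)})\ge 0$, and I would play these local inequalities against the negativity of $\sum_{i<j\in S}(\alpha^{(i)},\alpha^{(j)})$ forced by connectedness, choosing the order of peeling (for instance along a spanning tree) so that the required pairing is negative at each stage. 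This bookkeeping, ensuring that stray positive pairings never sabotage the inductive application of Lemma~\ref{lem:rootsum}, is the technical heart of the proof.
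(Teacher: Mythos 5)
Your overall strategy---vanishing cross-pairings between disconnected pieces force $p(\alpha)-\sum_k p(\beta_k)=1-c<0$, contradicting the defining inequality of $\Sigma_\theta$---is the same superadditivity argument the paper uses, and your identity for $p(\gamma)-\sum_s p(\beta^{(s)})$ is correct. But as written your proof has a genuine gap, which you yourself flag: everything hinges on the claim that each connected-component sum $\beta_k=\sum_{i\in A_k}\alpha^{(i)}$ is a positive root (so that $\alpha=\beta_1+\cdots+\beta_c$ is a proper decomposition into elements of $R^+_\theta$), and you only gesture at how to prove it via a ``minimal counterexample'' and unspecified ``bookkeeping.'' A proof that defers its technical heart is not a proof. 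Moreover, the obstacle you identify---that a negative pairing of $\alpha^{(v)}$ with one neighbour could be cancelled by positive pairings with other summands---is a red herring: by the very definition of $Q_D$, the number of arrows from $i$ to $j$ is $-\langle\alpha^{(i)},\alpha^{(j)}\rangle$, so every off-diagonal Ringel pairing is $\leq 0$ and adjacency supplies at least one that is strictly negative; hence $\langle\sigma_{S'},\alpha^{(v)}\rangle<0$ (or the pairing in the other order is), and the induction via Lemma~\ref{lem:rootsum} closes with no bookkeeping at all. This is exactly how the paper establishes the statement ``$\beta_J\in R^+$ for connected $J$'' in the proof of Proposition~\ref{prop:compprop3}, which is where that claim is genuinely needed.

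The paper's proof of Lemma~\ref{lem:QDconnected} itself takes a shorter route that avoids the root question entirely. It splits the vertex set into any two mutually disconnected parts $I\sqcup J$, notes $\langle\alpha_I,\alpha_J\rangle=0$ gives $p(\alpha)<p(\alpha_I)+p(\alpha_J)$, and then replaces $\alpha_I$ and $\alpha_J$ by their canonical decompositions into roots of $R^+_\theta$, invoking \cite[Lemma~7.3]{BellSchedQuiver} to guarantee that $p$ does not decrease under this replacement. The resulting proper decomposition of $\alpha$ into roots of $R^+_\theta$ contradicts $\alpha\in\Sigma_\theta$ immediately, with no need for $\alpha_I$, $\alpha_J$, or any component sum to be a root. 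So your approach, once the missing claim is supplied, is a valid alternative that is self-contained modulo Lemma~\ref{lem:rootsum}; the paper's is shorter but leans on an external result about canonical decompositions.
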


\begin{proof}
 If $\{1,...,m\} = I \cup J$ with $I$ and $J$ disconnected from each other in $Q_D$, then we get a decomposition $\alpha = \alpha_I + \alpha_J$, where $\langle \alpha_I, \alpha_J \rangle = 0$. This implies that $p(\alpha) < p(\alpha_I) + p(\alpha_J)$. Taking canonical decompositions of $\alpha_I$ and $\alpha_J$, and applying \cite[Lemma~7.3]{BellSchedQuiver}, we get a contradiction to the fact that $\alpha \in \Sigma_{\theta}$.
\end{proof}

\begin{prop}\label{prop:compprop3}
For every decomposition $D: \alpha = \alpha^{(1)} + \cdots + \alpha^{(m)}$, with connected quiver $Q_D$, the intersection $\cap_i (\alpha^{(i)})^\perp$ equals an intersection of hyperplanes in $\mc{A}_\alpha$.
\end{prop}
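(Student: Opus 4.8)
The plan is to realise $\bigcap_i (\alpha^{(i)})^\perp$ as the common zero locus of finitely many evaluation functionals $\theta \mapsto \theta(\beta)$, where each $\beta$ together with $\alpha-\beta$ gives a decomposition of $\alpha$ into two positive roots, so that each $\beta^\perp \cap \alpha^\perp$ is by definition a hyperplane of $\mc{A}_\alpha$. First I note that $\bigcap_i (\alpha^{(i)})^\perp$ automatically lies in $\Theta = \alpha^\perp$, since $\theta(\alpha^{(i)}) = 0$ for all $i$ forces $\theta(\alpha) = \sum_i \theta(\alpha^{(i)}) = 0$; thus the asserted identity already makes sense inside $\Theta$, as do all the hyperplanes of $\mc{A}_\alpha$.

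To produce the relevant two-root decompositions I would exploit the combinatorics of the connected quiver $Q_D$. Fix a spanning tree $T$ of $Q_D$ and root it at a vertex $r$. For each vertex $v \neq r$, let $S_v \subseteq \{1,\dots,m\}$ be the set of descendants of $v$ (including $v$ itself), and set $\alpha_{S_v} := \sum_{i \in S_v} \alpha^{(i)}$. Both $S_v$ and its complement $S_v^c$ are connected subtrees of $T$, hence induce connected subquivers of $Q_D$, and $\alpha_{S_v}$ is a \emph{proper} partial sum (it contains $\alpha^{(v)}\neq 0$ but not $\alpha^{(r)}$). Granting the key lemma below, namely that $\alpha_{S_v}$ and $\alpha - \alpha_{S_v} = \alpha_{S_v^c}$ are then both positive roots, each $\alpha_{S_v}^\perp \cap \Theta$ is a hyperplane of $\mc{A}_\alpha$.

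With these hyperplanes in hand, I claim $\bigcap_i (\alpha^{(i)})^\perp = \bigcap_{v \neq r} \alpha_{S_v}^\perp$, which I prove by a direct double inclusion. The inclusion $\subseteq$ is immediate from $\theta(\alpha_{S_v}) = \sum_{i \in S_v} \theta(\alpha^{(i)})$. For $\supseteq$, suppose $\theta \in \Theta$ annihilates every $\alpha_{S_v}$. For each $v \neq r$ one has $\alpha_{S_v} = \alpha^{(v)} + \sum_c \alpha_{S_c}$, the sum running over the children $c$ of $v$ (all distinct from $r$), so $\theta(\alpha^{(v)}) = \theta(\alpha_{S_v}) - \sum_c \theta(\alpha_{S_c}) = 0$; and finally $\theta(\alpha^{(r)}) = \theta(\alpha) - \sum_{v \neq r} \theta(\alpha^{(v)}) = 0$ since $\theta \in \Theta$. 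Hence $\theta \in \bigcap_i (\alpha^{(i)})^\perp$, completing the reduction.

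The crux, and the step I expect to be the main obstacle, is the key lemma: \emph{if $S$ induces a connected subquiver of $Q_D$, then $\alpha_S := \sum_{i\in S}\alpha^{(i)}$ is a positive root}. This genuinely strengthens Lemma~\ref{lem:rootsum} and cannot be obtained from it by iterated pairwise merging: even when connected subsets $A,B$ are adjacent, the aggregated Ringel pairing $\langle \alpha_A, \alpha_B\rangle$ need not be negative, owing to cancellation, so Lemma~\ref{lem:rootsum} does not apply to $\alpha_A+\alpha_B$. Instead I would prove it by adapting the reflection argument in the proof of Lemma~\ref{lem:rootsum} to all parts at once: apply a maximal sequence of simple reflections raising $\alpha = \sum_i \alpha^{(i)}$ towards the fundamental domain, reflecting each $\alpha^{(i)}$ simultaneously. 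Since the reflections act as isometries for the symmetrised form $(-,-)$, one can track positivity and connectivity along the sequence; the delicate points — that every $\alpha^{(i)}$ remains a positive root (the only danger being that some $\alpha^{(i)}$ equals the simple root $e_{i_j}$ being reflected) and that the support of $\alpha_S$ remains connected — are precisely the technical heart, handled exactly as in Lemma~\ref{lem:rootsum}. Once $\alpha$ lies in the fundamental domain, each reflected $\alpha^{(i)}$ is a positive root there and $\alpha_S$ maps to a sum of such roots with connected support, hence an imaginary root; reflecting back shows $\alpha_S \in R^+$, and the same argument applied to $S_v^c$ finishes the proof.
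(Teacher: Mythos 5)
Your overall strategy is the same as the paper's: produce two-part decompositions $\alpha=\beta_J+(\alpha-\beta_J)$ into positive roots indexed by subsets $J$ with both $J$ and its complement connected, and then check by linear algebra that the corresponding hyperplanes of $\mc{A}_\alpha$ cut out $\cap_i(\alpha^{(i)})^\perp$. Your spanning-tree construction of the sets $S_v$, and the leaf-to-root computation showing $\theta(\alpha_{S_v})=0$ for all $v\neq r$ together with $\theta(\alpha)=0$ forces $\theta(\alpha^{(i)})=0$ for all $i$, is correct and is in effect a constructive special case of the paper's Lemma~\ref{lem:SQ} (which produces a spanning set of such $e_J$ by inductively deleting a non-cut vertex). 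That part of your argument is fine.

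The problem is the step you yourself flag as the crux. You assert that ``connected $S$ implies $\alpha_S\in R^+$'' genuinely strengthens Lemma~\ref{lem:rootsum} and cannot be obtained by iterated pairwise merging because of possible cancellation in $\langle\alpha_A,\alpha_B\rangle$, and you then only sketch a replacement argument (simultaneously reflecting all $m$ parts), deferring exactly the delicate points --- that each $\alpha^{(i)}$ stays positive and that connectivity is preserved --- with the claim that they are ``handled exactly as in Lemma~\ref{lem:rootsum}''. As written this is a gap: the two-root argument there uses the specific value of $\langle\beta,\gamma\rangle$ to rule out $\beta^{(j-1)}=e_{i_j}$, and it is not clear how that transfers to $m$ parts. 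More importantly, the claimed obstruction does not occur: by the definition of $Q_D$, the number of arrows from $i$ to $j$ is $-\langle\alpha^{(i)},\alpha^{(j)}\rangle\geq 0$, so every cross-pairing $\langle\alpha^{(i)},\alpha^{(j)}\rangle$ with $i\neq j$ is nonpositive. Enumerating a connected $S$ so that each initial segment is connected, the pairing of the accumulated sum against the next part (in at least one of the two orders) is a sum of nonpositive terms, at least one of which is strictly negative by adjacency; hence there is no cancellation and Lemma~\ref{lem:rootsum} applies at every step. This one-line iteration is precisely how the paper deduces $\beta_J\in R^+$ for connected $J$. So your proof can be completed, but not by the route you propose: you should replace the unexecuted reflection sketch by the straightforward induction, noting the sign constraint built into the definition of $Q_D$.
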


The proof of this is based on an easy, purely combinatorial statement:

\begin{lem}
\label{lem:SQ}
Let $Q$ be a connected (undirected) graph with vertex set $Q_0$. For $J \subseteq Q_0$ define $e_J := \sum_{j \in J} e_j$, where $e_j\in \ZZ Q_0$ is the trivial path at vertex $j$. Then $\ZZ Q_0$ is spanned by the set
$$
S_Q := \{e_J \mid J \subseteq Q_0 \text{  
 is such that }J \text{ and }Q_0\setminus J 
 \text{ are connected} \}.
$$
\end{lem}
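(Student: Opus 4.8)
The plan is to prove the stronger statement that $\Span_\ZZ(S_Q) = \ZZ Q_0$ by induction on the number $n := |Q_0|$ of vertices, where the inductive step deletes a vertex whose removal preserves connectivity.

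First I would record the standard fact that any connected graph on $n \geq 2$ vertices possesses a \emph{non-cut vertex}, i.e.\ a vertex $v$ for which $Q - v$ (the induced subgraph on $Q_0 \setminus \{v\}$) is still connected. This follows by taking any spanning tree $T$ of $Q$ and choosing $v$ to be one of its (at least two) leaves: removing a leaf from a tree leaves a tree, and $T - v$ is then a connected spanning subgraph of $Q - v$, so $Q - v$ is connected. For such a $v$, the set $J = \{v\}$ is connected and its complement $Q_0 \setminus \{v\}$ is connected, so $e_{\{v\}} \in S_Q$ immediately.

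The heart of the argument is the inductive step. Having fixed a non-cut vertex $v$, I would set $Q' := Q - v$, which is connected on $n-1$ vertices, so by induction $\Span_\ZZ(S_{Q'}) = \ZZ(Q_0 \setminus \{v\})$. It then suffices to show that every generator $e_J$ with $J \in S_{Q'}$ already lies in $\Span_\ZZ(S_Q)$. Since $Q$ is connected, $v$ has a neighbour $u \in Q_0 \setminus \{v\}$, and the argument splits according to the side of the partition $Q_0 \setminus \{v\} = J \sqcup \big((Q_0 \setminus \{v\}) \setminus J\big)$ on which $u$ lies. If $u$ lies in the $Q'$-complement $(Q_0 \setminus \{v\}) \setminus J$, then $Q_0 \setminus J = \big((Q_0 \setminus \{v\}) \setminus J\big) \cup \{v\}$ is connected, being a connected set joined to $v$ through the edge $uv$, while $J$ itself is connected; hence $e_J \in S_Q$ directly. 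If instead $u \in J$, then $J \cup \{v\}$ is connected and its complement $(Q_0 \setminus \{v\}) \setminus J$ is connected, so $e_{J \cup \{v\}} \in S_Q$ and therefore $e_J = e_{J \cup \{v\}} - e_{\{v\}} \in \Span_\ZZ(S_Q)$. Either way $e_J \in \Span_\ZZ(S_Q)$, giving $\ZZ(Q_0 \setminus \{v\}) = \Span_\ZZ(S_{Q'}) \subseteq \Span_\ZZ(S_Q)$; combined with $e_{\{v\}} \in S_Q$ this yields $\ZZ Q_0 \subseteq \Span_\ZZ(S_Q)$ and closes the induction.

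The step requiring the most care will be the bookkeeping of the degenerate cases in the connectivity conditions, namely when $J$ or its $Q'$-complement is empty: for $J = \emptyset$ one has $e_J = 0$ trivially, and for $J = Q_0 \setminus \{v\}$ one checks $e_J \in S_Q$ directly, its complement being the connected singleton $\{v\}$. This also forces me to fix the convention that the empty graph counts as connected, which is needed only to make the base case $n = 1$ (where the complement of $J = \{v\}$ is empty) go through; for all $n \geq 2$ the argument above invokes only genuinely nonempty complements. The main conceptual obstacle is not any deep computation but recognising that a non-cut vertex is exactly the right thing to delete, and that the location of the neighbour $u$ of $v$ dictates which of the two liftings of $e_J$ applies.
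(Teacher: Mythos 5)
Your proof is correct and follows essentially the same route as the paper's: induction on $|Q_0|$, deleting a non-cut vertex $v$ (found as a leaf of a spanning tree) so that $e_{\{v\}}\in S_Q$, and then lifting each generator $e_J$ of $S_{Q'}$ to $S_Q$ either directly or as $e_{J\cup\{v\}}-e_{\{v\}}$. Your case analysis on the location of a neighbour $u$ of $v$ simply makes explicit the dichotomy (``either $J\cup\{j_0\}$ or $Q_0\setminus J$ is connected'') that the paper asserts without justification, and your treatment of the degenerate cases is a harmless extra precaution.
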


\begin{proof}
	By induction on $|Q_0|$. Note that in a connected graph there is always a vertex $j \in Q_0$ which can be removed leaving a connected graph (this is obvious for a tree, and every connected graph has a spanning tree).  Then $e_j$ is in the set $S_Q$ above. Let $j_0 \in Q_0$ be such a vertex.  Define $Q'$ to be the graph obtained from $Q$ by deleting the vertex $j_0$ and all edges that have an endpoint at $j_0$. By induction $\ZZ Q'_0$ is spanned by $S_{Q'}$. But for each $J \subseteq Q_0'$ such that both $J$ and $Q_0'\smallsetminus J$ are connected, either $J \cup \{j_0\}$ or $Q_0 \setminus J$ is connected, so $e_J$ or $e_J+e_{j_0}$ is in $S_{Q'}$. Thus $\Span(S_Q)$ contains $\ZZ \cdot e_{j_0}$, while the quotient $\Span(S_Q)/\ZZ  \cdot e_{j_0}$ contains $\Span(S_{Q'}) = \ZZ Q'_0$, so $S_Q$ spans $\ZZ Q_0$.
\end{proof}

\begin{proof}[Proof of Proposition~\ref{prop:compprop3}]
    Lemma~\ref{lem:rootsum} implies that for every $J \subset (Q_D)_0$ connected, the sum $\beta_J := \sum_{j\in J} \alpha^{(j)}$ is in $R^+$. In the case $e_J \in S_{Q_D}$ as in Lemma~\ref{lem:SQ}, we get that both $\beta_J$ and $\alpha - \beta_J$ belong to $R^+$. Thus, $\beta_J$ is the perpendicular vector to a hyperplane in $\mc{A}_\alpha$. Lemma \ref{lem:SQ}, applied to $Q_D$, then says that the intersection of these hyperplanes $\beta_J^{\perp}$, for $e_J \in S_{Q_D}$, equals the intersection of the hyperplanes $(\alpha^{(i)})^\perp$ for $1\leq i\leq m$, since intersecting hyperplanes produces the linear subspace perpendicular to the span of the normal vectors. 
\end{proof}

\begin{thm}\label{thm:GITquiverarrangement}
 Assume $\alpha \in \Sigma_0$ is indivisible. The GIT fan equals the fan given by the arrangement $\mc{A}_{\alpha}$.  
\end{thm}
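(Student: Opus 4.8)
The plan is to prove the two fans coincide by showing that their defining equivalence relations on $\Theta$ agree. Write $V=\mu^{-1}(0)$ and, for $\theta\in\Theta$, let $V^{\theta}$ be the $\chi_\theta$-semistable locus; by King's theorem this is the locus of $\theta$-semistable $\Pi$-modules, and GIT-equivalence means $V^{\theta}=V^{\theta'}$. On the other side, call $\theta,\theta'$ \emph{$\mathcal A_\alpha$-equivalent} if they have the same sign (positive, negative, or zero) on every $\beta$ with $\beta^{\perp}\in\mathcal A_\alpha$, i.e. with $\beta,\alpha-\beta\in R^{+}$; the $\mathcal A_\alpha$-equivalence classes are exactly the relatively open cones of the arrangement fan of Definition~\ref{defn:quiverGITarrangement}. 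I would first record the elementary but crucial observation that $\alpha\in\Sigma_0$ forces $\alpha\in\Sigma_{\theta}$ for every $\theta\in\Theta$: since $\theta(\alpha)=0$ we have $\alpha\in R^{+}_{\theta}$, and as $R^{+}_{\theta}\subseteq R^{+}$ every decomposition of $\alpha$ into $R^{+}_{\theta}$-roots is in particular one into $R^{+}$-roots, so the strict inequality defining $\Sigma_0$ is inherited. Consequently Lemma~\ref{lem:QDconnected} applies at every $\theta\in\Theta$.

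The heart of the argument is a local statement. Writing $F_{\theta_0}:=\bigcap\{\beta^{\perp}\mid \beta^{\perp}\in\mathcal A_\alpha,\ \theta_0\in\beta^{\perp}\}$ for the flat of the arrangement through $\theta_0$, I claim that for every $\eta\in F_{\theta_0}$ and all sufficiently small $\epsilon>0$ one has $V^{\theta_0+\epsilon\eta}=V^{\theta_0}$. Because subrepresentation dimension vectors $\beta$ satisfy $0\le\beta\le\alpha$ and so range over a finite set, there is a uniform $\epsilon_0$ for which $\theta_0$ and $\theta_0+\epsilon\eta$ have the same sign on every $\beta$ with $\theta_0(\beta)\ne0$; this already yields $V^{\theta_0+\epsilon\eta}\subseteq V^{\theta_0}$ and reduces the reverse inclusion to checking subrepresentations $N\subseteq M$ with $M\in V^{\theta_0}$ and $\theta_0(\dim N)=0$. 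For such an $N$, both $N$ and $M/N$ are $\theta_0$-semistable of slope zero, so $\beta:=\dim N$ and $\alpha-\beta$ are sums of the dimension vectors $\delta_l,\epsilon_m\in\Sigma_{\theta_0}$ of their $\theta_0$-stable Jordan--H\"older factors. Then $\alpha=\sum_l\delta_l+\sum_m\epsilon_m$ is a decomposition of $\alpha$ into $R^{+}_{\theta_0}$-roots, whose quiver is connected by Lemma~\ref{lem:QDconnected}; Proposition~\ref{prop:compprop3} then identifies $\bigcap_l\delta_l^{\perp}\cap\bigcap_m\epsilon_m^{\perp}$ with an intersection of hyperplanes of $\mathcal A_\alpha$, each containing $\theta_0$ since all $\delta_l,\epsilon_m$ lie in $R^{+}_{\theta_0}$. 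Hence $F_{\theta_0}\subseteq\delta_l^{\perp}$ for every $l$, so $\eta(\beta)=\sum_l\eta(\delta_l)=0$ and the last semistability condition is preserved.

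With the local statement in hand I would assemble the two inclusions. For \emph{$\mathcal A_\alpha$-equivalence implies GIT-equivalence}: the $\mathcal A_\alpha$-cone $\mathcal C$ containing $\theta_0$ is relatively open in $F_{\theta_0}$, convex, and connected, and for every $\theta_1\in\mathcal C$ the flat $F_{\theta_1}$ equals $F_{\theta_0}=\operatorname{span}\mathcal C$; applying the local statement at each $\theta_1$ shows $\theta\mapsto V^{\theta}$ is locally constant on $\mathcal C$, hence constant, so $\mathcal C$ lies in a single GIT cone. For the converse, if $V^{\theta}=V^{\theta'}$ but $\theta,\theta'$ were separated by some $\beta^{\perp}\in\mathcal A_\alpha$, then, using that a GIT-equivalence class is a convex cone, I would find $\theta''$ in that class with $\theta''(\beta)=0$; since $\beta,\alpha-\beta\in R^{+}_{\theta''}$ (by Crawley-Boevey~\cite{CBmomap} and \cite[Theorem~1.3]{BellSchedQuiver}) there is a strictly $\theta''$-semistable module $M'\oplus M''$ of dimensions $\beta$ and $\alpha-\beta$ in $V^{\theta''}$ which fails to lie in $V^{\theta}$ because the summand of negative $\theta$-slope destabilises it off the wall, contradicting $V^{\theta}=V^{\theta''}$. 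Thus the two equivalence relations agree, so the GIT fan equals the fan of $\mathcal A_\alpha$.

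The step I expect to be the main obstacle is the local statement, specifically the verification that every slope-zero subrepresentation dimension vector $\beta$ is orthogonal to the flat $F_{\theta_0}$: this is exactly where the combinatorial machinery of Lemma~\ref{lem:QDconnected}, Lemma~\ref{lem:SQ}, and Proposition~\ref{prop:compprop3} becomes indispensable, together with the existence of $\theta$-stable modules in each dimension lying in $\Sigma_{\theta}$. I would also take care with the standard but non-trivial facts that GIT-equivalence classes are convex cones and that slope-zero $\theta$-semistable modules admit Jordan--H\"older filtrations by $\theta$-stable modules.
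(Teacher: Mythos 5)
Your proof is correct, and it draws on exactly the same toolbox as the paper's: the existence results of \cite[Theorem~1.3]{BellSchedQuiver}, Lemma~\ref{lem:QDconnected}, and Proposition~\ref{prop:compprop3}. The organisation, however, is genuinely different. The paper reduces the theorem to the single assertion that the locus of non-generic parameters equals the union of the hyperplanes in $\mathcal{A}_\alpha$, checking one inclusion by exhibiting a strictly polystable point over a general point of each hyperplane and the other by feeding the polystable decomposition at a wall point into Proposition~\ref{prop:compprop3}; the identification of the lower-dimensional cones is then left implicit, resting on the structure of the GIT fan (every GIT cone is a face of a top-dimensional one). You instead prove the finer statement that the GIT-equivalence classes coincide with the relatively open cones of the arrangement on \emph{all} strata: your ``local statement'' -- that every slope-zero subrepresentation dimension vector of a $\theta_0$-semistable module is orthogonal to the flat of $\mathcal{A}_\alpha$ through $\theta_0$, proved by running Jordan--H\"older factors of $N$ and $M/N$ through Lemma~\ref{lem:QDconnected} and Proposition~\ref{prop:compprop3} -- is an intermediate result not present in the paper, and it yields local constancy of $\theta\mapsto V^\theta$ on arrangement strata directly. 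Your converse direction plays the role of the paper's first inclusion, upgraded from ``general points of hyperplanes are walls'' to ``GIT-equivalent parameters lie in the same stratum'' via convexity of GIT-equivalence classes. The trade-off is length against completeness: the paper's argument is shorter but delegates the matching of lower-dimensional cones to general fan theory, whereas yours is longer but self-contained on that point. Your preliminary observation that $\alpha\in\Sigma_0$ forces $\alpha\in\Sigma_\theta$ for all $\theta\in\Theta$ is correct and is indeed needed to invoke Lemma~\ref{lem:QDconnected} at arbitrary $\theta$.
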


\begin{proof}
It suffices to show that the GIT walls
are precisely the union of the hyperplanes in $\mc{A}_{\alpha}$. In other words, $\mf{M}_{\theta}(\alpha)^s = \mf{M}_{\theta}(\alpha)$ if and only if $\theta$ lies in the complement to the hyperplanes in $\mc{A}_{\alpha}$. 

Assume that $\theta$ is a general element of $\gamma^{\perp} \in \mc{A}_{\alpha}$. Then there exists a positive root $\beta$ such that $\alpha - \beta \in R^+$ and $\theta(\beta) = 0$. It is a consequence of \cite[Theorem~1.3]{BellSchedQuiver} that there exists a $\theta$-polystable representation of dimension vector $\eta$ for any $\eta \in \N R^+_{\theta}$. In particular, this implies that there exist $\theta$-polystable representations $M,N$ of dimension vector $\beta$ and $\alpha-\beta$ respectively. The point $[M \oplus N] \in \mf{M}_{\theta_0}(\alpha)$ is strictly $\theta$-polystable. Hence $\gamma^{\perp}$ is a GIT wall.

Conversely, if $\theta \in \Theta$ lies on some GIT wall then, by definition, there exists a properly $\theta$-polystable representation $M = M_1^{\oplus n_1} \oplus \cdots \oplus M_k^{\oplus n_k}$ with $\alpha^{(i)} := \dim M_i$ belonging to $\Sigma_{\theta}$. Counting the $\alpha^{(i)}$ with multiplicity gives a decomposition $D$ of $\alpha$. Since $\alpha \in \Sigma_0$, the associated quiver $Q_D$ is connected by Lemma~\ref{lem:QDconnected}. Then Proposition~\ref{prop:compprop3} implies that $\theta$ lies on some hyperplane in $\mc{A}_\alpha$.
 \end{proof}

We note the following useful consequence of the proof of Theorem~\ref{thm:GITquiverarrangement}. 

\begin{cor}
If $\alpha \in \Sigma_0$ is indivisible, then the quiver variety $\mf{M}_{\theta}(\alpha)$ is nonsingular if and only if $\theta$ does not lie on any hyperplane in $\mc{A}_{\alpha}$. 
\end{cor}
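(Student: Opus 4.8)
The plan is to read off this corollary from the proof of Theorem~\ref{thm:GITquiverarrangement} together with the smoothness criterion of \cite[Theorem~1.15]{BellSchedQuiver}. The organising idea is that \emph{both} conditions in the statement are equivalent to the single intermediate condition that every $\theta$-semistable representation of dimension $\alpha$ is already $\theta$-stable, i.e.\ $\mf{M}_\theta(\alpha)^s = \mf{M}_\theta(\alpha)$. So the entire argument reduces to chaining together two equivalences that are essentially already established.

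First I would record the equivalence coming directly out of the proof of Theorem~\ref{thm:GITquiverarrangement}: there it is shown that $\theta$ lies in the complement of the hyperplanes in $\mc{A}_\alpha$ if and only if $\mf{M}_\theta(\alpha)^s = \mf{M}_\theta(\alpha)$, since a point of a GIT wall is exactly a point admitting a strictly $\theta$-polystable representation. This gives one half of the chain for free.

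Next I would supply the other equivalence, relating the intermediate condition to nonsingularity. The assumption $\alpha \in \Sigma_0$ forces the $\theta$-stable locus to be nonempty for \emph{every} $\theta \in \Theta$: by Crawley--Boevey there is a simple ($= 0$-stable) $\Pi$-module of dimension $\alpha$, and any simple module is automatically $\theta$-stable for all $\theta$, exactly as in the proof of Lemma~\ref{lem:contractscurve}. With the stable locus nonempty, \cite[Theorem~1.15]{BellSchedQuiver} identifies the smooth locus of $\mf{M}_\theta(\alpha)$ with its stable locus, so the singular locus is precisely the strictly $\theta$-polystable locus, namely the complement of $\mf{M}_\theta(\alpha)^s$ in $\mf{M}_\theta(\alpha)$. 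Therefore $\mf{M}_\theta(\alpha)$ is nonsingular if and only if that complement is empty, i.e.\ if and only if $\mf{M}_\theta(\alpha)^s = \mf{M}_\theta(\alpha)$. Concatenating the two equivalences yields the corollary.

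I expect no genuine obstacle: the substantive content lives in Theorem~\ref{thm:GITquiverarrangement} and in the cited smoothness criterion, and this corollary is a short bookkeeping assembly of those facts. The only point requiring care is to confirm that the nonemptiness of the stable locus holds for \emph{all} $\theta$ and not just generic $\theta$, so that \cite[Theorem~1.15]{BellSchedQuiver} may be applied in the strong form ``smooth locus $=$ stable locus'' at every stability parameter; this is exactly where the hypothesis $\alpha \in \Sigma_0$ is used, via the specialisation argument above.
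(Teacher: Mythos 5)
Your proposal is correct and follows exactly the route the paper intends: the proof of Theorem~\ref{thm:GITquiverarrangement} already establishes that $\theta$ avoids the hyperplanes of $\mc{A}_\alpha$ precisely when $\mf{M}_\theta(\alpha)^s=\mf{M}_\theta(\alpha)$, and the paper (in the discussion after Lemma~\ref{lem:genericfreeaction} and in the proof of Lemma~\ref{lem:contractscurve}) uses $\alpha\in\Sigma_0$ in the same way you do, namely that a simple module of dimension $\alpha$ is $\theta$-stable for every $\theta$, so the stable locus is always nonempty and \cite[Theorem~1.15]{BellSchedQuiver} identifies the smooth locus with the stable locus. Your attention to verifying nonemptiness of the stable locus at every (not just generic) $\theta$ is exactly the point that makes the chain of equivalences close.
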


We may describe the GIT regions $R_C$ more explicitly. By \cite[Theorem 1.20]{BellSchedQuiver}, the walls in the boundary of $R_C$ all lie in the hyperplanes $\beta^\perp$ where $\beta$ is a  \emph{codimension two root}, meaning that there is a codimension two stratum $\mf{M}_{\theta}(\alpha)_{\tau}$ where the dimension vectors $\beta^{(i)}$ appearing in the representation type $\tau$ are all rational combinations of $\alpha$ and $\beta$.  In fact, in \cite{BelSchMinimalDegenerations},  we will show that this is equivalent to the condition that $\beta$ and $\alpha-\beta$ are both roots, and $(\beta, \alpha-\beta)=-2$ (but we 
do not need this fact here). Then $R_C$ is the closure of one of the complementary regions of these hyperplanes, namely, the one containing $C$. Conversely every such region can be used as $R_C$, and $C$ can be taken to be any GIT chamber inside it.
  \section{Hypertoric varieties}\label{sec:hypertoric}
  We now show that our main results also apply to the class of nonsingular hypertoric varieties, leading to a proof of Theorem~\ref{thm:conjecturefortori}. We work over the complex numbers in this section.

\subsection{GIT construction}
 Hypertoric varieties were originally constructed as hyperk\"ahler quotients by Bielawski and Dancer~\cite{BielawskiDancer}, where they were called toric hyperk\"ahler manifolds (the name ``hypertoric'' for the possibly singular algebraic varieties was coined later in work of Harada and Proudfoot, noting that they are not toric varieties.) Here we recall their construction as holomorphic symplectic varieties by GIT following Hausel and Sturmfels~\cite{HS} (see also Konno~\cite{Konno03}).
 
 For $n, r\in \mathbb{N}$ with $r<n$, consider the action of the algebraic torus $G:=(\CC^\times)^r$ on the complex symplectic vector space $T^*\CC^n = \CC^n\times (\CC^n)^*$, where the matrix that records the weights of the action is of the form $(A,-A)$, where $A$ is an $r\times n$ integer-valued matrix whose columns $a_1, \dots, a_n$ span $\ZZ^r$. Note that this forces the $r \times r$-minors of $A$ to be relatively prime. The $G$-action is Hamiltonian for the natural symplectic structure on $T^*\CC^n$, and the induced moment map $\mu\colon T^*\CC^n\to \mathfrak{g}^*$ satisfies 
 \[
 \mu(z,w)=\sum_{i=1}^n z_iw_i \cdot a_i.
 \]
 Choose an integer $n \times (n-r)$ matrix $B$ forming the short exact sequence
 \[
 0 \to \Z^{n-r} \stackrel{B}{\longrightarrow} \Z^n \stackrel{A}{\longrightarrow} \Z^r \to 0. 
 \]
 If no row of the matrix $B$ is zero (equivalently, when the torus $G$ contains no dilations along a single axis) then the locus $\mu^{-1}(0)$ is an affine variety by \cite[Lemma~4.7]{HypertoricBB}, and for any character $\theta\in G^\vee$, the corresponding \emph{hypertoric variety} is defined to be
 \[
 \Xtheta{}:= \mu^{-1}(0)\git_\theta \, G.
 \]
 Recall that a matrix $A$ is said to be \textit{unimodular} if all of the non-zero $r \times r$ minors of $A$ belong to $\{ -1,0, 1 \}$ (equivalently, the $(n-r) \times (n-r)$-minors of $B$ belong to $\{ -1,0, 1 \}$). Under the assumption that no row of the matrix $B$ is zero, it is shown in \cite[Proposition~6.2]{HS} that $\Xtheta{}$ is nonsingular for general $\theta$ if and only if $A$ is unimodular. Note in addition, that the interior of every top-dimensional cone in the GIT fan is a chamber, because $G$ is a torus \cite[Corollary~4.1.10]{DolgachevHu98}.
 
\subsection{Applying the main result}

 In order to apply our Theorem~\ref{thm:movable} to nonsingular hypertoric varieties, we show that Condition~\ref{cond:GIT} holds. Much of the heavy lifting was done by Konno~\cite{Konno03}.

\begin{thm}\label{thm:hypertoric}
Assume that $A$ is unimodular and no row of the matrix $B$ is zero, so the hypertoric variety $\Xtheta{}$ is nonsingular for general $\theta$. Then Condition~\ref{cond:GIT} holds for every chamber $C$, and hence Theorem~\ref{thm:movable} applies. In particular, every projective crepant resolution of $X_0$ is a hypertoric variety $\Xtheta{}$ for some generic $\theta$.
\end{thm}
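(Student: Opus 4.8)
The plan is to verify that Condition~\ref{cond:GIT} holds for every GIT chamber $C$, following closely the argument for quiver varieties in Theorem~\ref{thm:mainquiverVars}; the final statement about crepant resolutions of $X_0$ then follows from Theorem~\ref{thm:movable}, equivalently from Corollary~\ref{c:partial-cr}, exactly as in the quiver case (cf.\ Corollary~\ref{cor:resgivenbyquiver}), since any two crepant resolutions of $X_0$ are isomorphic in codimension one and hence small birational models of one another.

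First I would establish Condition~\ref{cond:GIT}(1). For generic $\theta\in C$, unimodularity of $A$ guarantees that $\Xtheta{}$ is nonsingular, hence normal. Since $V:=\mu^{-1}(0)\subseteq T^*\CC^n$ is cut out by the quadrics defining $\mu=0$, it is conical and the linear $G$-action is equivariant for the scaling, so the origin is a $G$-fixed point; Lemma~\ref{lem:fixedpoint} then shows that $\overline{C}$ is strongly convex. For surjectivity of $L_C$ I would invoke the tilting bundle on $\Xtheta{}$ constructed by \v{S}penko--Van den Bergh~\cite{SVdBhypertoric}, whose summands are line bundles of the form $L_\chi$ that generate $\Pic(\Xtheta{}/X_0)_\QQ$; this is the hypertoric analogue of Kirwan surjectivity. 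To obtain injectivity I would apply Corollary~\ref{c:git-stronglyconvex}, for which it suffices to show that, for $\theta_0$ general in any wall of $\overline{C}$, the VGIT morphism $\tau\colon\Xtheta{}\to\Xtheta{_0}$ contracts a curve. Every GIT quotient here has dimension $2(n-r)$ and maps birationally to $X_0$, so $\tau$ is birational; and because crossing a genuine GIT wall produces strictly semistable points, the quotient $\Xtheta{_0}$ is singular while $\Xtheta{}$ is smooth, so $\tau$ is not an isomorphism and hence contracts a curve. This is the hypertoric counterpart of Lemma~\ref{lem:contractscurve}, and together with strong convexity it yields Condition~\ref{cond:GIT}(1).

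For Conditions~\ref{cond:GIT}(2) and~(3) I would argue exactly as in the quiver case. The morphism $\tau\colon\Xtheta{}\to\Xtheta{_0}$ is a projective symplectic resolution, hence semismall by Kaledin~\cite{Kaledinsympsingularities}, so $\codim\Uns(\tau)\geq\tfrac{1}{2}\codim\mr{Sing}(\Xtheta{_0})$. The singular locus of $\Xtheta{_0}$ is a union of symplectic leaves and so has even codimension, and the slices transverse to a codimension-two leaf are Kleinian. This gives a dichotomy depending only on $\theta_0$: if $\codim\mr{Sing}(\Xtheta{_0})\geq 4$ then both unstable loci $\Uns(\tau_\pm)$ have codimension at least two, so by normality of the smooth $\Xtheta{_\pm}$, curve contraction, and Proposition~\ref{prop:floppingwall} the wall is flipping; whereas if $\codim\mr{Sing}(\Xtheta{_0})=2$ the inequality becomes an equality and both $\tau_\pm$ are divisorial. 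No VGIT morphism is of fibre type, since all quotients have dimension $2(n-r)$. By the definition of $R_C$, the interior walls are then exactly the flipping walls and the boundary walls exactly the divisorial ones, giving Conditions~\ref{cond:GIT}(2) and~(3).

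The step I expect to be the main obstacle is the curve-contraction statement, namely that $\Xtheta{_0}$ is genuinely singular for $\theta_0$ general in every wall of $\overline{C}$, equivalently that there are no walls of type~$\0$ in this setting. Much of this---along with the identification of the GIT chambers with the chambers of the hyperplane arrangement attached to $A$ and the description of the nef cone---was carried out by Konno~\cite{Konno03}, and I would appeal to that work both to exclude fake walls and to control $\mr{Sing}(\Xtheta{_0})$.
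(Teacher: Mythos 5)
Your proposal reaches the right conclusion and agrees with the paper on Condition~\ref{cond:GIT}(1): both use the \v{S}penko--Van den Bergh tilting bundle for surjectivity of $L_C$ (the paper passes through $K_0$ and the determinant map), Lemma~\ref{lem:fixedpoint} applied to the origin in $\mu^{-1}(0)$ for strong convexity, and Corollary~\ref{c:git-stronglyconvex} for injectivity. Where you diverge is in the wall analysis for Conditions~(2) and~(3): you propose to transplant the Kaledin semismallness argument from the quiver-variety proof, whereas the paper instead uses Konno's explicit local description of wall-crossing, namely that $\tau_{\pm}$ restricted to $\Uns(\tau_{\pm})$ is a $\mathbb{P}^r$-bundle over the strictly semistable stratum $S$ with $r\geq 1$; the dichotomy small versus divisorial then falls out of $r>1$ versus $r=1$, and the fact that a curve is always contracted is exactly Konno's $r\geq 1$. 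The two routes are consistent, but your semismallness route has a prerequisite you gloss over: the inequality $\codim\Uns(\tau)\geq\tfrac{1}{2}\codim\mathrm{Sing}(\Xtheta{_0})$ only follows from semismallness once you know the unstable locus coincides with the exceptional locus (equivalently, that $\tau$ maps $\Uns(\tau)$ onto the singular locus); in the quiver case this came from the identification of the singular locus with the strictly polystable locus, and in the hypertoric case the paper deduces it from Konno's description together with the observation that $\tau_{\pm}$ is a Poisson morphism that fails to be an isomorphism precisely over the unstable locus. So in practice your plan still needs Konno's input at exactly the point you flag as the main obstacle, and once that input is granted the semismallness detour becomes unnecessary. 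You would also need to justify that $\Xtheta{_0}$ is normal with symplectic singularities before invoking the Kleinian transverse-slice statement; the paper's route avoids this entirely.
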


\begin{proof}
Let $C$ be a chamber and $\theta \in C$. Since $X_{\theta}$ is nonsingular, \cite[Theorem~1.1]{SVdBhypertoric} shows that there exist $\eta_1, \ds, \eta_k \in \Theta$ such that $\mc{T} = \bigoplus_{i = 1}^k L_C(\eta_i)$ is a tilting bundle on $X_{\theta}$. This implies that the line bundles $L_C(\eta_i)$ for $1\leq i\leq k$ span the Grothendieck group $K_0(X_{\theta})$. Since $\det \colon K_0(X_{\theta}) \to \mr{Pic}(X_{\theta})$ is surjective, we deduce that $L_C$ is surjective. We note that the closed cone $\overline{C}$ is strongly convex by Lemma~\ref{lem:fixedpoint} because $\mu^{-1}(0)$ has a $G$-fixed point. Thus, Corollary~\ref{c:useful-criterion}(1) holds.  

In order to apply results from \cite{Konno03,Konnohypertoric}, we note that condition (C1) from \cite{Konno03} 
is equivalent to $A$ being unimodular and condition (C2) is equivalent to no row of $B$ being zero. Assume now that chambers $C_{\pm}$ share a wall in $R_C$, and let $\theta_\pm \in C_\pm$ and choose $\theta_0$ be general in this wall. Let $S \subset X_{\theta_0}$ denote the set of points whose corresponding closed $G$-orbit in $\mu^{-1}(0)^{\theta_0}$ consists of points with stabiliser of positive dimension. Then Konno 
says that $\tau_{\pm} \colon X_{\theta_{\pm}} \setminus \mr{Uns}(\tau_{\pm}) \to X_{\theta_0} \setminus S$ is an isomorphism (\cite[proof of Theorem~6.3 on page 306]{Konno03} or \cite[Theorem~6.4(4)]{Konnohypertoric}) and $\tau_{\pm} |_{\mr{Uns}(\tau_{\pm})}$ is a $\mathbb{P}^r$-bundle over $S$ (\cite[Lemma~6.8(1)]{Konno03} or \cite[Theorem~6.4(3)]{Konnohypertoric}). Crucially, \cite[Lemma~6.8(3)]{Konno03} says that $r \ge 1$ and hence $\tau_{\pm}$ always contracts a curve. Since $\tau_{\pm}$ is a Poisson morphism (with $X_{\theta_{\pm}}$ symplectic) and $\tau_{\pm}$ is not an isomorphism over the unstable locus, the image of the unstable locus must equal the singular locus of $X_{\theta_0}$. Thus, Corollary~\ref{c:useful-criterion}(2) holds. 

Finally, we note that a wall is small if $r > 1$, otherwise it is a divisorial boundary wall. In particular, Corollary~\ref{c:useful-criterion}(3) holds. We deduce from Corollary~\ref{c:useful-criterion} that Condition~\ref{cond:GIT} holds.   
\end{proof}

\begin{proof}[Proof of Theorem~\ref{thm:conjecturefortori}]
This is immediate from Theorem~\ref{thm:hypertoric}.
\end{proof}

\section{Crepant resolutions of some threefold quotient singularities}
 We now show that our main results apply to projective, crepant resolutions of certain Gorenstein, threefold quotient singularities, including all polyhedral singularities. That our methods can be applied to threefolds emphasises the fact that our results do not in any way rely on the holomorphic symplectic structure of Nakajima quiver varieties.
 
\subsection{McKay quiver moduli spaces}
 Let $\Gamma\subset \SL(3,\kk)$ be a finite subgroup. The affine quotient singularity $\mathbb{A}^3/\Gamma:=\Spec \kk[\mathbb{A}^3]^\Gamma$ is a normal, Gorenstein threefold that admits a projective, crepant resolution. Rather than recall the construction of  Bridgeland, King and Reid~\cite{BKR}, it is convenient for our purpose to recall the more general construction appearing in~\cite[Section~2]{CrawIshii}.
 
  Let $\Irr(\Gamma)$ denote the set of isomorphism classes of irreducible representations of $\Gamma$, and write $R(\Gamma)=\bigoplus_{\rho\in \Irr(\Gamma)} \ZZ \rho$ for the representation ring of $\Gamma$. A \emph{$\Gamma$-constellation} is a $\Gamma$-equivariant coherent sheaf $F$ on $\mathbb{A}^3$ such that $H^0(F)$ is isomorphic to the regular representation $R=\bigoplus_{\rho\in \Irr(\Gamma)} R_\rho \otimes \rho$ as a $\kk[\Gamma]$-module. Note that $H^0(F)$ is a module over the skew group algebra $\kk[\mathbb{A}^3]\rtimes \Gamma$ of dimension vector $(\dim \rho)_{\rho \in \Irr(\Gamma)}$, and conversely, the sheaf on $\mathbb{A}^3$ associated to any such module is a $\Gamma$-constellation. Consider the rational vector space
  \[
  \Theta:=\big\{\theta\in \Hom_\ZZ(R(\Gamma),\QQ) \mid \theta(R)=0\big\}. 
  \]
  For $\theta\in \Theta$, a $\Gamma$-constellation $F$ is \emph{$\theta$-semistable} if every proper nonzero $\Gamma$-equivariant coherent subsheaf $F'$ of $F$ satisfies $\theta(F'):=\theta(H^0(F')) \geq 0$; it is \emph{$\theta$-stable} if these inequalities are strict. Two $\theta$-semistable $\Gamma$-constellations are S-equivalent if their composition series agree in the abelian category of $\theta$-semistable $\Gamma$-constellations. The space $\Theta$ supports a polyhedral fan characterised by the following property: $\theta\in \Theta$ lies in the interior of a top-dimensional cone if and only if every $\theta$-semistable $\Gamma$-constellation is $\theta$-stable, in which case we say $\theta$ is \emph{generic} \cite[Lemma~3.1]{CrawIshii}. In particular, the interior of every top-dimensional cone in the GIT fan is a \emph{chamber}, so a \emph{wall} is a codimension-one face of the closure of any chamber. 
  
  Let $W$ denote the given three-dimensional representation of $\Gamma$, and consider the affine scheme $\{B\in \Hom_{\kk[\Gamma]}(R,W\otimes R) \mid B\wedge B=0\}$ parametrising $\Gamma$-constellations. Let $V$ denote the irreducible component of this scheme containing the free $\Gamma$-orbits. Isomorphism classes of $\Gamma$-constellations in $V$ correspond to orbits in $V$ under the action of  $G_\Gamma=\prod_{\rho\in \Irr(\Gamma)} \GL(\dim \rho)$ by change of basis on the summands of $R$. For any integer-valued $\theta\in \Theta$, consider the character $\chi_\theta\in {G_\Gamma}^\vee$ satisfying $\chi_\theta(g) = \prod_{\rho\in \Irr(\Gamma)} \det(g)^{\theta(\rho)}$ for $g\in G_\Gamma$. As in the construction by King~\cite{KingStable}, the GIT quotient 
  \[
 \mathcal{M}_\theta:= V \git_{\chi_\theta} G_\Gamma
 \]
 is the coarse moduli space of S-equivalence classes of $\theta$-semistable $\Gamma$-constellations that are deformations of a free $\Gamma$-orbit. The dimension vector $(\dim \rho)_{\rho\in \Irr(\Gamma)}$ is indivisible, so for any generic $\theta\in \Theta$, the GIT quotient $\mathcal{M}_\theta$ is the fine moduli space of $\Gamma$-constellations (that are deformations of a free $\Gamma$-orbit) up to isomorphism. 

 The tautological family on $\mathcal{M}_\theta$ is a locally-free sheaf $\mathcal{R}=\bigoplus_{\rho\in \Irr(\Gamma)} \mathcal{R}_\rho \otimes \rho$ and a tautological $\Gamma$-equivariant homomorphism $\mathcal{R}\to W\otimes \mathcal{R}$, where $\mathcal{R}_\rho$ has rank $\dim(\rho)$. We normalise the family so that the summand indexed by the trivial representation is the trivial bundle; see \cite[Section~2]{CrawIshii} for details. 
 
 \begin{prop}
 \label{prop:BKR}
 Let $C\subseteq \Theta$ be a chamber and let $\theta\in C$. Then 
 \begin{enumerate}
     \item[\one] variation of GIT quotient given by sending $\theta\rightsquigarrow 0$ induces a projective crepant resolution $f_\theta\colon \mathcal{M}_\theta\to \mathbb{A}^3/\Gamma$ that sends each $\Gamma$-constellation to its supporting $\Gamma$-orbit; and
     \item[\two] the linearisation map $L_C$ is surjective.
     \end{enumerate}
 \end{prop}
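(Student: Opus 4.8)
The plan is to prove the two statements separately, treating part \one\ as an identification of the target of the variation of GIT quotient together with the smoothness and crepancy input of \cite{BKR, CrawIshii}, and part \two\ by a tilting-bundle argument that runs exactly parallel to the proof of Theorem~\ref{thm:hypertoric}.

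For part \one, I would first identify the affine quotient $\mathcal{M}_0 = V\git_0 G_\Gamma$ with $\mathbb{A}^3/\Gamma$. The closed $G_\Gamma$-orbits in $V$ correspond to $0$-polystable, hence semisimple, $\Gamma$-constellations, and such a semisimple constellation is supported on a single $\Gamma$-orbit $Z\subset \mathbb{A}^3$, namely $\mathcal{O}_Z$ with its natural $\Gamma$-action; conversely every $\Gamma$-orbit produces one such object. This yields a bijection of closed points $\mathcal{M}_0\cong \mathbb{A}^3/\Gamma$, and in fact an isomorphism $\kk[V]^{G_\Gamma}\cong \kk[\mathbb{A}^3]^\Gamma$. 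The variation of GIT quotient $\theta\rightsquigarrow 0$ then provides a projective morphism $f_\theta\colon \mathcal{M}_\theta\to \mathcal{M}_0=\mathbb{A}^3/\Gamma$ which, on closed points, sends a $\theta$-polystable $\Gamma$-constellation to the $\Gamma$-orbit supporting its semisimplification. Over the dense open locus of free $\Gamma$-orbits the only semistable constellation with the correct support is $\mathcal{O}_Z$, so $f_\theta$ is an isomorphism there and hence birational. Smoothness of $\mathcal{M}_\theta$ for generic $\theta$ and crepancy of $f_\theta$ are precisely the content of the Bridgeland--King--Reid theorem \cite{BKR} and its extension to arbitrary chambers in \cite{CrawIshii}: since $\Gamma\subset \SL(3,\kk)$, the singularities of $\mathbb{A}^3/\Gamma$ are Gorenstein, the moduli space carries a trivial canonical class through the tautological family, and the derived McKay equivalence forces $f_\theta$ to be crepant.

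For part \two\ I would argue as in the hypertoric case. The tautological bundle $\mathcal{T}:=\bigoplus_{\rho\in \Irr(\Gamma)}\mathcal{R}_\rho$ is a tilting bundle on $\mathcal{M}_\theta$, with endomorphism algebra Morita equivalent to the skew group algebra $\kk[\mathbb{A}^3]\rtimes\Gamma$, by the derived McKay correspondence \cite{BKR, CrawIshii}. Consequently the classes $[\mathcal{R}_\rho]$, which correspond to the irreducible $\Gamma$-representations under the induced isomorphism $K_0(\mathcal{M}_\theta)\cong R(\Gamma)$, form a $\ZZ$-basis of $K_0(\mathcal{M}_\theta)$ and in particular generate it. Since $\mathcal{M}_\theta$ is smooth, the determinant map $\det\colon K_0(\mathcal{M}_\theta)\to \Pic(\mathcal{M}_\theta)$ is surjective, so $\Pic(\mathcal{M}_\theta)$ is generated by the classes $\det\mathcal{R}_\rho$. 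Each such class is obtained by descent from a determinant character on $G_\Gamma$: using the normalisation $\mathcal{R}_{\rho_0}\cong \mathcal{O}$ for the trivial representation $\rho_0$, it equals $L_C(\chi)$ for the character $\chi=\det_\rho\otimes\det_{\rho_0}^{-\dim\rho}\in \Theta$, and hence lies in the image of $L_C$. Finally $Y=\mathbb{A}^3/\Gamma$ is affine, so $\Pic(\mathcal{M}_\theta/Y)=\Pic(\mathcal{M}_\theta)$, and we conclude that $L_C$ is surjective.

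The main obstacle is the geometric input underlying both parts, namely the Bridgeland--King--Reid-type derived equivalence for an \emph{arbitrary} generic stability parameter $\theta$, and not merely for $\Gamma$-$\mathrm{Hilb}$: this is what simultaneously delivers the crepancy of $f_\theta$ in part \one\ and the tilting property of $\mathcal{T}$, hence the generation of $K_0(\mathcal{M}_\theta)$, in part \two. Everything else is formal, namely the identification of $\mathcal{M}_0$ with $\mathbb{A}^3/\Gamma$, the surjectivity of $\det$ on a smooth variety, and the reduction of the relative Picard group to the absolute one over the affine base.
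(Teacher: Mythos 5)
The step that fails is your identification of the affine quotient $\mathcal{M}_0=V\git_0\,G_\Gamma$ with $\mathbb{A}^3/\Gamma$. A $0$-polystable $\Gamma$-constellation is a direct sum of simple $\kk[\mathbb{A}^3]\rtimes\Gamma$-modules whose underlying $\Gamma$-module is $R$, and such a direct sum need \emph{not} be supported on a single $\Gamma$-orbit. For instance, take $\Gamma=\langle\mathrm{diag}(-1,-1,1)\rangle\cong\ZZ/2$ with characters $\rho_0,\rho_1$, and let $p\neq q$ be two points on the fixed line: then $(\kk_p\otimes\rho_0)\oplus(\kk_q\otimes\rho_1)$ is a semisimple $\Gamma$-constellation with $H^0\cong R$ whose support consists of two distinct $\Gamma$-orbits. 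Consequently the closed points of $\mathcal{M}_0$ are not in bijection with $\Gamma$-orbits, there is no isomorphism $\kk[V]^{G_\Gamma}\cong\kk[\mathbb{A}^3]^\Gamma$ in general, and --- as the paper's own (one-line) proof is careful to flag --- $\mathbb{A}^3/\Gamma$ is only an irreducible component of the affine quotient. The correct statement, which is precisely what \cite[Proposition~2.2, Theorem~2.5]{CrawIshii} supplies on top of \cite{BKR}, is that the VGIT morphism lands in this distinguished component and restricts to a projective crepant resolution of it. Your appeal to BKR for smoothness and crepancy is appropriate, but the reduction ``$\mathcal{M}_0=\mathbb{A}^3/\Gamma$ is formal'' is not available.

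Part \two\ is essentially right and deliberately mirrors the paper's hypertoric argument (it is also how the cited \cite[Section~3.2]{CrawIshii} proceeds): the classes $[\mathcal{R}_\rho]$ generate $K_0(\mathcal{M}_\theta)$, the determinant map is surjective for a smooth variety, and each $\det\mathcal{R}_\rho$ descends from a character, so $L_C$ is surjective. One caveat: you assert that $\bigoplus_\rho\mathcal{R}_\rho$ is a tilting bundle with endomorphism algebra $\kk[\mathbb{A}^3]\rtimes\Gamma$. That is more than \cite{BKR} provides --- their equivalence is a Fourier--Mukai transform along the universal object, and the higher $\Ext$-vanishing needed for tilting is not established for the tautological bundle in dimension three --- and it is also more than you need, since generation of $K_0(\mathcal{M}_\theta)$ by the $[\mathcal{R}_\rho]$ already follows from the induced isomorphism $K_0(\mathcal{M}_\theta)\cong K_0^\Gamma(\mathbb{A}^3)\cong R(\Gamma)$ together with the standard identification of the images of the generators $\rho\otimes\mathcal{O}$ with the tautological summands.
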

 \begin{proof}
 Part~\one\ is due to \cite{BKR}, though it appears in this form only in \cite[Proposition~2.2, Theorem~2.5]{CrawIshii}; note that the singularity $\mathbb{A}^3/\Gamma\cong\mathcal{M}_0$ is only an irreducible component of the affine quotient $\{B\in \Hom_{\kk[\Gamma]}(R,W\otimes R) \mid B\wedge B=0\}\git\; G$ in general. Part \two\ appears in \cite[Section~3.2]{CrawIshii},  or more explicitly, as \cite[Corollary~3.9]{CrawGaledual}.
 \end{proof}

  \subsection{The linearisation map}
  Our interest lies with those quotient singularities for which the linearisation map is an isomorphism. This property can be characterised in several ways as follows.
      
 \begin{lem}
 \label{lem:nosenior}
 Let $\Gamma\subset \SL(3,\kk)$ be a finite subgroup. The following statements are equivalent:
\begin{enumerate}
    \item[\one] every nontrivial conjugacy class of $\Gamma$ is `junior' in the sense of Ito and Reid~\cite{ItoReid96};
    \item[\two] some (and hence any) projective crepant resolution $f\colon X\to \mathbb{A}^3/\Gamma$ has all fibres of dimension at most one;
    \item[\three] for any GIT chamber $C\subset \Theta$ and $\theta\in C$, the moduli space $\mathcal{M}_\theta$ contains no proper surfaces;
    \item[\four] for any GIT chamber $C\subset \Theta$, the linearisation map $L_C$ is an isomorphism.
\end{enumerate}
 \end{lem}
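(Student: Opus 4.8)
The plan is to show that all four statements are equivalent to the single numerical condition $s=0$, where $s$ denotes the number of \emph{senior} (age $2$) conjugacy classes of $\Gamma$ and $j$ the number of \emph{junior} (age $1$) classes. Since $\Gamma\subset\SL(3,\kk)$, every nontrivial element has age $1$ or $2$, so statement \one\ is by definition the assertion that $s=0$, and the number of conjugacy classes satisfies $\#\Irr(\Gamma)=1+j+s$; consequently $\dim_\QQ\Theta=\#\Irr(\Gamma)-1=j+s$. The argument rests on two inputs from the McKay correspondence for $\Gamma\subset\SL(3,\kk)$: first, by Ito--Reid~\cite{ItoReid96} the exceptional prime divisors of any projective crepant resolution $f\colon X\to Y:=\mathbb{A}^3/\Gamma$ are in bijection with the junior classes, so there are exactly $j$ of them; and second, the age grading of cohomology (Batyrev) gives $\dim_\QQ H^4(X,\QQ)=s$.

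First I would record that $\dim_\QQ\Pic(\mathcal{M}_\theta/Y)_\QQ=j$. Indeed $Y$ is a quotient singularity, hence $\QQ$-factorial with $\Cl(Y)_\QQ=0$, so for the crepant resolution $f_\theta\colon\mathcal{M}_\theta\to Y$ of Proposition~\ref{prop:BKR}\one\ the group $\Pic(\mathcal{M}_\theta/Y)_\QQ\cong\Cl(\mathcal{M}_\theta)_\QQ$ is freely generated by the $f_\theta$-exceptional prime divisors, of which there are $j$. This yields \four$\iff$\one: the linearisation map $L_C\colon\Theta\to\Pic(\mathcal{M}_\theta/Y)_\QQ$ is surjective by Proposition~\ref{prop:BKR}\two, so it is an isomorphism if and only if $\dim_\QQ\Theta=\dim_\QQ\Pic(\mathcal{M}_\theta/Y)_\QQ$, that is $j+s=j$, that is $s=0$.

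Next I would interpret the second input geometrically: the compact (proper) irreducible exceptional surfaces span $H^4(X,\QQ)$, so there are exactly $s$ of them. Since $X$ deformation retracts onto the exceptional fibre $E=f^{-1}(\mathrm{Sing}\,Y)$ and the pairwise intersections of components of $E$ are at most curves, the Mayer--Vietoris spectral sequence collapses in degree four to give $H^4(X,\QQ)=H^4(E,\QQ)=\bigoplus_{E_i\text{ compact}}\QQ\,[E_i]$, whence the count. Because $Y$ is affine, any proper surface in $X$ is contracted by $f$ to a point and hence lies in a fibre; thus $X$ contains a proper surface if and only if some fibre of $f$ has dimension $2$, if and only if $s>0$. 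Applying this to each $\mathcal{M}_\theta$, which is a crepant resolution by Proposition~\ref{prop:BKR}\one, gives \three$\iff$\one, and the equivalence with fibre dimension gives \two$\iff$\one for a fixed resolution. Finally, since $s=\dim_\QQ H^4(X,\QQ)$ is the same for every projective crepant resolution of the fixed $Y$, the property in \two\ is independent of the chosen resolution, which supplies the ``some (and hence any)'' clause.

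I expect the main obstacle to be the second input, namely the identification $s=\#\{\text{compact exceptional surfaces}\}$. The bijection between junior classes and exceptional divisors is exactly Ito--Reid, but pinning down that the \emph{senior} classes count precisely the \emph{compact} exceptional divisors requires the age refinement of the (stringy) cohomology together with the topological fact that $H^4$ of the resolution is spanned by fundamental classes of compact surfaces; care is needed to verify that non-compact two-dimensional components of $E$, such as $\mathbb{P}^1$-bundles over a curve of singularities, contribute nothing to $H^4$. Everything else reduces to the dimension count above and the elementary observation that properness of a subvariety over the affine base $Y$ forces it into a single fibre.
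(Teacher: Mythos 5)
Your proof is correct in substance, but it takes a genuinely different route from the paper's. The paper proves \one$\iff$\two\ by quoting Ito--Reid directly (condition \one\ is equivalent to the absence of proper $f$-exceptional prime divisors) together with \cite[Corollary~3.54]{KollarMori} for the independence of the chosen resolution; it gets \two$\iff$\three\ from Proposition~\ref{prop:BKR}\one\ exactly as you do; and, crucially, it proves \three$\iff$\four\ by citing \cite[Lemma~4.2]{CrawGaledual}, which identifies $\ker(L_C)$ as dual to the span of the numerical classes of proper surfaces in $\mathcal{M}_\theta$, so that injectivity of $L_C$ is \emph{equivalent} to the absence of proper surfaces with no counting required. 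You instead reduce everything to the single equation $s=0$ via two numerical inputs: the Ito--Reid bijection (giving $\dim\Pic(\mathcal{M}_\theta/Y)_\QQ=j$) and the cohomological McKay correspondence $\dim H^4(X,\QQ)=s$ (Batyrev), plus the dimension count $\dim\Theta=\#\Irr(\Gamma)-1=j+s$. This buys you independence from the Gale-duality lemma and makes the ``some hence any'' clause automatic (since $s$ depends only on $\Gamma$), at the cost of invoking the age grading of cohomology, which is a deeper input than anything the paper uses, and of the topological bookkeeping you rightly flag as the delicate point. Two small remarks there: the standard deformation retraction (via the contracting $\kk^\times$-action) is onto the central fibre $f^{-1}(0)$, not onto $f^{-1}(\mathrm{Sing}\,Y)$, but this is all you need, since upper semicontinuity of fibre dimension along the $\kk^\times$-orbits shows that a two-dimensional fibre anywhere forces one over the origin; and you do not actually need the precise count of $s$ compact surfaces, only the equivalence $s=0\iff H^4(X,\QQ)=0\iff$ no two-dimensional fibre, which is solid. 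Your dimension count for \four\ is also correct: $\Gamma\subset\SL(3,\kk)$ contains no pseudo-reflections, so $\Cl(Y)_\QQ=0$ and $\Pic(X/Y)_\QQ$ is freely spanned by the $j$ exceptional prime divisors, whence the surjection $L_C$ is an isomorphism precisely when $j+s=j$.
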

 \begin{proof}
 Since $\mathbb{A}^3/\Gamma$ admits a projective crepant resolution $f\colon X\to \mathbb{A}^3/\Gamma$, condition \one\ is equivalent by \cite[Theorem~1.6]{ItoReid96} to the statement that $X$ contains no proper $f$-exceptional prime divisors, which is equivalent to $f$ having all fibres of dimension at most one. This holds for one crepant resolution if and only if it holds for all such \cite[Corollary~3.54]{KollarMori}, so \one\ and \two\ are equivalent. For any chamber $C\subset \Theta$ and $\theta\in C$, the morphism $f_\theta\colon \mathcal{M}_\theta\to \mathbb{A}^3/\Gamma$ is a projective crepant resolution by Proposition~\ref{prop:BKR}, so \two\ is equivalent to \three. Finally, 
 \cite[Lemma~4.2]{CrawGaledual} shows that the kernel of $L_C$ is dual to a vector space spanned by the numerical classes of proper surfaces in $\mathcal{M}_\theta$. Thus, there are no such surfaces if and only if $\ker(L_C)=0$. The result follows from Proposition~\ref{prop:BKR}\two.
 \end{proof}
  
 \begin{example}
 A simple and much-studied example is that of the subgroup $\Gamma=\ZZ_2\times \ZZ_2$ in $\SL(3,\kk)$ generated by the diagonal matrices $\text{diag}(1,-1,-1)$ and $\text{diag}(-1,-1,1)$. The toric threefold $\mathbb{A}^3/\Gamma$ admits four projective, crepant toric resolutions, one of which has exceptional locus comprising three $(-1,-1)$-curves meeting at a point; the remaining three such resolutions are obtained by flopping one of these curves. All four of these resolutions can be constructed as fine moduli spaces of $\theta$-stable $\Gamma$-constellations for some generic $\theta$; see \cite[Chapter~5]{Crawthesis} or \cite[Example~3.4, Remark~7.5]{Wemyss18}.   
 \end{example}
 
 \begin{example}[\textbf{Polyhedral singularities}]
  \label{exa:polyhedral} 
  It is classical that every finite subgroup $\Gamma\subset \SO(3,\mathbb{R})$ is a cyclic group, a dihedral group, or the rotational symmetry group of either the tetrahedron, the octahedron or the icosahedron.  The quotient singularity $\mathbb{A}^3/\Gamma$ is called \emph{polyhedral singularity}.  For each of these groups, Gomi, Nakamura and Shinoda~\cite{GNS00,GNS04} showed that the Hilbert--Chow morphism for the $\Gamma$-Hilbert scheme (this is $f_\theta$ from Proposition~\ref{prop:BKR}\one\ for $\theta$ as in \cite[Proposition~5.9]{Crawthesis})  satisfies the condition from Lemma~\ref{lem:nosenior}\two. The crepant resolution is unique when $\Gamma$ is cyclic, but for the dihedral and tetrahedral cases, Nolla de Celis and Sekiya~\cite{NollaSekiya17} subsequently proved that every projective, crepant resolution of $\mathbb{A}^3/\Gamma$ is of the form $\mathcal{M}_\theta$ for some generic $\theta$. Compare Remark~\ref{rem:Wemyss}.
 \end{example}
 
 \subsection{On GIT walls}
 \label{sec:threefoldsGITwalls}
 We now turn our attention to the GIT walls in $\Theta$. For adjacent chambers $C_+, C_-$ separated by a wall, variation of GIT quotient induces morphisms $\tau_\pm \colon \mathcal{M}_{\theta_{\pm}} \to \mathcal{M}_{\theta_0}$  of schemes over $Y=\mathbb{A}^3/\Gamma$ as in \eqref{eqn:flop}.
 
 The proof of the next result builds on the proof of \cite[Proposition~4.4]{CrawIshii}.
 
 \begin{lem}
 \label{lem:unstablesurfacescontracted}
 Let $\Gamma$ satisfy the equivalent conditions from Lemma~\ref{lem:nosenior}.  Suppose that $\Uns(\tau_-)$ has an irreducible component $D$ of codimension one. Then $D$ is contracted by $\tau_-$ onto a curve.
  \end{lem}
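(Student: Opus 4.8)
The plan is to locate $D$ and its image precisely inside the singular loci of $Y:=\mathbb{A}^3/\Gamma$ and of $\overline{\mathcal{M}_{\theta_0}}$, and then to play the two contractions $f_{\theta_-}=g\circ\tau_-$ off against each other, where $g\colon \overline{\mathcal{M}_{\theta_0}}\to Y$. Write $f:=f_{\theta_-}$ and $Z:=f(D)$, and let $\Sigma\subseteq\overline{\mathcal{M}_{\theta_0}}$ be the image of the strictly $\theta_0$-semistable locus, equivalently the locus over which $\tau_-$ fails to be an isomorphism by Lemma~\ref{lem:theta0stable}\two. Since $D$ is a component of $\Uns(\tau_-)=\tau_-^{-1}(\Sigma)$, we have $\tau_-(D)\subseteq\Sigma$ and hence $Z=g(\tau_-(D))\subseteq g(\Sigma)$. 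A $\Gamma$-constellation supported at a \emph{free} $\Gamma$-orbit is the structure sheaf of that orbit, hence simple and therefore $\eta$-stable for every $\eta$; so no strictly $\theta_0$-semistable constellation is supported at a free orbit, giving $g(\Sigma)\subseteq\mathrm{Sing}(Y)$. As $\Gamma\subset\SL(3,\kk)$ contains no nontrivial pseudo-reflections, $\mathrm{Sing}(Y)$ is the image of the non-free locus; and since every nontrivial element of $\SL(3,\kk)$ has finite order, hence is diagonalisable with fixed subspace $\ker(g-1)$ of dimension at most one, we conclude $\dim\mathrm{Sing}(Y)\leq 1$, and therefore $\dim Z\leq 1$.

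First I would pin down $\dim Z=1$. The surface $D$ cannot be contracted to a point by $f$: otherwise $D$ would be a proper surface in $\mathcal{M}_{\theta_-}$, contradicting Lemma~\ref{lem:nosenior}\three\ (equivalently, the fibres of $f$ have dimension at most one by Lemma~\ref{lem:nosenior}\two, whereas $\dim D=2$). Combined with the bound $\dim Z\leq 1$ from the previous paragraph, this forces $\dim Z=1$, so $Z$ is a curve contained in $\mathrm{Sing}(Y)$.

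It then remains to show that $\tau_-$ genuinely contracts $D$, that is, $\dim\tau_-(D)=1$ rather than $2$. Because $g(\tau_-(D))=Z$ is already a curve, it suffices to prove $\dim\Sigma\leq 1$: then $\tau_-(D)\subseteq\Sigma$ has dimension at most one, while $g|_{\tau_-(D)}$ surjects onto the curve $Z$, forcing $\dim\tau_-(D)=1$. To bound $\dim\Sigma$ I would invoke the transverse structure of $Y$ along $\mathrm{Sing}(Y)$, as in the proof of \cite[Proposition~4.4]{CrawIshii}: over a general point $z$ of a one-dimensional component of $\mathrm{Sing}(Y)$, the transverse slice is a Kleinian singularity $\mathbb{A}^2/G_z$ with $G_z$ a cyclic subgroup of $\SL(2,\kk)$, and, transverse to $Z$, the morphism $g$ is the partial resolution of $\mathbb{A}^2/G_z$ obtained from $\theta_0$-semistable $G_z$-constellations. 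Such a partial resolution of a Kleinian singularity has only finitely many (rational double) singular points, and these are precisely its strictly $\theta_0$-semistable points; thus $\Sigma$ meets each transverse slice in finitely many points, so $\Sigma$ is generically finite over $g(\Sigma)\subseteq\mathrm{Sing}(Y)$ and $\dim\Sigma\leq 1$. This gives $\dim\tau_-(D)=1$, i.e.\ $\tau_-$ contracts $D$ onto a curve, as claimed.

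The main obstacle is this final step: rigorously identifying the \'etale-local (transverse) structure of $\overline{\mathcal{M}_{\theta_0}}$ along the singular curve $Z$ with the moduli of $\theta_0$-semistable $G_z$-constellations on the Kleinian slice, so that the strictly semistable locus $\Sigma$ is seen to be generically finite over $\mathrm{Sing}(Y)$. Everything else is a dimension count driven by the standing hypothesis from Lemma~\ref{lem:nosenior} that the fibres of any projective crepant resolution of $\mathbb{A}^3/\Gamma$ have dimension at most one.
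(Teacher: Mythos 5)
Your proposal follows essentially the same route as the paper: both arguments first locate $f_{\theta_-}(D)$ as a curve $\ell$ inside $\mathrm{Sing}(\mathbb{A}^3/\Gamma)$ (your justification of this step, via the support of strictly semistable constellations and the non-properness of $D$, is if anything more explicit than the paper's), and both then reduce the behaviour of $\tau_-$ on $D$ to the transverse Kleinian picture over a general point of $\ell$. The difference is one of packaging: the paper identifies $\tau_-\vert_D$ directly with $(\text{a wall-crossing contraction of the minimal resolution of }\mathbb{A}^2/\Gamma')\times \id_{\mathbb{A}^1}$ and reads off that $D$ maps onto a curve, whereas you bound the dimension of the image of the strictly semistable locus and conclude by a dimension count; these buy the same thing. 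The step you flag as the main obstacle --- rigorously identifying the transverse structure with moduli of $\Gamma'$-constellations for the stabiliser $\Gamma'\subset\SL(2,\kk)\times\id$ --- is exactly what the paper supplies, using the restriction-functor equivalence of \cite[Lemma~8.1]{BKR} between $\Gamma$-constellations supported on the orbit $\Gamma\cdot x$ and $\Gamma'$-constellations supported at $x$, together with the compatibility $\chi_{\theta'}=\res^{G_\Gamma}_{G_{\Gamma'}}(\chi_\theta)$ of stability parameters and Kronheimer's description of VGIT for Kleinian singularities; so your outline is sound but that step does need to be carried out rather than assumed. One small correction: the stabiliser $G_z$ of a general point of a one-dimensional singular stratum is an arbitrary finite subgroup of $\SL(2,\kk)$, not necessarily cyclic (e.g.\ $\Gamma$ a binary dihedral or binary tetrahedral group embedded in $\SL(2,\kk)\times\id$ satisfies the junior condition), though this does not affect your argument since a partial resolution of any Kleinian singularity still has only finitely many strictly semistable points.
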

 \begin{proof}
By Lemma~\ref{lem:nosenior}, the divisor $D$ cannot be contracted to a point. We claim that the resolution $f\colon \mathcal{M}_{\theta_-}\to \mathbb{A}^3/\Gamma$ that sends each $\theta_-$-stable $\Gamma$-constellation to its supporting $\Gamma$-orbit contracts $D$ onto a curve. It suffices to show that $f(D)$ is contained in a curve. The union of the fixed loci in $\mathbb{A}^3$ under all nontrivial elements of $\Gamma$ is a finite union of lines through the origin, so its image in $\mathbb{A}^3/\Gamma$ is a curve $Z$.  Any point in $\mathcal{M}_{\theta_-}$ lying over the complement of $Z$ corresponds to a simple $\Gamma$-constellation because $\Gamma$ acts freely on the corresponding locus of $\mathbb{A}^3$, so it is $\theta_0$-stable. But the family of $\Gamma$-constellations over $D$ is not $\theta_0$-stable, so $\ell:=f(D)$ is contained in $Z$ as required.

Let $\pi \colon \mathbb{A}^3 \to \mathbb{A}^3/\Gamma$ be the quotient map and consider a nonzero $x\in \pi^{-1}(\ell)$. Then $x\in \mathbb{A}^3$  has a non-trivial stabiliser $\Gamma^\prime$. As in the proof of \cite[Lemma~8.1]{BKR}, the restriction functor provides an equivalence from the category of $\Gamma$-constellations supported on the orbit $\Gamma\cdot x$ to the category of $\Gamma^\prime$-constellations supported at $x\in \mathbb{A}^3$, and moreover, the restriction map that sends a character $\chi_\theta$ of $G_{\Gamma}$ to the character $\chi_{\theta^\prime}:=\res^{G_\Gamma}_{G_{\Gamma'}}(\chi_\theta)$ of $G_{\Gamma^\prime}$ determines the $\QQ$-linear map $\Theta \to \Theta_{\Gamma^\prime}$ between the spaces of  stability parameters for $\Gamma$- and $\Gamma^\prime$-constellations. This compatibility implies in particular that the restriction of a $\theta$-stable $\Gamma$-constellation supported on $\Gamma\cdot x$ is a $\theta^\prime$-stable $\Gamma^\prime$-constellation supported on $x$. Thus, if we write $f_{\theta^\prime}\colon \mathcal{M}_{\theta^\prime}(\Gamma^\prime)\to \mathbb{A}^3/\Gamma^\prime$ for the morphism sending each $\Gamma^\prime$-constellation to its supporting $\Gamma^\prime$-orbit, and $E_\ell:= (f_{\theta^\prime})^{-1}(\ell)$ for the preimage of
  $\ell$, then the restriction functor identifies $f_{\theta_-}\vert_D \colon D\to \ell$ with $f_{\theta^\prime}\vert_{E_\ell}\colon E_\ell\to \ell$. 
 
 This description of $f_{\theta_-}\vert _D$ allows us to study $\tau_-\vert_D$. Indeed, the action of $\Gamma^\prime$ fixes $x\in \mathbb{A}^3\smallsetminus \{0\}$, so we may choose coordinates with $\Gamma^\prime\subset \SL(2,\kk)\times \id\subset \SL(3,\kk)$. Since $f_{\theta^\prime}$ is a crepant resolution, we have that $f_{\theta^\prime}=f\times \id_{\mathbb{A}^1}$ where $f$ is the minimal resolution of an ADE singularity. The morphism $f_{\theta_-}$ is obtained by varying the stability parameter to zero, so $f_{\theta_-}$ factors via $\tau_-$. The restriction functor identifies $\tau_-\vert_D$ with the restriction of $\tau^\prime\colon \mathcal{M}_{\theta^\prime}(\Gamma^\prime)\to \mathcal{M}_{\theta^\prime_0}(\Gamma^\prime)$ to $E_\ell$, where $\theta^\prime_0\in\Theta_{\Gamma^\prime}$ is determined by the character $\chi_{\theta^\prime_0}:=\res^{G_\Gamma}_{G_{\Gamma'}}(\chi_{\theta_0})$ of $G_{\Gamma^\prime}$. The parameter $\theta_0^\prime$ is in the boundary
 of the chamber containing $\theta^\prime$ since $\Uns(\tau'_-)$ is nonempty, 
 so by Kronheimer~\cite{Kronheimer}, $\tau^\prime$ is the product of $\id_{\mathbb{A}^1}$ with the contraction of at least one
 $(-1)$-curve. It particular, $\tau^\prime\vert_{E_\ell}$ contracts a divisor to a curve, and hence so too does $\tau_-\vert_D$.       
 \end{proof}

 Now, consider the diagram 
 \begin{equation}
\begin{tikzcd}
\label{eqn:flopMtheta}
 \mathcal{M}_{\theta_-} \ar[rr,"\psi",dashed] \ar[dr,"\widetilde{\tau}_-"'] & & \mathcal{M}_{\theta_+} \ar[dl,"\widetilde{\tau}_+"] \\
& \widetilde{\mathcal{M}}_{\theta_0} & 
\end{tikzcd}
\end{equation}
 of schemes over $Y=\mathbb{A}^3/\Gamma$ as in \eqref{eqn:flopStein}, where $\widetilde{\mathcal{M}}_{\theta_0}$ is the normalisation of $\mathcal{M}_{\theta_0}$. Since $\widetilde{\tau}_\pm$ have connected fibres, we may classify GIT walls into four types. Recall from \eqref{eqn:O1pullback} that $L_{C_-}(\theta_0)$ is the semi-ample line bundle that determines the morphism $\widetilde{\tau}_-$. 
 Then either $L_{C_-}(\theta_0)$:
 \begin{itemize}
     \item is ample, in which case $\widetilde{\tau}_-$ is an isomorphism and we say that the wall is of \emph{type $\0$}; or it
    \item defines a class on the boundary of the ample cone of $\mathcal{M}_{\theta_-}$, and since $\theta_0$ is general in the wall, this class lies in the interior of a codimension-one face of $\Amp(\mathcal{M}_{\theta_-}/Y)$ and hence $\widetilde{\tau}_-$ is a primitive contraction. In this case, we say that the wall is:
    \begin{itemize}
        \item  of \emph{type $\I$} if $\widetilde{\tau}_-$ contracts a curve to a point;
        \item of \emph{type $\III$} if $\widetilde{\tau}_-$ contracts a surface to a curve.
    \end{itemize}
 \end{itemize}
 In principle, the morphism $\widetilde{\tau}_-$ might contract a surface to a point - a \emph{type $\II$} contraction - but that surface would necessarily be proper, thereby contradicting Lemma~\ref{lem:nosenior}.
 
 Since $\mathcal{M}_{\theta_-}$ and $\mathcal{M}_{\theta_+}$ are both crepant resolutions of $\mathbb{A}^3/\Gamma$, the type of a wall is independent of whether we replace $\widetilde{\tau}_-$ by $\widetilde{\tau}_+$ throughout the above. In short, the type is independent of the side from which we approach the wall.

 \begin{lem}
 \label{lem:noseniorflipping}
 Let $\Gamma$ satisfy the conditions of Lemma~\ref{lem:nosenior}. For any wall of type $\I$, the unstable loci $\Uns(\tau_-)\subseteq \mathcal{M}_{\theta_-}$ and $\Uns(\tau_+)\subseteq \mathcal{M}_{\theta_+}$ each have codimension at least two.
 \end{lem}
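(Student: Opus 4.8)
The plan is to argue by contradiction, leveraging Lemma~\ref{lem:unstablesurfacescontracted} together with the definition of a type~$\I$ wall. Since the type of a wall is independent of the side from which it is approached, it suffices to bound the codimension of $\Uns(\tau_-)$; the argument for $\tau_+$ is then identical.

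Recall that for a type~$\I$ wall the morphism $\tau_-$ contracts a curve to a point. As this is a primitive contraction associated to a codimension-one face of $\Amp(\mathcal{M}_{\theta_-}/Y)$, it is a small contraction, so $\Exc(\tau_-)$ has codimension at least two in $\mathcal{M}_{\theta_-}$. Suppose, for contradiction, that $\Uns(\tau_-)$ has an irreducible component $D$ of codimension one. Lemma~\ref{lem:unstablesurfacescontracted} then guarantees that $D$ is contracted by $\tau_-$ onto a curve, so $\dim \tau_-(D) = 1 < 2 = \dim D$. In particular $\tau_-$ fails to be an isomorphism at the generic point of $D$, whence $D \subseteq \Exc(\tau_-)$. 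This forces $\Exc(\tau_-)$ to contain the divisor $D$, contradicting smallness. Therefore $\Uns(\tau_-)$ has codimension at least two, and the same reasoning applied to $\tau_+$ completes the proof.

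The genuine content of this statement is already packaged into Lemma~\ref{lem:unstablesurfacescontracted}, whose proof carries out the delicate local analysis near the stabiliser loci via the restriction functor to $\Gamma'$-constellations and Kronheimer's description of the relevant surface contractions. Consequently I expect no real obstacle here: the only points requiring care are, first, matching the wall-type classification by recognising that type~$\I$ means precisely that $\tau_-$ is small and hence contracts no divisor, and second, observing that any codimension-one component of $\Uns(\tau_-)$ contracted onto a curve must lie inside $\Exc(\tau_-)$. Both are immediate once Lemma~\ref{lem:unstablesurfacescontracted} and the exclusion of type~$\II$ walls (Lemma~\ref{lem:nosenior}) are in hand.
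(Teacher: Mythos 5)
Your proof is correct and follows essentially the same route as the paper's: assume a codimension-one component $D$ of the unstable locus, invoke Lemma~\ref{lem:unstablesurfacescontracted} to conclude $D$ is contracted onto a curve, and contradict the fact that a type~$\I$ wall contracts only a curve. The paper phrases the contradiction directly in terms of what $\tau_-$ contracts rather than passing through smallness of $\Exc(\tau_-)$, but this is only a cosmetic difference.
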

 \begin{proof}
 If $\Uns(\tau_-)$ had an irreducible component $D$ of codimension one, then Lemma~\ref{lem:unstablesurfacescontracted} shows that $\tau_-$ contracts $D$ and hence so does $\widetilde{\tau}_-$. However, $\widetilde{\tau}_-$ contracts only a curve, a contradiction. The $\tau_+$ case is identical.
 \end{proof}
 
 \begin{prop}
 \label{prop:no0orII}
Let $\Gamma$ satisfy the conditions of Lemma~\ref{lem:nosenior}.  There are no GIT walls of type $\0$.
 \end{prop}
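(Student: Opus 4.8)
The plan is to argue by contradiction. Suppose that some wall $W=\overline{C_-}\cap\overline{C_+}$ is of type $\0$, so that for general $\theta_0\in W$ the morphism $\tau_-\colon\mathcal{M}_{\theta_-}\to\overline{\mathcal{M}_{\theta_0}}$ from diagram~\eqref{eqn:flopMtheta} is an isomorphism; since the type of a wall is independent of the side from which it is approached, $\tau_+$ is an isomorphism as well. I will derive a contradiction by showing that $\tau_-$ must in fact contract a curve. As a preliminary observation, note that $\Uns(\tau_-)$ cannot contain a divisor: any codimension-one component would be contracted by $\tau_-$ onto a curve by Lemma~\ref{lem:unstablesurfacescontracted}, contradicting that $\tau_-$ is an isomorphism. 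Thus $\Uns(\tau_-)$ has codimension at least two, and $\overline{\mathcal{M}_{\theta_0}}$ is smooth.

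The key geometric input is that, under the hypotheses of Lemma~\ref{lem:nosenior}, the singular locus of $Y=\mathbb{A}^3/\Gamma$ is one-dimensional. Indeed, if a nontrivial $g\in\Gamma$ of order $n$ had no eigenvalue equal to $1$, then, writing its eigenvalues as $(\zeta^a,\zeta^b,\zeta^c)$ with $0<a,b,c<n$ and $a+b+c=n$, the element $g^{-1}$ would have eigenvalue exponents $n-a,n-b,n-c$ summing to $2n$, hence age $2$, making $g^{-1}$ senior and contradicting the assumption. Therefore every nontrivial $g\in\Gamma$ fixes a line in $\mathbb{A}^3$, and $\Sing(Y)$ is a union of curves $\ell$ whose generic transverse slice is a Kleinian singularity $\mathbb{A}^2/\Gamma'$ with $\Gamma'\subset\SL(2,\kk)\times\id$. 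Consequently, for general $\theta_0\in W$ the properly $\theta_0$-polystable $\Gamma$-constellation $F$ responsible for the wall is supported on an orbit $\Gamma\cdot x$ with $x\neq 0$ lying over a general point of such a curve $\ell$, and the stabiliser $\Gamma':=\Gamma_x$ is a nontrivial subgroup of $\SL(2,\kk)\times\id$.

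Now I would run the restriction-functor argument exactly as in the proof of Lemma~\ref{lem:unstablesurfacescontracted}. Restriction along $\Gamma'\subset\Gamma$ identifies, \'etale-locally near the fibre over $\Gamma\cdot x$, the morphism $\tau_-$ with the product of $\id_{\mathbb{A}^1}$ and the VGIT morphism $\tau'\colon\mathcal{M}_{\theta'}(\Gamma')\to\mathcal{M}_{\theta_0'}(\Gamma')$ for the Kleinian surface, where $\theta_0'$ is the image of $\theta_0$ under the $\QQ$-linear restriction map $\Theta\to\Theta_{\Gamma'}$. Because $[F]$ is strictly polystable, the restricted parameter $\theta_0'$ is general in a genuine wall of the $\Gamma'$-chamber containing $\theta'$, so by Kronheimer~\cite{Kronheimer} the surface morphism $\tau'$ contracts a $(-1)$-curve. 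Hence $\tau_-$ contracts a curve through $[F]$, contradicting the assumption that $W$ is of type $\0$ and completing the proof.

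I expect the main obstacle to lie in the second and third steps: one must verify that for a general point of the wall the supporting orbit genuinely has a nontrivial stabiliser in $\SL(2,\kk)\times\id$, and that the restricted parameter $\theta_0'$ lands on a genuine $\Gamma'$-wall, so that Kronheimer's description of ADE wall-crossing applies and yields a contracted curve. The age computation ruling out isolated singular support is what makes this reduction uniform; once the transverse Kleinian model is in place, the argument follows the template already established for Lemma~\ref{lem:unstablesurfacescontracted}. I note in passing that a purely formal argument via the linearisation map is \emph{not} available here: even though $L_{C}$ is an isomorphism, two distinct chambers $C_-\neq C_+$ could a priori both map into $\Amp(\mathcal{M}_{\theta_-}/Y)$, so some genuinely geometric input is required to exclude fake walls.
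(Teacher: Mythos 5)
Your opening and closing moves are sound: a codimension-one component of $\Uns(\tau_-)$ would be contracted onto a curve by Lemma~\ref{lem:unstablesurfacescontracted}, so none can exist if $\tau_-$ is an isomorphism, and your age computation showing that every nontrivial element of $\Gamma$ fixes a line (hence that $\mathrm{Sing}(Y)$ is a union of curves) is correct. The gap is in the middle, and it sits exactly where the difficulty of the proposition lives. You assert that for general $\theta_0$ in the wall, the properly $\theta_0$-polystable $\Gamma$-constellation ``responsible for the wall'' is supported on an orbit $\Gamma\cdot x$ with $x\neq 0$ lying over a general point of a curve in $\mathrm{Sing}(Y)$. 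Nothing forces this. A genuine GIT wall only guarantees that strictly $\theta_0$-semistable constellations exist; they could all be supported on the orbit $\{0\}$, in which case $\Uns(\tau_-)$ is contained in the fibre $f_{\theta_-}^{-1}(0)$, there is no curve $\ell$ to slice transversally, and the restriction to a Kleinian subgroup produces nothing. This ``everything happens over the origin, yet no curve is contracted'' scenario is precisely the fake-wall possibility you are trying to exclude, so the argument is circular at this point: knowing that $\mathrm{Sing}(Y)$ is one-dimensional does not tell you that the wall is witnessed away from the origin.

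The paper closes this gap with an ingredient your proposal does not use: the tautological families. If $\tau_\pm$ are both isomorphisms then $\psi$ is an isomorphism; the two tautological families agree off the unstable loci (there the parametrised constellations are $\theta_0$-stable), but they cannot agree on the unstable loci, since a constellation that were simultaneously $\theta_-$-stable and $\theta_+$-stable would be $\theta_0$-stable by Lemma~\ref{lem:theta0stable}\one, contradicting strict $\theta_0$-semistability. If $\Uns(\tau_-)$ had codimension at least two, the locally free families would extend uniquely across it by \cite[Proposition~1.6]{Hartshorne80} and hence agree everywhere. Therefore $\Uns(\tau_-)$ must contain a divisor, and Lemma~\ref{lem:unstablesurfacescontracted} then gives the contradiction. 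To salvage your approach you would need a replacement for this step: a reason why a nonempty unstable locus of codimension at least two on which $\tau_-$ remains injective cannot occur. Your transverse Kleinian analysis only becomes available once you already know that the unstable locus dominates a curve of $\mathrm{Sing}(Y)$ away from the origin, which is not established.
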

 \begin{proof}
 Suppose for a contradiction that chambers $C_-, C_+\subset \Theta$ are separated by a type $\0$ wall. For $\theta_-\in C_-$ and $\theta_+\in C_+$, both $\widetilde{\tau}_-$ and $\widetilde{\tau}_+$ from \eqref{eqn:flopMtheta} are isomorphisms, and hence so is the rational map $\psi$ from \eqref{eqn:flopMtheta}. However, the tautological families agree only on the locus $\mathcal{M}_{\theta_-}\setminus \Uns(\tau_-) \cong \mathcal{M}_{\theta_+}\setminus \Uns(\tau_+)$, otherwise the isomorphic fibres of the tautological families over a strictly $\theta_0$-semistable point would be $\Gamma$-constellations that are $\theta_+$-stable and $\theta_-$-stable in addition to being strictly $\theta_0$-semistable, thereby contradicting Lemma~\ref{lem:theta0stable}\one.
  Since $\mathcal{M}_{\theta_-}\cong \mathcal{M}_{\theta_+}$ is normal, the locus $\Uns(\tau_-)\cong\Uns(\tau_+)$ where the tautological families differ cannot have an irreducible component of codimension at least two, otherwise these tautological families would extend uniquely over that component \cite[Proposition~1.6]{Hartshorne80}, forcing them to agree beyond $\mathcal{M}_{\theta_-}\setminus \Uns(\tau_-) \cong \mathcal{M}_{\theta_+}\setminus \Uns(\tau_+)$. Thus, every irreducible component of $\Uns(\tau_-)\cong\Uns(\tau_+)$ is of codimension one. However, if there were such a component,  Lemma~\ref{lem:unstablesurfacescontracted} shows that it would be contracted by $\widetilde{\tau}_-$, a contradiction. 
 \end{proof}

 In passing, we record the following fact for groups $\Gamma$ that do not satisfy Lemma~\ref{lem:nosenior}.
 
 \begin{lem}
 If a finite subgroup $\Gamma\subset \SL(3,\kk)$ fails to satisfy the conditions from Lemma~\ref{lem:nosenior}, then every chamber $C$ whose closure is strongly convex has a wall of type $\0$.
 \end{lem}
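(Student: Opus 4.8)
The plan is to derive the existence of a type~$\0$ wall directly from the contrapositive of Proposition~\ref{prop:stronglyconvex}, feeding in the failure of Lemma~\ref{lem:nosenior} through the linearisation map.

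First I would fix a chamber $C$ with $\overline{C}$ strongly convex and $\theta\in C$. Since $\Gamma$ fails the equivalent conditions of Lemma~\ref{lem:nosenior}, part~\four\ of that lemma shows that $L_C$ is not an isomorphism; as $L_C$ is surjective by Proposition~\ref{prop:BKR}\two, this forces $\ker(L_C)\neq 0$. On the other hand, strong convexity of $\overline{C}$ says precisely that the maximal linear subspace $K_C=K$, which is independent of the chamber, is zero. Hence $\ker(L_C)\neq 0=K$, so in particular $\ker(L_C)\not\subseteq K$.

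Next I would invoke Proposition~\ref{prop:stronglyconvex} in contrapositive form: it asserts that if, for every wall of $\overline{C}$ and general $\theta_0$ in that wall, the VGIT morphism $\tau$ from \eqref{eqn:VGITdiagram} contracts at least one curve, then $\ker(L_C)\subseteq K$. Since we have just shown $\ker(L_C)\not\subseteq K$, there must exist some wall of $\overline{C}$ for which the induced morphism $\tau\colon\mathcal{M}_\theta\to\mathcal{M}_{\theta_0}$ contracts no curve.

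Finally I would translate this into the type classification of Section~\ref{sec:threefoldsGITwalls}. By \eqref{eqn:O1pullback} the semi-ample bundle $L_C(\theta_0)=\tau^*\mathcal{O}(1)$ governs $\tau$, and by the finite-generation proposition recalled at the start of the birational-geometry background, $\tau$ contracts a proper curve $\ell$ if and only if $L_C(\theta_0)\cdot\ell=0$. Thus $\tau$ contracting no curve is equivalent to $L_C(\theta_0)$ being strictly positive on all curves, i.e.\ ample by the relative Kleiman criterion; by definition this is exactly a wall of type~$\0$. Here one should confirm that the chosen wall is an honest GIT wall shared by two chambers, so that the classification attached to \eqref{eqn:flopMtheta} applies; this holds because every point of $\Theta$ is effective and the interior of each top-dimensional GIT cone is a chamber, so the GIT fan is complete and every codimension-one face of $\overline{C}$ is shared with a neighbouring chamber. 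This last bookkeeping---matching ``contracts no curve'' with the type~$\0$ label and checking the face is a genuine wall---is the only delicate point; everything else is a formal chain through the cited results.
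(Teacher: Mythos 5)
Your argument is correct and follows essentially the same route as the paper: the paper's proof combines surjectivity of $L_C$ (Proposition~\ref{prop:BKR}) with Lemma~\ref{lem:nosenior} to get $\ker(L_C)\neq 0$, then applies Corollary~\ref{c:git-stronglyconvex} (which is exactly Proposition~\ref{prop:stronglyconvex} plus $K=0$ under strong convexity, as you use it) to produce a wall whose VGIT morphism contracts no curve, and identifies that wall as type~$\0$. Your extra care in matching ``contracts no curve'' with ampleness of $L_C(\theta_0)$ and in checking the face is a genuine wall is sound but not a substantive departure.
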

 \begin{proof}
 The linearisation map $L_C$ is surjective by Proposition~\ref{prop:BKR}, so the kernel of $L_C$ must be nonzero by Lemma~\ref{lem:nosenior}. Since $\overline{C}$ is strongly convex, Corollary~\ref{c:git-stronglyconvex} implies that $C$ has a wall such that the morphism $\widetilde{\tau}\colon \mathcal{M}_\theta\to\widetilde{\mathcal{M}}_{\theta_0}$ into the wall is an isomorphism. This wall is of type $\0$.
 \end{proof}

  \subsection{Birational geometry}
 We can now state and prove the main result of this section. 
  
 \begin{thm}
 \label{thm:noseniormain}
 Let $\Gamma\subset \SL(3,\kk)$ satisfy the equivalent conditions from Lemma~\ref{lem:nosenior}. The conclusions of Theorem~\ref{thm:movable} hold for any chamber $C$ and any projective crepant resolution $f\colon X\to Y=\mathbb{A}^3/\Gamma$.
 \end{thm}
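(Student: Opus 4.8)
The plan is to verify Condition~\ref{cond:GIT} for every GIT chamber $C\subset\Theta$, apply Theorem~\ref{thm:movable}, and then transfer its conclusions to an arbitrary projective crepant resolution. First I would dispose of Condition~\ref{cond:GIT}(1): for $\theta\in C$ the moduli space $X=\mathcal{M}_\theta$ is nonsingular (hence normal and $\QQ$-factorial), because the dimension vector $(\dim\rho)_\rho$ is indivisible and $\theta$ is generic, while Lemma~\ref{lem:nosenior}\four\ gives that the linearisation map $L_C$ is an isomorphism.

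The crux of the argument is to match the wall classification of Section~\ref{sec:threefoldsGITwalls} into types $\0$, $\I$, $\II$, $\III$ with the definition of the region $R_C$. By construction an interior wall of $R_C$ is one whose VGIT morphisms $\tau_\pm$ from diagram~\eqref{eqn:flopMtheta} are both small, whereas a boundary wall is one for which they are not. Proposition~\ref{prop:no0orII} rules out walls of type $\0$, and a type $\II$ contraction cannot occur since it would contract a proper surface, contradicting Lemma~\ref{lem:nosenior}. Hence every wall is of type $\I$ (small, contracting a curve to a point) or of type $\III$ (divisorial, contracting a surface to a curve), and therefore the interior walls of $R_C$ are exactly the type $\I$ walls while the boundary walls are exactly the type $\III$ walls. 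This already yields Condition~\ref{cond:GIT}(3), since boundary walls are divisorial.

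For Condition~\ref{cond:GIT}(2) I would check that each type $\I$ wall is a flipping wall in the sense of Proposition~\ref{prop:floppingwall}. The adjacent quotients $\mathcal{M}_{\theta_\pm}$ are nonsingular, hence normal; each $\tau_\pm$ contracts at least one curve since the wall is of type $\I$; and Lemma~\ref{lem:noseniorflipping} supplies the crucial fact that $\Uns(\tau_-)$ and $\Uns(\tau_+)$ both have codimension at least two. Thus Proposition~\ref{prop:floppingwall} applies, the wall is flipping, and Condition~\ref{cond:GIT}(2) holds. Combining the three parts, Condition~\ref{cond:GIT} is satisfied for every chamber $C$, so Theorem~\ref{thm:movable} applies to $\mathcal{M}_\theta$ for $\theta\in C$.

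It remains to transfer the conclusions to an arbitrary projective crepant resolution $f\colon X\to Y=\mathbb{A}^3/\Gamma$. Such an $X$ is nonsingular, hence a projective $\QQ$-factorial terminalisation of $Y$ that trivially dominates itself, so I would apply Corollary~\ref{c:partial-cr}\one\ with $g=f'=f$ to obtain a chamber $C'\subset R_C$ and $\eta\in\overline{C'}$ with $X\cong X_\eta$; since $X$ is terminal and $\QQ$-factorial, $\eta$ is a generic point of the interior of $R_C$, whence $X\cong\mathcal{M}_\eta$ and the conclusions of Theorem~\ref{thm:movable} hold for $X$. Because Proposition~\ref{prop:no0orII} and Lemmas~\ref{lem:unstablesurfacescontracted} and~\ref{lem:noseniorflipping} have already done the geometric work, the remaining difficulty is purely organisational: the \emph{only} delicate point is the identification of ``interior/boundary wall of $R_C$'' with ``type~$\I$/type~$\III$ wall'', which rests entirely on the nonexistence of type~$\0$ and type~$\II$ walls, together with the need to work consistently over the component $\mathbb{A}^3/\Gamma$ of the affine quotient dominated by the moduli spaces $\mathcal{M}_\theta$.
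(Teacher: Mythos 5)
Your proposal is correct and follows essentially the same route as the paper: verify Condition~\ref{cond:GIT}(1) via Proposition~\ref{prop:BKR} and Lemma~\ref{lem:nosenior}, use the type classification together with Proposition~\ref{prop:no0orII} and Lemma~\ref{lem:noseniorflipping} to show interior walls of $R_C$ are flipping (type $\I$) and boundary walls are divisorial (type $\III$), then transfer to an arbitrary crepant resolution by identifying its ample cone with a chamber in $R_C$. The only cosmetic difference is that you route the final transfer through Corollary~\ref{c:partial-cr}\one\ rather than directly naming the chamber $L_C^{-1}(\Amp(X/Y))$, which amounts to the same thing.
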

 \begin{proof}
 Let $C\subset \Theta$ be any GIT chamber. For $\theta\in C$, we know $\mathcal{M}_\theta$ is smooth by Proposition~\ref{prop:BKR}, and the linearisation map $L_C$ is an isomorphism by Lemma~\ref{lem:nosenior}, so Condition~\ref{cond:GIT}(1) holds for $C$. Next, consider any wall in the interior of the GIT region $R_{C}$ containing $C$. The wall cannot be of type $\0$ or $\II$ by Proposition~\ref{prop:no0orII}, nor can it be of type $\III$ because interior walls of $R_{C}$ must induce small contractions. Therefore, the wall must be of type $\I$, so $\widetilde{\tau}_-$ and $\widetilde{\tau}_+$ each contract a curve to a point. Lemma~\ref{lem:noseniorflipping} shows that every such wall satisfies the assumptions of Proposition~\ref{prop:floppingwall}. It follows that every interior wall of $R_C$ is flipping, so Condition~\ref{cond:GIT}(2) holds. Finally, given a boundary wall of $R_C$, the only possibility left is that the wall is of type $\III$. In particular, the morphism $\widetilde{\tau}_-$ for that wall contracts a (necessarily nonproper) divisor to a curve, so the wall is of divisorial type. Thus, Condition~\ref{cond:GIT}(3) holds for the chamber $C$, so the conclusions of Theorem~\ref{thm:movable} hold for the specific projective crepant resolution $f_\theta\colon \mathcal{M}_\theta\to\mathbb{A}^3/\Gamma$, where $\theta\in C$. These same conclusions must therefore also hold for the chamber $C_X:=L_C^{-1}(\Amp(X/Y))$ that defines $X\cong \mathcal{M}_\theta$ for $\theta\in C_X$.
 \end{proof}

 \begin{remark}
 \label{rem:Wemyss}
 Theorem~\ref{thm:noseniormain} implies in particular that every projective crepant resolution of $\mathbb{A}^3/\Gamma$ is of the form $\mathcal{M}_\theta$ for some generic $\theta$. As noted in the introduction, this statement follows from the work of Wemyss~\cite[Theorem~6.2]{Wemyss18}, which generalised the study of dihedral and trihedral singularities by Nolla de Celis and Sekiya~\cite[Corollaries~1.3 and 1.5]{NollaSekiya17}. Our direct, geometric proof bypasses the algebraic approach via mutation introduced in \cite{Wemyss18}, while our description of the relative movable cone $\Mov(X/Y)$ follows from Theorem~\ref{thm:movable}. In fact, our approach shows that for any chamber $C$, it is not hard to say which should be the next wall to crash through to induce any given flop of $\mathcal{M}_\theta$ for $\theta\in C$: one simply chooses the wall of $\overline{C}$ that's identified by $L_C$ with the given flopping wall of the nef cone of $\mathcal{M}_\theta$ for $\theta\in C$.
\end{remark}

\def\cprime{$'$} \def\cprime{$'$} \def\cprime{$'$} \def\cprime{$'$}
\def\cprime{$'$} \def\cprime{$'$} \def\cprime{$'$} \def\cprime{$'$}
\def\cprime{$'$} \def\cprime{$'$} \def\cprime{$'$} \def\cprime{$'$}
\def\cprime{$'$} \def\cprime{$'$}

 \end{document}